\newcommand{\Conv}{\mathop{\scalebox{1.5}{\raisebox{-0.2ex}{$\ast$}}}}%
\numberwithin{equation}{section}
\renewcommand{\leq}{\leqslant}
\renewcommand{\geq}{\geqslant}
\renewcommand{\phi}{\varphi}
\newcommand{\WR}{\mathcal{WR}}
\newcommand{\bh}{\overline H}
\renewcommand{\d }{{\rm d} }
\renewcommand{\dh}{\widehat \d}
\newcommand{\e }{\varepsilon }
\renewcommand{\P }{\mathcal P}
\newcommand{\lab}{{\bf Lab}}
\renewcommand{\ll }{\langle\hspace{-.7mm}\langle }
\newcommand{\rr }{\rangle\hspace{-.7mm}\rangle }
\newcommand{\QN}{\mathcal {QN}}
\newcommand{\M}{\mathcal M}
\newcommand{\Nn}{\mathcal N}
\newcommand{\ra}{\rightarrow}
\newcommand{\ca}{\curvearrowright}
\newcommand{\C}{\mathcal C}
\newcommand{\D}{\mathcal D}
\newcommand{\K}{\mathcal K}
\renewcommand{\L}{\mathcal L}
\newcommand{\Q}{\mathcal Q}
\newcommand{\R}{\mathcal R}
\renewcommand{\S}{\mathcal S}
\newcommand{\sU}{\mathscr U}
\newcommand{\cM}{\mathcal M}
\newcommand{\oo}{\bar\otimes}
\newcommand{\cN}{\mathcal N}
\newcommand{\cQ}{\mathcal Q}
\newcommand{\cL}{\mathcal L}
\newcommand{\sH}{\mathscr H}
\newcommand{\liftS}{\varsigma}
\newcommand{\FD}{\mathsf D}
\newcommand{\FE}{\mathsf E}
\newcommand{\FF}{\mathsf F}
\newcommand{\ma}{\mathfrak{a}}
\newcommand{\mc}{\mathfrak{c}}
\newcommand{\ms}{\mathfrak{s}}
\newcommand{\mq}{\mathfrak{q}}
\newcommand{\hatrmd}{\widehat{\mathrm{d}}}
\newcommand{\Ng}{\overline}
\newcommand{\ZZ}{\mathbb Z}
\newcommand{\NN}{\mathbb N}
\theoremstyle{plain}
\newtheorem{thm}{Theorem}[section]
\newtheorem*{thm*}{Theorem}
\newtheorem{cor}[thm]{Corollary}
\newtheorem{lem}[thm]{Lemma}
\newtheorem{claim}[thm]{Claim}
\newtheorem{prop}[thm]{Proposition}
\theoremstyle{definition}
\newtheorem{defn}[thm]{Definition}
\newtheorem{ex}[thm]{Example}
\theoremstyle{remark}
\newtheorem{rem}[thm]{Remark}
\crefname{lem}{lemma}{lemmas}
\crefname{cor}{corollary}{corollaries}
\crefname{notation}{notation}{notations}
\crefname{prop}{proposition}{propositions}
\crefname{claim}{claim}{claims}
\crefname{rem}{remark}{remarks}
\crefname{cov}{convention}{conventions}
\crefname{assumption}{assumption}{assumptions}
\newcommand{\innpr}[1]{\langle #1 \rangle}
\newcommand{\Norm}[1]{\lVert #1 \rVert} 
\newcommand{\acts}{\curvearrowright}
\DeclareMathOperator{\supp}{supp}
\DeclareMathOperator{\Ad}{Ad}
\DeclareMathOperator{\Aut}{Aut}
\renewcommand{\subset}{\subseteq}
\newcommand{\Path}{\mathfrak{P}}
\newcommand{\bg}{\overline{G}}
\newcommand{\pint}{\partial_{\mathrm{int}}}
\newcommand{\pext}{\partial_{\mathrm{ext}}}
\newcommand{\al}{\mathsf}
\newcommand{\family}{\mathfrak}
\DeclareMathOperator{\stab}{Stab}
\title{McDuff superrigidity for group II$_1$ factors}
\author{Juan Felipe Ariza Mej{\'i}a, Ionu\c t Chifan, Denis Osin, Bin Sun}
\date{}
\begin{document}
\maketitle

\begin{abstract} \noindent Developing new techniques at the interface of geometric group theory and von Neumann algebras, we identify the first examples of ICC groups
$G$ whose von Neumann algebras are McDuff and exhibit a new rigidity phenomenon, termed \emph{McDuff superrigidity}: an arbitrary group $H$ satisfying  $\L(H)\cong \L(G)$ decomposes as  $H \cong G\times A$ for an ICC amenable group $A$. 
    
\end{abstract}

\section{Introduction}

A II$_1$ factor $\M$ is called \emph{McDuff} if and only if it tensorially absorbs the hyperfinite factor $\mathcal R$ of Murray and von Neumann \cite{MvN43}, that is, $$\M \cong \M \bar\otimes \mathcal R.$$ This property first appeared in McDuff's work on central sequences \cite{MD69a}; in particular, she proved that $\M$ is McDuff if and only if its central sequence algebra contains two noncommuting unitaries \cite{MD69b}.

In this paper we study rigidity phenomena for McDuff factors associated to countable groups. Recall that the von Neumann algebra $\mathcal L(G)$ of a countable discrete group $G$ is defined as the weak operator closure of the image of the left regular representation $\lambda \colon \mathbb CG\to B(\ell^2(G))$ \cite{MvN36}. As observed by Murray and von Neumann, the von Neumann algebra $\mathcal L(G)$ is a II$_1$ factor precisely when $G$ satisfies the infinite conjugacy classes condition (abbreviated \emph{ICC}): the conjugacy class of every non-trivial element of $G$ is infinite. We say that a group $G$ is \emph{$W^*$-McDuff} if $\mathcal L(G)$ is a McDuff factor.

The class of $W^*$-McDuff groups  includes McDuff’s original family of examples \cite{MD69b}, Thompson’s group $F$ \cite{Jol98} and its generalizations $F_n$ \cite{Pic04}, Thompson-like groups from cloning systems \cite{BZ21}, the pure braid group on infinitely many strands $P_\infty$ \cite{CS20}, any ICC group of the form $G\times A$ with $A$ nontrivial amenable, and any infinite direct sum $\oplus_{n\in \mathbb N} G_n$ of nontrivial ICC groups.

By definition, for every $W^*$-McDuff group $G$ one has 
\begin{equation}\label{Eq:McDuffsr}
\L(G)\cong \L(G)\bar\otimes \mathcal R \cong \L(G\times A),
\end{equation}  
where $A$ is any ICC amenable group.
In particular, $W^*$-McDuff groups are never $W^*$-superrigid in the sense of Popa \cite{Po07}. However, it is natural to ask whether there exist $W^*$-McDuff groups for which (\ref{Eq:McDuffsr}) is the \emph{only} obstruction to $W^*$-superrigidity.  More formally, we introduce the following.

\begin{defn}
A group $G$ is said to be \emph{McDuff superrigid} if for any countable group $H$, the isomorphism $\L(G)\cong \L(H)$ implies the existence of an ICC amenable group $A$ such that $H\cong G\times A$.
\end{defn}

Instances of this phenomenon are expected to be quite rare and it is easy to construct $W^*$-McDuff groups that are not McDuff superrigid. 

\begin{ex}
Consider $G = \bigoplus_\mathbb{N} \mathbb{F}_m$ for some integer $m\ge 2$. Further, we fix some $k\in \mathbb N$ and let $n=(m-1)k^2+1$ and $H=\bigoplus_\mathbb{N} \mathbb{F}_n$. Then $G$ and $H$ are $W^*$-McDuff and we have $$\cL(\mathbb{F}_m) \cong \cL(\mathbb{F}_n)\otimes M_k(\mathbb{C}) $$ by the Voiculescu's formula for free group factors \cite[Theorem 3.3]{Voi89}. Taking tensor product over countably many copies of these algebras, we obtain $$\cL(G) \cong \cL(H) \bar{\otimes} \R \cong \cL(H).$$  However, an elementary group theoretic argument shows that we cannot have $H\cong G \times A$ for any amenable group $A$ if $m\ne n$. In particular, $G$  is not McDuff superrigid.
\end{ex}

The main goal of our paper is to construct the first examples of McDuff superrigid groups. More precisely, we prove the following. 

\begin{thm}\label{Thm:main}
There exists a family of countable $W^\ast$-McDuff groups $\{ G_i\}_{i\in I}$ such that $|I|=2^{\aleph_0}$, $G_i\not\cong G_j$ whenever $i\ne j$, and every $G_i$ is McDuff superrigid. 
\end{thm}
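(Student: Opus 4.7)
The plan is to construct each $G_i$ as a direct product $G_i = G_i^\circ \times A$, where $A$ is a single carefully chosen ICC amenable group enjoying strong absorption properties, and $\{G_i^\circ\}_{i\in I}$ is a continuum-sized family of pairwise non-isomorphic non-amenable ICC groups built via iterated Dehn filling / small cancellation over suitable acylindrically hyperbolic targets. The properties we engineer into each $G_i^\circ$ are: (i) Kazhdan's property (T); (ii) primeness of $\L(G_i^\circ)$; (iii) $W^\ast$-superrigidity within the class of ICC groups without amenable direct factor. In the spirit of the wreath-like product constructions of Chifan--Ioana--Osin--Sun, varying a continuum of relator parameters in the Dehn filling step yields $2^{\aleph_0}$ pairwise non-isomorphic $G_i^\circ$'s, and this distinction passes to the $G_i$'s by cancellation of the common amenable direct factor $A$.

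Let $H$ be a countable group with an isomorphism $\Theta\colon \L(H) \to \L(G_i) \cong \L(G_i^\circ) \bar\otimes \L(A)$. The first analytic step is to transport the tensor decomposition back to $\L(H)$: using property (T), primeness, and Popa's intertwining-by-bimodules, we locate a subfactor $N \subset \L(H)$ with $\Theta(N) = \L(G_i^\circ) \bar\otimes 1$ up to a unitary, whose relative commutant $P = N' \cap \L(H)$ is amenable and satisfies $\L(H) = N \bar\otimes P$. One then applies the strong $W^\ast$-superrigidity of $G_i^\circ$ to $N$ to extract a subgroup $K \leq H$ with $K \cong G_i^\circ$ and $\L(K) = N$. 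A centralizer analysis in $H$ (using that $P$, as the relative commutant of $N$, corresponds to the group-theoretic centralizer of $K$) then forces a splitting $H = K \times L$ with $L$ an ICC amenable group and $\L(L) = P$. Choosing $A$ so as to absorb every countable ICC amenable group as a direct factor---for instance a suitable infinite direct sum of copies of a universal amenable ICC group---we obtain $L \cong A \times L'$ for some ICC amenable $L'$, hence $H \cong G_i^\circ \times A \times L' = G_i \times L'$, completing the verification.

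The main obstacle is the first analytic step. A McDuff II$_1$ factor admits an enormous supply of tensor decompositions, so there is no a priori canonical non-amenable factor in $\L(G_i)$. Overcoming this requires carefully leveraging property (T) and primeness of $\L(G_i^\circ)$ to pin down the non-amenable subfactor up to unitary conjugacy in $\L(H)$, and then exploiting the wreath-like combinatorial skeleton of $G_i^\circ$---which rigidly controls normalizers and group-like unitaries inside $\L(G_i^\circ)$---to reconstruct $G_i^\circ$ group-theoretically inside $H$. The ancillary group-theoretic choice of the amenable absorbing partner $A$ must be made with equal care so that the final reconstruction $H \cong G_i \times L'$ actually produces an ICC amenable complement $L'$ for arbitrary $H$ satisfying $\L(H) \cong \L(G_i)$.
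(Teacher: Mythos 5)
Your proposal has a genuine gap at the final group-theoretic absorption step. Having arrived at $H \cong K \times L$ with $K \cong G_i^\circ$ and $L$ ICC amenable, you need $L \cong A \times L'$ for some ICC amenable $L'$ in order to conclude $H \cong G_i \times L'$. But no fixed countable ICC amenable group $A$ is a direct factor of \emph{every} countable ICC amenable $L$ that might arise --- for instance, a simple ICC amenable $L$ has no nontrivial proper direct factor. Taking $A = \bigoplus_{\NN} B$ gives only the reverse absorption $A \times B \cong A$, which points the wrong way. As written, your construction could at best yield the weaker form of McDuff rigidity (Definition~\ref{genmdsup}, illustrated by Corollary~\ref{main3}), where one is allowed to multiply by amenable groups on both sides; it does not give the statement of Theorem~\ref{Thm:main}.

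The paper's construction avoids this by never introducing an external amenable direct factor: each $G_i$ is an \emph{infinite direct sum} $\bigoplus_j G_j$ of nontrivial ICC property (T) wreath-like products, which is automatically $W^*$-McDuff with no amenable summand attached. The infinite product rigidity theorem (Theorem~\ref{main1}) then decomposes any $H$ with $\L(H)\cong \L(G_i)$ as $(\bigoplus_j H_j)\oplus A$ with $H_j$ ICC property~(T) and $A$ ICC amenable, and $W^*$-superrigidity of each $G_j$ from~\cite{CIOS21} forces $H_j\cong G_j$, hence $H\cong G_i\times A$ with no leftover amenable factor to absorb. Separately, the analytic step you flag as the main obstacle --- locating $N$ with $\Theta(N)=\L(G_i^\circ)\bar\otimes 1$ up to unitary conjugacy inside a McDuff factor --- is not available from primeness and property~(T) alone: the wreath-like products of~\cite{CIOS21} are not known to be bi-exact or weakly amenable, which is precisely why this paper develops the support-array and deformation-subordination machinery rather than invoking existing tensor/product rigidity results.
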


Our groups $G_i$ are infinite direct sums of $W^\ast$-superrigid groups with property $(T)$ constructed in \cite{CIOS21}. The proof of Theorem \ref{Thm:main} incorporates several new ideas on both the group-theoretic and analytic sides. We discuss these in more detail below.

\paragraph{Acknowledgments.} The first author was supported by NSF Grants FRG-DMS-1854194 and DMS-2154637, the Graduate College Post-Comprehensive Research Fellowship and the Erwin and Peggy Kleinfeld  Scholar Fellowship; the second author was supported by the NSF Grants DMS-2154637 and DMS-2452247; the third author is supported by the NSF grant DMS-2405032 and the Simons Fellowship in Mathematics MP-SFM-00005907. The fourth author is supported by the NSF grant DMS-2507047.

\section{Outline of the paper}

In this section, we discuss the main ideas leading to Theorem \ref{Thm:main} in more detail. We begin with the group-theoretic component of the proof, which provides new insight into the algebraic structure of group theoretic Dehn fillings and appears to be of independent interest. To set the stage, we first briefly review the motivation and relevant background.

\paragraph{2.1. Group theoretic Dehn filling and wreath-like products with bounded cocycle.} The classical Dehn surgery on a $3$-dimensional manifold $M$ consists of cutting off a solid torus -- intuitively, ``drilling" along an embedded knot in $M$ -- and then gluing it back in a ``twisted" way. The systematic study of this operation dates back to the foundational work of Dehn in the 1910s. The theory was further developed in the 1960s, culminating in the Lickorish–Wallace theorem: \emph{every closed, connected, orientable $3$-manifold can be obtained from the
$3$-sphere by performing finitely many Dehn surgeries. }

The second step of the surgery, known as \emph{Dehn filling}, admits the following formalization.
Let $M$ be a compact orientable $3$-manifold with toric boundary. Topologically distinct ways of attaching a solid torus to $\partial M$ can be classified by free homotopy classes of unoriented essential simple closed curves in $\partial M$, called {\it slopes}. Given a slope $s$, the corresponding   Dehn filling  $M(s)$ of $M$ is the manifold obtained from $M$ by attaching a solid torus to $\partial M$ so that the meridian of the torus goes to a simple closed curve of the slope $s$. A breakthrough result of Thurston \cite[Theorem~1.6]{Th} asserts that \emph{if $M\setminus\partial M$ admits a finite volume hyperbolic structure, then $M(s)$ is hyperbolic for all but finitely many slopes.} 

In group-theoretic settings, the role of the pair $\partial M\subset M$ is played by a pair of groups $H\leqslant G$ and the existence of a finite volume hyperbolic structure on $M\setminus\partial M$ translates to relative hyperbolicity of $G$ with respect to $H$. Under the assumptions of Thurston's theorem, we can think of $s$ as an element of $\pi_1(\partial M)\leqslant \pi_1(M)$, and by the Seifert--van Kampen theorem we have
\begin{equation}\label{Eq:pi1}
\pi_1(M(s))=\pi_1(M)/\ll s\rr.
\end{equation}
This suggests that the process of attaching a solid torus to $M$ must correspond to quotienting the group $G$ by normal subgroups generated by elements of $H$.

In what follows, we say that a certain property $P$ \emph{holds for all sufficiently deep normal subgroups of a group} $H$ if $P$ holds for every subgroup $N\lhd H$ avoiding a fixed set of non-trivial elements. The following algebraic analogue of Thurston's theorem was first obtained in \cite{Osi07} (an independent proof for torsion-free groups was given by Groves and Manning in \cite{GM}), and then extended to the more general case of weakly hyperbolic groups in \cite[Theorem 2.27]{DGO11}.

\begin{thm}\label{Thm:DF}
Suppose that a group $G$ is hyperbolic relative to a subgroup $H$. Then for any sufficiently deep subgroup $N\lhd H$, the natural map $H/N\to G/\ll N\rr$ (where $\ll N\rr$ denotes the normal closure of $N$ in $G$) is injective and $G/\ll N\rr$ is hyperbolic relative to $H/N$. 
\end{thm}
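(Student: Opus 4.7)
The plan is to exploit Osin's characterization of relative hyperbolicity by a linear relative isoperimetric inequality and to show that this inequality survives passage to the quotient $\bg = G/\ll N\rr$ provided $N$ is sufficiently deep. The argument will take the form of a small-cancellation reduction over the relatively hyperbolic group $G$, in the spirit of Olshanskii's graded small cancellation theory adapted to relatively hyperbolic groups.

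First I would fix a finite relative presentation $G = \langle X, H \mid \mathcal Q\rangle$ with linear relative Dehn function: there exists $C>0$ such that every word $w$ in $X\sqcup H$ representing $1_G$ bounds a van Kampen diagram with at most $C|w|$ cells labeled by elements of $\mathcal Q$ (cells labeled by words in $H$ alone are not counted in the relative area). The quotient $\bg$ then admits the natural relative presentation $\bg = \langle X, H/N \mid \mathcal Q \cup \mathcal S\rangle$, where $\mathcal S$ is the set of words in $H$ representing elements of $N$.

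The heart of the argument is to establish that, when $N$ avoids a sufficiently large finite set $F\subset H\setminus\{1\}$ depending only on $C$ and the presentation data, the new presentation also satisfies a linear relative isoperimetric inequality. To this end I would fix a minimal van Kampen diagram $\Delta$ over $\mathcal Q\cup\mathcal S$ whose boundary word represents the identity in $\bg$ and analyze the interaction of its $\mathcal S$-cells. The key technical step is to show that any two $\mathcal S$-cells sharing a sufficiently long boundary arc would produce, via the linear isoperimetric bound in $G$, a short word in $H$ representing a nontrivial element of $N$; taking $F$ to contain all such short elements rules this configuration out. The resulting small-cancellation condition lets one absorb the $\mathcal Q$-cells into a linear estimate in terms of $|w|$ and the combined perimeter of the $\mathcal S$-cells, giving the desired linear relative inequality for $\bg$ with peripheral subgroup $H/N$. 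This diagrammatic estimate is where I expect the main obstacle to lie, since it requires a careful trade-off between the hyperbolicity constants of $G$ and the depth of $N$.

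Injectivity of the natural map $H/N \to \bg$ would follow from the same machinery: if $h\in H$ became trivial in $\bg$ without lying in $N$, then a minimal diagram with boundary label $h$, analyzed by the same small-cancellation argument, would force an $\mathcal S$-cell whose boundary agrees with most of $\partial\Delta$, and hence $h\in N$, a contradiction. Once the linear relative isoperimetric bound for $\bg$ over $H/N$ is in hand, relative hyperbolicity of $\bg$ relative to $H/N$ follows directly from Osin's characterization.
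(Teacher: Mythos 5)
The paper does not prove this theorem: it is stated as a quoted result (Theorem~\ref{Thm:DF}) with attribution to Osin \cite{Osi07}, Groves--Manning \cite{GM}, and \cite[Theorem~2.27]{DGO11}, and the only part of the Dehn filling machinery the paper actually revisits is the uniformity of the hyperbolicity and polygonal constants (Theorem~\ref{thm. dehn filling}). So there is no in-paper proof to compare your sketch against; the comparison below is with the cited arguments.

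Your overall framework --- a finite relative presentation, Osin's characterization of relative hyperbolicity via a linear relative Dehn function, and a van Kampen diagram / graded small-cancellation analysis of the quotient presentation $\langle X, H/N \mid \mathcal{Q}\cup\mathcal{S}\rangle$ --- is exactly Osin's original 2007 route, so the skeleton is sound, and the reduction of injectivity to the same diagrammatic analysis is also correct. The step that fails as stated is the small-cancellation condition you propose to verify. You want to rule out ``two $\mathcal{S}$-cells sharing a sufficiently long boundary arc,'' but this is not a forbidden configuration at all: an arc shared by two $\mathcal{S}$-cells is necessarily labelled entirely by letters of $H$, and since $N\lhd H$ the boundary label of the union of the two cells (with the shared arc deleted) again represents an element of $N$. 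The two cells therefore merge into a single $\mathcal{S}$-cell under ordinary diagram reduction, independently of the depth of $N$, and the configuration cannot persist in a reduced diagram; the depth hypothesis on $N$ gains you nothing at this step. The genuine obstruction concerns a \emph{single} $\mathcal{S}$-cell in relation to the rest of the diagram: in a reduced (or minimal-type, in the language of Section~\ref{sec. surgery}) diagram one treats the $H$-labelled pieces adjoining an $\mathcal{S}$-cell as components of a geodesic polygon in $\mathrm{Cay}(G,X\sqcup H)$, and the polygonal-constant bound (Proposition~4.14 of \cite{DGO11}, i.e.\ Proposition~\ref{prop. geodesic polygon}) forces the $\mathcal{S}$-cell's boundary label to have bounded relative length unless a long $H$-component of that cell escapes to the exterior boundary; depth of $N$ then eliminates the ``absorbed'' case. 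This is exactly what the surgery lemmas (Lemmas~\ref{lem. adding path}, \ref{lem. surgery}, \ref{lem. good diagram}) make rigorous, and it is the ingredient your sketch does not supply. As written, your proposed key lemma would be verified vacuously by reduction and the linear bound on the number of $\mathcal{S}$-cells would not follow, so the argument does not close.
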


In \cite{DGO11}, the authors proved that the subgroup $\ll N\rr$ is a free product of conjugates of $N$ under the assumptions of Theorem \ref{Thm:DF}. In this paper, we further contribute to the study of the algebraic structure of Dehn fillings by showing that the extension 
$$
1\to \ll N\rr \to G\to G/\ll N\rr \to 1
$$ 
``almost splits". More precisely, the main group-theoretic result of this paper asserts that, under mild technical assumptions, there exists a ``canonical section" $G/\ll N\rr \to G$ such that every element in the image of the associated cocycle $G/\ll N\rr \times G/\ll N\rr \to \ll N\rr$ is a product of at most $6$ conjugates of elements of $N$ (see Theorem \ref{thm. general main}). Although this theorem is reminiscent in spirit of the Rips–-Sela technology of canonical representatives \cite{RS}, our proof is entirely different and relies on several novel ideas.

This result has an immediate application to the structure of wreath-like products naturally associated with Dehn fillings.  This class of groups was introduced in \cite{CIOS21} and used there and in subsequent papers \cite{CIOS23, CIOS24} to construct examples exhibiting various rigidity properties. 

\begin{defn}
    Recall that a group $G$ is a \emph{wreath-like product of groups $A$ and $B$ associated to an action $B \ca I$ on a certain set $I$} if there exists a short exact sequence
\begin{equation}\label{wrlext}
1 \ra \bigoplus_{i\in I} A_i \hookrightarrow G \overset{\varepsilon}{\twoheadrightarrow} B \ra 1,
\end{equation}
such that for all $g \in G$ and all $i \in I$ we have $A_i\cong A$ and $g A_i g^{-1} = A_{\varepsilon(g)i}$. We denote by $\mathcal{WR}(A, B \ca I)$ the collection of all wreath-like products as above.
\end{defn}  

 If the extension \ref{wrlext} splits, the group $G$ is simply the (permutational) wreath product $A{\, \rm wr}_I\, B$. In this paper, we introduce a subclass of $\mathcal{WR}(A, B \ca I)$ consisting of groups $G$ such that the extension (\ref{wrlext}) ``splits up to a bounded error".

\begin{defn}\label{def. cl}
We say that a wreath-like product $G\in \mathcal{WR}(A, B \ca I)$ has \textit{bounded cocycle} and write $G\in \mathcal{WR}_b(A, B \ca I)$ if there is a set-theoretic map $\liftS\colon B \rightarrow G$ such that 
$\varepsilon\circ\liftS=\mathrm{id}_B$ and the elements in the image of the associated cocycle $\alpha \colon B\times B\rightarrow \bigoplus_{i\in I}A_i$ defined by $$\alpha (x,y)=\liftS(x)\liftS(y)\liftS(xy)^{-1},\;\; \text{for all }\, x,y\in B$$ have uniformly bounded supports (considered as functions $I\to \bigcup_{i\in I}A_i$). That is, there is a constant $C\in\mathbb N$ such that, for any $x,y\in B$, we have 
$$
\left |\{ i\in I \mid \alpha(x,y) (i)\ne 1\}\right |\leqslant C.
$$
\end{defn}

In \cite{CIOS21}, we showed that certain groups naturally associated with group theoretic Dehn fillings have wreath-like product structure. In particular, this is the case when $G$ is a torsion-free hyperbolic group, $H=\langle g\rangle $ is a maximal cyclic subgroup of $G$, and $N=\langle g^n\rangle $ for some large $n\in \mathbb N$. In this paper, we show that these groups actually have bounded cocycle whenever $n$ is prime. More precisely, we prove the following. 

\begin{thm}\label{Thm:main-gt}
Let $G$ be a torsion-free hyperbolic group. Suppose that $g$ is a non-trivial element of $G$ that is not a proper power. Then for any sufficiently large prime $n\in \NN$, we have $$ G/[\ll g^n\rr, \ll g^n\rr] \in \mathcal{WR}_b(\ZZ, G/\ll g^n\rr \ca I),$$ where the action of $G/\ll g^n\rr $ on $I$ is transitive with stabilizers isomorphic to $\ZZ/n\ZZ$. \\ In addition, $G/\ll g^n\rr $ is a non-elementary, ICC, hyperbolic group.\end{thm}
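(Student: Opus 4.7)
The strategy combines three ingredients: the Dehn filling theorem (Theorem \ref{Thm:DF}), the free-product structure of $\ll g^n\rr$ from \cite{DGO11}, and the bounded-cocycle section provided by Theorem \ref{thm. general main}, which is the paper's new group-theoretic contribution.

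First, I would install the Dehn filling framework. Since $g$ is nontrivial and not a proper power and $G$ is torsion-free hyperbolic, the cyclic subgroup $H := \langle g \rangle$ is maximal cyclic, malnormal, and hyperbolically embedded in $G$. For a prime $n$ chosen sufficiently large, $N := \langle g^n \rangle \lhd H$ avoids any prescribed finite set of nontrivial elements of $H$, so Theorem \ref{Thm:DF} applies: the map $H/N \hookrightarrow \bg := G/\ll g^n \rr$ is injective, identifying a copy of $\ZZ/n\ZZ$, and $\bg$ is hyperbolic relative to this finite subgroup, hence hyperbolic. For $n$ in the prescribed range, $\bg$ is also non-elementary (sufficiently deep fillings of a non-elementary hyperbolic group remain non-elementary) and has trivial finite radical; in a non-elementary hyperbolic group, the ICC property is equivalent to having trivial finite radical, so $\bg$ is ICC.

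Second, I would extract the wreath-like product skeleton. By the free-product theorem of \cite{DGO11}, under the hypotheses above one has
$$
\ll g^n \rr \;\cong\; \Conv_{i \in I} C_i,
$$
where each $C_i$ is a $G$-conjugate of $\langle g^n\rangle \cong \ZZ$ and $I$ is a $G$-set on which $G$ acts transitively with point stabilizer $\langle g \rangle \cdot \ll g^n \rr$. Passing to abelianizations gives $\ll g^n\rr^{\mathrm{ab}} \cong \bigoplus_{i \in I} \ZZ$, and since $[\ll g^n \rr, \ll g^n \rr]$ is characteristic in $\ll g^n\rr$, it is normal in $G$. This yields the short exact sequence
$$
1 \longrightarrow \bigoplus_{i \in I} \ZZ \longrightarrow G/[\ll g^n \rr, \ll g^n \rr] \longrightarrow \bg \longrightarrow 1.
$$
The conjugation action of $G$ on $\bigoplus_{i \in I} \ZZ$ factors through $\bg$ (inner automorphisms of $\ll g^n\rr$ act trivially on its abelianization) and permutes the $\ZZ$-summands via the induced action of $\bg$ on $I$. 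This $\bg$-action is transitive with point stabilizer $(\langle g \rangle \cdot \ll g^n \rr)/\ll g^n \rr = \langle g \rangle/\langle g^n \rangle \cong \ZZ/n\ZZ$, using the injectivity in Theorem \ref{Thm:DF}. Hence $G/[\ll g^n \rr, \ll g^n \rr]$ lies in $\WR(\ZZ, \bg \ca I)$ with the specified transitive action and stabilizer.

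Third, and this is the crux, I would upgrade the wreath-like structure to the bounded-cocycle variant by invoking Theorem \ref{thm. general main}. That theorem supplies a set-theoretic section $\sigma \colon \bg \to G$ whose associated cocycle $\alpha(x,y) = \sigma(x)\sigma(y)\sigma(xy)^{-1}$ can be written, for every pair $(x,y)$, as a product of at most six $G$-conjugates of elements of $N = \langle g^n \rangle$. Composing $\sigma$ with the quotient $G \twoheadrightarrow G/[\ll g^n \rr, \ll g^n \rr]$ yields a section $\liftS$ for the extension above. Under abelianization, any element of $gNg^{-1} \subset \ll g^n \rr$ contributes to exactly one coordinate of $\bigoplus_{i \in I} \ZZ$: either $gNg^{-1}$ is itself one of the free factors $C_i$, or it is $\ll g^n\rr$-conjugate to such a factor, and in either case inner conjugation of $\ll g^n\rr$ preserves the coordinate in the abelianization. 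Thus the image of $\alpha(x,y)$ in $\bigoplus_{i \in I} \ZZ$ has support of size at most six, which is exactly Definition \ref{def. cl} with constant $C = 6$.

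The main obstacle I anticipate is verifying that the technical hypotheses of Theorem \ref{thm. general main} are met in this cyclic setting, in particular the role of the primality of $n$. Primality presumably enters the internal construction of the canonical section by ruling out proper intermediate subgroups of $\langle g\rangle/\langle g^n\rangle$ whose existence could allow cocycle values to escape the normal closure of $g^n$ itself at bounded cost. A secondary issue is ensuring that a single sufficiently large prime $n$ simultaneously meets the depth conditions of Theorem \ref{thm. general main}, the free-product decomposition of \cite{DGO11}, and the non-elementarity and triviality of the finite radical of $\bg$; all of these are open conditions on $n$ in the standard quantitative formulations, so they should be simultaneously satisfiable.
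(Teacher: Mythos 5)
Your overall architecture is exactly the paper's: obtain the wreath-like structure of $G/[\ll g^n\rr,\ll g^n\rr]$ from the Cohen--Lyndon free-product decomposition of $\ll g^n\rr$ (via \Cref{prop. wreath product}), then invoke \Cref{thm. general main} to pass from an ordinary wreath-like product to one with bounded cocycle, translating ``product of boundedly many conjugates of elements of $N$'' into ``support of uniformly bounded size'' after abelianization. That translation is correct and matches \Cref{def. cl}.

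The gap is in your last paragraph, where you flag but do not close the verification of the hypotheses of \Cref{thm. general main}, and your speculation about where primality enters is wrong. Primality of $n$ is not used inside the construction of the canonical section; it is used to verify hypothesis \ref{item. malnormal}, namely malnormality of $\bh = H\ll g^n\rr/\ll g^n\rr \cong \ZZ/n\ZZ$ in $\bg$. Malnormality of $\langle g\rangle$ in $G$ is easy, but what the theorem needs is malnormality of the \emph{image} in $\bg$, and that is the real work. The paper's argument: if $t^{-1}\bh t\cap\bh\ne\{1\}$ for some $t\notin\bh$, then since $\bh\cong\ZZ/n\ZZ$ has no proper nontrivial subgroups (this is where primality is used), $t^{-1}\bh t=\bh$; using that $|\Aut(\ZZ/n\ZZ)|=n-1$ is coprime to $n$ together with the order classification from Ol'shanskii (\cite[Theorem 3]{Ols93}: every element of $\bg$ has order $n$ or $\infty$), one finds in either case a power of $t$ centralizing $\e(g)$ lying outside $\e(H)$; but Ol'shanskii also shows $C_{\bg}(\e(g))=\e(C_G(g))=\e(H)$, a contradiction. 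Hypothesis \ref{item. no involution} follows from the same order classification once $n>2$, and hypothesis \ref{item. long} follows from $H\hookrightarrow_h(G,X)$ \cite[Theorem 6.8]{DGO11} and local finiteness of the relative metric. Your proposal asserts none of this and your conjectured mechanism (ruling out proper intermediate subgroups to keep cocycle values from escaping $\ll g^n\rr$) does not correspond to anything in the actual proof, so as written the proposal does not justify that \Cref{thm. general main} is applicable, which is precisely the step the theorem's novelty depends on.
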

\noindent For a more comprehensive version of this result, the reader is referred to Theorem~\ref{Thm:main-gt-full} in Section~\ref{sec. group main}.

Informally, Definition \ref{def. cl} asserts that groups from $\mathcal{WR}_b(A, B \ca I)$ are algebraically close to the corresponding (permutational) wreath products $A{\, \rm wr}_I\, B$. Note that $A{\, \rm wr}_I\, B$ never has property (T) of Kazhdan if $A$ is non-trivial and $I$ is infinite. In contrast, applying Theorem \ref{Thm:main-gt} to a property (T) group $G$, we obtain plenty of wreath-like products with bounded cocycle that inherit property (T) from $G$. The existence of groups combining these seemingly incompatible features plays a crucial role in our proof of Theorem~\ref{Thm:main}.

\paragraph{2.2. Arrays and $W^*$-rigidity of infinite products.}
Another key step in our approach to establishing McDuff superrigidity is a rigidity theorem in the spirit of~\cite[Theorem A]{CU18} for infinite direct sums of property (T) groups $$G_n\in \mathcal{WR}_b(A_n, B_n\ca I_n), \;\; n\in \mathbb N,$$ where the groups $A_n$, $B_n$, and the actions $B_n\ca I_n$ satisfy certain additional conditions.

Recall that \cite[Theorem A]{CU18} established the following \emph{infinite product rigidity} phenomenon: if $G=\bigoplus_{n\in \mathbb N} G_n$ and each $G_n$ is an ICC, property~(T), biexact, weakly amenable group, then every group $H$ satisfying $\mathcal{L}(G)\cong \mathcal{L}(H)$ decomposes as $H=\Big(\bigoplus_{n} H_n\Big)\oplus A$,
where $A$ is an ICC amenable group, and for each $n$ we have $\mathcal{L}(G_n)\cong \mathcal{L}(H_n)$ up to amplification. Moreover, by \cite{CJ85, DP24, CH89}, each $H_n$ is again ICC, property~(T), biexact, and weakly amenable.

In the recent work \cite{DD25}, Ding and Drimbe strengthened this theorem by removing the weak amenability assumption on the groups $G_n$. Their proof also avoids relying on Popa--Vaes’s classification of normalizers \cite{PV12}, replacing it with a suitable form of relative biexactness  in the context of infinite direct sum groups.

However, none of these prior results apply to the case when the direct summands are our wreath-like product groups. It is currently unknown whether these groups are biexact, and they are very likely not weakly amenable. As a result, establishing the above infinite product rigidity requires substantial new ideas beyond the existing technology. Although we follow the general strategy of \cite{CU18}, and to some extent the product-reconstruction framework of \cite{CdSS15}, at the technical level we must introduce several new methods to compensate for the lack of biexactness in our setting. Below we outline some of the principal novel features of our approach.

Whereas in \cite{CU18} the groups $G_n$ were assumed to be weakly amenable and bi-exact in order to apply the relative strong solidity theorem of \cite[Theorem 1.4]{PV12}, in our setting only the acting groups $B_n$ are required to satisfy these two properties. This essential distinction manifests itself in a key step of the proof in Section~\ref{sec. vNa main} (corresponding to \cite[Section 3]{CU18}): there, we need an algebra to intertwine into
$$
\cL(G)\,\bar{\otimes}\,\cL\!\left(\bigoplus_{n\neq j} G_n\right)
$$
inside $\cL(G)\,\bar{\otimes}\,\cL(G)$. However, the relative strong solidity result only yields an intertwining into $$
\cL(G)\,\bar{\otimes}\,\cL\!\left(\big(\oplus_{n\neq j} G_n\big) \oplus A_j^{(I_j)}\right).$$ To overcome this difficulty, we exploit the fact that the $2$-cocycle with uniformly bounded support naturally gives rise to an array on $G_n$ into a weakly--$\ell^2$ representation that, in a precise sense, detects the support of the $2$-cocycle (see Definition~\ref{arraydef} and Theorem~\ref{supportarray}). We then combine this \emph{support array} with a proper array into a weakly--$\ell^2$ representation of $B_n$ (coming from bi-exactness of $B_n$) to construct arrays on groups in $\mathcal{WR}_b(A_n, B_n \curvearrowright I_n)$
(where the action $B_n\curvearrowright I_n$ has amenable stabilizers). These arrays enjoy the property that balls of finite radius lie inside a finite union of translates of elements in $A_n^{(I_n)}$ whose supports have uniformly bounded size (with the bound depending on the radius). A further novel ingredient of our approach is a method for tensoring infinitely many such arrays while maintaining control over finite-radius balls. We emphasize that while finite products of proper arrays had been constructed in \cite{CS11} to obtain relatively proper arrays, no infinite product construction was previously available. In our case, it is precisely the special structure of the groups in $\mathcal{WR}_b(A_n, B_n \curvearrowright I_n)$ that enables this construction. 

The second key role of the arrays constructed for groups in $\mathcal{WR}_b(A,B\ca I)$ comes in Theorem \ref{subrel} where a correspondence between the rate of convergence of the Gaussian deformation (as introduced in \cite{CS11}) and the tensor length deformation from \cite{Io06} is obtained.  Thus, by combining some of the methods from \cite{CU18} with new developments in the infinitesimal analysis of arrays, initiated in \cite{CS11, Io06}, we are able to show the following result, which parallels \cite[Theorem A]{CU18} and \cite[Theorem 6.2]{DD25}.

\begin{thm}\label{main1} For every $j\in \mathbb{N}$, let $A_j$ be nontrivial amenable group, $B_j$ an ICC subgroup of a hyperbolic group, and $B_j \ca I_j$ an action on a set $I_j$ with amenable stabilizers. Further, let $G_j \in \mathcal {WR}_b(A_j, B_j\ca I_j)$ be any groups with property (T), and let $G=\bigoplus_{j\in \mathbb{N}}G_j$. 

Suppose that $H$ is any group satisfying $\L(G)=\L(H)$. Then $H$ admits a direct sum decomposition $H=(\bigoplus_{j\in \mathbb{N}} H_j) \oplus A$, where $H_j$ is an ICC property (T) group for all $j\in \mathbb{N}$ and $A$ is an ICC amenable group. Moreover, there is a sequence $(t_j)_{j\in \mathbb{N}}\subset \mathbb R_+^*$ such that for every $k\in \mathbb{N}$ one can find a unitary $u_k\in \L(H)$ such that
\begin{equation}\label{intertwiningtheparts-main1}\begin{split}& \L(G_j)^{t_j}=u_k \L(H_j)u^*_k\quad \text{ for all } j\leqslant  k \text{; and }\\& \L(\oplus_{j> k} G_j)=u_k\L((\oplus_{j> k} H_j) \oplus A)u^*_k.\end{split}
\end{equation}
\end{thm}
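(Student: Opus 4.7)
The plan is to follow the deformation/rigidity blueprint of \cite{CU18,DD25} for infinite product $W^*$-rigidity using the comultiplication device, while replacing the bi-exactness/weak amenability input (needed there to invoke the Popa--Vaes strong solidity of \cite{PV12}) with the array technology for $\mathcal{WR}_b$ groups outlined in Section 2.2. Identify $\L(G)$ with $\L(H)$ via the given equality and consider the comultiplication $\Delta\colon \L(H)\to \L(H)\bar\otimes \L(H)$, $\Delta(u_h)=u_h\otimes u_h$, transported to the $G$ side. The heart of the argument is to establish, for every $k$, a Popa intertwining of $\Delta(\L(H_k))$ into $\L(G)\bar\otimes \L\!\big(\bigoplus_{n\ne j}G_n\big)$ inside $\L(G)\bar\otimes \L(G)$ for some $j=j(k)$. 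Once such intertwinings are in hand, the product-reconstruction machinery of \cite{CdSS15,CU18} produces the amplifications $t_j$ (independent of $k$) and, for each $k$, a unitary $u_k\in\L(H)$ realizing $\L(G_j)^{t_j}=u_k\L(H_j)u_k^*$ for all $j\leq k$ together with the corresponding identification of residues. Property (T) is transferred from $G_j$ to $H_j$ by \cite{CJ85}, and the residual summand $A$ must be amenable because no further property (T) direct summand can be peeled off.

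The crucial new step -- the substitute for Popa--Vaes -- unfolds in two stages. First, for each $G_n\in\mathcal{WR}_b(A_n,B_n\ca I_n)$, I would combine the \emph{support array} of Theorem \ref{supportarray} (produced from the uniformly bounded $2$-cocycle characterizing $\mathcal{WR}_b$) with a proper array on $B_n$ into a weakly-$\ell^2$ representation, which is available because $B_n$ embeds into a hyperbolic group and $B_n\ca I_n$ has amenable stabilizers. This yields an array on $G_n$ whose finite-radius balls sit inside finitely many translates of elements of $\bigoplus_{i\in I_n}A_{n,i}$ having uniformly bounded supports. Second, I would assemble these into an infinite tensor-product array on $G$; this operation has no prior analogue, and its convergence together with the finite-radius control rely crucially on the uniform $\mathcal{WR}_b$ support bound.

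With the array in place, I would invoke Theorem \ref{subrel} to synchronize the rate of the Chifan--Sinclair Gaussian deformation of $\L(G)$ with that of Ioana's tensor-length deformation of $\L(G)\bar\otimes \L(G)$ along $\Delta$. Property (T) of $H_k$ (inherited from $G$) precludes uniform convergence of the Gaussian deformation on $\Delta(\L(H_k))$, so the standard deformation/rigidity dichotomy forces the desired relative amenability -- and hence the target intertwining -- of $\Delta(\L(H_k))$ into $\L(G)\bar\otimes \L(\bigoplus_{n\ne j}G_n)$. The \emph{main obstacle} is precisely the array construction: building the support arrays on each $G_n$ and, above all, assembling them into an infinite tensor-product array that preserves bounded-support control on finite-radius balls. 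Without the bounded-cocycle condition of $\mathcal{WR}_b$ this assembly would be impossible, and the entire comultiplication-based pipeline would collapse back to the finite-product setting covered by \cite{CU18,DD25}.
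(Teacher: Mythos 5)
Your proposal correctly identifies the high-level blueprint the paper uses --- comultiplication along $H$, property~(T) of the $G_j$'s for an initial finite intertwining, replacement of bi-exactness/weak amenability by the support arrays of Theorem~\ref{supportarray}, assembly into an infinite tensor-product array (Proposition~\ref{relsolarray}), the subordination between the Gaussian and tensor-length deformations (Theorem~\ref{subrel} and Proposition~\ref{ineqballsioanadef}), followed by the \cite{CU18} reconstruction machinery --- and you correctly flag the infinite tensor-product array as the genuinely new ingredient. That part of your outline matches the paper in spirit.

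However, the heart of the argument is misstated in a way that would not allow you to execute the proof. You propose to intertwine $\Delta(\L(H_k))$ into $\L(G)\,\bar{\otimes}\,\L(\bigoplus_{n\ne j}G_n)$. But the direct-summand subgroups $H_k$ are the \emph{output} of the theorem, not its input: at the point where the intertwining must be established you have no decomposition of $H$ at all, only $\L(G)=\L(H)$ and the $G$-side splitting. The paper's actual intertwining (Theorem~\ref{intertwining}) is stated entirely on the $G$-side: for each $i$ there is $k$ with $\Delta(\L(\bigoplus_{n\ne i}G_n))\prec_{\L(G)\bar\otimes\L(G)}\L(G)\,\bar\otimes\,\L(\bigoplus_{n\ne k}G_n)$. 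It is this statement that feeds into Theorem~3.6 of \cite{CU18} (applied here as Theorem~\ref{TwoFoldedProduct}) and then, by iteration, yields the decomposition $H=(\bigoplus_j H_j)\oplus A$; the $H_j$ and $A$ only appear afterwards.

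The deformation/rigidity logic is also inverted. You write that property~(T) ``precludes uniform convergence of the Gaussian deformation \dots so the standard dichotomy forces the desired relative amenability --- and hence the target intertwining.'' In fact the argument runs the other way: property~(T) of each $G_i$ gives, for free, a finite-set intertwining $\Delta(\L(G_i))\prec \L(G)\bar\otimes\cM_F$; non-amenability of $\Delta(\L(G_i))$ together with the spectral-gap property of the array deformation (Proposition~\ref{propo:Gaussian_properties}, part~3) then \emph{produces} uniform convergence of $V_t^{\ma}$ on $\Delta(\L(\bigoplus_{n\ne i}G_n))$; and the asymptotic bimodularity together with the subordination to Ioana's tensor-length deformation, an averaging argument in $L^2$ of the direct-sum auxiliary algebra, Ioana's Lemma~\ref{ioanacontrol}, and the quasi-normalizer control of Theorem~\ref{quasinormcontrol2} convert that uniform convergence into the intertwining into $\L(G)\bar\otimes\cM_{\hat k}$. ``Relative amenability'' is not the mechanism here; it is the bad alternative that one must rule out, and it is ruled out precisely by the non-amenability of $\Delta(\L(G_i))$. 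Fixing these two points --- working with $\Delta$ of the $G$-side complement algebras, and running the convergence argument in the right direction --- is what closes the gap between your outline and the paper's actual proof.
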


\noindent Further combining Theorem \ref{main1} with the W$^*$-superrigidity results for the property (T) wreath-like product groups with abelian base from \cite[Theorem 1.3]{CIOS21} we obtain many examples of groups satisfying the aforementioned McDuff rigidity phenomenon.

\begin{cor}\label{main2} Under the assumptions of Theorem \ref{main1}, suppose that all groups $A_j$ are abelian. Then for any group $H$ such that $\L(G)\cong \L(H)$, one can find an ICC amenable group A such that $H \cong G \times A$.
\end{cor}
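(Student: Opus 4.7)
The plan is to deduce Corollary \ref{main2} by combining the infinite-product rigidity of Theorem \ref{main1} with the W$^*$-superrigidity for property (T) wreath-like products with abelian base established in \cite[Theorem 1.3]{CIOS21}.

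First, I apply Theorem \ref{main1} to the hypothesis $\L(G) \cong \L(H)$. This produces a decomposition $H = \bigl(\bigoplus_{j\in\mathbb{N}} H_j\bigr) \oplus A$ in which $A$ is an ICC amenable group and every $H_j$ is an ICC property (T) group, together with parameters $t_j \in \mathbb{R}_+^*$ and unitaries $u_k \in \L(H)$ realizing the intertwinings \eqref{intertwiningtheparts-main1}. In particular, for every $j$ one obtains a $*$-isomorphism $\L(H_j) \cong \L(G_j)^{t_j}$.

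Next, since $A_j$ is now assumed abelian, $B_j$ is an ICC subgroup of a hyperbolic group, the action $B_j \ca I_j$ has amenable stabilizers, and $G_j \in \mathcal{WR}_b(A_j, B_j \ca I_j) \subseteq \mathcal{WR}(A_j, B_j \ca I_j)$ has property (T), each $G_j$ lies in the class of property (T) wreath-like products with abelian base to which \cite[Theorem 1.3]{CIOS21} applies. Applying that W$^*$-superrigidity theorem to the isomorphism $\L(H_j) \cong \L(G_j)^{t_j}$ forces $t_j = 1$ and produces an honest group isomorphism $H_j \cong G_j$ for every $j \in \mathbb{N}$. Assembling these isomorphisms yields $\bigoplus_j H_j \cong \bigoplus_j G_j = G$, and hence
$$H = \Bigl(\bigoplus_{j\in\mathbb{N}} H_j\Bigr) \oplus A \cong G \oplus A = G \times A,$$
with $A$ an ICC amenable group as required.

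The only point demanding care — rather than constituting an analytic obstacle — is the verification that each $G_j$ appearing in Theorem \ref{main1} indeed satisfies the exact hypotheses of \cite[Theorem 1.3]{CIOS21}, so that full W$^*$-superrigidity (including the triviality of the fundamental group, which is what forces $t_j = 1$) is available. Once this is confirmed, the corollary follows immediately from the two theorems with no further analytic input.
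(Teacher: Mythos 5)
Your proposal is correct and follows essentially the same route as the paper's proof: apply Theorem~\ref{main1} (via Theorem~\ref{infprod}) to decompose $H = (\oplus_j H_j)\oplus A$ with $\L(H_j)\cong\L(G_j)^{t_j}$, then invoke the W$^*$-superrigidity of property~(T) wreath-like products with abelian base from \cite[Theorem~1.3]{CIOS21} to force $t_j=1$ and $H_j\cong G_j$, giving $H\cong G\times A$. The paper performs this same two-step combination without further elaboration, so the verification you flag at the end is indeed the only point to check and poses no difficulty.
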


Our main result, Theorem \ref{Thm:main}, is essentially a combination of Theorem \ref{Thm:main-gt}, Corollary \ref{main2}, and elementary additional arguments. 

\section{Group-theoretic preliminaries}\label{sec. preliminary}

\subsection{Notation}

Recall that, for a subset $S$ of a group $G$, we denote by $\ll S \rr$ the normal subgroup of $G$ generated by $S$. If $T$ is another subset of $G$, we let
\[[S,T]=\{sts^{-1}t^{-1}\mid s\in S, t\in T\}.\]

If $H\leqslant G$ is a subgroup, we denote the centralizer of $H$ in $G$ by $C_G(H)$. For an action $G\curvearrowright I$ and $i\in I$, we denote by $\stab_G(i)$ the stabilizer of $i$ in $G$.

\subsection{Van Kampen diagrams}\label{subsec.vk}
We will prove boundedness of certain cocycles by constructing specific van Kampen diagrams, which will be recalled in this subsection. Our main reference is \cite{ol2012geometry}.

For the rest of the current section, let $G$ be a group. Fix a presentation
\begin{equation}\label{eq. pre G}
G=\langle\al{A}\mid\al{R}\rangle,
\end{equation}
where $\al{R}$ is a symmetric set of words in $\al{A}$ (i.e., for every $w\in\al{R}$, every cyclic shift of $w$ or $w^{-1}$ belongs to $\al{R}$).

\begin{defn}\label{def. vk diagram}
    A \textit{van Kampen diagram} $\Delta$ over \eqref{eq. pre G} is a finite oriented connected planar $2$-complex with labels on its oriented edges such that the following items \ref{item. diagram 1}, \ref{item. diagram 2}, \ref{item. diagram 3} and \ref{item. diagram 4} hold.
\end{defn}

\begin{enumerate}[label=(\alph*)]
\item\label{item. diagram 1} Each oriented edge of $\Delta$ is labeled by a letter in $\al{A}\sqcup\{1\}$.
\item\label{item. diagram 2} If an oriented edge $e$ of $\Delta$ has label $a\in\al{A}\sqcup\{1\}$, then $e^{-1}$ has label $a^{-1}$, where $e^{-1}$ (resp. $a^{-1}$) is the inverse of $e$ (resp. $a$).
\end{enumerate}

Here, we adopt the convention $1=1^{-1}$.

Let $p=e_1\cdots e_k$ be a path in a van Kampen diagram over \eqref{eq. pre G}. The initial (resp. terminal) vertex of $p$ is denoted as $p^-$ (resp. $p^+$). The \textit{label} of $p$, denoted as $\lab(p)$, is obtained by concatenating the labels of the edges of $p$ and then omit all $1$. So $\lab(p)\in \al A^\ast$. The length of $p$ is $\ell(p)=\|\lab(p)\|$.

Edges labeled by letters from $\al{A}$ are called \textit{essential edges}, while edges labeled by the letter $1$ are called \textit{non-essential edges}. A \textit{face} of $\Delta$ is a $2$-cell of $\Delta$. Let $\Pi$ be a face of $\Delta$, the boundary of $\Pi$ is denoted as $\partial\Pi$. Likewise, the boundary of $\Delta$ is denoted by $\partial\Delta$. Note that if we choose a base point for $\partial\Pi$ (resp. $\partial\Delta$), then $\partial\Pi$ (resp. $\partial\Delta$) becomes a path in $\Delta$. For a word $w$ over $\al{A}$, we use the notation $\lab(\partial\Pi)\equiv w$ (resp. $\lab(\partial \Delta)\equiv w$) to indicate that one can pick a base point to turn $\partial\Pi$ (resp. $\partial\Delta$) into a path $p$ so that $\lab(p)\equiv w$.

\begin{enumerate}[label=(\alph*)]
\setcounter{enumi}{2}
\item\label{item. diagram 3} For every face $\Pi$ of a van Kampen diagram $\Delta$ over the presentation $\eqref{eq. pre G}$, at least one of the following conditions (c1) and (c2) holds:
\item[(c1)] $\lab(\partial\Pi)$ is equal to an element of $\al{R}$.
\item[(c2)] $\partial\Pi$ either consists entirely of non-essential edges or consists of exactly two essential edges with mutually inverse labels (in addition to non-essential edges).
\end{enumerate}

A face satisfying (c2) is called a \textit{non-essential face}. All other faces are called \textit{essential faces}. The process of adding non-essential faces to a van Kampen diagram is called a \textit{$0$-refinement}. The interested readers are referred to \cite{ol2012geometry} for a formal discussion. 

\begin{enumerate}[label=(\alph*)]
\setcounter{enumi}{3}
\item\label{item. diagram 4} Each face is homeomorphic to a disc, i.e., its boundary has no self-intersection.
\end{enumerate}

The well-known van Kampen lemma states that a word $w$ over $\al{A}$ represents $1$ in $G$ if and only if there is a van Kampen diagram $\Delta$ over \eqref{eq. pre G} such that $\Delta$ is homeomorphic to a disc (such diagrams are called \textit{disk diagrams}), and that $\lab(\partial\Delta)\equiv w$.

\begin{rem}\label{mapofvk}
For a disk diagram $\Delta$ and a vertex $O$ of $\Delta$, there exists a unique continuous map $\mu$ from the $1$-skeleton of $\Delta$ to $\mathrm{Cay}(G,\al{A})$ sending $O$ to the identity vertex, preserving the labels of the essential edges and collapsing non-essential edges to vertices.
\end{rem}

\subsection{Hyperbolically embedded subgroups}

We begin by recalling some definitions and notation used throughout the rest of the paper.  

By a \emph{generating alphabet} $\al A$ of a group $G$, we mean an abstract set given together with a map $\al A\to G$ whose image generates $G$; to simplify our notation, we do not distinguish between elements of $\al A$ and their images in $G$ whenever no confusion is possible. Moreover, we will denote by $\al A^\ast$ the free monoid on $\al A$, i.e., the set of all words (including the empty word) over the alphabet $\al A$.

By the \emph{Cayley graph} of $G$ with respect to a generating alphabet $\al A$, denoted $\mathrm{Cay}(G, \al A)$, we mean a graph with the vertex set $G$ and the set of edges defined as follows. For each $a\in \al A$ and each $g\in G$, there is an oriented edge $e$ going from $g$ to $ga$ in $\mathrm{Cay}(G, \al A)$ and labelled by $a$. 

Given a combinatorial path $p$ of $\mathrm{Cay}(G,\al A)$, we will denote the combinatorial inverse of $p$ as $p^{-1}$, and let $p^-$ (resp. $p^+$) be the initial (resp. terminal) vertex of $p$. Note that, being vertices of $\mathrm{Cay}(G,\al A)$, $p^+$ and $p^-$ are in fact elements of $G$, and their inverses are denoted by $(p^+)^{-1}$ and $(p^-)^{-1}$, respectively (not to be confused with $p^{-1}$). The \textit{label} of $p$, denoted $\lab(p)$, is obtained by concatenating the labels of the edges in $p$. The \textit{length} of $p$, denoted $\ell(p)$, is the number of letters in $\lab(p)$. If $w$ is a word over $\al{A}$, then $\|w\|$ will denote the length of $w$, i.e., the number of letters in $w$. Therefore, $\ell(p)=\|\lab(p)\|$. Moreover, if $v$ is another word over $\al{A}$ (resp. $g$ is an element of $G$), then we write $w\equiv v$ to indicate a letter-by-letter equality, and $w=_G v$ (resp. $w=_G g$) to indicate that $w$ and $v$ represent the same element in $G$ (resp. $w$ represents $g$ in $G$). For a subset $S\subset G$, we will write $w\in_G S$ (resp. $w\not\in_G S$) to indicate that $w$ represents an element of $G$ that is in (resp. not in) $S$. In contrast, for a subset $T\subset \al A^\ast$, we will write $w\in T$ to indicate that $w$ is an element of $T$.

For the rest of the current section, let $H\leqslant G$ be a subgroup, and let $X\subseteq G$ be a subset such that $X\cup H$ generates $G$. We think of $X$ and the subgroup $H$ as abstract sets and consider  the alphabet 
\begin{equation}\label{eq. generating alphabet}
\al A= X \sqcup H.
\end{equation}

We think of the Cayley graph $\mathrm{Cay}(H, H)$ as the subgraph of $\mathrm{Cay}(G,\al{A)}$.

\begin{defn}\label{def. relative metric}
For any $g,h\in H$, let $\widehat{\mathrm{d}} (g,h)$ be the length of a shortest path in $\mathrm{Cay}(G,\al{A}) $ that connects $g$ to $h$ and contains no edges of $\mathrm{Cay}(H, H)$. If no such path exists, we set $\widehat{\mathrm{d}} (g,h)=\infty $. Further, if $p$ is a path between two vertices $g$ and $gh$, we define $\widehat{\ell}(p)=\widehat{\mathrm{d}}(1,h)$.
\end{defn}

Clearly $\widehat{\mathrm{d}}$ satisfies the triangle inequality (with addition extended to $[0, \infty]$ in the natural way). The metric $\dh$ is called the \textit{relative metric} with respect to $X$.

We are now ready to define hyperbolically embedded subgroups introduced in \cite{DGO11}.
 
\begin{defn}\label{def. he}
The subgroup $H$ is \emph{hyperbolically embedded  in $G$ with respect to a subset $X\subseteq G$}, denoted $H \hookrightarrow_h (G,X)$, if the group $G$ is generated by the alphabet $\al A$ defined by \eqref{eq. generating alphabet} and the following conditions hold.
\begin{enumerate}
\item[(a)] The Cayley graph $\mathrm{Cay}(G,\al{A}) $ is a hyperbolic metric space.
\item[(b)] For each $n\in \mathbb N$, the set $\left\{ h\in H\mid \widehat{\mathrm{d}}(1,h)\leqslant n\right\}$ is finite.
\end{enumerate}
\end{defn}

\subsection{Isolated components and geodesic polygons}\label{sec. isolated components}

\begin{defn}\label{def. component}
Let $p$ be a path in $\mathrm{Cay}(G,\al{A})$. An $H$\textit{-subpath} $q$ of $p$ is a subpath of $p$ such that $\lab(q)\in H^\ast$ (if $p$ is a \textit{loop}, i.e., $p^-=p^+$, we allow $q$ to be a subpath of some cyclic shift of $p$). An $H$-subpath $q$ of $p$ is called an $H$-\textit{component} if $q$ is not properly contained in any other $H$-subpath. 

We say that two $H$-components $q_1,q_2$ of $p$ are \textit{connected} if there exists a path $c$ in $\mathrm{Cay}(G,\al{A})$ such that $c$ connects a vertex of $q_1$ to a vertex of $q_2$, and that $\lab(c)$ is a letter from $H$. An $H$-component $q$ of $p$ is called \textit{isolated} if it is not connected to any other $H$-component of $p$.
\end{defn}

\begin{defn}
Suppose that $p$ is a path in $\mathrm{Cay}(G,\al{A})$ that is not a loop and $q_1,q_2$ are two $H$-components of $p$. We write $$q_1<_p q_2$$ if $p$ first visits $q_1$ and then $q_2$.
\end{defn}

The lemma below follows immediately from Definition \ref{def. component} and will be used routinely in our paper.

\begin{lem}\label{lem. component and geodesic}
    If $p$ is a geodesic in $\mathrm{Cay}(G, \al A)$, then any $H$-component of $p$ is isolated in $p$ and consists of a single edge labeled by an element of  $H\smallsetminus\{1\}$.
\end{lem}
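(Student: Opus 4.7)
The plan is to derive both conclusions from one observation: because the alphabet $\al A=X\sqcup H$ contains every element of $H$ as a letter, any two consecutive $H$-labelled edges in $\mathrm{Cay}(G,\al A)$ can be replaced by a single $H$-labelled edge (or deleted entirely when the product of their labels equals $1$). Failure of either conclusion will therefore expose a local $H$-shortening of $p$, contradicting the fact that $p$ is a geodesic.

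First I would settle the single-edge part. If some $H$-component of $p$ contained two consecutive $H$-labelled edges with labels $h_1,h_2\in H$, the element $h_1h_2\in H$ is itself a letter of $\al A$, so these two edges can be replaced by one edge (or no edge if $h_1h_2=1$), yielding a strictly shorter path with the same endpoints as $p$. Hence every $H$-component of $p$ is a single edge, and its label cannot be $1$, since a $1$-labelled edge is a loop whose removal would likewise shorten $p$.

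Next I would handle the isolation claim. Assume for contradiction that $p$ contains two distinct $H$-components $q_1<_p q_2$ that are connected by an $H$-labelled edge $c$ with label $h\in H$, and decompose $p=p_1\,q_1\,p_2\,q_2\,p_3$. By the previous step, each $q_i$ is a single edge from $a_i$ to $b_i$ with label $h_i\in H\smallsetminus\{1\}$, and $c$ runs from some $x\in\{a_1,b_1\}$ to $y=xh\in\{a_2,b_2\}$. Replace the sub-segment of $p$ from $a_1$ to $b_2$ by the shortcut consisting of $q_1$ (if $x=b_1$), then $c$, then $q_2$ (if $y=a_2$). The middle of this new path has length at most $3$, whereas the middle of the original has length $\ell(p_2)+2$. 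Maximality of $q_1$ and $q_2$ forces $\ell(p_2)\geqslant 1$, and geodesicity of $p$ then forces $\ell(p_2)=1$ together with $x=b_1$ and $y=a_2$.

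The main obstacle is precisely this borderline case, in which the first shortcut only preserves the length of $p$. I would finish it by iterating the single-edge reduction from the first step: the rewritten middle $q_1\,c\,q_2$ is a run of three consecutive $H$-labelled edges, so it collapses to a single $H$-edge labelled $h_1hh_2$ (or disappears altogether if $h_1hh_2=1$). The resulting path is strictly shorter than $p$, contradicting geodesicity and completing the argument.
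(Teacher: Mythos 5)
Your proof is correct; the paper gives no proof of this lemma, remarking only that it ``follows immediately from Definition \ref{def. component},'' so your write-up is supplying details the authors considered self-evident, and it does so accurately.

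For the isolation claim, a more direct route avoids both the four-way case split on $(x,y)$ and the final iterated collapse. Once $q_1$ and $q_2$ are connected by an $H$-labelled edge, all four vertices $a_1,b_1,a_2,b_2$ lie in a single left $H$-coset. Since a geodesic never revisits a vertex, $a_1\neq b_2$, so $(a_1)^{-1}b_2\in H\smallsetminus\{1\}$ labels a single edge $e$ from $a_1$ to $b_2$. Replacing the subpath $q_1p_2q_2$ of $p$ --- which has length $\ell(p_2)+2\geqslant 3$ by the maximality observation you already make --- with $e$ strictly shortens $p$, giving the contradiction at once. Your version, which first constructs the length-$3$ bypass $q_1cq_2$ in the borderline case and then collapses three consecutive $H$-edges into one, reaches the same conclusion with one extra bookkeeping step. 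Both are valid, and I see no gap in your reasoning.
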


The next proposition was first proved for relatively hyperbolic groups in \cite{Osi07},  and then generalized in \cite{DGO11}.

\begin{prop}[{\cite[Proposition 4.14]{DGO11}}]\label{prop. geodesic polygon}
Suppose that $\mathrm{Cay}(G,\al A)$ is a hyperbolic metric space. Then there exists a constant $D>0$ satisfying the following property. Let $p$ be an $n$-gon in $\mathrm{Cay}(G,\al{A})$ with geodesic sides $p_1,...,p_n$. Let $I\subset\{1,2,...,n\}$ be a subset such that $p_i$ is an isolated $H$-component of $p$ for all $i\in I$. Then
\[\sum_{i\in I}\widehat{\ell}(p_i)\leqslant Dn.\]
\end{prop}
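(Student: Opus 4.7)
The plan is to induct on $n$, with the base case $n=3$ handled directly using the $\delta$-thin triangle property of $\mathrm{Cay}(G,\al A)$. By \cref{lem. component and geodesic}, each $p_i$ with $i\in I$ is a single edge labeled by an element $h_i\in H\smallsetminus\{1\}$, so the task reduces to producing, for each such $i$, a path from $p_i^-$ to $p_i^+$ that avoids $H$-edges with both endpoints in the coset $p_i^-H$, and then bounding the sum of lengths of these paths by $Dn$.

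For the base case $n=3$, let $p_1$ be an isolated $H$-component of a geodesic triangle $p_1p_2p_3$. Using $\delta$-thinness together with $\ell(p_1)=1$, the sides $p_2$ and $p_3$ must fellow-travel within distance $\delta$ of each other in a small neighborhood of $p_1$, so I can locate vertices $x\in p_2$ and $y\in p_3$ with $d(p_1^+,x),d(p_1^-,y)\leq\delta$. Concatenating the short bridges $p_1^-\to y$ and $x\to p_1^+$ with the segment of $p_3\cdot p_2$ from $y$ to $x$ yields a detour whose length is controlled by $\delta$. The isolation of $p_1$ is essential here: it rules out the possibility of any $H$-labeled edge along the detour having both endpoints in $p_1^-H$, since such an edge would yield an $H$-connection from $p_1$ to an $H$-component of another side, contradicting isolation.

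For the inductive step $n\geq 4$, I would pick two non-adjacent vertices of the polygon and join them by a geodesic diagonal $d$, splitting the $n$-gon into two geodesic sub-polygons with strictly fewer sides. Every isolated $H$-component of the original polygon lies in exactly one sub-polygon; the only way its isolation could be spoiled is by an $H$-component of $d$ itself creating a new $H$-connection to it, an event that happens only finitely often per diagonal and is absorbed into a $+O(1)$ overhead.

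The main obstacle I anticipate lies precisely in the bookkeeping of constants. A naive application of $(n-2)\delta$-thinness would bound each $\widehat\ell(p_i)$ individually by $O(n\delta)$, which summed over $|I|\leq n$ gives only the quadratic estimate $O(n^2)$. Getting the sharper linear bound $Dn$ requires either an amortization across components (so that isolated components share portions of each other's detours) or a balanced choice of diagonals at each inductive step so that the accumulated constants remain linear. Controlling this balance via the interplay between the hyperbolicity of $\mathrm{Cay}(G,\al A)$ and the isolation hypothesis is the technical heart of the argument, and the one place where I expect a direct attack to be delicate.
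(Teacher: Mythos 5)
Your proposal is an honest sketch, but it does not constitute a proof, and you yourself correctly identify where it breaks down. Let me make the gaps precise.

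First, the base case $n=3$ has a real hole in the bridging argument. You claim that isolation of $p_1$ "rules out the possibility of any $H$-labeled edge along the detour having both endpoints in $p_1^- H$, since such an edge would yield an $H$-connection from $p_1$ to an $H$-component of another side." This is correct for the portion of the detour that runs along $p_2$ or $p_3$, because an $H$-edge on a geodesic side of the polygon \emph{is} an $H$-component of the polygon, and its being connected to $p_1$ contradicts isolation. But it is \emph{not} correct for the two bridges $p_1^-\to y$ and $x\to p_1^+$: these are auxiliary paths, not sides of the polygon, so an $H$-edge on a bridge with both endpoints in $p_1^- H$ does not produce an $H$-component of the polygon and hence does not contradict isolation of $p_1$. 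Without controlling such edges, the length of the detour does not bound $\widehat\ell(p_1)$ — the bound on $\widehat{\mathrm{d}}$ requires a path \emph{avoiding} $\mathrm{Cay}(H,H)$-edges entirely, and trying to estimate the $\widehat{\mathrm{d}}$-contribution of a bad $H$-edge on the bridge lands you back in the same problem you started with.

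Second, in the inductive step, adding a geodesic diagonal $d$ can spoil isolation of the original $p_i$'s whenever $d$ has an $H$-component in the same left $H$-coset as some $p_i$. You say this "happens only finitely often per diagonal and is absorbed into a $+O(1)$ overhead," but a single diagonal may have many $H$-components and can a priori spoil an unbounded number of the $p_i$'s; more importantly, each spoiled $p_i$ is one for which you now have no bound at all, not a bounded error. And as you note, even if spoiling could be controlled, a naive split yields a quadratic rather than linear bound. You flag the required amortization as "the technical heart of the argument" and leave it unresolved; that unresolved step \emph{is} the content of \cite[Proposition~4.14]{DGO11}. The actual proof there does not proceed by induction on $n$ with diagonals; it combines a planar graph lemma with a careful analysis of how fellow-traveling interacts with components, and the linear bound is extracted from a global accounting rather than inductive splitting. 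Your sketch correctly diagnoses the obstacles but does not overcome either one.
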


To stress the fundamental role played by this proposition in our paper, we introduce the following. 

\begin{defn}\label{Def:pc}
We say that $D\geqslant 0$ is a \textit{polygonal constant} for the Cayley graph $Cay (G, \al A)$ if it satisfies the conclusion of  Proposition \ref{prop. geodesic polygon}.
\end{defn}

\subsection{Separating cosets}\label{sec. separating coset}

We recall the notion and basic properties of separating cosets in this subsection. Our main reference is \cite{hull2013induced}. For the rest of this section, we will let $D$ be a polygonal constant given by \Cref{prop. geodesic polygon}, and fix $C>6D$.

\begin{defn}
    We say that a path $p$ in $\mathrm{Cay}(G,\al A)$ \textit{penetrates} an $H$-coset $gH$ if $p$ has an $H$-component $q$ whose endpoints lie in $gH$. If in addition, $\widehat\ell(q)>C$, then we say $p$ \textit{essentially penetrates} $gH$.

    If $p$ is a path that every left $H$-coset at most once (for example, this is the case if $p$ is a geodesic, by \Cref{lem. component and geodesic}), we will say that $q^-$ (resp. $q^+$) is the vertex where $p$ \textit{enters} (resp. exits) $gH$.
\end{defn}

\begin{defn}\label{def. essentially penetrate}
    For $f, g\in G$, let $S'(f,g)$ be the set of left $H$-cosets $kH$ such that there exists a geodesic $p$ in $\mathrm{Cay}(G,\al A)$ such that $p^-=f,p^+=g$ and $p$ essentially penetrates $kH$. Let also $S(f,g)=S'(f,g)\cup fH\cup gH$.
\end{defn}

\begin{rem}
    For technical reasons, our definition of $S(f,g)$ is slightly different from that of \cite{hull2013induced}. But the following results still hold true.
\end{rem}

\begin{lem}\label{lem. basic of separating cosets}
    \begin{enumerate}[label=(\roman*)]
        \item\label{item. keep order of separating cosets} (\cite[Lemma 3.7]{hull2013induced}) For $f, g\in G$, there is a linear order $<_{f,g}$ on $S(f,g)$ such that the following hold. Let $k_1H<_{f,g}k_2H\in S(f,g)$. Then for every geodesic $p$ in $\mathrm{Cay}(G,\al A)$ with $p^-=f,p^+=g$, we have that $p$ penetrates both $k_1H$ and $k_2H$, and $k_1H<_p k_2H$.
        
        \item\label{item. separating cosets flip} (\cite[Lemma 3.2 (a)]{hull2013induced}) $S(f,g)=S(g,f)$. Moreover, for $k_1H\neq k_2H\in S(f,g)$, we have $k_1H<_{f,g} k_2H$ if and only if $k_1H>_{g,f} k_2H$.
        \item\label{item. separating cosets equivariant} (\cite[Lemma 3.3 (b)]{hull2013induced}) $S(kf,kg)=\{kxH\mid xH\in S(f,g)\}$. Moreover, $k_1H<_{f,g} k_2H$ if and only if $kk_1H<_{kf,kg} kk_2H$

        \item\label{item. 2 exception} (\cite[Lemma 3.9]{hull2013induced}) Let $f,g,k\in G$. Then $S(f,k)$ decomposes as

        \[S(f,k)=S_1\sqcup S_2\sqcup S_3,\]
        where $S_1\subset S(f,g)\smallsetminus S(g,k),S_3\subset S(g,k)\smallsetminus S(f,g)$ and $|S_2|\leqslant 2$.

        \item\label{item. decomposition and order} Under the setting of item \ref{item. 2 exception}, $S_1$ (resp. $S_3$) is an initial (resp. a terminal) segment of $<_{f,k}$.
    \end{enumerate}
\end{lem}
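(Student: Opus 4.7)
My plan is to prove that $S_1$ is an initial segment of $<_{f,k}$; that $S_3$ is a terminal segment will then follow by a symmetric argument, applied to the reversed pair $(k,f)$ via item~\ref{item. separating cosets flip}. I fix geodesics $\alpha\colon f\to g$, $\beta\colon g\to k$, $\gamma\colon f\to k$ forming a geodesic triangle, and recall from item~\ref{item. keep order of separating cosets} that $<_{f,k}$ coincides with the order in which $\gamma$ penetrates cosets of $S(f,k)$. Suppose $xH\in S_1$ and $yH\in S(f,k)$ satisfies $yH<_{f,k}xH$; I must show $yH\in S_1$. By item~\ref{item. 2 exception}, $yH\in S_1\cup S_2\cup S_3$, so it suffices to rule out $yH\in S_2\cup S_3$. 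Boundary cases such as $xH=fH$ are immediate (since $fH$ is the minimum of $<_{f,k}$), and the case $yH\in S_2$ reduces to the $S_3$ case because $|S_2|\le 2$ affords a bounded adjustment.

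Assume $yH\in S_3\subset S(g,k)\smallsetminus S(f,g)$. Then $\alpha$ essentially penetrates $xH$ via a single $H$-edge $p_x$ with $\widehat{\ell}(p_x)>C$ (by \Cref{lem. component and geodesic}), $\beta$ essentially penetrates $yH$ via an $H$-edge $r_y$ with $\widehat{\ell}(r_y)>C$, and $\gamma$ essentially penetrates both $xH$ and $yH$ via $H$-edges $q_x,q_y$ with $q_y$ strictly preceding $q_x$ along $\gamma$. I assemble these pieces into a closed polygonal path $P$ in $\mathrm{Cay}(G,\al A)$ by gluing suitable subpaths of $\alpha,\beta,\gamma$ together with two auxiliary $H$-edges $e_x\in xH$ and $e_y\in yH$, chosen to bridge the $\alpha$/$\gamma$ and $\beta$/$\gamma$ appearances of $xH$ and $yH$ respectively. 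The hypothesis $q_y<_\gamma q_x$ permits this assembly to close up with at most six sides (after merging collinear sub-geodesics of $\gamma$), with $e_x$ and $e_y$ appearing as $H$-edge sides: specifically, a sub-geodesic of $\alpha$ from $f$ to $p_x^-$, the edge $e_x$ from $p_x^-$ to $q_x^-$, a sub-geodesic of $\gamma$ (reversed) from $q_x^-$ back to $q_y^+$, the edge $e_y$, a sub-geodesic of $\beta$ from its endpoint back to $g$, and a sub-geodesic of $\alpha$ (reversed) from $g$ back to $f$.

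Applying Proposition~\ref{prop. geodesic polygon} to $P$ yields $\sum\widehat{\ell}(\text{isolated $H$-edge sides})\le 6D$. But $e_x$ and $e_y$ are isolated $H$-edge sides of $P$ in their respective cosets: the sub-geodesic sides have, by \Cref{lem. component and geodesic}, at most one $H$-component in each coset, and these occur outside the chosen subpaths (since $xH$'s $\alpha$-component $p_x$ and $\gamma$-component $q_x$ lie outside the chosen sub-geodesic portions, and similarly for $yH$); moreover, essential penetration of $xH$ by $\beta$ is forbidden by $xH\notin S(g,k)$, and of $yH$ by $\alpha$ by $yH\notin S(f,g)$. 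The triangle inequality for $\widehat{\mathrm{d}}$ then shows $\widehat{\ell}(e_x)\ge\widehat{\ell}(p_x)-\widehat{\ell}(q_x\text{-arc})>C-o(1)$ once the relevant sub-geodesic contributions are accounted for, and analogously for $e_y$. Summing gives a total exceeding $6D$, contradicting the polygonal bound and proving $yH\in S_1$.

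The main obstacle is ensuring isolation of the key $H$-edges in $P$: short, non-essential $H$-components of the sub-geodesic sides in $xH$ or $yH$ could in principle destroy isolation. These must be excluded either by tightening the polygon to avoid them, or by absorbing them as additional $H$-edge sides (increasing the side count by a bounded amount). The strict inequality $C>6D$, rather than mere non-strictness, is calibrated precisely to furnish the slack required to push through the contradiction in all such cases. Alternatively, one can extract item~\ref{item. decomposition and order} directly from the proof of item~\ref{item. 2 exception} in \cite{hull2013induced}, where the decomposition $S(f,k)=S_1\sqcup S_2\sqcup S_3$ is constructed by traversing $\gamma$ and sorting cosets according to whether their $\gamma$-entry vertex is paired with $\alpha$ or $\beta$ via a thin-triangle argument; the ordering $S_1$ first, $S_2$ middle, $S_3$ last emerges tautologically from that construction.
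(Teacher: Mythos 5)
Your polygon argument has several genuine gaps, and the paper's proof is actually the route you only gesture at in your final sentence. The most concrete problem with the polygon $P$: its sixth side is all of $\alpha$ reversed, which overlaps the first side on $[f,p_x^-]$ and, more seriously, contains the $xH$-component $p_x^{-1}$ of $\alpha^{-1}$; that component is connected to $e_x$, so $e_x$ is \emph{not} an isolated $H$-component of $P$ and Proposition~\ref{prop. geodesic polygon} yields no bound on $\widehat\ell(e_x)$. Even with the polygon repaired, you have no lower bound on $\widehat\ell(e_x)$: the edge $e_x$ joins $p_x^-$ to $q_x^-$, and $\widehat{\mathrm{d}}(p_x^-,q_x^-)$ bears no a priori relation to $\widehat\ell(p_x)=\widehat{\mathrm{d}}(p_x^-,p_x^+)>C$; your triangle-inequality estimate rests on a ``$q_x$-arc'' quantity that is nowhere controlled. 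Finally, the reduction of the $S_2$ case to the $S_3$ case ``because $|S_2|\leqslant 2$'' is an unsubstantiated assertion: a coset in $S_2$ need lie in neither $S(f,g)\smallsetminus S(g,k)$ nor $S(g,k)\smallsetminus S(f,g)$, so the essential penetrations you use in the $S_3$ case (of $xH$ by $\alpha$ and of $yH$ by $\beta$) are simply not available there.

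The paper's proof is, in spirit, your ``alternative'', but it is not tautological and it contains the real work. Assuming $xH\in S_1$, $yH\in S_2$ and (toward a contradiction) $yH<_{f,k}xH$, one picks a geodesic $p$ from $f$ to $k$ essentially penetrating $yH$ and a geodesic $q$ from $f$ to $g$. Let $p_1$ be the initial segment of $p$ ending at its entry vertex into $xH$ (which contains the $yH$-component of $p$), $q_1$ the final segment of $q$ starting at its exit vertex from $xH$, and $e$ the $H$-edge inside $xH$ joining these two vertices. A short length comparison --- using the other $H$-edge $t$ between the remaining pair of $xH$-vertices and the geodesicity of $p$ --- shows $\ell(p_1eq_1)\leqslant\ell(q)$, so $p_1eq_1$ is a geodesic from $f$ to $g$ that essentially penetrates $yH$, i.e.\ $yH\in S(f,g)$. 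Only after this does one appeal to the internals of Hull--Osin's construction in their Lemma~3.9 to conclude that $yH$ would have been placed in $S_1$, a contradiction. That surgery step, proving $yH\in S(f,g)$, is precisely the missing content in your sketch.
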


\begin{rem}
    In \cite[Lemma 3.7]{hull2013induced}, the order $<_{f,g}$ is only defined on $S'(f,g)$. We extend this order by letting $fH$ (resp. $gH$) be the least (resp. greatest) element in $S(f,g)$.
\end{rem}

\begin{proof}[Proof of \Cref{lem. basic of separating cosets}]
    It remains to prove item \ref{item. decomposition and order}. We prove that $xH<_{f,k}yH$ for any $xH\in S_1$ and $yH\in S_2$. The proof for the other cases are the same.

    So suppose, to the contrary, that $xH>_{f,k}yH$. We claim that $yH\in S(f,g)$: Let $p$ be a geodesic from $f$ to $k$ that essentially penetrates $yH$, let $p_1$ be the subpath of $p$ between $f$ and the vertex $e^-$ where $p$ enters $xH$, and let $q$ be a geodesic from $f$ to $g$. By item \ref{item. keep order of separating cosets}, $q$ penetrates $xH$. Let $q_1$ be the subpath of $q$ between $g$ and the vertex $e^+$ where $q$ exits $xH$. Let $e$ be an edge connecting $e^-$ and $e^+$. 
    
    We claim that the concatenation $p_1eq_1$ is a geodesic that connects $f,g$: Let $s_1$ be the subpath of $q$ between $f$ and $t^-$, the vertex where $q$ enters $xH$, let $p_2$ be the subpath of $p$ between $k$ and $t^+$, the vertex where $p$ exits $xH$, and let $t$ be an edge connecting $t^-$ and $t^+$. As $p$ is geodesic, we have $\ell(s_1)+\ell(t)+\ell(p_2)\geqslant \ell(p)$, which yields $\ell(s_1)\geqslant \ell(p_1)$. Thus, $\ell(p_1eq_1)\leqslant \ell(s_1)+1+\ell(q_1)=\ell(q)$.
    
    By construction, the geodesic $p_1eq_1$ essentially penetrates $yH$. As $xH>_{f,k}yH$, the proof of \cite[Lemma 3.9]{hull2013induced} would include $yH$ in $S_1$, a contradiction.
\end{proof}

\subsection{Diagram surgery}\label{sec. surgery}

Let $\al R\subset \al A^\ast$ be a symmetric subset so that $G$ is given by the presentation \eqref{eq. pre G} and $\al R$ contains all words over the subalphabet $H\subset \al A$ that represent $1$ in $G$. For the rest of the current section, let $N\lhd H$, let $\bg=G/\ll N \rr$, and let $\al S$ be the set consisting of all words over $H$ representing elements of $N$ in $G$. 

We recall the techniques for diagram surgery in this subsection. Our main reference is \cite{DGO11}. Let $\family{D}$ be the set of all van Kampen diagrams $\Delta$ over \eqref{eq. pre G} satisfying the following:

\begin{enumerate}[label=(D\arabic*)]
\item\label{item. D1} Topologically $\Delta$ is a disc with $k\geqslant 0$ holes. The boundary of $\Delta$ can be decomposed as $\partial \Delta=\pext\Delta\cup\pint\Delta$, where $\pext\Delta$ is the boundary of the disc, and $\pint\Delta$ consists of disjoint cycles (connected components) $c_1,...,c_k$ that bound the holes.
\item\label{item. D2} For $i=1,...,k$, $c_i$ is labeled by a word from $\al S$.
\item\label{item. D3} $\Delta$ is equipped with a \textit{cut system} that is a collection $T=\{t_1,...,t_k\}$ of disjoint paths (\textit{cuts}) $t_1,...,t_k$ in $\Delta$ without self-intersections such that, for $i=1,...,k$, the two endpoints of $t_i$ belong to different components of $\partial\Delta$, and that after cutting $\Delta$ along $t_i$ for all $i = 1,...,k$, one gets a disc van Kampen diagram $\widetilde{\Delta}$ over \eqref{eq. pre G}.
\end{enumerate}

\begin{lem}[{\cite[Lemma 3.17]{sun2018cohomologyi}}]\label{Lem:diag}
    A word $w\in \al A^\ast$ satisfies $w=_{\bg} 1$ if and only if there exists a van Kampen diagram $\Delta\in\family{D}$ such that $\lab(\pext\Delta)\equiv w$. 
\end{lem}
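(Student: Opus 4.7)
The plan is to prove both directions via disc-diagram surgery, using the cut system from axiom \ref{item. D3} as the bridge between van Kampen diagrams with holes and ordinary disc van Kampen diagrams over the presentation \eqref{eq. pre G}.

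For the backward implication, given $\Delta\in\family{D}$ with $\lab(\pext\Delta)\equiv w$, I would cut $\Delta$ open along the paths $t_1,\dots,t_k$ in the cut system to obtain a disc diagram $\widetilde\Delta$ over \eqref{eq. pre G}. Reading off $\lab(\partial\widetilde\Delta)$ from an appropriate base point yields a word of the form
\[
w\cdot \prod_{j=1}^k u_j\, r_{\sigma(j)}^{\pm 1}\, u_j^{-1},
\]
where $u_j\equiv\lab(t_j)$, $r_{\sigma(j)}\in\al S$ is the label of the hole component $c_{\sigma(j)}$, and $\sigma$ is some permutation that records the order in which the holes are encountered on $\partial\widetilde\Delta$. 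By the van Kampen lemma this word equals $1$ in $G$. Since every $r_j\in\al S$ represents an element of $N$ in $G$, rearranging gives $w\in_G\ll N\rr$, hence $w=_{\bg}1$.

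For the forward implication, I would reverse this construction. Assuming $w=_{\bg}1$, we have $w\in_G\ll N\rr$, so we may write $w=_G \prod_{j=1}^k f_j n_j f_j^{-1}$ with $f_j\in G$ and $n_j\in N$. Choose a word $u_j\in \al A^\ast$ representing $f_j$ and a word $r_j\in\al S$ representing $n_j$. The concatenation
\[
v\equiv w^{-1}\cdot (u_1 r_1 u_1^{-1})(u_2 r_2 u_2^{-1})\cdots(u_k r_k u_k^{-1})
\]
represents $1$ in $G$, so by the van Kampen lemma it bounds a disc diagram $\widetilde\Delta_0$ over \eqref{eq. pre G}. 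Its boundary decomposes as $\alpha\,\beta_1\gamma_1\delta_1\cdots\beta_k\gamma_k\delta_k$, whose labels match $w^{-1},u_j,r_j,u_j^{-1}$, respectively. For each $j$ I would identify $\beta_j$ with $\delta_j^{-1}$, collapsing the triple $\beta_j\gamma_j\delta_j$ into a ``lollipop'': a single cut $t_j$ (the image of the identified $\beta_j=\delta_j^{-1}$) terminating at a closed hole boundary $c_j$ labeled by $r_j\in\al S$. The resulting complex is topologically a disc with $k$ holes satisfying axioms \ref{item. D1}--\ref{item. D3}, its external boundary is labeled by $w$, and the $t_j$ constitute the cut system, so it lies in $\family{D}$.

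The main technical obstacle is the identification step in the forward direction: the subpaths $\beta_j$ and $\delta_j^{-1}$ carry the same label, but as they sit inside $\widetilde\Delta_0$ in the plane, their corresponding vertices do not in general coincide, so one cannot naively glue them without destroying planarity or the disc-like structure of the $2$-cells. I would circumvent this by applying a $0$-refinement in the sense of Section~\ref{subsec.vk}: attach thin bands of non-essential faces along each $\beta_j$ and $\delta_j$ before folding, so that after the identification every $2$-cell is still homeomorphic to a disc and axiom \ref{item. diagram 4} is preserved. This is a standard maneuver in the van Kampen calculus, and once set up carefully it produces a bona fide element of $\family{D}$ in the sense of \Cref{def. vk diagram}.
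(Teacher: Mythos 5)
Your strategy is the right one and mirrors how this lemma is established in \cite{sun2018cohomologyi}, but two points do not hold up as written. In the backward direction, $\lab(\partial\widetilde\Delta)$ is not in general of the form $w\cdot\prod_j u_j\,r_{\sigma(j)}^{\pm1}\,u_j^{-1}$: the cuts in (D3) may leave $\pext\Delta$ from distinct vertices, or run from one hole to another, so the lollipop words are interleaved inside $w$ (for example $w_1 u_1 r_1 u_1^{-1} w_2 u_2 r_2 u_2^{-1} w_3$ with $w\equiv w_1 w_2 w_3$; for a tree-shaped cut system the lollipops are even nested). One still gets $w\in_G\ll N\rr$ by pushing prefixes through, but this needs to be said. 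Alternatively, and more cleanly: cap each hole $c_i$ with a new $2$-cell labeled by $\lab(c_i)\in\al S$ to obtain a disc diagram over $\langle\al A\mid\al R\cup\al S\rangle$, a presentation of $\bg$, with boundary $w$, and invoke van Kampen's lemma for $\bg$ directly.

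In the forward direction, the obstruction you name is misplaced: identifying the boundary arcs $\beta_j$ and $\delta_j^{-1}$, which are separated on the disc's boundary only by $\gamma_j$, is automatically a planar operation (it turns a disc into an annulus with inner boundary $\gamma_j$), regardless of where the intermediate vertices happen to sit. The genuine issues are combinatorial. First, a face of $\widetilde\Delta_0$ touching both $\beta_j$ and $\delta_j$ can acquire a non-simple boundary after folding; this is the issue you flag, and bands of non-essential faces do fix it. Second, and this is what your proposal misses: after folding all $k$ pairs, the start of every cut $t_j$ is the single vertex at which $\alpha$ begins and ends, so the cuts $t_1,\dots,t_k$ are \emph{not disjoint}, contradicting the requirement in (D3). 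This is repaired by a further $0$-refinement --- for instance arrange the boundary of $\widetilde\Delta_0$ to read $w^{-1}\cdot 1\cdot u_1 r_1 u_1^{-1}\cdot 1\cdot u_2 r_2 u_2^{-1}\cdots$, with a non-essential letter separating consecutive lollipops, so that after folding the cut endpoints land on distinct vertices of $\pext\Delta$ --- but this is a different refinement than the bands along $\beta_j$ and $\delta_j$ you describe, and it needs to be spelled out for the constructed object to actually lie in $\family{D}$.
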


\begin{defn}
    For a word $w\in\al A^\ast$ such that $w=_{\bg}1$, we will denote the set of all diagrams $\Delta\in\family D$ such that $\lab(\pext\Delta)\equiv w$ by $\family{D}(w)$.
\end{defn}

\begin{lem}[{\cite[Lemma 7.11]{DGO11}}]\label{lem. adding path}
Let $\Delta\in\family{D}$, let $\widetilde{\Delta}$ be the disc van Kampen diagram resulting from cutting $\Delta$ along its set of cuts. Define $\kappa\colon \widetilde{\Delta}\rightarrow\Delta$ to be the map that ``sews'' the cuts.  Fix an arbitrary vertex $O$ in $\widetilde{\Delta}$ and let $\mu$ be the map sending $\widetilde\Delta^{(1)}$ to $\mathrm{Cay}(G,\al{A})$ as described by \Cref{mapofvk}.

Let $a,b$ be two vertices on $\partial\Delta$ and let $\widetilde{a},\widetilde{b}$ be two vertices on $\partial\widetilde{\Delta}$ such that $\kappa(\widetilde{a})=a,\kappa(\widetilde{b})=b$. Then for any path $p$ in $\mathrm{Cay}(G,\al A)$ connecting $\mu(\widetilde{a})$ to $\mu(\widetilde{b})$, there is a diagram $\Delta_1\in\family{D}$ with the following properties:

\begin{enumerate}
\item[(a)] $\Delta$ and $\Delta_1$ have the same boundary and cut system. By this we mean the following: Let $K$ (resp. $K_1$) be the subgraph of $\Delta^{(1)}$ (resp. $\Delta_1^{(1)}$), the $1$-skeleton of $\Delta$ (resp. $\Delta$), consisting of $\partial\Delta$ (resp. $\partial\Delta_1$) and all cuts of $\Delta$ (resp. $\Delta_1$). Then there is a graph isomorphism $K_1\rightarrow K$ that preserves orientation and labels and maps the cuts of $\Delta_1$ to the cuts of $\Delta$ and $\partial\Delta_1$ to $\partial\Delta$.
\item[(b)] There is a path $q$ in $\Delta_1$ without self-intersections and does not intersect $\pext\Delta_1$ such that (1) $q$ connects $a$ and $b$, (2) $q$ has no common vertices with the cuts of $\Delta_1$ except possibly for $a,b$, and (3) $\lab(q)\equiv \lab(p)$.
\end{enumerate}
\end{lem}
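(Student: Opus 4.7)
The plan is to insert a thin strip into $\widetilde\Delta$ along a chosen chord $r$ so that the strip carries a new chord $q$ whose label is $\lab(p)$, and then to sew the cuts back to recover a diagram in $\family{D}$.

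First, I would select a simple combinatorial path $r$ in the $1$-skeleton of $\widetilde\Delta$ from $\widetilde a$ to $\widetilde b$ whose interior lies in the interior of $\widetilde\Delta$ and avoids the boundary segments of $\widetilde\Delta$ that correspond to the cuts of $\Delta$. Such a path exists because $\widetilde\Delta$ is a topological disc and the cut preimages sit on $\partial\widetilde\Delta$; if the $1$-skeleton is too sparse to realize the needed topological chord combinatorially, I would first apply a $0$-refinement, which is harmless since non-essential faces do not change the words represented on the boundary.

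Next, since $\mu(r)$ and $p$ are two paths in $\mathrm{Cay}(G,\al A)$ with the same endpoints $\mu(\widetilde a)$ and $\mu(\widetilde b)$, the word $\lab(r)\cdot \lab(p)^{-1}$ represents the identity in $G$. By van Kampen's lemma, I can produce disc diagrams $F_1$ and $F_2$ over the presentation \eqref{eq. pre G} with boundary labels $\lab(r)\cdot \lab(p)^{-1}$ and $\lab(p)\cdot \lab(r)^{-1}$ respectively. Gluing $F_1$ to $F_2$ along their $\lab(p)$-arcs yields a disc $F$ whose boundary is labeled by $\lab(r)\cdot \lab(r)^{-1}$ and whose interior contains a seam arc $q = F_1\cap F_2$ labeled by $\lab(p)$.

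To finish, I would cut $\widetilde\Delta$ along $r$ into two sub-discs and insert $F$ between them by identifying each $\lab(r)$-arc of $\partial F$ with one of the two copies of $r$. The resulting disc diagram $\widetilde\Delta_1$ has the same exterior boundary as $\widetilde\Delta$ and contains $q$ as a chord from $\widetilde a$ to $\widetilde b$. Because the cutting and insertion both stay in the interior of $\widetilde\Delta$, the boundary segments of $\widetilde\Delta$ corresponding to the cuts of $\Delta$ are unchanged in $\widetilde\Delta_1$, so sewing them back produces a diagram $\Delta_1\in\family D$ with the same boundary and cut system as $\Delta$, verifying item (a); the path $q$ then descends to a simple path in $\Delta_1$ from $a$ to $b$ with $\lab(q)\equiv \lab(p)$, disjoint from $\pext \Delta_1$ and meeting the cuts only at (possibly) $a, b$, verifying item (b). The main obstacle is the very first step: ensuring that $r$ can be realized as a combinatorial path with the required disjointness from $\pext\widetilde\Delta$ and the cut preimages, which calls for mild care with the $0$-refinement; once $r$ is in place, the remainder of the argument is a routine application of van Kampen's lemma combined with the gluing formalism recalled in Section~\ref{subsec.vk}.
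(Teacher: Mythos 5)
The paper does not prove this lemma; it is quoted directly from \cite[Lemma~7.11]{DGO11}. Your cut-and-glue argument is the standard one for statements of this type and matches the mechanism in the cited reference: produce a ``lens'' $F$ by filling $\lab(r)\lab(p)^{-1}$ on each side of $p$, cut $\widetilde\Delta$ along an interior chord $r$, insert $F$, and re-sew the cuts.

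Two points deserve a bit more care than you give them. First, the degenerate case $\widetilde a=\widetilde b$: there your chord $r$ is a loop based at a boundary vertex, and ``cutting $\widetilde\Delta$ along $r$ into two sub-discs'' does not produce the clean chord decomposition you invoke; the insertion must be done as a wedge at a pinch point (or one first pushes $\widetilde a$ slightly apart using a $0$-refinement and runs the generic argument). Second, the $0$-refinement step is doing more work than it appears: you need not only to create an interior chord from $\widetilde a$ to $\widetilde b$, but also to keep $\widetilde a,\widetilde b$, the boundary labeling, and the cut arcs fixed under the refinement, and to ensure the inserted faces of $F_1,F_2$ are all of type (c1)/(c2). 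None of this is hard, but the sentence ``such a path exists because $\widetilde\Delta$ is a topological disc'' elides the combinatorial work; it is exactly the point at which an ill-chosen refinement could break the isomorphism claimed in item~(a). With these details filled in, the proof is correct.
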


\begin{defn}
By an $H$\textit{-subpath} of $\Delta$ or $\pext\Delta$ we mean any path of $\Delta$ whose label belongs to $H^\ast\smallsetminus\{1\}^\ast$. By an \textit{$H$-component} of $\pext\Delta$ we mean an $H$-subpath of $\pext\Delta$ that is not contained in any other $H$-subpath of $\pext\Delta$.

We say two $H$-subpaths $p,q$ of $\Delta$ are \textit{connected} if there exist $H$-components $a,b$ in $\widetilde{\Delta}$ such that $\kappa(a)$ (resp. $\kappa(b)$) is a subpath of $p$ (resp. $q$), and that $\mu(a),\mu(b)$ are connected in $\mathrm{Cay}(G,\al{A})$ (in the sense of \Cref{def. component}).
\end{defn}

\begin{rem}
The definitions of $H$-subpaths and connected $H$-subpaths in $\partial\Delta$ for a van Kampen diagram $\Delta\in\family{D}$ do not depend on the pre-chosen vertex $O$.
\end{rem}

\begin{defn}\label{type of diagrams}
For a diagram $\Delta\in \family{D}$, the \textit{type} of $\Delta$ is defined by the formula
\[\tau(\Delta)=(g(\Delta),\sum_{i=1}^{g(\Delta)}\ell(t_i)),\]
where $g(\Delta)$ is the number of holes in $\Delta$ and $t_1,t_2,...,t_{g(\Delta)}$ form the cut system of $\Delta$.

We order the types of diagrams in $\family{D}$ lexicographically: $(k_1,\ell_1)<(k_2,\ell_2)$ if and only if either $k_1<k_2$ or $k_1=k_2$ and $\ell_1<\ell_2$.
\end{defn}

The proof of the lemma below is the same as the one for \cite[Lemma 7.17 (b)]{DGO11}.

\begin{lem}\label{lem. surgery}
Suppose that $H\hookrightarrow_h (G,X)$ and $\widehat{\mathrm{d}}(1,n)>(4+k)D$ (recall that $D$ is a polygonal constant constant given by \Cref{prop. geodesic polygon}), and $w\in\al{A}^{\ast}$ is the label of a path $p$ in $\mathrm{Cay}(G,\al{A})$ that is the concatenation of at most $k$ geodesics. Also suppose that $w\neq_G 1$ and $w=_{\bg}1$. Let $\Delta$ be a diagram in $\family{D}(w)$ of minimal type. Then there exists a connected component $c$ of $\pint\Delta$ such that $c$ is connected (as an $H$-subpath of $\Delta$) to an $H$-subpath of $\pext\Delta$.
\end{lem}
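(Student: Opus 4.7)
The argument is by contradiction. Suppose that no component of $\pint\Delta$ is connected (as an $H$-subpath of $\Delta$) to any $H$-subpath of $\pext\Delta$, and write $g=g(\Delta)$. Since $w\neq_G 1$, the diagram $\Delta$ has at least one hole, so $g\geqslant 1$. The idea is to lift $\partial\widetilde{\Delta}$ via $\mu$ to a geodesic polygon in $\mathrm{Cay}(G,\al A)$, apply \Cref{prop. geodesic polygon}, and combine with the depth hypothesis on $N$ to force $g<1$.

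The first step uses the minimality of $\tau(\Delta)$ together with \Cref{lem. adding path} to enforce the following three conditions:
\begin{itemize}
    \item[(a)] Distinct components $c_i,c_j$ of $\pint\Delta$ are not connected; otherwise a single $H$-edge in $\mathrm{Cay}(G,\al A)$ between their lifts can be inserted into $\Delta$ and used to merge two holes into one, strictly reducing $g(\Delta)$.
    \item[(b)] Each cut $t_i$ maps via $\mu$ to a geodesic in $\mathrm{Cay}(G,\al A)$; otherwise a strictly shorter replacement path can be inserted via \Cref{lem. adding path} and used as a new cut, reducing $\sum_i \ell(t_i)$.
    \item[(c)] No component $c_i$ of $\pint\Delta$ is connected to an $H$-component of any cut; otherwise the cut can be rerouted through $c_i$, strictly lowering $\tau(\Delta)$.
\end{itemize}

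Next, replace each lifted boundary component $c_i$ in $\partial\widetilde{\Delta}$ by a single edge labeled by the element $n_i\in N$ that $\lab(c_i)$ represents (each $n_i$ is nontrivial by minimality: if $n_i=1$ then $\lab(c_i)\in\al R$ and the hole could be filled by a disc subdiagram, strictly reducing the type). Subdividing $\pext\Delta$ at the geodesic seams of the decomposition $w=w_1\cdots w_k$ and using (b) for the cuts, we obtain a polygon $P$ in $\mathrm{Cay}(G,\al A)$ with at most $k+3g$ geodesic sides. By (a), (c), and the standing contradiction hypothesis, each of the $g$ edges labeled by some $n_i$ is an isolated $H$-component of $P$. \Cref{prop. geodesic polygon} then yields
$$
\sum_{i=1}^{g}\widehat{\mathrm{d}}(1,n_i)\;\leqslant\;D(k+3g).
$$
The depth hypothesis gives each summand strictly greater than $(4+k)D$, so $g(4+k)<k+3g$, i.e., $g(k+1)<k$. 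Hence $g<1$, contradicting $g\geqslant 1$.

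The most delicate step is (c): ensuring that a minimal cut system has no $H$-component of any cut connected to a hole-boundary. This requires careful surgery combining \Cref{lem. adding path} with modifications of the cut system that preserve the combinatorics of the holes while strictly decreasing $\tau(\Delta)$; items (a) and (b) are more straightforward variants of the same technique.
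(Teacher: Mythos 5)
The paper itself does not write out a proof; it says the argument is the same as that of Lemma 7.17(b) in [DGO11]. Your high-level strategy---argue by contradiction, lift $\partial\widetilde{\Delta}$ via $\mu$ to a geodesic polygon, apply \Cref{prop. geodesic polygon}, and use the depth hypothesis to force $g<1$---is indeed the DGO11 strategy, and items (a) and (b) are standard minimality moves. Item (c), however, is a genuine gap, and you flag it yourself but do not close it.

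The issue is exactly whether each $n_i$-edge is an \emph{isolated} $H$-component of the lifted polygon $P$. The contradiction hypothesis handles connectedness to $\pext\Delta$ and (a) handles the other $n_j$-edges, but your justification of (c) ("the cut can be rerouted through $c_i$, strictly lowering $\tau(\Delta)$") fails in the cases that matter most. If $t_i$ terminates in an essential $H$-edge $e$, then $e$ has its terminal vertex on $c_i$, so $e$ (and its other copy in $\widetilde\Delta$) is \emph{always} in the same $H$-coset as the $n_i$-edge; the proposed shortcut merely replaces one $H$-edge by another of the same length, so $\tau(\Delta)$ does not decrease. Moreover, \Cref{lem. adding path} only inserts paths between vertices of $\partial\Delta$; interior vertices of a cut are not boundary vertices, so the "rerouting" surgery for an $H$-edge in the interior of $t_j$ cannot be performed directly as you describe. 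The terminal-edge obstruction is the one that cannot be made to go away by minimality, so (c) as stated is false.

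The correct way to handle the terminal $H$-edges of $t_i$ is not to forbid them but to absorb them: the maximal $H$-subpath of $P$ containing the $n_i$-edge also contains the last $H$-edge of $\mu(\tilde{t}_i')$ and of $\mu(\tilde{t}_i'')^{-1}$, and it is labeled by $h n_i h^{-1}$ for some $h\in H$. Since $N\lhd H$, this element still lies in $N\smallsetminus\{1\}$, so the depth bound $\widehat{\mathrm{d}}(1,hn_ih^{-1})>(4+k)D$ still applies to the combined component. Because $t_i$ is a geodesic it visits each left $H$-coset at most once, so no interior $H$-edge of $t_i$ can be connected to this combined component. What remains---and what your argument does not address---is the possibility that $c_i$ is connected to an interior $H$-edge of a \emph{different} cut $t_j$. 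That case is the real content of (c) and requires a separate surgery argument (one must first cut along $t_j$ so its vertices become boundary vertices, then show the type decreases or the holes merge), which you do not supply.

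Finally, a small accounting slip: the polygon $P$ has at most $k+4g$ geodesic sides, not $k+3g$; the $g$ cut-attachment points on $\pext\Delta$ subdivide the original $k$ geodesic arcs into up to $k+g$ pieces, plus $2g$ cut sides and $g$ replacement edges. The constant $(4+k)D$ was chosen so that $g(4+k)<k+4g$ still forces $g<1$, so this slip is harmless, but the count as written is wrong.
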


\begin{lem}\label{lem. good diagram}
Suppose that $\rm{Cay}(G,\al A)$ is hyperbolic, $\widehat{\mathrm{d}}(1,n)>(4+k)D$, and $w\in\al{A}^{\ast}$ is the label of a $k$-gon $p$ in $\mathrm{Cay}(\bg,\al{A})$ with geodesic sides. Then there exists a van Kampen diagram $\Delta\in\family D(w)$ such that the following hold.

\begin{enumerate}
    \item[(i)] Each component $c$ of $\pint\Delta$ (if there is any) is a loop with a single edge labeled by a letter in $N\smallsetminus\{1\}$. Moreover, there exists a unique cut $t_c$ in the cut system of $\Delta$ such that $t^-_c\in c, t^+_c=q^-_c$ and $t_c$ is a non-essential edge, where $q_c$ is an $H$-subpath of $\pext\Delta$.
    \item[(ii)] $\Delta$ is of minimal type. In particular, if $\pint\Delta$ contains two components $c_1\neq c_2$, then $c_1$ is not connected to $c_2$.
\end{enumerate}
\end{lem}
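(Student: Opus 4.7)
My plan is to produce $\Delta$ in two stages: first construct an element of $\family{D}(w)$ that satisfies (i), then impose minimality within this subclass to obtain (ii).

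\emph{Existence.} Since $p$ is a loop in $\mathrm{Cay}(\bg,\al{A})$ with $\lab(p)\equiv w$, we have $w=_{\bg}1$, so $\family{D}(w)\ne\emptyset$ by \Cref{Lem:diag}. Given any diagram in $\family{D}(w)$, I first reduce each hole to a single edge: for each component $c_i$ of $\pint$ with label $s_i\in\al{S}$ representing $n_i\in N$, if $n_i=1$ then $s_i=_G 1$ and the hole can be filled by a disk subdiagram over $\al{R}$; otherwise I place inside the hole a new single-edge loop labeled $n_i$, connect it to $c_i$ by a non-essential edge, and fill the annular region by a single essential face with boundary label $s_i^{-1}n_i$ -- a word over $H$ representing $1$ in $G$, which lies in $\al{R}$ by our standing assumption on $\al{R}$. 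Extending the old cut through the new non-essential edge preserves the cut-system structure. Next, among diagrams with single-edge hole boundaries I pick one $\Delta_1$ of minimal type, apply \Cref{lem. surgery} to find a component $c$ of $\pint\Delta_1$ connected as an $H$-subpath to an $H$-subpath of $\pext\Delta_1$, and invoke \Cref{lem. adding path} to insert a non-essential edge from the (single) vertex of $c$ to the starting vertex of that $H$-subpath as the new cut $t_c$. Cutting along $t_c$ reduces the problem to a diagram with one fewer hole, to which I iterate the argument. After finitely many steps I obtain a diagram in the subclass $\family{D}'(w)\subset\family{D}(w)$ characterized by (i).

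\emph{Minimality and (ii).} I now take $\Delta\in\family{D}'(w)$ of minimal type; (i) is built in. For (ii), if two distinct components $c_1,c_2$ of $\pint\Delta$ were connected, then \Cref{lem. adding path} would allow me to insert into $\Delta$ an $H$-edge joining lifts of their single vertices. Merging the two single-edge loops into one loop labeled by the product of their labels (still an element of $N$, since $N$ is a subgroup) and absorbing the intervening edges into a new essential face (whose boundary is a word over $H$ representing $1$ in $G$, hence in $\al{R}$) would produce a diagram in $\family{D}'(w)$ with $g(\Delta)$ reduced by one, contradicting minimality.

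\emph{Main obstacle.} The principal technical difficulty is maintaining the hypothesis of \Cref{lem. surgery} -- minimal type in $\family{D}(w)$ -- throughout the iterative surgery of the existence step, since inserting non-essential cut edges increases $\sum_i \ell(t_i)$. One resolves this either by re-minimizing the type after each hole is handled or by working inductively on the number of holes not yet in nice form. A further subtlety is that the non-essential edge produced by \Cref{lem. adding path} must land exactly at the initial vertex of an $H$-subpath of $\pext\Delta$ rather than its interior, which is arranged by first splitting $\pext\Delta$ into its maximal $H$-subpaths and choosing the connection data from \Cref{lem. surgery} accordingly.
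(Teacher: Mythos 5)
Your overall strategy --- locate, via \Cref{lem. surgery}, a hole connected to an $H$-subpath of $\pext\Delta$, cut along the connecting edge to drop the hole count, recurse, and reassemble --- is the same one the paper uses; the phrasing differs (iteration vs.\ induction on $g(w)$) but the skeleton is identical. However, there is a genuine gap in your recursion. After you cut along $t_c$ and pass to a diagram ``with one fewer hole'', the new boundary word $u$ is different from $w$, and in order to run the next round (in particular to reapply \Cref{lem. surgery}, whose hypothesis involves the constant $(4+k)D$ and requires the boundary to be a bounded concatenation of geodesics) you must verify that $u$ still labels a $k$-gon with geodesic sides in $\mathrm{Cay}(\bg,\al A)$. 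Nothing in your write-up addresses this. The paper's proof devotes an explicit paragraph to it: using \Cref{lem. component and geodesic} to identify the $H$-component $q$ of the geodesic side $r$ as a single edge, it shows that the modified side $s$ of $\pext\Delta_4$ has the same endpoints and length as $r$ and is therefore still geodesic. Without this step the iteration has no floor to stand on; your self-diagnosed obstacle (re-establishing minimal type) is real, but this one is the more serious and you did not flag it.

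A secondary gap: \Cref{lem. adding path} only inserts a path with a prescribed label and leaves the cut system of the diagram untouched, so it does not by itself let you ``insert a non-essential edge \dots as the new cut $t_c$''. Promoting a newly inserted non-essential edge to a cut, while discarding the old one and keeping the requirement that cutting along all cuts yields a disk diagram, requires a reassembly step that you never describe. In the paper this is done explicitly: one glues an auxiliary disk $\Delta_6$ (with non-essential edges $e_2,e_4$ and an essential edge $e_3$ labelled by $n_1\in N$) to the diagram $\Delta_5$ furnished by the induction hypothesis, glues $e_2$ to $e_4$, and declares $T=T_4\cup\{e_4\}$ to be the new cut system. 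It is precisely at this stage that one reads off $g(\Delta)\leqslant g(\Delta_1)$ and $\sum_i \ell(t_i)=0$, which is what makes $\Delta$ minimal-type in all of $\family D(w)$ and delivers part (ii) directly; your alternative merging argument for (ii) is not wrong in spirit (the conjugate $h n_2 h^{-1}$ still lies in $N$ by normality, a point you glossed over), but it is only needed because your minimality is taken over the restricted subclass $\family D'(w)$ rather than over $\family D(w)$.
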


\begin{defn}
    We say a van Kampen diagram $\Delta$ \emph{good} if it satisfies the conclusions (i) and (ii) of \Cref{lem. good diagram}.
\end{defn}

\begin{proof}[Proof of \Cref{lem. good diagram}]
    We induct on $g(w)$. The base case $g(w)=0$ is trivial.

    Suppose that the lemma holds for all $u$ with $g(u)<g(w)$. Let $\Delta_1\in\mathcal{D}(w)$ of minimal type. By \Cref{lem. surgery}, we may assume that there is a component $c$ of $\pint\Delta_1$ such that $\Delta_1$ contains a path $e$ connecting $c$ and an $H$-component $q$ of $\pext\Delta_1$ such that $e$ is a single edge with $\lab(e)\in H, e^+=q^-$ and $e$ does not self-intersect or intersect the cut system $T$ of $\Delta_1$. By cutting $\Delta_1$ along $e$, we obtain a van Kampen diagram $\Delta_2$ with one less hole. 

    We have $\lab(e^{-1}ceq)=_G h$ for some $h\in H$. Let $\Delta_3$ be a disk diagram over \eqref{eq. pre G} with $\partial\Delta_2=e^{-1}ceqe_1$, where $e_1$ is an edge with $\lab(e_1)\equiv h$. By gluing $\Delta_2$ to $\Delta_3$ we obtain a van Kampen diagram $\Delta_4$ with one less hole than $\Delta_1$. Let $u\equiv \lab(\pext\Delta_4)$. We then have $g(u)<g(w)$.

    To apply the induction hypothesis to $u$, we need to prove that $u$ labels a $k$-gon with geodesic sides. Let $r$ be the subpath of $\pext\Delta_1$ such that $r$ contains $q$ and $\lab(r)$ labels a geodesic side of $p$. The side $r$ gives rise to a path $s$ in $\pext\Delta_4$. Then $\lab(s)$ labels a geodesic in $\rm{Cay}(\bg,\al A)$, as this path has the same length and endpoints as the geodesic labeled by $\lab(r)$. Note also that each of the other geodesic sides of $p$ gives rise to a subpath of $\pext\Delta_4$ whose label labels a geodesic in $\rm{Cay}(\bg,\al A)$. So $u$ labels a $k$-gon in $\rm{Cay}(\bg,\al A)$ with geodesic sides.

    The induction hypothesis then gives us a good diagram $\Delta_5\in\family D(u)$. Note that $\lab(e^{-1}ce)=_G n_1$ for some $n_1\in N$. Let $\Delta_6$ be the disk van Kampen diagram over \eqref{eq. pre G} such that $\partial\Delta_6=e_2e_3e_4qe_1$, where $e_3$ is an edge with $\lab(e_3)\equiv n_1$ and $e_2,e_4$ are non-essential edges. By gluing $\Delta_6$ with $\Delta_5$, and then gluing $e_2$ with $e_4$, we obtain a diagram $\Delta$ with $\pext\Delta\equiv w$. By an abuse of notation, we denote the edge in $\Delta$ resulting from $e_2$ also by $e_2$. Let $T_4$ be the cut system of $\Delta_4$. Then $T:=T_4\cup\{e_4\}$ is a cut system for $\Delta$. As 

    \[g(\Delta)=1+g(\Delta_5)\leqslant 1+g(\Delta_2)=g(\Delta_1)\]
    and each cut in the cut system of $\Delta$ has zero length, $\Delta$ is a good diagram in $\family{D}(w)$.
\end{proof}

\begin{defn}
    Let $\Delta$ be a good diagram such that $\lab(\pext\Delta)$ labels a $k$-gon with geodesic sides in $\rm{Cay}(\bg,\al A)$. By cutting $\Delta$ along all cuts, we obtain a disk van Kampen diagram $\widetilde\Delta$. Each component $c$ of $\pint\Delta$ gives rise to a subpath of $\partial\widetilde\Delta$ with label $\lab(cq_c)$, where $q_c$ is the $H$-component of $\pext\Delta$ connected to $c$. Note that $\lab(cq_c)$ represents an element $h_c\in H$. By replacing each $\lab(cq_c)$ by $h_c$ in $\lab(\partial\widetilde \Delta)$, we obtain a word $w_\Delta$ that labels a $k$-gon $p_\Delta$ with geodesic sides in $\rm{Cay}(G,\al A)$. For simplicity, we will call $p_\Delta$ \textit{a polygon obtained by applying a cutting and replacing process} to $\Delta$.
\end{defn}

\subsection{Sufficiently long Dehn filling}

Assume now that $G$, $H$, $X$ are as above and $N\lhd H$. Note that $\al A=X\sqcup H$ is a generating alphabet of $\bg=G/\ll N\rr$ via the composition $\al A\to G\to \bg$. So it makes sense to talk about $\rm{Cay}(\bg,\al A)$, the Cayley graph of $\bg$ with respect to $\al A$. The following result can be extracted from Theorem 7.15 in \cite{DGO11} and its proof. For the definition of a polygonal constant, see Definition \ref{Def:pc}.

\begin{thm}\label{thm. dehn filling}
    Suppose that $\rm{Cay}(G,\al A)$ is hyperbolic. There exist constants $D_0\leqslant \overline D$ and $\delta$ such that, for any $N\lhd H$ satisfying $\hatrmd(1,n)>D_0$ for all $n\in N\smallsetminus\{1\}$, the following hold.
\begin{enumerate}
\item[(a)] $H/N$ naturally embeds in $\bg $ and the Cayley graph $\rm{Cay}(\bg,\al A)$ $\delta$-hyperbolic. 

\item[(b)] $\overline D$ is a polygonal constant for $\rm{Cay}(\bg,\al A)$.
\end{enumerate}
\end{thm}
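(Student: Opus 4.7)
The plan is to derive both parts of the theorem from \Cref{lem. good diagram} applied to geodesic $k$-gons in $\bg$ with $k\leqslant 3$. Let $D$ be a polygonal constant for $\mathrm{Cay}(G,\al A)$ coming from \Cref{prop. geodesic polygon}, and let $\delta_0$ be a hyperbolicity constant for $\mathrm{Cay}(G,\al A)$. The constants $D_0\leqslant \overline D$ will be chosen as multiples of $D$ depending only on $D$: $D_0$ larger than $7D$ so that \Cref{lem. good diagram} applies at $k\leqslant 3$, and $\overline D$ further enlarged to absorb the additive errors produced by the cutting--replacing process in the polygonal estimate of part~(b). The hyperbolicity constant $\delta$ in the conclusion will depend only on $D$ and $\delta_0$.

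For part~(a), I handle embedding and hyperbolicity separately. For the embedding, given $h\in H\cap\ll N\rr$, I view the word $h$ as a loop of length one in $\mathrm{Cay}(\bg,\al A)$ and apply \Cref{lem. good diagram} with $k=1$. The cutting--replacing process then produces a loop of length one in $\mathrm{Cay}(G,\al A)$ labeled by a single $H$-letter, which must be trivial in $G$; unwinding the substitutions $h_c\equiv\lab(cq_c)$ (where $\lab(c)\in N$) shows $h\in N$, giving injectivity of $H/N\to\bg$. For hyperbolicity, I take a geodesic triangle $T$ in $\mathrm{Cay}(\bg,\al A)$, apply \Cref{lem. good diagram} with $k=3$, and use cutting--replacing to lift $T$ to a geodesic triangle $T'$ in $\mathrm{Cay}(G,\al A)$. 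Since $\mathrm{Cay}(G,\al A)$ is hyperbolic, $T'$ is $\delta_0$-slim, and the slimness transfers back to $T$ because the sides of $T'$ differ from those of $T$ only by substitution of certain $H$-components with single $H$-edges, shifting positions by at most a constant depending only on $D$. This yields the desired $\delta$.

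For part~(b), given a geodesic $n$-gon $P=p_1\cdots p_n$ in $\mathrm{Cay}(\bg,\al A)$ with isolated $H$-components indexed by $I\subseteq\{1,\ldots,n\}$, I triangulate $P$ by inserting $n-3$ internal geodesic diagonals, producing $n-2$ geodesic triangles in $\mathrm{Cay}(\bg,\al A)$. For each triangle, \Cref{lem. good diagram} with $k=3$ combined with cutting--replacing yields a geodesic triangle in $\mathrm{Cay}(G,\al A)$ to which \Cref{prop. geodesic polygon} applies with constant $D$. Summing across the $n-2$ triangles, and using that each $p_i$ with $i\in I$ appears as a side of exactly one triangle while the auxiliary diagonals do not contribute to the $H$-components being tracked, gives $\sum_{i\in I}\hat\ell(p_i)\leqslant \overline D\cdot n$ for a suitable constant $\overline D$ that is a fixed multiple of $D$.

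The main obstacle is verifying, in the triangulation step of part~(b), that each isolated $H$-component $p_i$ of $P$ in $\bg$ survives the cutting--replacing lift as an isolated $H$-component of the corresponding triangle in $\mathrm{Cay}(G,\al A)$, up to a uniform additive error in $\hat\ell$. Indeed, an $H$-component isolated in the $\bg$-polygon may, after the lift, become connected to a substituted hole-edge $h_c$ or to another lifted side that carries $H$-letters not visible in $\bg$. Ruling out or absorbing such connections into the constant $\overline D$ requires exploiting the minimality of type of the good diagram together with the hypothesis $\hatrmd(1,n)>D_0$ for all $n\in N\smallsetminus\{1\}$, in the spirit of the arguments in the proof of \cite[Theorem~7.15]{DGO11}.
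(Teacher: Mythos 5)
The paper does not attempt a self-contained proof here: the argument is a citation to \cite[Theorem 7.15]{DGO11} together with a careful tracing of the constants $\delta$ and $\overline D$ through DGO11's Lemma~4.9, Proposition~4.14, and the relative presentation~(82), observing that the relevant isoperimetric constant and $H$-subword set are independent of $N$. Your proposal is a genuinely different route --- re-deriving the statement internally from \Cref{lem. surgery}, \Cref{lem. good diagram} and the cutting-and-replacing process. Such a route is a priori plausible since \Cref{lem. good diagram} is stated before the theorem and does not assume hyperbolicity of $\mathrm{Cay}(\bg,\al A)$. However, there are two real gaps.

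First, the embedding step does not go through as written. You apply \Cref{lem. good diagram} to the one-letter word $h$ viewed as a $1$-gon in $\mathrm{Cay}(\bg,\al A)$. But if $h\in H\cap\ll N\rr$ and $h\neq 1$ in $G$, that edge is a loop of length $1$ based at a single vertex of $\bg$, hence it is not a geodesic side (a geodesic from a point to itself has length $0$). So the hypothesis of \Cref{lem. good diagram} fails. To get the embedding one has to argue from \Cref{lem. surgery} (which is about paths in $\mathrm{Cay}(G,\al A)$, where the edge labeled $h$ genuinely is a geodesic) together with a minimal-$\hatrmd$ induction, exactly as in DGO11's proof of their Theorem~7.15; invoking \Cref{lem. good diagram} directly cannot replace that step.

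Second, and more seriously, your part~(b) argument conflates two different relative metrics. The cutting-and-replacing process, combined with \Cref{prop. geodesic polygon} applied in $\mathrm{Cay}(G,\al A)$, yields bounds on $\hatrmd_G(1,\lab(q))$, i.e.\ with respect to the relative metric on $H$ inside $\mathrm{Cay}(G,\al A)$. This is precisely what \Cref{lem. estimate of hat length} gives, and the paper is explicit that $\hatrmd$ there ``stands for the relative metric on $H$ for the Cayley graph $\mathrm{Cay}(G,\al A)$ (and not for the Cayley graph $\mathrm{Cay}(\bg,\al A)$).'' A polygonal constant for $\mathrm{Cay}(\bg,\al A)$, by contrast, must bound $\widehat\ell$ with respect to the relative metric on $\bh$ in $\mathrm{Cay}(\bg,\al A)$. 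These are not a priori comparable: a path in $\mathrm{Cay}(G,\al A)$ from $1$ to $h'$ avoiding $\mathrm{Cay}(H,H)$ need not project to a path in $\mathrm{Cay}(\bg,\al A)$ avoiding $\mathrm{Cay}(\bh,\bh)$, since $\bh$ pulls back to the larger subgroup $H\ll N\rr$. Bridging this gap is a substantive part of the DGO11 argument and is entirely omitted in the proposal.

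Finally, the obstacle you flag at the end --- that an isolated $\bh$-component of $P$ might become non-isolated in the lifted polygon --- is in fact not an obstacle. If two $\bh$-components of $P$ lie in distinct left $\bh$-cosets of $\bg$, their lifts lie in distinct left $H\ll N\rr$-cosets of $G$, hence in distinct left $H$-cosets. So non-connected in $\bg$ does imply non-connected in the lift. The genuine difficulty is the mismatch of relative metrics above, which your proposal does not address.
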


\begin{proof}[On the proof]
 Compared to \cite[Theorem 7.15]{DGO11}, we added the uniformness of the constants $\delta $ and $\overline D$. Tracing the constants in the proofs of Lemma 4.9 and Proposition 4.14 in \cite{DGO11}, the reader can verify that these constants only depend on the set $S$ of $H$-subwords that occur in relations from the set $\e_0(R)$ in the relative presentation (82) on p. 124 of \cite{DGO11} and the corresponding isoperimetric constant. It is easy to see that the set $S$ is independent of the choice of $N$. Further, the proof of Theorem 7.15 in \cite{DGO11} provides a uniform bound on the isoperimetric constant (see the inequality two lines above (82) on p. 124). Thus, the desired uniformness follows.
\end{proof}
    
Note that the constant $\overline D$ in the theorem above works, in particular, for the trivial filling, when $N=\{ 1\}$; thus, $\overline D$ is also a polygonal constant for $\rm{Cay}(G,\al A)$. 

\Cref{def. component}, applied to the group pair $(\bg,\bh)$ and the relative generating set $\overline X$, gives us the notion of an (isolated) $\bh$-component. For \Cref{lem. estimate of hat length} below, we emphasize that $\hatrmd\colon H\times H \to [0,\infty]$ stands for the relative metric on $H$ for the Cayley graph $\rm{Cay}(G,\al A)$ (and not for the Cayley graph $\rm{Cay}(\bg,\al A)$).

\begin{lem}\label{lem. estimate of hat length}
    Let $\Delta\in\family D$ be a good diagram such that $w:=\lab(\pext\Delta)$ labels a $k$-gon $p$ in $\rm{Cay}(G,\al A)$ with geodesic sides. Let also $q$ be an $H$-subpath of $\pext\Delta$ that is not connected to any component of $\pint \Delta$ and gives rise to an isolated $\bh$-component in $p$. Then

    \[\hatrmd(1,\lab(q))\leqslant (k+2)D.\]
\end{lem}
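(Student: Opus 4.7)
The plan is to reduce this to an application of \Cref{prop. geodesic polygon} on a $(k+2)$-gon in $\mathrm{Cay}(G,\al A)$ in which a subpath corresponding to $q$ is realized as a single side that is an isolated $H$-component. The bridge between the Dehn-filling picture carrying $p$ and the Cayley graph $\mathrm{Cay}(G,\al A)$ where the polygonal constant $D$ operates is the cutting-and-replacing process defined just before the lemma.

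First I would apply the cutting-and-replacing process to $\Delta$, obtaining a $k$-gon $p_\Delta$ in $\mathrm{Cay}(G,\al A)$ with geodesic sides. Each pair $(c_i,q_{c_i})$ --- an internal component together with its matched $H$-subpath on $\pext\Delta$ --- gets collapsed to a single $H$-edge labeled by $h_{c_i}\in H$, while every other letter of $\pext\Delta$ passes through unchanged. Since by hypothesis $q$ is not connected to any component of $\pint\Delta$, it is not collapsed and descends to a subpath $q'\subset p_\Delta$ with $\lab(q')\equiv\lab(q)$. The assumption that $q$ gives rise to an isolated $\bh$-component of the geodesic $k$-gon $p$ forces that component, and hence $q'$, to lie within a single side $p_i$ of $p_\Delta$. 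Splitting $p_i=r_1\cdot q'\cdot r_2$ yields a $(k+2)$-gon $P$ in $\mathrm{Cay}(G,\al A)$ with geodesic sides $p_1,\ldots,p_{i-1},r_1,q',r_2,p_{i+1},\ldots,p_k$.

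The hard part is promoting the given isolation of the $\bh$-component in $p$ to isolation of $q'$ as an $H$-component of $P$ inside $\mathrm{Cay}(G,\al A)$. Any other $H$-component of $P$ is a single $H$-edge contained in a geodesic side, by \Cref{lem. component and geodesic}, and projects to an $\bh$-edge of $\bar p_\Delta$ in $\mathrm{Cay}(\bg,\al A)$; thus any $H$-connection to $q'$ in $\mathrm{Cay}(G,\al A)$ would descend to an $\bh$-connection in $\bar p_\Delta$. Tracing the cutting-and-replacing recipe, the $\bh$-components of $\bar p_\Delta$ fall into two classes: those inherited from $\bh$-components of $p$ that do not touch any $q_{c_i}$, and the replacement edges $h_{c_i}$, whose endpoints in $\bg$ agree with those of $q_{c_i}$ in $p$ (because $\lab(c_i)\in N\subset\ll N\rr$). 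Because $q$ is not connected to any component of $\pint\Delta$, the image of $q'$ in $\bar p_\Delta$ corresponds precisely to the $\bh$-component of $p$ that $q$ gives rise to, with matching endpoints in $\bg$. Any purported $\bh$-connection of that image to a distinct $\bh$-component of $\bar p_\Delta$ would, via this endpoint correspondence, lift to a $\bh$-connection of $q$ to a distinct $\bh$-component of $p$, contradicting the hypothesized isolation.

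Granted isolation, \Cref{prop. geodesic polygon} applied to the $(k+2)$-gon $P$ with $I=\{q'\}$ delivers $\widehat{\ell}(q')\leqslant (k+2)D$, and since $\lab(q')\equiv\lab(q)$ this is exactly the desired $\hatrmd(1,\lab(q))\leqslant (k+2)D$. I expect the main obstacle to be the third paragraph, namely the careful bookkeeping that matches $\bh$-components of $\bar p_\Delta$ with those of $p$ in a way that makes the non-connection with $\pint\Delta$ hypothesis rule out all unwanted $\bh$-connections.
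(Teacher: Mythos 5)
Your proof follows the same route as the paper's: apply the cutting-and-replacing process to obtain a polygon in $\mathrm{Cay}(G,\al A)$, view the side containing the image of $q$ as three geodesic pieces so that $q$ becomes a single side that is an isolated $H$-component of a polygon with at most $k+2$ geodesic sides, and invoke \Cref{prop. geodesic polygon}. The only difference is that you devote a paragraph to justifying the isolation of $q'$ (via descent to $\mathrm{Cay}(\bg,\al A)$ and endpoint-matching for the replacement edges), a point the paper dispatches in a single sentence; note also that in the lemma statement the $k$-gon $p$ should live in $\mathrm{Cay}(\bg,\al A)$ rather than $\mathrm{Cay}(G,\al A)$, as you correctly inferred.
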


\begin{proof}
    By applying a cutting and replacing process to $\Delta$, we obtain a cycle $\widetilde p$ in $\rm{Cay}(G,\al A)$. We claim that $\widetilde p$ is a polygon with at most $k+2$ geodesic sides and $\lab(q)$ labels a side of $\widetilde p$ which is an isolated $H$-component.

    Let $\overline\mu\colon \Delta^{(1)}\to \rm{Cay}(\bg,\al A)$ be the map defined by \Cref{mapofvk}, and let $\overline r_1,...,\overline r_k$ be the subpaths of $\pext\Delta$ such that $\overline\mu(r_i)$ is a geodesic side of $\overline \mu(\pext\Delta)$ for each $i$. Assume, without loss of generality, that $q\subseteq r_1$. Under the cutting and replacing process, each of $r_2,...,r_k$ gives rise to a geodesic side of $\widetilde p$, as each $\lab(r_i)$ labels a geodesic in $\rm{Cay}(\bg,\al A)$. The path $r_1$ also gives rise to a geodesic side $s$ of $\widetilde p$, but in order to apply \Cref{prop. geodesic polygon}, we will think of $s$ as a concatenation $s_1es_2$, where $e$ is the edge resulting from $q$ and so $\lab(e)\equiv \lab(q)$, and $s_1,s_2$ are geodesics. Our assumption implies that $e$ is an isolated $H$-component of $\widetilde p$. Therefore, \Cref{prop. geodesic polygon} implies $\widehat\ell(e)\leqslant (k+2)D$.
\end{proof}

\subsection{Cohen--Lyndon triples}\label{sec. cl}

Cohen--Lyndon triples appear naturally in the context of Dehn filling and are key to our construction of wreath-like products. They were first studied by Cohen--Lyndon in \cite{cohen1963free}, hence the name. These triples serve as a rich source of wreath-like products. We will recall the definition and some properties in this subsection.

\begin{defn}\label{def. cohen-lyndon}
    Let $G$ be a group, $H\leqslant G$, $N\lhd H$. We say that $(G,H,N)$ is a \textit{Cohen--Lyndon triple} if there exists a left transversal $T$ of $H\ll N\rr$ in $G$ (recall that $\ll N\rr$ is the normal closure of $N$ in $G$) such that $\ll N \rr$ decomposes as a free product:
    \[\ll N \rr=\Conv_{t\in T}t N t^{-1}.\]
\end{defn}

\begin{prop}[{\cite[Theorem 5.1]{sun2018cohomologyi}}]\label{prop. cl}
    Suppose that $N$ is a normal subgroup of $H$ such that $\widehat{\mathrm{d}}(1,n)>24D$ for all $1\neq n\in N$. Then $(G,H,N)$ is a Cohen--Lyndon triple.
\end{prop}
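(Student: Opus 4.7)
The plan is to fix any left transversal $T$ of $H\ll N\rr$ in $G$ containing $1$ and to prove that the natural map
$$\phi : \Conv_{t\in T} tNt^{-1} \longrightarrow \ll N\rr, \qquad (t,n)\mapsto tnt^{-1},$$
is an isomorphism. This splits into injectivity (the free product structure) and surjectivity (generation of $\ll N\rr$ by conjugates).

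Injectivity will be handled by a minimal-counterexample argument. Suppose $w = (t_1 n_1 t_1^{-1}) \cdots (t_k n_k t_k^{-1}) =_G 1$ is a cyclically reduced relation of minimal length $k\geqslant 1$, with $n_j\in N\setminus\{1\}$, $t_j\in T$, and $t_j\neq t_{j+1}$ cyclically. I would realize $w$ as a $3k$-gon $P$ in $\mathrm{Cay}(G,\al A)$ whose sides alternate between geodesic segments representing $t_j$ and $t_j^{-1}$, and single $H$-edges $e_j$ labelled by the $n_j$. The crucial claim is that each $e_j$ is an isolated $H$-component of $P$. If $e_j$ were connected to another $H$-component by a $1$-edge labelled $h\in H$, a coset calculation using the partial products $v_{j-1}\in\ll N\rr$ and the transversal property of $T$ would show that the other component is some $e_{j'}$ with $t_j=t_{j'}$, and that $h\in H\cap\ll N\rr = N$ by the Dehn filling injectivity in \Cref{Thm:DF} (applicable because $24D$ is well above the relevant threshold). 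Cyclic reducedness rules out consecutive $j,j'$; for non-consecutive $j,j'$, I would replace the cyclic subword from $e_j$ to $e_{j'}$ by $t_j h t_j^{-1}$ -- using \Cref{lem. adding path} to re-route parts of $P$ when the connecting $1$-edge lies inside a geodesic side rather than at another $e_{j'}$ -- and then reduce adjacent same-factor terms to obtain a cyclically reduced relation of strictly smaller length, contradicting minimality of $k$.

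Once isolation is in place, \Cref{prop. geodesic polygon} applied to $P$ yields
$$\sum_{j=1}^k \widehat\ell(e_j) \leqslant 3kD,$$
whereas the standing hypothesis gives $\widehat\ell(e_j) = \widehat{\mathrm{d}}(1,n_j) > 24D$ for every $j$, forcing $24kD < 3kD$, which is absurd. For surjectivity I would verify that $N_0:=\langle tNt^{-1}:t\in T\rangle$ is normal in $G$; since $N\subseteq N_0$ and $\ll N\rr$ is the smallest normal subgroup of $G$ containing $N$, this forces $\ll N\rr\subseteq N_0$, and the reverse containment is automatic. Normality reduces to $g(tnt^{-1})g^{-1}\in N_0$ for $g\in G$, $t\in T$, $n\in N$: writing $gt = t'hm$ with $t'\in T$, $h\in H$, $m\in\ll N\rr$, then rewriting $m$ as a product of generators of $N_0$ via the free-product decomposition already proven, together with the identity $hNh^{-1}=N$, allows one to inductively express the conjugate as an element of $N_0$.

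The main obstacle I anticipate is the surgery step within the isolation argument, specifically the case where $e_j$ connects to a stray $H$-letter that sits inside a geodesic side of $P$ rather than directly at another $e_{j'}$. Carrying out the reduction cleanly -- maintaining cyclic reducedness while producing a strictly shorter relation -- requires careful bookkeeping with \Cref{lem. adding path} and with the order on separating cosets from \Cref{lem. basic of separating cosets}. The generous constant $24$ in the hypothesis (versus the bare $3$ that is strictly needed for the final polygon inequality alone) is precisely what provides the slack for these intermediate reductions and for the repeated invocations of the Dehn filling injectivity in \Cref{Thm:DF}.
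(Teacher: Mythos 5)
The result you are proving is cited in the paper as \cite[Theorem 5.1]{sun2018cohomologyi} without proof, and the cited proof follows the DGO van Kampen diagram surgery framework (diagrams in $\family D$, cut systems, minimality of type), not a direct Cayley-graph polygon argument. Your high-level plan is a genuinely different route, and parts of it are sound: the case-(a) coset computation showing that if $e_j$ and $e_{j'}$ are connected $H$-components of $P$ then $t_j^{-1}(v_{j-1}^{-1}v_{j'-1})t_{j'}\in H$ forces $t_j=t_{j'}$ (since $v_{j-1}^{-1}v_{j'-1}\in\ll N\rr$ and $T$ is a transversal of $H\ll N\rr$), with the connecting label landing in $H\cap\ll N\rr=N$ by \Cref{Thm:DF}, is correct, and the final application of \Cref{prop. geodesic polygon} once isolation is in hand is fine.

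There are, however, two real gaps. First, case (b) is not handled, and your proposed fix does not apply: \Cref{lem. adding path} modifies a van Kampen diagram in $\family D$ (it concerns cut systems and the map $\widetilde\Delta^{(1)}\to\mathrm{Cay}(G,\al A)$), and gives no way to ``re-route'' a geodesic side of a bare polygon in $\mathrm{Cay}(G,\al A)$. When the $H$-component $c$ connected to $e_j$ lies inside a geodesic side $\gamma_i^\pm$ with $i\neq j$ (which is not ruled out: $v_{j-1}t_jH$ may well be a separating coset for the endpoints of $\gamma_i^\pm$), $c$ is not one of the $e_{j'}$'s, so ``replacing the cyclic subword from $e_j$ to $e_{j'}$'' has no meaning, and it is not clear that any surgery you perform strictly decreases $k$ rather than, say, increases the number of sides or produces a polygon whose sides are no longer geodesics. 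This is precisely the phenomenon the diagram-surgery approach is designed to control via minimality of the type $\tau(\Delta)$. Second, the surjectivity argument is circular: to show $g(tnt^{-1})g^{-1}\in N_0$ you write $gt=t'hm$ with $m\in\ll N\rr$ and propose to ``rewrite $m$ as a product of generators of $N_0$ via the free-product decomposition already proven'' --- but the injectivity step only shows that $N_0:=\langle tNt^{-1}:t\in T\rangle$ is a free product of the factors, not that $m\in N_0$; the containment $\ll N\rr\subseteq N_0$ is exactly what surjectivity asserts, so you cannot use it. The generation statement requires its own argument (in Sun's and DGO's treatment it is packaged together with the free-product structure via the diagram machinery), and it does not follow formally from normality of $N$ in $H$ plus the injectivity you proved.
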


Under the assumptions of \Cref{prop. cl}, let
\begin{equation}\label{eq. def S}
S=\big\{ [g_1n_1g^{-1}_1, g_2n_2g^{-1}_2] \mid n_1,n_2\in N \text{ and } g_1,g_2 \in G \text{ and } g_1H\ll N\rr \ne g_2H\ll N\rr\} .
\end{equation} 
Then $S$ is a normal subgroup of $G$ (see the discussion after \cite[Definition 4.12]{CIOS23}). Let $T$ be the transversal given by \Cref{def. cohen-lyndon}. Note that $T$ can be identified with the set of left cosets of $H\ll N \rr/\ll N \rr$ in $G/\ll N \rr$. So there is an action $G/\ll N \rr\curvearrowright T$ induced by left multiplication.

\begin{prop}[{\cite[Proposition 4.14]{CIOS23}}]\label{prop. wreath product}
If $\widehat{\mathrm{d}}(1,n)>24D$ for all $1\neq n\in N$, then $G/S\in\WR(N,G/\ll N \rr\curvearrowright T)$. More precisely, there is a short exact sequence
\[1 \rightarrow \bigoplus_{t\in T}tNt^{-1} \rightarrow G/S \xrightarrow{\epsilon} G/\ll N \rr \rightarrow 1\]
such that $g\cdot tNt^{-1} \cdot g^{-1}=(\epsilon(g)\cdot t)N(\epsilon(g)\cdot t)^{-1}$ for all $g\in G/S$.
\end{prop}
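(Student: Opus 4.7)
The plan is to identify $S$ with the kernel of the natural ``abelianize-the-factors'' map on the Cohen--Lyndon free product decomposition of $\ll N\rr$, and then read off the short exact sequence from the third isomorphism theorem and verify the conjugation action by direct computation. By \Cref{prop. cl}, the hypothesis $\hatrmd(1,n)>24D$ for $1\ne n\in N$ ensures that $(G,H,N)$ is a Cohen--Lyndon triple, so there exists a transversal $T$ with $\ll N\rr=\Conv_{t\in T}tNt^{-1}$. Let $\pi\colon \ll N\rr\twoheadrightarrow\bigoplus_{t\in T}tNt^{-1}$ denote the canonical projection and set $K:=\ker\pi$; as a normal subgroup of $\ll N\rr$, $K$ is generated by the commutators $[t_1n_1t_1^{-1},t_2n_2t_2^{-1}]$ with $t_1\ne t_2\in T$ and $n_i\in N$. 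Since every defining generator of $S$ is a product of conjugates of elements of $N$, one has $S\subseteq\ll N\rr$, and the crux of the argument is to establish $S=K$.

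The inclusion $K\subseteq S$ is immediate: the generating commutators of $K$ listed above are themselves generators of $S$ upon taking $g_i:=t_i$, and $S$ is normal in $\ll N\rr$. For the reverse inclusion, fix a generator $[g_1n_1g_1^{-1},g_2n_2g_2^{-1}]$ of $S$ with $g_iH\ll N\rr=t_iH\ll N\rr$ for distinct $t_1,t_2\in T$, and write $g_i=t_ih_im_i$ with $h_i\in H$ and $m_i\in\ll N\rr$. Using $N\lhd H$ and $\ll N\rr\lhd G$, put $n_i':=h_in_ih_i^{-1}\in N$ and $m_i':=h_im_ih_i^{-1}\in\ll N\rr$; a short rewriting yields
$$g_in_ig_i^{-1}\;=\;x_i\,(t_in_i't_i^{-1})\,x_i^{-1},\qquad x_i:=t_im_i't_i^{-1}\in\ll N\rr.$$
The element $t_in_i't_i^{-1}$ lies in the factor $t_iNt_i^{-1}$, and since distinct factors commute in $\bigoplus_{s\in T}sNs^{-1}$, conjugation by $\pi(x_i)$ reduces to conjugation by its $t_i$-component; hence $\pi(g_in_ig_i^{-1})$ also lies in the $t_i$-factor. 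As $t_1\ne t_2$, the two images commute in the direct sum, so $[g_1n_1g_1^{-1},g_2n_2g_2^{-1}]\in K$.

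With $S=K$ established, the third isomorphism theorem applied to $S\lhd\ll N\rr\lhd G$ delivers the desired short exact sequence. For the conjugation action, fix $g\in G$ and $t\in T$, and write $gt=t'hm$ with $t'\in T$, $h\in H$, $m\in\ll N\rr$; by the definition of the action $G/\ll N\rr\ca T$, this forces $\epsilon(g)\cdot t=t'$. The same rewriting as above produces $(gt)N(gt)^{-1}=m''(t'Nt'^{-1})m''^{-1}$ with $m'':=t'hmh^{-1}t'^{-1}\in\ll N\rr$, and inside $G/S$ conjugation of the factor $t'Nt'^{-1}$ by $m''$ reduces to conjugation by its $t'$-component, which lies in $t'Nt'^{-1}$ itself and thus preserves the subgroup. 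This yields $g\cdot tNt^{-1}\cdot g^{-1}=(\epsilon(g)\cdot t)N(\epsilon(g)\cdot t)^{-1}$, as required. The main obstacle I expect is the direction $S\subseteq K$: a generator of $S$ is not \emph{a priori} a commutator of elements living in single free-product factors, so the transversal decomposition and the commutativity of distinct factors in the direct sum must be used carefully to place each $g_in_ig_i^{-1}$ into a single factor before taking the commutator.
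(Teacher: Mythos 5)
The paper does not prove this proposition; it cites it from \cite[Proposition~4.14]{CIOS23}, and the only proof-adjacent material in the present source is the remark that $S$ is normal in $G$ (with a pointer to the discussion after Definition~4.12 of \cite{CIOS23}). Your reconstruction is correct and is almost certainly the intended argument: identifying $S$ with the cartesian subgroup $\ker\bigl(\ll N\rr\to\bigoplus_{t\in T}tNt^{-1}\bigr)$ is the natural step given the Cohen--Lyndon decomposition; the rewriting $g_in_ig_i^{-1}=x_i\,(t_in_i't_i^{-1})\,x_i^{-1}$ with $x_i\in\ll N\rr$ correctly places each conjugate into a single factor of the direct sum, which is exactly what is needed for the nontrivial inclusion $S\subseteq K$; and the verification of the conjugation formula for $G/S\acts\{tNt^{-1}\}_{t\in T}$ follows the same rewriting combined with the observation that conjugating an element of one factor of a direct sum only sees the corresponding component of the conjugator.
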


\begin{rem}\label{rem. stabilizer}
    In \Cref{prop. wreath product}, as the action $G/\ll N \rr\curvearrowright T$ is induced by left multiplication, for every $t\in T$, we have $\stab_{G/\ll N \rr}(t)\cong H\ll N \rr/\ll N \rr\cong H/N$, where the last isomorphism follows from \cite[Theorem 7.15]{DGO11}.
\end{rem}

\section{Wreath-like product groups with $2$-cocycles of uniformly bounded support}\label{sec. group main}

In this section, we prove the main group-theoretic result of our paper. 

\begin{thm}\label{thm. general main}
    Suppose that $G$ is a group, $H\leqslant G$ is a subgroup, and $X\subset G$ is a relative generating set with respect to $H$ such that $\mathrm{Cay}(G,X\sqcup H)$ is a hyperbolic metric space. Also let $\dh\colon H\times H\rightarrow [0,\infty]$ be the relative metric corresponding to $X$. Suppose that $N$ is a normal subgroup of $H$ and:

    \begin{enumerate}[label=(\roman*)]
        \item\label{item. no involution} $\bg:=G/\ll N \rr$ has no elements of order $2$;
        \item\label{item. malnormal} the image of $H$ is malnormal in $\bg$; and
        \item\label{item. long} for all $n\in N\smallsetminus\{1\}$, we have 
    \begin{equation}\label{eq. very long}
        \dh(1,n)>63\overline D, 
    \end{equation}
    where $\overline D$ is the constant provided by \Cref{thm. dehn filling}.
    \end{enumerate}
    Then there exists a set-theoretic section $\liftS\colon \bg\rightarrow G$ such that for all $\Ng f,\Ng g\in \bg$, the element $\liftS (\Ng f)\liftS(\Ng g)(\liftS(\Ng f \Ng g))^{-1}$ can be written as a product of at most six conjugates of elements of $N$, i.e., there exist $\{g_i\}^9_{i=1}\subset G, \{n_i\}^9_{i=1}\subset N$ such that

    \[\liftS(\Ng f)\liftS(\Ng g)(\liftS(\Ng f \Ng g))^{-1}=\prod^9_{i=1}g_in_ig^{-1}_i.\]
\end{thm}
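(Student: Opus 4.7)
Plan. The strategy is to build $\liftS$ from canonical geodesic lifts in $\mathrm{Cay}(\bg, X \sqcup \bh)$ and then control the cocycle via good van Kampen diagrams combined with the polygonal lemma. By \Cref{thm. dehn filling}(a), $\bh = H/N$ embeds in $\bg$, so I will fix a set-theoretic section $\sigma \colon \bh \to H$ of the projection $H \twoheadrightarrow \bh$. For each $\Ng g \in \bg$, fix a geodesic $\Ng p_{\Ng g}$ from $1$ to $\Ng g$ in $\mathrm{Cay}(\bg, X \sqcup \bh)$, and define $\liftS(\Ng g) \in G$ by lifting $\lab(\Ng p_{\Ng g})$ letter by letter, keeping $X$-letters as themselves and applying $\sigma$ to $\bh$-letters. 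Because the quotient $G \to \bg$ is length-preserving on Cayley graphs (each edge maps to an edge), the lifted word automatically labels a geodesic $\widetilde{p}_{\Ng g}$ in $\mathrm{Cay}(G, X \sqcup H)$ from $1$ to $\liftS(\Ng g)$.

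Given $\Ng f, \Ng g \in \bg$, concatenate $\widetilde{p}_{\Ng f}$, the translate $\liftS(\Ng f) \cdot \widetilde{p}_{\Ng g}$, and the reverse of $\widetilde{p}_{\Ng f \Ng g}$ to form a word $w \in (X \sqcup H)^{\ast}$ which labels a three-sided geodesic configuration in $\mathrm{Cay}(G, X \sqcup H)$ whose image in $\mathrm{Cay}(\bg, X \sqcup \bh)$ is a closed geodesic triangle; in particular, $w$ represents $\alpha := \liftS(\Ng f)\liftS(\Ng g)\liftS(\Ng f \Ng g)^{-1} \in \ll N \rr$. Since $63\overline D$ greatly exceeds the threshold $(4+3)\overline D$, \Cref{lem. good diagram} produces a good diagram $\Delta \in \mathfrak D(w)$ whose holes $c$ are loops carrying a single edge labeled by some $n_c \in N \setminus \{1\}$, each attached by a single non-essential cut $t_c$ to an $H$-subpath $q_c$ of $\pext \Delta$, with distinct holes unconnected inside $\Delta$. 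Reading off the cut system expresses
\[\alpha = \prod_c g_c n_c g_c^{-1},\]
so it suffices to bound the number of holes by a universal constant.

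To obtain such a bound, apply the cutting-and-replacing process of \Cref{sec. surgery} to $\Delta$, producing a geodesic triangle $p_\Delta$ in $\mathrm{Cay}(G, X \sqcup H)$ in which each hole $c$ is replaced by a single $H$-edge $e_c$ labeled $h_c = \lab(c q_c)$. The crucial structural claim is that the $e_c$ are isolated $H$-components of $p_\Delta$, apart from a bounded number of exceptions concentrated near the three vertices; here hypotheses (i) and (ii) enter decisively. Malnormality of the image of $H$ in $\bg$ prevents distinct separating $\bh$-cosets along different sides of the triangle from collapsing to a common $H$-coset, with the at-most-two exceptional cosets per pair of sides controlled by \Cref{lem. basic of separating cosets}; the absence of $2$-torsion in $\bg$ rules out the degenerate identifications that would arise from an involution reversing a side onto itself. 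Granting isolation, \Cref{prop. geodesic polygon} applied to $p_\Delta$ with uniform polygonal constant $\overline D$ (\Cref{thm. dehn filling}(b)) gives $\sum_c \widehat{\ell}(e_c) \leq 3\overline D$. On the other hand, $\widehat{\ell}(e_c) = \dh(1, h_c) \geq \dh(1, n_c) - \dh(1, \lab(q_c))$, and \Cref{lem. estimate of hat length} applied to an auxiliary diagram built around $q_c$ bounds $\dh(1, \lab(q_c))$ by a small multiple of $\overline D$. The sharp lower bound $\dh(1, n_c) > 63\overline D$ in hypothesis (iii) then forces each $\widehat{\ell}(e_c)$ to dominate $\overline D$, so the polygonal sum bound allows only a small number of non-vertex holes; combined with the vertex exceptions, this yields the stated bound on the number of conjugates.

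I expect the hardest step to be the isolation claim for the replacement edges $e_c$ across the three sides of $p_\Delta$, where the interplay of malnormality, the no $2$-torsion hypothesis, and the separating-coset framework of \Cref{sec. separating coset} must be exploited carefully. The precise constant $63\overline D$ in hypothesis (iii) is calibrated exactly so that the polygonal sum bound $3\overline D$ and the lower bound on each $\widehat{\ell}(e_c)$ together leave room for only a handful of conjugates, matching the stated bound of six.
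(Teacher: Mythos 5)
The central idea missing from your proposal is the construction of a \emph{canonical equivariant path system}; without it, the section you define does not yield a bounded cocycle. You build $\liftS$ by lifting arbitrary fixed geodesics from $\mathrm{Cay}(\bg, X\sqcup\bh)$, whereas the paper's section traces, through the separating cosets of \Cref{sec. separating coset}, a carefully constructed $\bg$-equivariant, flip-compatible family $\Path$ of geodesic segments between adjacent cosets (\Cref{prop. equivariant paths}), with $\bh$-edges taken from a fixed set $R$ of minimal-length coset representatives. That construction is the content of \Cref{lem. fundamental domain} and \Cref{prop. equivariant paths}, and it is there that hypotheses \ref{item. no involution} and \ref{item. malnormal} are actually consumed: the no-$2$-torsion assumption lets one choose a consistent ``positive'' half $\FD_+$ of the double cosets, and malnormality of $\bh$ is what forces the uniqueness in \Cref{prop. equivariant paths}\ref{item. p exist and unique}. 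Your stated uses of these hypotheses (to stop separating cosets on different sides from ``collapsing,'' or to rule out ``side-reversing involutions'') do not correspond to the roles they actually play.

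Here is why the distinction is not cosmetic. When you form the triangle $w_{\Ng f}\,w_{\Ng g}\,w_{\Ng f\Ng g}^{-1}$, the two sides $w_{\Ng f}$ and $w_{\Ng f\Ng g}^{-1}$ are lifts of $\bg$-geodesics that both essentially penetrate the same long initial string of separating $\bh$-cosets (the set $S_1$ of \Cref{lem. basic of separating cosets}). If the two $\bg$-geodesics were chosen independently, they need not agree between consecutive shared cosets, and after lifting they can differ at each shared coset by an element of $N$. Each such discrepancy potentially contributes one hole of the good diagram, so the number of conjugates you extract is only bounded by $|S(1,\Ng f)|+|S(1,\Ng g)|+|S(1,\Ng f\Ng g)|$, which is unbounded. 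Your attempted rescue via \Cref{prop. geodesic polygon} does not close this gap, because the replacing edges $e_c$ over the $S_1$-, $S_2$-, $S_3$-regions are not isolated $H$-components of $p_\Delta$: a component on $w_{\Ng f}$ and a component on $w_{\Ng f\Ng g}^{-1}$ lying over the \emph{same} separating $\bh$-coset are connected, so the polygonal lemma says nothing about their $\widehat\ell$-lengths, and your per-edge lower bound is not available.

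The paper avoids all of this precisely because the equivariant path system makes the shared segments \emph{identical on the nose}: \Cref{cl. agree} shows the subpaths of $\phi(\Ng f)$ and $\phi(\Ng f\Ng g)$ joining the same consecutive pair of separating cosets coincide (up to orientation), so the triangle peels into three disc (i.e.\ hole-free) diagrams $\Delta_1,\Delta_2,\Delta_3$ covering $S_1,S_2,S_3$, plus one central good diagram $\Delta_4$ whose boundary hexagon $e_1p_4e_2p_5e_3p_6$ has bounded combinatorial complexity. Only on $\Delta_4$ does the hole-count argument run, via \Cref{lem. connect means long}, and it is here that the isolated-component estimate \Cref{lem. estimate of hat length} and the quantitative form of hypothesis \ref{item. long} enter. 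If you want to complete your argument along the lines you sketched, you would first have to replace ``fix arbitrary geodesics'' with the construction of $\Path$ and the resulting canonical path representative $\phi$.
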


In what follows, we work under the assumptions of Theorem \ref{thm. general main}.

\subsection{Equivariant systems of paths between cosets}

For $\Ng f,\Ng g\in\bg$, let $S(\Ng f,\Ng g)$ be defined by \Cref{def. essentially penetrate} with respect to the Cayley graph $\rm{Cay}(\bg,\al A)$ and the constant $C=7\overline D$. Note that our definition of $S(\Ng f,\Ng g)$ here aligns with our definition in \Cref{sec. separating coset}.

We will construct a path system $\Path$, which consists of paths in $\rm{Cay}(\bg,\al A)$ between left $\bh$-cosets of $\bg$ that are ``close to each other'' under the following metric:

\begin{defn}
    Let $\Ng f \bh \neq \Ng g\bh$ be left $\bh$-cosets. We will denote by $D(\Ng f\bh,\Ng g\bh)$ the set of pairs $(\Ng f_1,\Ng g_1)$ such that $\Ng f_1\in \Ng f\bh, \Ng g_1\in \Ng g\bh$ and 
    
    \[\overline \d(\Ng f_1,\Ng g_1)\leqslant \overline\d(\Ng f_2,\Ng g_2) \text{ for all }\Ng f_2\in \Ng f\bh, \Ng g_2\in \Ng g\bh.\]
    We define the \textit{coset distance} between $\Ng f\bh$ and $\Ng g\bh$ by 

    \[\overline\d_C(\Ng f\bh,\Ng g\bh)=\min_{(\Ng f_1,\Ng g_1)\in D(\Ng f\bh,\Ng g\bh)}|S(\Ng f_1,\Ng g_1)|-1.\]
\end{defn}

\begin{rem}
    The coset distance is symmetric.
\end{rem}

For the rest of this section, choose a subset $R\subset H$ of coset representatives of $N$ such that $R^{-1}=R$ and

\[\hatrmd(1,r)\leqslant \hatrmd(1,rn) \text{ for all } r\in R.\]
Note that such a set $R$ exists: In general, it might not be possible to have $R^{-1}=R$, which happens only under the existence of an element $h\in H\smallsetminus N$ such that $h^2\in N\smallsetminus\{1\}$. In our situation, \cite[Theorem 7.15]{DGO11} and item \ref{item. long} imply that the group $H/N$ embeds into $\bg$ as a subgroup. So $H/N$ has no order-$2$ elements as the same holds for $\bg$.

\begin{prop}\label{prop. equivariant paths}
There exists a set $\Path$ of geodesics of $\mathrm{Cay}(\bg,\al A)$ such that the following hold.
\begin{enumerate}[label=(\roman*)]
    \item\label{item. p exist and unique} For every ordered pair $(\Ng f\bh, \Ng g\bh)$ of left $\bh$-cosets such that $\Ng f\bh\neq \Ng g\bh$ and $\overline\d_C(\Ng f\bh,\Ng g\bh)=1$, there exists a unique element $p\in \Path$ such that $p$ is a geodesic in $\mathrm{Cay}(\bg,\al A)$, $p^-\in \Ng f\bh,p^+\in \Ng g\bh$ and $p$ does not essentially penetrate any left $\bh$-coset.
    \item\label{item. P flip} For every $p_1\in \Path$, let $p_2$ be the unique element of $\Path$ such that $p^-_2\in p^+_1\bh,p^+_2\in p^-_1\bh$, then $p_2=p^{-1}_1$.
    \item\label{item. P equivariant} For every $p\in \Path$ and $\Ng g\in \bg$, we have $\Ng g\cdot p\in \Path$.
    \item\label{item. R} For each $H$-subpath $q$ of each path $p\in \Path$, we have $\lab(q)\in R$.
\end{enumerate}
\end{prop}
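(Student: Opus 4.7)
The plan is to build $\Path$ by equivariant extension from orbit representatives, using the hypotheses of \Cref{thm. general main} to handle the inversion condition (ii). Let $\bg$ act diagonally on the set $\mathcal{P}$ of ordered pairs of distinct left $\bh$-cosets with coset distance $1$. The first task is to show this action is \emph{free} and that the involution $\iota \colon (A, B) \mapsto (B, A)$ acts on $\mathcal{P}/\bg$ without fixed points. For $A = \Ng f\bh$, the left-translation stabilizer of $A$ is $\Ng f\bh\Ng f^{-1}$; by malnormality of $\bh$, the pointwise stabilizer $\Ng f\bh\Ng f^{-1} \cap \Ng g\bh\Ng g^{-1}$ of $(\Ng f\bh, \Ng g\bh)$ is trivial, giving freeness. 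If some $\Ng k \in \bg$ swapped $A$ and $B$, then $\Ng k^2$ would lie in this trivial intersection, forcing $\Ng k^2 = 1$, and absence of $2$-torsion in $\bg$ gives $\Ng k = 1$, whence $A = B$, a contradiction.

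I would then pair each orbit $\mathcal{O} \subset \mathcal{P}$ with its reverse $\iota(\mathcal{O})$ and fix a transversal $T$ containing one orbit per pair. For each $\mathcal{O} \in T$, I choose a representative $(A_\mathcal{O}, B_\mathcal{O})$, a pair $(\Ng f_\mathcal{O}, \Ng g_\mathcal{O}) \in D(A_\mathcal{O}, B_\mathcal{O})$ realizing $|S(\Ng f_\mathcal{O}, \Ng g_\mathcal{O})| = 2$, and a geodesic $p_\mathcal{O}$ from $\Ng f_\mathcal{O}$ to $\Ng g_\mathcal{O}$ in $\mathrm{Cay}(\bg, \al A)$. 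To secure property (iv), I relabel each $\bh$-edge of $p_\mathcal{O}$ by the parallel edge labeled by the unique representative in $R$ of the same element of $\bh$; endpoints and length are preserved, so $p_\mathcal{O}$ remains a geodesic. The key base-level claim is that $p_\mathcal{O}$ essentially penetrates no $\bh$-coset at all, not merely none besides $A_\mathcal{O}, B_\mathcal{O}$. A short-cut argument rules out any $\bh$-edge with both endpoints in $A_\mathcal{O}$: an interior such edge allows the $\bh$-shortcut from $\Ng f_\mathcal{O}$ directly to the edge's terminus, contradicting geodesicity; a terminal such edge violates $(\Ng f_\mathcal{O}, \Ng g_\mathcal{O}) \in D(A_\mathcal{O}, B_\mathcal{O})$. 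The same holds for $B_\mathcal{O}$, and combined with $S(\Ng f_\mathcal{O}, \Ng g_\mathcal{O}) = \{A_\mathcal{O}, B_\mathcal{O}\}$, this rules out essential penetration of any coset.

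Finally, I would define
\[
\Path := \{\Ng g \cdot p_\mathcal{O} : \mathcal{O} \in T,\, \Ng g \in \bg\}\, \cup\, \{(\Ng g \cdot p_\mathcal{O})^{-1} : \mathcal{O} \in T,\, \Ng g \in \bg\}
\]
and verify the four clauses. Property (iii) is immediate. The properties of being a geodesic, essentially penetrating no coset, and having $\bh$-subpath labels in $R$ are each preserved under left $\bg$-translation and under path inversion (the latter using $R^{-1} = R$), yielding (iv) and the non-penetration clause of (i). Existence and uniqueness in (i) reduce to the fact that every $(A, B) \in \mathcal{P}$ lies in exactly one of $\mathcal{O}$ or $\iota(\mathcal{O})$ for a unique $\mathcal{O} \in T$, and the freeness proven in the first paragraph then singles out a unique $\Ng g \in \bg$ realizing it. Property (ii) follows from (i) since $p_1^{-1}$ is itself in $\Path$ and has the prescribed endpoint cosets. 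The main obstacle I anticipate is the orbit analysis of the first paragraph, which simultaneously leverages hypotheses (i) and (ii) of \Cref{thm. general main}: without freeness the equivariant extension cannot be defined consistently, and without the distinctness of $\mathcal{O}$ from $\iota(\mathcal{O})$ one cannot simultaneously enforce (iii) and (ii).
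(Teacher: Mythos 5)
Your proof is correct and matches the paper in all essential respects; the difference is largely one of bookkeeping. The paper organizes the construction around the set $\FD$ of double cosets $\bh\Ng g\bh$, proves a Zorn-type decomposition $\FD=\FD_+\sqcup\FD_-$ (\Cref{lem. fundamental domain}) where the ``no $\iota$-fixed orbit'' content appears as the impossibility of $\bh\Ng g\bh=\bh\Ng g^{-1}\bh$ for $\Ng g\notin\bh$, and then verifies uniqueness in (i) via a hands-on quadrilateral argument in $\mathrm{Cay}(\bg,\al A)$ plus malnormality. You recast double cosets as $\bg$-orbits of ordered pairs of left cosets (an equivalent encoding), prove freeness of the $\bg$-action on $\mathcal{P}$ upfront (same malnormality computation), and derive uniqueness abstractly from freeness instead of re-running the malnormality argument inside the Cayley graph. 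What your version buys is that malnormality and torsion-freeness are each isolated into a single clean statement about the action (freeness and absence of $\iota$-fixed orbits, respectively), rather than being re-invoked inside the case analysis; what the paper's version buys is that it stays entirely inside the combinatorial framework of $\Path_0$ and never needs to introduce the orbit space explicitly. Your shortcut argument for why $p_\mathcal{O}$ penetrates neither endpoint coset, and the observation that the relabeling to $R$ preserves length, endpoints, and $\widehat\ell$, both fill details the paper leaves implicit in conditions (P1)--(P4), and are sound.
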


To prove \Cref{prop. equivariant paths}, we start with an auxiliary result. Let
\[\overline{\FD}=\{\bh\Ng g\bh\mid \Ng g\in \bg\smallsetminus \bh\}\]
be the set of double cosets of $\bh$. Let $\FD$ be the subset of $\overline{\FD}$ consisting of double cosets $\bh\Ng g\bh$ such that there exists a geodesic $p$ in $\mathrm{Cay}(\bg,\al A)$ satisfying the following properties:

\begin{enumerate}[label=(P\arabic*)]
    \item\label{item. Domain 1} $p^-=1,p^+\in \bh\Ng g\bh$;
    \item\label{item. Domain 2} $p$ does not essentially penetrate any left $\bh$-coset; and
    \item\label{item. Domain 3} $\ell(p)\leqslant \d(1,\Ng f)$ for all $\Ng f\in \bh\Ng g\bh$.
\end{enumerate}
Note that $\FD\neq \emptyset$. Indeed, as $\bh\neq \bg$, there exists $\Ng x\in \overline X\smallsetminus \bh$, and we have $\bh \Ng x \bh\in \FD$.

\begin{lem}\label{lem. fundamental domain}
There exists a disjoint-union decomposition
\[\FD=\FD_+\sqcup \FD_-\]
such that for all $\bh\Ng g\bh\in\FD$, exactly one of $\bh\Ng g\bh$ and $\bh\Ng g^{-1}\bh$ belongs to $\FD_+$.
\end{lem}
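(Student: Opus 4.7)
The plan is to verify that the inversion map $\sigma\colon \bh\Ng g\bh \mapsto \bh\Ng g^{-1}\bh$ restricts to a fixed-point-free involution of $\FD$; the decomposition $\FD = \FD_+ \sqcup \FD_-$ is then obtained by choosing a transversal $\FD_+$ for the two-element orbits of $\sigma$ and setting $\FD_- := \sigma(\FD_+)$.

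The essential step is fixed-point freeness: I claim that $\bh\Ng g\bh \ne \bh\Ng g^{-1}\bh$ for every $\Ng g \in \bg \setminus \bh$. Suppose for contradiction the two double cosets coincide. Then $\Ng g^{-1} = \Ng h_1 \Ng g \Ng h_2$ for some $\Ng h_1, \Ng h_2 \in \bh$, which after rearrangement yields $(\Ng g\Ng h_1)^2 = \Ng h_2^{-1}\Ng h_1 \in \bh$. Set $\Ng x := \Ng g\Ng h_1$. If $\Ng x \in \bh$, then $\Ng g \in \bh$, contrary to assumption. Otherwise $\Ng x \notin \bh$, and $\Ng x^2$ lies in $\Ng x \bh \Ng x^{-1} \cap \bh$, so malnormality of the image of $H$ in $\bg$ (hypothesis~\ref{item. malnormal}) forces $\Ng x^2 = 1$; the absence of order-$2$ elements in $\bg$ (hypothesis~\ref{item. no involution}) then forces $\Ng x = 1$, once again contradicting $\Ng x \notin \bh$. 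Thus both algebraic hypotheses on $\bg$ are genuinely used.

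It remains to verify $\sigma(\FD) \subseteq \FD$. Given $\bh\Ng g\bh \in \FD$ with witness geodesic $p$ satisfying~\ref{item. Domain 1}--\ref{item. Domain 3}, the translated inverse $p' := (p^+)^{-1} \cdot p^{-1}$ is a geodesic from $1$ to $(p^+)^{-1} \in \bh\Ng g^{-1}\bh$ of length $\ell(p) = \ell(p')$. Since left translation is an isometry of $\mathrm{Cay}(\bg,\al A)$ permuting left $\bh$-cosets and path reversal only flips the order of traversal, the property~\ref{item. Domain 2} passes from $p$ to $p'$. Moreover, $\Ng f \mapsto \Ng f^{-1}$ is a length-preserving bijection $\bh\Ng g\bh \to \bh\Ng g^{-1}\bh$, so the minimum of $\mathrm{d}(1,\cdot)$ over $\bh\Ng g^{-1}\bh$ equals the minimum over $\bh\Ng g\bh$, which is $\ell(p)$; hence $p'$ also satisfies~\ref{item. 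Domain 3} and witnesses $\bh\Ng g^{-1}\bh \in \FD$.

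The principal obstacle, and really the only nontrivial point, is the fixed-point freeness in the second paragraph, where the interaction of hypotheses~\ref{item. no involution} and~\ref{item. malnormal} is essential; the $\sigma$-invariance of $\FD$ and the final choice of transversal are routine.
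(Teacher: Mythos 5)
Your proof is correct, and the crucial algebraic step---using malnormality together with the absence of order-$2$ elements to show $\bh\Ng g\bh \neq \bh\Ng g^{-1}\bh$ for $\bh\Ng g\bh \in \FD$---is the same computation the paper carries out. The packaging differs: the paper forms a poset of partial decompositions $(\FE_+,\FE_-)$ compatible with the inversion involution, applies Zorn's lemma to obtain a maximal one, and then uses maximality plus fixed-point-freeness to conclude; you instead note directly that $\sigma$ is a fixed-point-free involution of $\FD$ and take a transversal of its two-element orbits. Both routes invoke choice, so neither is more elementary, but your version is arguably cleaner to state. One genuine addition in your write-up is the explicit verification that $\sigma(\FD)\subseteq\FD$ (via translating and reversing the witness geodesic and checking that conditions~\ref{item. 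Domain 1}--\ref{item. Domain 3} are preserved). The paper's Zorn argument considers only the case where both $\bh\Ng g\bh$ and $\bh\Ng g^{-1}\bh$ lie in $\FD_-$ and does not explicitly exclude the possibility that $\bh\Ng g^{-1}\bh$ is absent from $\FD$ altogether; your stability observation shows that this possibility never arises, which closes a small unstated step in the paper's argument. (Alternatively, that step can be handled directly: if $\bh\Ng g^{-1}\bh\notin\FD$ one may still move $\bh\Ng g\bh$ into $\FE_+$ without violating the compatibility condition, contradicting maximality.)
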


\begin{proof}
Consider the family $\FF$ of pairs $(\FE_+,\FE_-)$ such that
\begin{enumerate}[label=(\arabic*)]
    \item $\FD=\FE_+\sqcup \FE_-$;
    \item if $\bh\Ng g\bh\in \FE_+$, then $\bh\Ng g^{-1}\bh\not\in\FE_+$;
\end{enumerate}

Note that $\FF$ is non-empty as $(\emptyset,\FD)\in \FF$. Note also that $\FF$ is partially ordered as follows: $(\FE_+,\FE_-)\prec (\FE'_+,\FE'_-)$ if and only if $\FE_+\subseteq \FE'_+$. Note that every totally ordered subset of $\FF$ has an upper bound, i.e., the one obtained by taking the union of the first factors and intersection of the second factors. So Zorn's lemma implies that $\FF$ has a maximal element $(\FD_+,\FD_-)$.

We claim that $(\FD_+,\FD_-)$ satisfies the requirement of the lemma. Suppose that there is $\bh\Ng g\bh\in \FD$ such that $\bh\Ng g\bh,\bh\Ng g^{-1}\bh\in \FD_-$. If $\bh\Ng g\bh\neq \bh\Ng g^{-1}\bh$, then by adding $\bh\Ng g\bh$ to $\FD_+$ and deleting $\bh\Ng g\bh$ from $\FD_-$ we get an element of $\FF$ greater than $(\FD_+,\FD_-)$, a contradiction. Therefore, we have $\bh\Ng g\bh=\bh\Ng g^{-1}\bh$, or $\bh\Ng g=\Ng g^{-1}\bh$. So there exist $\Ng h_1,\Ng h_2\in \bh$ such that $\Ng h_1\Ng g=\Ng g^{-1}\Ng h_2$ and thus
\[(\Ng h_1\Ng g)^2=\Ng h_1\Ng g\Ng g^{-1}\Ng h_2=\Ng h_1\Ng h_2\in \bh,\]
and
\[(\Ng h_1\Ng g)^2=\Ng g^{-1}\Ng h_2\Ng h_1\Ng g\in \Ng g^{-1}\bh \Ng g.\]
Therefore,
\[(\Ng h_1\Ng g)^2\in \bh\cap \Ng g^{-1}\bh \Ng g.\]
As $\bg$ does not have order-$2$ elements, $(\Ng h_1\Ng g)^2\neq 1$. The malnormality of $\bh$ then implies that $\Ng g\in \bh$, which contradicts the definition of $\FD$.
\end{proof}

Let $\Path_0$ be a set of paths in $\mathrm{Cay}(\bg,\al{A})$ such that for every $\bh\Ng g\bh\in \FD_+$, there is a unique path $p\in\Path_0$ that satisfies \ref{item. Domain 1}, \ref{item. Domain 2}, \ref{item. Domain 3}, and additionally

\begin{enumerate}[label=(P\arabic*)]
\setcounter{enumi}{3}
    \item\label{item. R 2} For each $H$-subpath $q$ of each path $p\in \Path_0$, we have $\lab(q)\in R$.
\end{enumerate}

Let
\[\Path=\{\Ng g\cdot p\mid p\in \Path_0\}\cup \{\Ng g\cdot p^{-1}\mid p\in \Path_0\}.\]

\begin{proof}[Proof of \Cref{prop. equivariant paths}]
By construction, $\Path$ satisfies items \ref{item. P equivariant} and \ref{item. R}. We prove that $\Path$ also satisfies items \ref{item. p exist and unique} and \ref{item. P flip}. Let $\Ng f\bh\neq \Ng g\bh$ be two distinct cosets of $\bh$ such that $\d_C(\Ng f\bh,\Ng g\bh)=1$. We first find a path $p_1\in\Path$ such that $p_1^-\in \Ng f\bh,p_1^+\in \Ng g\bh$.

Note that either $\bh\Ng f^{-1}\Ng g\bh$ or $\bh\Ng g^{-1}\Ng f\bh$ belongs to $\FD_+$. Without loss of generality we may assume $\bh\Ng g^{-1}\Ng f\bh\in\FD_+$. Then $\Path_0$ contains a geodesic $p_0$ with $p^-=1$ and $p^+\in \bh\Ng g^{-1}\Ng f\bh$. By multiplying $p^{-1}_0$ by some $\Ng h_0\in \bh$, we get a path $\Ng h_0\cdot p^{-1}_0\in\Path$ going from $\Ng g^{-1}\Ng f\bh$ to $\bh$. Then $p_1=\Ng g\Ng h_0\cdot p^{-1}_0\in \Path$ is a geodesic with $p^-_1\in\Ng f\bh$ and $p^+_1\in\Ng g\bh$.

Now, suppose that there is another path $p_2\in \Path$ with $p^-_2\in \Ng f\Ng H$ and $p^+_2\in \Ng g\Ng H$. There are two cases to consider:

\textbf{Case 1.} There exists a path $p_3\in \Path_0$ and an element $\Ng k_1\in \bg$ such that $p_2=\Ng k_1\cdot p_3$.

Then we have $\Ng k_1=\Ng k_1\cdot p^-_3=p^-_2\in \Ng f\bh$. So there exists some $\Ng h_1\in \bh$ such that $\Ng k_1=\Ng f\Ng h_1$. By construction, $p^+_3\in \bh\Ng k_2\bh$ for some $\bh\Ng k_2\bh\in \FD_+$. So there exist $\Ng h_2,\Ng h_3\in \bh$ such that $p^+_3=\Ng h_2\Ng k_2\Ng h_3$. We have 

\[\Ng f\Ng h_1\Ng h_2\Ng k_2\Ng h_3=\Ng k_1\Ng h_2\Ng k_2\Ng h_3=\Ng k_1\cdot p^+_3=p^+_2\in \Ng g\bh.\]

So $\Ng h_2\Ng k_2\Ng h_3\in \Ng h^{-1}_1\Ng f^{-1}\Ng g\bh\subset \bh\Ng f^{-1}\Ng g\bh$. It follows that $\bh\Ng f^{-1}\Ng g\bh=\bh\Ng k_2\bh\in \FD_+$. But then both $\bh\Ng g^{-1}\Ng f\bh$ and $\bh\Ng f^{-1}\Ng g\bh$ belong to $\FD_+$, which violates \Cref{lem. fundamental domain}.

\textbf{Case 2.} There exists a path $p_3\in \Path_0$ and an element $\Ng k_1\in G$ such that $p_2=\Ng k_1\cdot p^{-1}_3$.

Then we have $\Ng k_1=\Ng k_1\cdot p^-_3=p^+_2\in \Ng g\bh$. So there exists some $\Ng h_1\in \bh$ such that $\Ng k_1=\Ng g\Ng h_1$. By construction, $p^+_3\in \bh\Ng k_2\bh$ for some $\bh \Ng k_2\bh\in \FD$. So there exist $\Ng h_2,\Ng h_3\in \bh$ such that $p^+_3=\Ng h_2\Ng k_2\Ng h_3$. We have 

\[\Ng g\Ng h_1\Ng h_2\Ng k_2\Ng h_3=\Ng k_1\Ng h_2\Ng k_2\Ng h_3=\Ng k_1\cdot p^+_3=p^-_2\in \Ng f\bh.\]

So $\Ng h_2\Ng k_2\Ng h_3\in \Ng h^{-1}_1\Ng g^{-1}\Ng f\bh\subset \bh\Ng g^{-1}\Ng f\bh$. Therefore, $p_3$ is a path in $\Path_0$ that goes from $1$ to $\bh\Ng g^{-1}\Ng f\bh$. Note that $p_0$ is another such path. By the construction of $\Path_0$, we have $p_3=p_0$. So 

\[\lab(p_1)\equiv (\lab(p_0))^{-1}\equiv \lab(p_2).\]

Let $e_1$ (resp. $e_2$) be the edge of $\mathrm{Cay}(\bg,\al A)$ labeled by a letter $\Ng h'_1\in \bh$ (resp. $\Ng h'_2\in \bh$) such that $e^-_1=p^-_1$ (resp. $e^-_2=p^+_2$) and $e^+_1=p^-_2$ (resp. $e^+_2=p^+_1$). Then $e_1p_2e_2p^{-1}_1$ is a quadrilateral in $\mathrm{Cay}(\bg,\al A)$. So its label represents the identity in $\bg$, i.e.,

\[\Ng h'_1\lab(p_0)\Ng h'_2(\lab(p_0))^{-1}=_{\bg} 1.\]

Therefore, $\Ng h'_1\in \lab(p_0)\bh(\lab(p_0))^{-1}$. As $p_0\in \Path_0$ and $\bh\not\in \FD_+$, we have $\lab(p_0)\not\in_{\bg} \bh$. So the malnormality of $\bh$ implies that $\Ng h'_1=1$, which in turn implies that $\Ng h'_2=1$. So $p_1=p_2$.
\end{proof}

\subsection{Path representative}

\begin{prop}
    There exists a map $\phi$ from $\bg$ to the set of paths in $\rm{Cay}(\bg,\al A)$ satisfying the following. For all $\Ng g\in\bg$, the path $\phi(\Ng g)$ starts at $1$ and decomposes as

    \begin{equation}\label{Eq:phig}
    \phi(\Ng g)=e_0p_1e_1p_2\cdots p_{k+1}e_{k+1},
    \end{equation}
   where

    \begin{enumerate}
        \item[(i)] $k=|S(1,\Ng g)|-2$;
        
        \item[(ii)] for $i=0,...,k+1$, there is a unique $\bh$-coset $\Ng f\bh\in S(1,\Ng g)$ such that $e_i$ is an edge in $\Ng f\cdot\rm{Cay}(\bh,\bh)\subseteq \rm{Cay}(\bg,\al A)$;

        \item[(iii)] for $i=1,...,k$, $e_i$ is a non-trivial edge, although $e_0$ and $e_{k+1}$ might possibly be trivial;

        \item[(iv)] for $i=1,...,k+1$, $p_i$ is a path in $\Path$ connecting the elements of $S(1,\Ng g)$ containing $e_{i-1}$ and $e_i$, respectively;

        \item[(v)] the edges $e_0,...,e_{k+1}$ are all labeled by elements of $R$.
    \end{enumerate}
\end{prop}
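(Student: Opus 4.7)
The plan is to define $\phi(\Ng g)$ directly from the separating coset list $S(1, \Ng g)$. Enumerate $S(1, \Ng g)$ in the linear order $<_{1,\Ng g}$ provided by \Cref{lem. basic of separating cosets}(i) as
$$
\bh \;=\; \Ng f_0 \bh \;<_{1,\Ng g}\; \Ng f_1 \bh \;<_{1,\Ng g}\; \cdots \;<_{1,\Ng g}\; \Ng f_{k+1} \bh \;=\; \Ng g \bh,
$$
so $k = |S(1, \Ng g)| - 2$. (The degenerate case $\Ng g \in \bh$ is handled separately: take $\phi(\Ng g)$ to be a single $R$-labeled edge from $1$ to $\Ng g$.)

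The first technical step is to show that consecutive cosets $\Ng f_i \bh$ and $\Ng f_{i+1} \bh$ have coset distance $1$. I would argue by contradiction: if every minimum-distance pair $(u, v) \in \Ng f_i \bh \times \Ng f_{i+1} \bh$ admitted an essentially penetrated $\Ng k \bh \in S'(u, v)$, then concatenating a geodesic from $1$ to $u$, a geodesic from $u$ to $v$ essentially penetrating $\Ng k \bh$, and a geodesic from $v$ to $\Ng g$ produces (by additivity of the lengths) a geodesic from $1$ to $\Ng g$ essentially penetrating $\Ng k \bh$ between the points where it visits $\Ng f_i \bh$ and $\Ng f_{i+1} \bh$. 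Hence $\Ng k \bh \in S'(1, \Ng g)$ would lie strictly between $\Ng f_i \bh$ and $\Ng f_{i+1} \bh$ in $<_{1, \Ng g}$, contradicting consecutiveness.

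Given coset distance $1$, \Cref{prop. equivariant paths}(i) supplies unique paths $p_{i+1} \in \Path$ from $\Ng f_i \bh$ to $\Ng f_{i+1} \bh$ for $0 \leq i \leq k$. I then glue them by $R$-labeled edges: $e_0$ is the edge in $\bh$ from $1$ to $p_1^-$ labeled by the unique $r \in R$ representing $p_1^-$; for $1 \leq i \leq k$, $e_i$ is the edge in $\Ng f_i \cdot \mathrm{Cay}(\bh, \bh)$ from $p_i^+$ to $p_{i+1}^-$ labeled by the unique $r \in R$ representing $(p_i^+)^{-1} p_{i+1}^- \in \bh$; and $e_{k+1}$ is the analogous $R$-labeled edge in $\Ng g \bh$ from $p_{k+1}^+$ to $\Ng g$. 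These edges exist and are unique because $R$ bijects with $\bh = H/N$. Setting $\phi(\Ng g) := e_0 p_1 e_1 p_2 \cdots p_{k+1} e_{k+1}$, the items (i), (ii), (iv), and (v) hold by construction.

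The main obstacle is item (iii), the non-triviality $p_i^+ \neq p_{i+1}^-$ for $1 \leq i \leq k$. To prove it, I would exploit the essential penetration of $\Ng f_i \bh$: pick a geodesic $\gamma$ from $1$ to $\Ng g$ whose single-edge $\bh$-component $q$ inside $\Ng f_i \bh$ satisfies $\widehat\ell(q) > 7 \overline D$. Assuming $p_i^+ = p_{i+1}^-$, I would assemble a geodesic polygon whose sides are $p_i$, $p_{i+1}$, the edge $q$, and short geodesic segments bridging the endpoints. Since neither $p_i$ nor $p_{i+1}$ essentially penetrates any coset, $q$ appears as an isolated $\bh$-component of this polygon, and \Cref{prop. geodesic polygon} bounds $\widehat\ell(q) \leq c \overline D$ for a small integer $c$ depending only on the bounded number of sides, contradicting $\widehat\ell(q) > 7 \overline D$. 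Verifying that $q$ is genuinely isolated, and counting the sides of the polygon carefully, is the most delicate point; the margin $C = 7\overline D$ in the definition of essential penetration, together with the global bound $\widehat d(1, n) > 63 \overline D$ on $N \setminus \{1\}$, feeds into making the bookkeeping close.
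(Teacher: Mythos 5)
Your construction of $\phi(\Ng g)$ is identical to the paper's: enumerate $S(1,\Ng g)$ in the order $<_{1,\Ng g}$, thread $\Path$-geodesics between consecutive cosets, and glue them with $R$-labeled $\bh$-edges. The paper's own proof simply writes this down without checking either of the two tacit facts you correctly isolate, namely that consecutive cosets of $S(1,\Ng g)$ have coset distance $1$ (so that the uniqueness clause of \Cref{prop. equivariant paths} applies) and that the intermediate edges $e_1,\dots,e_k$ are non-trivial; so you are going beyond the paper here, which is welcome.

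The one real gap is the phrase ``by additivity of the lengths.'' For an arbitrary minimum-distance pair $(u,v)\in\Ng f_i\bh\times\Ng f_{i+1}\bh$ it is not automatic that $\overline\d(1,\Ng g)=\overline\d(1,u)+\overline\d(u,v)+\overline\d(v,\Ng g)$, i.e.\ that $(u,v)$ lies on a geodesic from $1$ to $\Ng g$. The fix: take any geodesic $p$ from $1$ to $\Ng g$, let $a'$, $a$ be the entry and exit points of $\Ng f_i\bh$ and $b$, $b'$ those of $\Ng f_{i+1}\bh$, and reroute the middle portion of $p$ as $a'\to u\to v\to b'$ (with $\bh$-edges bridging inside the two cosets). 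The new path has length at most $\ell(p)=\overline\d(1,\Ng g)$, forcing $\overline\d(u,v)=\overline\d(a,b)$; hence $(a,b)$ is itself a minimum-distance pair and does lie on $p$. Now splicing into $p$ a geodesic from $a$ to $b$ that essentially penetrates some $\Ng k\bh$ gives the contradiction via part~(i) of \Cref{lem. basic of separating cosets}. Your polygon argument for item~(iii) is also the right idea, though the closed polygon is really a $7$-gon: sides $p_i$, $p_{i+1}$, $q^{-1}$, the two subsegments of $\gamma$ reversed, and two $\bh$-edges bridging inside $\Ng f_{i-1}\bh$ and $\Ng f_{i+1}\bh$. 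The delicate isolation of $q^{-1}$ you flag follows because condition (P3) together with left-invariance of $\overline\d$ forces paths in $\Path$ to neither begin nor end with an $\bh$-edge (an $\bh$-edge at an endpoint would produce a strictly closer point of the double coset), so neither $p_i$ nor $p_{i+1}$ carries an $\bh$-component in $\Ng f_i\bh$, and \Cref{prop. geodesic polygon} then gives $\widehat\ell(q)\leqslant 7\overline D$, contradicting essential penetration.
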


\begin{proof}
    Fix any $\Ng g\in \bg$. Let

    \[\bh<_{1,\Ng g}\Ng g_1\bh <_{1,\Ng g}\cdots<_{1,\Ng g}\Ng g_k\bh <_{1,\Ng g} \Ng g\bh\]
    be the elements of $S(1,\Ng g)$. For $i=1,...,k+1$, let $p_i$ be the unique path in $\Path$ with $p^-_i\in \Ng g_{i-1}\bh$ and $p^+_i\in \Ng g_i$. For $i=1,...,k$, we let $e_i$ be the edge in $\Ng g_i\cdot\rm{Cay}(\bh,\bh)$ with $e^-_i=p^+_i,e^+_i=p^-_{i+1}$ and $\lab(e_i)\in R$. Let also $e_0$ (resp. $e_{k+1}$) be the possibly trivial edge in $\rm{Cay}(\bh,\bh)$ (resp. $\Ng g\cdot\rm{Cay}(\bh,\bh)$) with $e^-_0=1$ (resp. $e^-_{k+1}=p^+_{k+1}$), $e^+_0=p^-_1$ (resp. $e^+_{k+1}=\Ng g$) and $\lab(e_0)\in R$ (resp. $\lab(e_{k+1})\in R$). Finally, we define $\phi(\bg)$ by \eqref{Eq:phig}.
\end{proof}

\subsection{Some estimates}

Recall that $\family D$ denotes the set of the van Kampen diagrams over \eqref{eq. pre G} satisfying items \ref{item. D1}, \ref{item. D2}, and \ref{item. D3}. For this subsection, let $\Delta\in\family D$ be a good diagram that satisfies the following additional properties:

\begin{enumerate}
    \item[(i)] $\pext\Delta$ decomposes as a concatenation of six paths:

    \[\pext\Delta=e_1p_1e_2p_2e_3p_3.\]
    For $i=1,2,3$, $e_i$ is a (possibly non-essential) edge labeled by an element of $H$. $\lab(p_i)$ labels a path in $\rm{Cay}(\bg,\al A)$ that is a concatenation of at most five geodesics. The initial and terminal letters of $\lab(p_i)$ do not belong to $H$.

    \item[(ii)] $\pint\Delta\neq \emptyset$ and each component of $\pint\Delta$ is connected to an $H$-component of $\Delta$ lying in one of $p_1,p_2,p_3$.
\end{enumerate}

\begin{lem}\label{lem. connect means long}
    For any component $c$ of $\pint\Delta$, there is an $H$-component $q$ of $\pext\Delta$ connected to $c$ with $\widehat\ell(q)>7\overline D$.
\end{lem}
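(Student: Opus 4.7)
The plan is to argue by contradiction. Suppose some component $c$ of $\pint\Delta$ is such that every $H$-component of $\pext\Delta$ connected to $c$ has $\widehat\ell\leqslant 7\overline D$; in particular, by \Cref{lem. good diagram}(i), the unique $H$-component $q_c$ joined to $c$ through the cut $t_c$ satisfies $\widehat\ell(q_c)\leqslant 7\overline D$. I will apply the cutting-and-replacing process to $\Delta$ to obtain a polygon $p_\Delta$ in $\mathrm{Cay}(G,\al A)$. Since $\pext\Delta=e_1p_1e_2p_2e_3p_3$ with each $e_i$ a single edge and each $\lab(p_i)$ a concatenation of at most five geodesics in $\mathrm{Cay}(\bg,\al A)$, the polygon $p_\Delta$ has at most $3+15=18$ geodesic sides. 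The $H$-subpath $q_c$ is replaced in $p_\Delta$ by a single edge labeled $h_c:=n\cdot\lab(q_c)$, where $n=\lab(c)\in N\smallsetminus\{1\}$. The triangle inequality and left-$H$-invariance of $\hatrmd$, together with $\hatrmd(1,n)>63\overline D$ and $\hatrmd(1,\lab(q_c))\leqslant 7\overline D$, then yield
\[
\hatrmd(1,h_c)\ \geqslant\ \hatrmd(1,n)-\hatrmd(1,\lab(q_c))\ >\ 56\overline D.
\]

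The heart of the proof is to show that $h_c$ is an isolated $H$-component of $p_\Delta$. The other $H$-edges of $p_\Delta$ fall into three classes: (a) the edges $e_i$ when essential, (b) the replacement edges $h_{c'}$ for the remaining components $c'$ of $\pint\Delta$, and (c) the unreplaced $H$-components of the geodesic sides coming from the $p_i$'s. In case (b), $h_c$ and $h_{c'}$ are connected in $p_\Delta$ if and only if the vertices $\mu(V_c)$ and $\mu(V_{c'})$ (where $V_c$, $V_{c'}$ are the single vertices of the loops $c$, $c'$) lie in a common $H$-coset of $G$, which unwinding \Cref{sec. surgery} is precisely connectedness of $c$ and $c'$ as $H$-subpaths of $\Delta$; this is excluded by \Cref{lem. good diagram}(ii). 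In cases (a) and (c), connectedness of $h_c$ with an $H$-edge $f$ of $p_\Delta$ forces the corresponding $H$-component of $\pext\Delta$ to be connected to $c$ in $\Delta$; inserting the single $H$-letter witnessing this connection via \Cref{lem. adding path} should produce a diagram in $\family D(\lab(\pext\Delta))$ in which the hole bounded by $c$ can be absorbed into the boundary through $f$, yielding a diagram of strictly smaller type and contradicting the minimality of $\Delta$.

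Once isolation of $h_c$ is established, I will split the geodesic side of $p_\Delta$ containing $h_c$ as $s_1\cdot h_c\cdot s_2$, so that $p_\Delta$ becomes a polygon with at most $20$ geodesic sides. Applying \Cref{prop. geodesic polygon} with the polygonal constant $\overline D$ (valid for $\mathrm{Cay}(G,\al A)$ by the remark after \Cref{thm. dehn filling}) then gives $\hatrmd(1,h_c)\leqslant 20\overline D$, contradicting $\hatrmd(1,h_c)>56\overline D$. The hard part will be the surgery step for establishing isolation in cases (a) and (c): the connection with $f$ provides only a single $H$-letter shortcut in $\mathrm{Cay}(G,\al A)$, and one must leverage it together with the good-diagram structure to genuinely fill or re-route the hole $c$ into the boundary while keeping $\pext\Delta$ and the remaining cut system intact.
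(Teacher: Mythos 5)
Your argument for the case where $c$ is connected to a \emph{unique} $H$-component of $\pext\Delta$ is essentially the paper's Case~1: apply the cutting-and-replacing process, note $h_c$ is isolated in a polygon with $\leqslant 18$ (hence after splitting the side, $\leqslant 20$) geodesic sides, apply \Cref{prop. geodesic polygon} to get $\hatrmd(1,h_c)\leqslant 20\overline D$, and compare with the lower bound from \eqref{eq. very long}. That part is fine.

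The gap is in your handling of the situation where $h_c$ turns out \emph{not} to be isolated, i.e.\ where $c$ is connected to some other $H$-component $q'$ of $\pext\Delta$ besides $q_c$. You claim this contradicts the minimality of the type of $\Delta$ by ``absorbing the hole $c$ into the boundary through $f$.'' This contradiction does not materialize. A good diagram (\Cref{lem. good diagram}) is of minimal type \emph{by construction}, and minimality only guarantees, via \Cref{lem. surgery}, that each $c$ is connected to \emph{at least one} $H$-subpath of $\pext\Delta$ — it does not forbid $c$ from being connected to several. There is no move that reduces the type: re-routing the cut $t_c$ to aim at $q'$ instead of $q_c$ leaves both the number of holes and the total cut length unchanged (the cut is a single non-essential edge either way), and the hole $c$ itself cannot be filled in since its label lies in $N\smallsetminus\{1\}$ and is not a relator of $G$. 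So the very situation you try to rule out — $c$ connected to two or three $H$-components, all of small $\widehat\ell$ — is a genuine possibility for a minimal-type good diagram, and isolation of $h_c$ simply fails there.

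The paper treats those cases by a different mechanism, not by contradicting minimality. When $c$ is connected to $q_1,\dots,q_m$ ($m=2$ or $3$), one uses \Cref{lem. adding path} to insert single $H$-edges $t_i$ joining consecutive $q_i$'s; the subdiagrams cut off between consecutive connections are themselves good diagrams whose boundaries label polygons with few geodesic sides, which, via \Cref{lem. estimate of hat length}, gives bounds on $\widehat\ell(t_i)$; and the loop $q_1 t_1 q_2 t_2 \cdots c$ lives in a single $H$-coset of $\mathrm{Cay}(G,\al A)$, so the triangle inequality in $\hatrmd$ bounds $\widehat\ell(c)$ by the sum $\sum_i \widehat\ell(q_i) + \sum_i\widehat\ell(t_i)$. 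Assuming all $\widehat\ell(q_i)\leqslant 7\overline D$ then forces $\widehat\ell(c)\leqslant 47\overline D$ (two connections) or $\leqslant 62\overline D$ (three), contradicting \eqref{eq. very long} directly — not minimality. Your proposal has no replacement for this step, so as written it only establishes the lemma when $c$ happens to be connected to exactly one $H$-component of $\pext\Delta$.
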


\begin{proof}
    We split our argument into three cases.

    \textbf{Case 1.} $c$ is connected to a unique $H$-component $q$ of $\pext\Delta$.

    The cut system of $\Delta$ contains an edge $e$ with $\lab(e)=1$ connecting $q^-$ with the vertex of $c$. We have $\lab(cq)=_G h$ for some $h\in H$. By applying a cutting and replacing process to $\Delta$, we obtain a polygon in $\rm{Cay}(G,\al A)$ with at most eighteen geodesic sides and with $h$ labeling an isolated $H$-component. By \Cref{lem. estimate of hat length}, we have $\hatrmd(1,h)\leqslant 20\overline D$.
    Thus,

    \[\widehat\ell(q)\geqslant \widehat\ell(c)-\hatrmd(1,h)> 43\overline D.\]

    \textbf{Case 2.} $c$ is connected to exactly two $H$-components of $\pext\Delta$.

    Without loss of generality, we may assume that $c$ is connected to $H$-components $q_1\subset p_1$ and $q_2\subset p_2$. The case where $q_1$ and $q_2$ lie on the same $p_i$ can be treated similarly. We may also assume that the cut corresponding to $c$ is an edge $e$ with $\lab(e)=1$ connecting $c$ with $q^-_1$. By \Cref{lem. surgery}, we may assume that $\Delta$ contains edges $t_1,t_2$ with $\lab(t_1),\lab(t_2)\in H$ and $t^-_1=q^+_1,t^+_1=q^-_2,t^-_2=q^+_2,t^+_2=q^-_1$, as shown by \Cref{fig. case 2}.

    \begin{figure}[ht!]
        \centering

\tikzset{every picture/.style={line width=0.75pt}} 

\begin{tikzpicture}[x=0.75pt,y=0.75pt,yscale=-1,xscale=1]

\draw    (324,237) -- (424,137) ;

\draw    (121,244) -- (84,176) ;
\draw    (121,244) -- (324,237) ;
\draw    (294,20) -- (366,20) ;
\draw    (366,20) -- (424,137) ;
\draw    (84,176) -- (127,142) ;
\draw [color={rgb, 255:red, 255; green, 0; blue, 0 }  ,draw opacity=1 ]   (127,142) -- (158,116) ;
\draw    (158,116) -- (196,86) ;
\draw [color={rgb, 255:red, 255; green, 0; blue, 0 }  ,draw opacity=1 ]   (196,86) -- (238,55) ;
\draw    (238,55) -- (294,20) ;
\draw [color={rgb, 255:red, 255; green, 0; blue, 0 }  ,draw opacity=1 ]   (158,116) .. controls (211,181) and (266,156) .. (196,86) ;
\draw [color={rgb, 255:red, 255; green, 0; blue, 0 }  ,draw opacity=1 ]   (127,142) .. controls (230,309) and (421,219) .. (238,55) ;
\draw   (147,152.5) .. controls (147,145.04) and (153.04,139) .. (160.5,139) .. controls (167.96,139) and (174,145.04) .. (174,152.5) .. controls (174,159.96) and (167.96,166) .. (160.5,166) .. controls (153.04,166) and (147,159.96) .. (147,152.5) -- cycle ;
\draw    (127,142) -- (148,146) ;

\draw (155.5,140) node [anchor=north west][inner sep=0.75pt]    {$c$};
\draw (346,139) node [anchor=north west][inner sep=0.75pt]    {$\Delta _{2}$};
\draw (184,107) node [anchor=north west][inner sep=0.75pt]    {$\Delta _{1}$};
\draw (126,106) node [anchor=north west][inner sep=0.75pt]    {$q_{1}$};
\draw (207,43) node [anchor=north west][inner sep=0.75pt]    {$q_{2}$};

\draw (264,187) node [anchor=north west][inner sep=0.75pt]    {$\Delta_3$};

\end{tikzpicture}
        \caption{Case 2}
        \label{fig. case 2}
    \end{figure}

    In \Cref{fig. case 2}, $\Delta_1$ is a subdiagram of $\Delta$. So $\Delta_1$ is a good diagram with $\lab(\pext\Delta_1)$ labels a polygon in $\rm{Cay}(\bg,\al A)$ with at most six geodesic sides. Moreover, $t_1$ cannot be connected to any component $c_1$ of $\pint\Delta_1$, as in that case $c_1$ would be connected to $c$, which contradicts our assumption that $\Delta$ is a good diagram. So $\widehat\ell(t_1)\leqslant 8\overline D$ by \Cref{lem. estimate of hat length}. Similarly, by considering the subdiagram $\Delta_2$, we get that $\widehat\ell(e_2)\leqslant 25\overline D$. 
    
    Note that the region cannot $\Delta_3$ cannot contain any component $c_1$ of $\pint\Delta$ other than $c$, as otherwise $c_1$ would be connected to $\pext\Delta_3$ by \Cref{lem. surgery}, and thus connected to $c$, violating our definition of a good diagram. Therefore, the word $\lab(q_1t_1q_2t_2c)$ labels a cycle in $\rm{Cay}(G,\al A)$. Assuming $\widehat\ell(p_1),\widehat\ell(p_2)\leqslant 7\overline D$, we get

    \[\widehat\ell(c)\leqslant 47\overline D,\]
    a contradiction with \eqref{eq. very long}.

    \textbf{Case 3.} $c$ is connected to three distinct $H$-components of $\pext\Delta$.

    This time, let us assume that $c$ is connected to three $H$-components of the same side $p_1$. The other case can be treated similarly. Let these $H$-components be $q_1,q_2,q_3$. By \Cref{lem. surgery}, we may assume that $\Delta$ contains edges $t_1,t_2,t_3$ such that $t^-_1=p^+_1,t^+_1=q^-_2,t^-_2=q^+_2,t^+_2=q^-_3,t^-_3=q^+_3,t^+_3=q^-_1$, as shown by \Cref{fig. case 3}.

    \begin{figure}[ht!]
        \centering

\tikzset{every picture/.style={line width=0.75pt}} 

\begin{tikzpicture}[x=0.75pt,y=0.75pt,yscale=-1,xscale=1]

\draw    (344,257) -- (444,157) ;

\draw    (141,264) -- (104,196) ;
\draw    (141,264) -- (344,257) ;
\draw    (314,40) -- (386,40) ;
\draw    (386,40) -- (444,157) ;
\draw    (104,196) -- (128,176) ;
\draw [color={rgb, 255:red, 255; green, 0; blue, 0 }  ,draw opacity=1 ]   (128,176) -- (164,148) ;
\draw    (164,148) -- (183,133) ;
\draw [color={rgb, 255:red, 255; green, 0; blue, 0 }  ,draw opacity=1 ]   (183,133) -- (223,101) ;
\draw    (223,101) -- (244,84) ;
\draw [color={rgb, 255:red, 255; green, 0; blue, 0 }  ,draw opacity=1 ]   (244,84) -- (285,53) ;
\draw    (285,53) -- (314,40) ;
\draw [color={rgb, 255:red, 255; green, 0; blue, 0 }  ,draw opacity=1 ]   (164,148) .. controls (219,227) and (239,180) .. (183,133) ;
\draw [color={rgb, 255:red, 255; green, 0; blue, 0 }  ,draw opacity=1 ]   (223,101) .. controls (271,179) and (305,155) .. (244,84) ;
\draw [color={rgb, 255:red, 255; green, 0; blue, 0 }  ,draw opacity=1 ]   (128,176) .. controls (363,353) and (402,170) .. (285,53) ;
\draw   (153,176) .. controls (153,171.03) and (157.03,167) .. (162,167) .. controls (166.97,167) and (171,171.03) .. (171,176) .. controls (171,180.97) and (166.97,185) .. (162,185) .. controls (157.03,185) and (153,180.97) .. (153,176) -- cycle ;
\draw    (128,176) -- (153,176) ;

\draw (154,170) node [anchor=north west][inner sep=0.75pt]    {$c$};
\draw (129,136) node [anchor=north west][inner sep=0.75pt]    {$q_{1}$};
\draw (185,91) node [anchor=north west][inner sep=0.75pt]    {$q_{2}$};
\draw (250,43) node [anchor=north west][inner sep=0.75pt]    {$q_{3}$};

\end{tikzpicture}
        \caption{Case 3}
        \label{fig. case 3}
    \end{figure}
    
    Similar to Case 2, by applying \Cref{lem. estimate of hat length} and assuming $\widehat\ell(p_1),\widehat\ell(p_2),\widehat\ell(p_3)\leqslant 7\overline D$, we get that $\widehat\ell(c)\leqslant 62\overline D$,
    which contradicts \eqref{eq. very long}.
\end{proof}

\subsection{Proof of \Cref{thm. general main}}

For every $\Ng g\in\bg$, let $\liftS(\Ng g)\in G$ be the element represented by $w_{\Ng g}:=\lab(\phi(\Ng g))$. We will prove that $\liftS(\Ng g)$ satisfies the conclusion of the theorem. Below, we fix $\Ng f,\Ng g\in \bg$.

\begin{claim}
    \Cref{thm. general main} will follow once we have constructed a good diagram $\Delta\in \family D(w_{\Ng f}w_{\Ng g}w^{-1}_{\Ng f\Ng g})$ with

\begin{equation}\label{eq. genus leq 9}
    g(\overline\Delta)\leqslant 9
\end{equation}
(Recall that $g(\overline\Delta)$ denotes the number of holes in $\overline\Delta$).
\end{claim}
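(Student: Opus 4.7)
The plan is to unpack the standard correspondence between a van Kampen diagram in $\family D$ and an explicit expression of its external boundary as a product of conjugates of elements of $N$, one per hole of the diagram.

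First I would verify that the word $w := w_{\Ng f}w_{\Ng g}w^{-1}_{\Ng f\Ng g}$ represents an element of $\ll N\rr$ in $G$. Indeed, each $w_{\Ng x} \equiv \lab(\phi(\Ng x))$ is the label of a path in $\rm{Cay}(\bg,\al A)$ from $1$ to $\Ng x$, so it represents $\Ng x$ in $\bg$; hence $w$ represents $1$ in $\bg$, i.e., $\liftS(\Ng f)\liftS(\Ng g)\liftS(\Ng f\Ng g)^{-1} \in \ll N\rr$. By \Cref{Lem:diag}, $\family D(w) \neq \emptyset$, and the hypothesis of the claim supplies a good $\Delta \in \family D(w)$ with $k := g(\overline\Delta) \leq 9$.

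Next I would extract an explicit factorization from $\Delta$. Cutting $\Delta$ along its cut system $\{t_1,\ldots,t_k\}$ yields a disk van Kampen diagram $\widetilde\Delta$ over \eqref{eq. pre G}, whose boundary label, read from a chosen base point on $\pext\Delta$, factors as
$$\lab(\partial\widetilde\Delta) \equiv w_0\, u_1 v_1 u_1^{-1}\, w_1\, u_2 v_2 u_2^{-1}\, w_2 \cdots u_k v_k u_k^{-1}\, w_k,$$
where $w_0 w_1 \cdots w_k \equiv w$ is the induced factorization of $\pext\Delta$ at the feet of the cuts, $u_i \equiv \lab(t_i)$, and $v_i \equiv \lab(c_i) \in \al S$ represents some $m_i \in N$.

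Finally, since $\widetilde\Delta$ is a disk diagram its boundary label equals $1$ in $G$. A short telescoping identity (inserting trivial factors $(w_i \cdots w_k)(w_i \cdots w_k)^{-1}$ between successive conjugated pieces) then yields
$$w \;=_G\; \prod_{i=k}^{1} g_i\, n_i\, g_i^{-1},$$
where $g_i \in G$ is represented by $(w_i \cdots w_k)^{-1} u_i$ and $n_i := m_i^{-1} \in N$. Since $k \leq 9$, this is precisely the desired expression in Theorem \ref{thm. general main}.

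I do not expect any genuine obstacle here: every step is a routine van Kampen diagram manipulation together with a short algebraic rearrangement. The real content of Theorem \ref{thm. general main} lies in the still-pending construction of the good bounded-genus diagram itself, not in this reduction.
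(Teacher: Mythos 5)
Your proof is correct and follows essentially the same approach as the paper: cut $\Delta$ along its cut system to obtain a disk diagram $\widetilde\Delta$ whose boundary label equals $1$ in $G$, read this label as a product interleaving external-boundary pieces with (conjugated) internal-component labels, and telescope to express $w_{\Ng f}w_{\Ng g}w^{-1}_{\Ng f\Ng g}$ as a product of at most $g(\Delta)\leqslant 9$ conjugates of elements of $N$. The only cosmetic differences are that you telescope from the right (conjugating by suffixes $(w_i\cdots w_k)^{-1}u_i$, which differ from the paper's prefix conjugators $w_0\cdots w_{i-1}$ by a harmless global conjugation by $w$) and that you carry the cut labels $u_i$ explicitly, though in a good diagram these are labeled by $1$ and so drop out.
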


\begin{proof}[Proof of the claim]
    Let $\widetilde\Delta$ be the disk diagram obtained by cutting $\overline\Delta$ along all cuts. Then $\lab(\partial \widetilde\Delta)$ can be decomposed as

    \[\lab(\partial \widetilde\Delta)\equiv w_0\cdot \prod^9_{k=1}(n_kw_k),\]
    where for $k=1,...,9$, $n_k$ is either $1$ or the label of some component of $\pint\overline\Delta$, and $w_0,...,w_9$ satisfies

    \begin{equation}\label{eq. product of w}
        \prod^9_{k=0}w_k\equiv \lab(\pext\overline\Delta)\equiv w_{\Ng f}w_{\Ng g}w^{-1}_{\Ng f\Ng g}.
    \end{equation}

    Note that $\lab(\partial \widetilde\Delta)$ can be rewritten as

    \begin{equation}\label{eq. rewritten}
        \lab(\partial \widetilde\Delta)\equiv \left(\prod^9_{k=1}\left((\prod^{k-1}_{\ell=0}w_{\ell}) n_k(\prod^{k-1}_{\ell=1}w_{\ell})^{-1}\right)\right)\cdot (\prod^9_{k=0}w_k).
    \end{equation}

    For $k=1,...,9$, let $g_k$ be the element of $G$ represented by $\prod^{k-1}_{\ell=1}w_{\ell}$. Using \eqref{eq. product of w}, equation \eqref{eq. rewritten} can be rewritten as

    \begin{equation*}
        \lab(\partial \widetilde\Delta)\equiv (\prod^9_{k=1}g_k n_k g^{-1}_k)\cdot w_{\Ng f}w_{\Ng g}w^{-1}_{\Ng f\Ng g}.
    \end{equation*}

    As $\widetilde\Delta$ is a disk diagram, we have $\lab(\partial\widetilde\Delta)=_G 1$. Thus,

    \[w_{\Ng f}w_{\Ng g}w^{-1}_{\Ng f\Ng g}=_G \prod^9_{k=1}(g_k n^{-1}_k g^{-1}_k)\]
    as desired.
\end{proof}

It remains to construct a good diagram $\Delta\in \family D( w_{\Ng f}w_{\Ng g}w^{-1}_{\Ng f\Ng g})$ such that $g(\Delta)\leq 9$. The word $w_{\Ng f}w_{\Ng g}w^{-1}_{\Ng f\Ng g}$ labels a triangle in $\rm{Cay}(\bg,\al A)$ with three sides $p_1,p_2,p_3$ labeled by $w_{\Ng f},w_{\Ng g},w^{-1}_{\Ng f\Ng g}$, respectively, and with $p^-_1=1$. Recall that \Cref{lem. basic of separating cosets} gives us sets

\[S_1\subset S(1,\Ng f)\smallsetminus S(1,\Ng f\Ng g),~~S_2\subset S(\Ng f,\Ng g)\smallsetminus S(1,\Ng f),~~S_3=S(1,\Ng f)\smallsetminus S(\Ng f,\Ng g)\]
such that

By \Cref{lem. basic of separating cosets}, we have

\begin{align}\label{eq. only 2}
|S(1,\Ng f)\smallsetminus(S_1\cup S_3)|\leqslant 2,\nonumber\\
|S(\Ng f,\Ng g)\smallsetminus(S_1\cup S_2)|\leqslant 2,\nonumber\\
|S(1,\Ng f\Ng g)\smallsetminus(S_2\cup S_3)|\leqslant 2.\nonumber\\
\end{align}

Recall that \Cref{lem. basic of separating cosets} defines orders $<_{1,\Ng f},<_{\Ng f,\Ng g},<_{1,\Ng f\Ng g}$ on $S(1,\Ng f),S(\Ng f, \Ng g),S(1,\Ng f\Ng g)$, respectively. Moreover, $S_1$ is a terminal segment of $<_{1,\Ng f}$, as well as an initial segment of $<_{\Ng f,\Ng g}$, and analogous statements hold for $S_2$ and $S_3$ as well.

Let $v_{11}$ (resp. $v_{12},v_{21},v_{22},v_{31},v_{32}$) be the vertex where $p_1$ (resp. $p_1,p_2,p_2,p_3,p_3$) exits (resp. enters, exits, enters, exits, enters) the greatest (resp. least, greatest, least, greatest, least) element of $S_3$ (resp. $S_1,S_1,S_2,S_2,S_3$) under the order $<_{1,\Ng f}$ (resp. $<_{1,\Ng f},<_{\Ng f,\Ng g},<_{\Ng f,\Ng g},<_{\Ng f\Ng g,1},<_{\Ng f\Ng g,1}$). Let $p_{11}$ (resp. $p_{12},p_{21},p_{22},p_{31},p_{32}$) be the subpath of $p_1$ (resp. $p_1,p_2,p_2,p_3,p_3$) between $p^-_1$ and $v_{11}$ (resp. $v_{12}$ and $p^+_1$, $p^-_2$ and $v_{21}$, $v_{22}$ and $p^+_2$, $p^-_3$ and $v_{31}$, $v_{32}$ and $p^+_3$).

\begin{claim}\label{cl. agree}
    Suppose that there exist $\Ng x\bh\neq \Ng y\bh\in S_2$ that are consecutive in $<_{\Ng f,\Ng g}$. Let $r_2$ (resp. $r_3$) be the subpath of $p_2$ (resp. $p_3$) connecting $\Ng x\bh,\Ng y\bh$ such that neither of the initial and terminal letters of $\lab(r_2)$ (resp. $\lab(r_3)$) is from $H$. Then $r_2=r^{-1}_3$.
\end{claim}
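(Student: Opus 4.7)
The plan is to identify both $r_2$ and $r_3^{-1}$ as elements of the canonical path system $\Path$ constructed in Proposition \ref{prop. equivariant paths}, both connecting the cosets $\Ng x\bh$ and $\Ng y\bh$, and then to invoke the uniqueness asserted in item \ref{item. p exist and unique} to force $r_2=r_3^{-1}$.

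First I would unpack the three sides of the triangle: $p_1=\phi(\Ng f)$, $p_2=\Ng f\cdot\phi(\Ng g)$, and $p_3^{-1}=\phi(\Ng f\Ng g)$. By the decomposition of $\phi$ from the previous subsection, each of these paths alternates between $\Path$-geodesics (which by construction do not essentially penetrate any $\bh$-coset) and possibly trivial $H$-edges located inside the successive cosets of $S(\Ng f,\Ng f\Ng g)$ (respectively $S(1,\Ng f\Ng g)$) in their natural orders. Reading the hypothesis ``$\Ng x\bh,\Ng y\bh$ are consecutive in $<_{\Ng f,\Ng g}$'' as consecutive in $S(\Ng f,\Ng f\Ng g)$, the subpath $r_2$ coincides with a single $\Ng f$-translate of a $\Path$-element, and the equivariance of $\Path$ (item \ref{item. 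P equivariant} of Proposition \ref{prop. equivariant paths}) delivers $r_2\in\Path$ with $r_2^-\in\Ng x\bh$ and $r_2^+\in\Ng y\bh$.

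The analogous step for $r_3^{-1}$ requires knowing that $\Ng x\bh,\Ng y\bh$ are also consecutive in $<_{1,\Ng f\Ng g}$ (so that the subpath of $p_3^{-1}=\phi(\Ng f\Ng g)$ between them is a single $\Path$-element) and that $p_3^{-1}$ traverses them in the same direction as $p_2$. Since $S_2\subseteq S(\Ng f,\Ng f\Ng g)\cap S(1,\Ng f\Ng g)$ --- any coset separating $\Ng f$ from $\Ng f\Ng g$ but not $1$ from $\Ng f$ must separate $1$ from $\Ng f\Ng g$ --- and since $S_2$ occupies a terminal segment of $<_{1,\Ng f\Ng g}$, the two orders should restrict to the same order on $S_2$. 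Combining this observation with items \ref{item. 2 exception} and \ref{item. decomposition and order} of Lemma \ref{lem. basic of separating cosets} controls the at-most-two cosets in $S(1,\Ng f\Ng g)\smallsetminus S_2$ that might fall between $\Ng x\bh$ and $\Ng y\bh$, ruling them out under the hypothesis that these two cosets are consecutive in $<_{\Ng f,\Ng f\Ng g}$. The same decomposition argument applied to $\phi(\Ng f\Ng g)$ then yields $r_3^{-1}\in\Path$ with $(r_3^{-1})^-\in\Ng x\bh$ and $(r_3^{-1})^+\in\Ng y\bh$.

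Finally, both $r_2$ and $r_3^{-1}$ are geodesics in $\mathrm{Cay}(\bg,\al A)$ from a vertex of $\Ng x\bh$ to a vertex of $\Ng y\bh$ that do not essentially penetrate any $\bh$-coset, so $\overline\d_C(\Ng x\bh,\Ng y\bh)=1$. The uniqueness statement in item \ref{item. p exist and unique} of Proposition \ref{prop. equivariant paths} then forces $r_2=r_3^{-1}$. I expect the main obstacle to be the order-coherence step in the third paragraph: pinning down that $<_{\Ng f,\Ng f\Ng g}$ and $<_{1,\Ng f\Ng g}$ induce the same order on $S_2$ and ruling out intervening cosets from $S(1,\Ng f\Ng g)\smallsetminus S_2$. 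This is a combinatorial/geometric fact about geodesic triangles in the hyperbolic Cayley graph $\mathrm{Cay}(\bg,\al A)$, but with the exceptional size-$\leqslant 2$ sets provided by item \ref{item. 2 exception} of Lemma \ref{lem. basic of separating cosets} it should be tractable through a careful case analysis.
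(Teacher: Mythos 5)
Your plan is the paper's own: exhibit both $r_2$ and $r_3^{-1}$ as elements of $\Path$ and appeal to the uniqueness and flip properties of Proposition~\ref{prop. equivariant paths}. The paper verifies $r_2\in\Path$ exactly as you do and then dispatches the other side with a single line, ``The proof for $r_3\in\Path$ is similar,'' and you are right that this is not literally symmetric: what is really needed is that $\Ng x\bh,\Ng y\bh$ are also consecutive in $<_{1,\Ng f\Ng g}$ \emph{and} appear there in the same relative order as in $<_{\Ng f,\Ng f\Ng g}$, since otherwise $r_3^{-1}$ might fail to be a single $\Path$-segment, or might lie in $\Path$ but point the same way as $r_2$, giving $r_2=r_3$ rather than $r_2=r_3^{-1}$. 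You flag this order-coherence on $S_2$ as the main obstacle but leave it at ``should be tractable.'' It is, though not really via the exceptional-set bookkeeping you gesture at; the cleanest route is the splicing trick already used in the paper's proof of item~\ref{item. decomposition and order} of Lemma~\ref{lem. basic of separating cosets}: if $\Ng x\bh <_{\Ng f,\Ng f\Ng g}\Ng y\bh$ but $\Ng y\bh <_{1,\Ng f\Ng g}\Ng x\bh$, pick geodesics $p\colon\Ng f\to \Ng f\Ng g$ and $q\colon 1\to\Ng f\Ng g$ and set $\gamma= p\big|_{\Ng f\to p(\Ng x\bh)^-}\cdot e\cdot q\big|_{q(\Ng x\bh)^+\to \Ng f\Ng g}$, with $e$ the edge inside $\Ng x\bh$ joining those two vertices; the opposite splice from $1$ to $\Ng f\Ng g$ has length at least $\ell(q)$, which forces $\ell(\gamma)\leqslant\ell(p)$, so $\gamma$ is a geodesic from $\Ng f$ to $\Ng f\Ng g$ that misses $\Ng y\bh$, contradicting item~\ref{item. keep order of separating cosets}. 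Together with $S_2$ being a terminal segment of both orders, this transfers consecutiveness. So your proposal is faithful to the paper's approach and correctly isolates the real subtlety that the paper's ``similar'' sweeps under the rug, even if you stop just short of closing it.
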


\begin{proof}[Proof of the claim]
    It suffices to prove that both $r_2$ and $r_3$ belong to $\Path$. Consider $r_2$. Let $r'_2$ be the subpath of $\phi(\Ng g)$ connecting $\Ng f^{-1}\Ng x\bh,\Ng f^{-1}\Ng y\bh$ such that neither of the initial and terminal letters of $\lab(r'_2)$ is from $H$. By definition, we have $r'_2\in \Path$. As $r_2=\Ng f\cdot r'_2$, \Cref{prop. equivariant paths} implies that $r_2\in \Path$. The proof for $r_3\in \Path$ is similar.   
\end{proof}

By \Cref{cl. agree}, the paths $p^{-1}_{22}$ and $p_{31}$ agree up to their last edges, which are labeled by elements of $H$. So there is a disk van Kampen diagram $\Delta_3$ with $\partial \Delta_1=p_{22}p_{31}e_3$ where $e_3$ is a possibly non-essential edge with $\lab(e_3)\in H$. Similarly, there is a disk van Kampen diagram $\Delta_2$ (resp. $\Delta_1$) with $\partial \Delta_2=p_{12}p_{21}e_2$ (resp. $\partial \Delta_1=p_{32}p_{11}e_1$) where $e_2$ (resp. $e_1$) is a possibly non-essential edge with $\lab(e_2)\in H$ (resp. $\lab(e_1)\in H$).

Let $p_4$ (resp. $p_5,p_6$) be the subpath of $p_1$ (resp. $p_2,p_3$) between $v_{11}$ and $v_{12}$ (resp. $v_{21}$ and $v_{22}$, $v_{31}$ and $v_{32}$). By an abuse of notation, we may view $e_1,e_2,e_3,p_1,p_2,p_3$ as paths of $\rm{Cay}(\bg,\al A)$ such that the concatenation $e_1p_4e_2p_5e_3p_6$ is a cycle in $\rm{Cay}(\bg,\al A)$, so its label represents $1$ in $\bg$. By \Cref{lem. good diagram}, there is a good diagram $\Delta_4$ with $\pext\Delta_4=e_1p_4e_2p_5e_3p_6$. 

Without loss of generality, we may assume that $\pint\Delta_4$ has three components $c_1,c_2,c_3$, connected respectively to $e_1,e_2,e_3$. As $\Delta_4$ is a good diagram, it contains cuts $t_1,t_2,t_3$ that respectively connect $e_1$ to $c_1$, $e_2$ to $c_2$, and $e_3$ to $c_3$. 

There exists $h_1\in H$ (resp. $h_2\in H,h_3\in H$) with $\lab(c_1e_1)=_G h_1$ (resp. $\lab(c_2e_2)=_G h_2,\lab(c_3e_3)=_G h_3$). Let $\Delta_5$ (resp. $\Delta_6,\Delta_7$) be a disc diagram over \eqref{eq. pre G} with $\partial \Delta_5=t^{-1}_1c_1t_1e_1e_4$ (resp. $\partial \Delta_6=t^{-1}_2c_2t_2e_2e_5, \partial \Delta_7=t^{-1}_3c_3t_3e_3e_6$) where $e_4$ (resp. $e_5,e_6$) is an edge with $\lab(e_4)=h^{-1}_4$ (resp. $\lab(e_5)=h^{-1}_5,\lab(e_6)=h^{-1}_6$). By cutting $\Delta_4$ along $t_1,t_2,t_3$ and gluing $\Delta_5,\Delta_6,\Delta_7$ to the resulting diagram, we obtain a good diagram $\Delta_8$ with $\pext\Delta_8=e_4p_4e_5p_5e_6p_6$.

By \eqref{eq. only 2} and the construction of $\phi$, each of $p_4,p_5,p_6$ is a concatenation of at most five geodesics. Each component of $\pint\Delta_8$ is connected to an $H$-component of $\pext\Delta_8$ in one of $p_4,p_5,p_6$ with $\widehat\ell$-length strictly greater than $7\overline D$ by \Cref{lem. connect means long}. Different components of $\pint\Delta_8$ (as they are not connected to each other) are connected to different $H$-components of $\pext\Delta_8$. By \eqref{eq. only 2}, the union $p_4\cup p_5\cup p_6$ only has at most six $H$-components with $\widehat\ell$-length strictly greater than $7\overline D$. Therefore, there are at most six components of $\pint\Delta_8$. Therefore, $\pint\Delta_4$ has at most nine components.

By gluing $\Delta_1,\Delta_2,\Delta_3,\Delta_4$, we obtain a good diagram $\Delta\in \family D( w_{\Ng f}w_{\Ng g}w^{-1}_{\Ng f\Ng g})$ with $g(\Delta)\leqslant 9$, finishing the proof.

\subsection{Proof of Theorem \ref{Thm:main-gt}}

For our purpose, we will need the following more precise version of Theorem \ref{Thm:main-gt}, which includes certain results proved in \cite{Ols93} and \cite{CIOS21}. Recall that a hyperbolic group is called \textit{elementary} if it is virtually cyclic.

\begin{thm}\label{Thm:main-gt-full}
Let $G$ be a non-cyclic, torsion-free, hyperbolic group. Suppose that $g$ is a non-trivial element of $G$ that is not a proper power. Then for any sufficiently large prime $n\in \NN$, the following hold.
\begin{enumerate}
    \item[(a)] $ G/[\ll g^n\rr, \ll g^n\rr] \in \mathcal{WR}_b(\ZZ, G/\ll g^n\rr \ca I)$, where the action of $G/\ll g^n\rr $ on $I$ is transitive with stabilizers isomorphic to $\ZZ/n\ZZ$. 
    \item[(b)] $G/\ll g^n\rr $ is a non-elementary, ICC, hyperbolic group.
    \item[(c)] The image of the element $g$ in $G/\ll g^n\rr$ has order $n$ and every non-trivial element of $G/\ll g^n\rr$  has order $n$ or $\infty$.
\end{enumerate}
\end{thm}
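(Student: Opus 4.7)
Set $H = \langle g\rangle \leqslant G$. Since $G$ is torsion-free hyperbolic and $g$ is not a proper power, $H \cong \mathbb Z$ is a maximal cyclic subgroup, and by a classical fact (see \cite{DGO11}) $H$ is hyperbolically embedded in $G$ with respect to some subset $X \subset G$. In particular, the relative metric $\hatrmd$ is proper on $H$: only finitely many elements of $H$ lie within any fixed $\hatrmd$-distance from the identity. Let $N = \langle g^n\rangle \lhd H$. For all sufficiently large $n$, every non-trivial $g^{kn} \in N$ satisfies $\hatrmd(1, g^{kn}) > 63\overline D$, where $\overline D$ is the polygonal constant from \Cref{thm. dehn filling}. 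Set $\bg = G/\ll g^n\rr$ and $\bh = H/N$.

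For parts (b) and (c), we appeal to the standard theory of sufficiently deep Dehn fillings on hyperbolic groups \cite{Ols93, DGO11, CIOS21}. For $n$ a sufficiently large prime, $\bg$ is a non-elementary hyperbolic group (non-elementary because $G$ is not virtually cyclic); every non-trivial torsion element of $\bg$ has order exactly $n$, and in particular the image of $g$ in $\bg$ has order $n$. The ICC property of $\bg$ then follows from the general fact that a non-elementary hyperbolic group with no non-trivial finite normal subgroups is ICC, together with the verification (via the Ol'shanskii-style arguments of \cite{Ols93}) that $\bg$ has no finite normal subgroup. Since $n$ is an odd prime, $\bg$ has no element of order $2$.

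For part (a), our choice of $n$ ensures $\hatrmd(1, g^{kn}) > 24\overline D$ for every $k\neq 0$, so \Cref{prop. cl} guarantees that $(G,H,N)$ is a Cohen--Lyndon triple. \Cref{prop. wreath product} then yields an exact sequence
\[
1 \to \bigoplus_{t \in T} tNt^{-1} \to G/S \xrightarrow{\varepsilon} \bg \to 1,
\]
exhibiting $G/S \in \mathcal{WR}(\mathbb Z, \bg \curvearrowright T)$, where $T$ is a left transversal of $H\ll N\rr$ in $G$ and $S$ is the normal subgroup defined in \eqref{eq. def S}. Because $\ll g^n\rr = \Conv_{t \in T}\, tNt^{-1}$ is a free product of infinite cyclic groups, its commutator subgroup $[\ll g^n\rr,\ll g^n\rr]$ coincides with the normal closure in $G$ of the generating set of $S$; hence $G/S \cong G/[\ll g^n\rr, \ll g^n\rr]$. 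By \Cref{rem. stabilizer}, the action $\bg\curvearrowright T$ is transitive with stabilizers isomorphic to $\bh \cong \mathbb Z/n\mathbb Z$.

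Finally, to upgrade this to a bounded-cocycle wreath-like product we apply \Cref{thm. general main}. Hypothesis (i) follows from (c) since $n$ is odd, hypothesis (iii) is our choice of $n$, and hypothesis (ii)---malnormality of the image of $H$ in $\bg$---is included in the standard package of deep-Dehn-filling results for hyperbolic groups established in \cite{CIOS21}. The theorem then supplies a set-theoretic section $\liftS \colon \bg \to G$ whose cocycle $\alpha(\Ng f, \Ng g) = \liftS(\Ng f)\liftS(\Ng g)\liftS(\Ng f\Ng g)^{-1}$ is a product of at most nine conjugates $g_i n_i g_i^{-1}$ with $n_i \in N$. Post-composing with the quotient $G \to G/S$ produces a set-theoretic section for $G/S \to \bg$ whose cocycle values lie in $\bigoplus_{t\in T} tNt^{-1}$ and are products of at most nine elements, each supported on a single factor $tNt^{-1}$. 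Every cocycle value therefore has support of size at most nine in $\bigoplus_{t\in T}\mathbb Z$, proving $G/[\ll g^n\rr, \ll g^n\rr] \in \mathcal{WR}_b(\mathbb Z, \bg \curvearrowright T)$. The substantive obstacle in the overall argument is \Cref{thm. general main} itself; given it, the present deduction is an unpacking of the Cohen--Lyndon structure combined with the Ol'shanskii/DGO toolkit for checking malnormality, torsion, and the ICC property in the Dehn filling quotient.
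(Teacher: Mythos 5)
Your overall strategy matches the paper's: set up $H=\langle g\rangle$, $N=\langle g^n\rangle$, use Olshanskii's quotient theorem for (b) and (c), use the Cohen--Lyndon/wreath-like package for the $\mathcal{WR}(\ZZ, \bg\ca T)$ structure, and then invoke \Cref{thm. general main} to get bounded cocycle, observing that the nine conjugates $g_i n_i g_i^{-1}$ each land in a single factor $tNt^{-1}$ of the base $\bigoplus_{t\in T} tNt^{-1}$. Parts (b), (c) and the transitive-action/stabilizer claims are handled the same way.

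There is, however, a genuine gap where you assert that the malnormality of $\bh=H/N$ in $\bg$ --- hypothesis (ii) of \Cref{thm. general main} --- ``is included in the standard package of deep-Dehn-filling results... established in \cite{CIOS21}.'' It is not. What \cite{CIOS21} supplies (Theorem 2.6) is the wreath-like structure $K\in\mathcal{WR}(\ZZ, \bg\ca I)$ and the ICC property of $\bg$; malnormality of the image of the filling kernel's source subgroup is not among the results one can simply cite, and indeed it is false for Dehn fillings in general. The paper has to prove it from scratch, and the argument is genuinely delicate: since $\bh\cong\ZZ/n\ZZ$ with $n$ prime, any $t\in\bg$ with $t^{-1}\bh t\cap\bh\ne 1$ normalizes $\bh$; combining part (c) (all torsion has order exactly $n$) with the fact that $|\Aut(\ZZ/n\ZZ)|=n-1$ is coprime to $n$, one deduces that $C_{\bg}(\e(g))$ would be strictly larger than $\e(H)$; one then rules this out by applying \cite[Lemma 4.1(2)]{Ols93} (to the cyclic shifts of $W^{\pm n}$, using that $E(g)=\langle g\rangle$ in the torsion-free hyperbolic $G$) and repeating the proof of part (7) of \cite[Theorem~3]{Ols93} to show $C_{\bg}(\e(g))=\e(C_G(g))=\e(H)$. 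This is exactly why the theorem is stated only for \emph{prime} $n$: primality is what makes the automorphism-group counting go through. Your proposal never uses primality in the malnormality step and simply outsources the claim, so as written the verification of hypothesis (ii) --- the one substantive piece of work in this deduction beyond \Cref{thm. general main} itself --- is missing.

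A minor secondary point: your identification $G/S\cong G/[\ll g^n\rr,\ll g^n\rr]$ is correct (in the free product $\Conv_{t\in T} tNt^{-1}$ with abelian factors, the cross-commutators in \eqref{eq. def S} normally generate the full commutator subgroup), but the paper just cites \cite[Theorem~2.6]{CIOS21} for $K\in\mathcal{WR}(\ZZ,\bg\ca I)$ directly; your route is a harmless re-derivation. The rest of your argument --- including the bound on the support of the cocycle by the number of conjugating factors --- is sound.
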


\begin{proof}
Let $G$ be a torsion-free hyperbolic group, $g\in G$ a non-trivial element that is not a proper power. Let $\e\colon G\to \overline G:=G/\ll g^n\rr$ be the natural homomorphism. In \cite[Theorem 3]{Ols93}, Olshanskii proved that, for any sufficiently large natural number $n$, $\overline G$ is non-elementary hyperbolic, the image of the element $g$ in $G/\ll g^n\rr$ has order $n$,  and every finite order element of $\overline G$  is the image of a finite order element of $G$ or is conjugate to a power of $g$. Since $G$ is torsion-free and $n$ is prime, every non-trivial finite order element of $\overline G$  has order $n$. Further, by \cite[Theorem 2.6 (c)]{CIOS21}, $\overline G$ is ICC. This proves parts (b) and (c) of our theorem. 

To prove (a), let $K=G/[\ll g^n\rr, \ll g^n\rr]$ be the wreath-like product given by \Cref{prop. wreath product} with respect to the triple $\langle g^n\rangle \lhd \langle g\rangle \hookrightarrow_h G$. We first note that $K \in \mathcal{WR}(\ZZ, G/\ll g^n\rr \ca I)$, where the action of $\bg $ on $I$ is transitive with stabilizers isomorphic to $\ZZ/n\ZZ$ by \cite[Theorem 2.6]{CIOS21}; furthermore, the base of the wreath-like product $K$ canonically decomposes as the sum of conjugates of $\e(\langle g^n\rangle)$.  Thus, we only need to prove that, as a wreath-like product, the group $K$ has bounded cocycle. To this end, we first show that Theorem \ref{thm. general main} applies in our situation. 

Let $H=\langle g\rangle$. Then $H\hookrightarrow_h (G,X)$ for some subset $X\subset G$ by \cite[Theorem 6.8]{DGO11}. By part (b) of Definition \ref{def. he}, the subgroup $N=\langle g^n \rangle\lhd H$ satisfies condition \ref{item. long} in Theorem \ref{thm. general main} for all large enough $n$. Further, \ref{item. no involution} follows from part (b) of our theorem if $n>2$. It remains to show that \ref{item. malnormal} also holds. The proof is a straightforward combination of elementary arguments with the proof of part (7) of \cite[Theorem 3]{Ols93}. For convenience of the reader, we provide the proof here; to keep it reasonably short, we do not state all necessary definitions and theorems, and refer to \cite{Ols93} for details.  

Let $\overline H=H/N\leqslant \overline G$. Suppose that $t^{-1}\bh t\cap \bh \ne \{ 1\}$ for some $t\in G\setminus H$. Since $\bh \cong \ZZ/n\ZZ$ and $n$ is prime, we must have $t^{-1}\bh t=\bh$. If $t$ has finite order, it has order $n$ by (c) and, therefore, cannot act as a non-trivial automorphism of $\bh$; this implies $t\in C_{\bg}(\bh)$. If $t$ has infinite order, $t^{n-1}\in C_{\bg}(\bh)$ (by the Lagrange theorem for $Out(\ZZ/n\ZZ)\cong \ZZ/ (n-1)\ZZ$). In either case, we have that $C_{\bg}(\e(g))$ is strictly larger than $\e(C_G(g))= \e(H)$.

Fix some finite generating set $Y$ of $G$ and let $W$ be a shortest word in the alphabet $Y\cup Y^{-1}$ representing $g$ in $G$. Since $E(g)=\langle g\rangle$, \cite[Lemma 4.1 (2)]{Ols93} is applicable to the set of all cyclic shifts of words $W^{\pm n}$. Repeating the proof of part (7) of \cite[Theorem 3]{Ols93} verbatim, we conclude that $C_{\bg}(\e(g))=\e(C_G(g))$, which contradicts the fact established in the previous paragraph. This contradiction shows that $\bh $ must be malnormal in $\bg$, i.e., condition \ref{item. malnormal} also holds.

Thus, Theorem \ref{thm. general main} applies and immediately gives us that $\overline G$ has bounded cocycle with respect to the section $\overline G\to G$ provided by the theorem.
\end{proof}

\section{Arrays on wreath-like products}

Sinclair and the second named author introduced in \cite{CS11} a coarse version of quasicocycles on a countable group, called \emph{arrays}. For reader's convenience we recall the definition below.

\begin{defn}\label{arraydef}
Let $\pi\colon G \rightarrow \mathcal{O}(\mathscr{H}_\pi)$ be an orthogonal representation of a countable discrete group $G$.  A map $\mq\colon G \rightarrow \mathscr{H}_\pi$ is called an \textit{array} if for every $h,k\in G$ there exists $K\geqslant 0$ such that $$\sup_{ g \in G} \Norm{\pi_h(\mq(g) ) - \mq(h g k)} \leqslant K.$$
Throughout the paper we will denote the bounded equivariance constant, or defect, by  $K=: d^\mq(h,k)$. An array is said to be \textit{proper with respect to a family} $\S$ \textit{of subgroups of} $G$ if the map $g \mapsto \Norm{\mq(g)}$ is proper with respect to $\S$, i.e., for all $C>0$ there exist finite subsets $F,H \subset G$ and $\K \subset \S$ such that $$B_C^\mq := \{ g\in G : \Norm{\mq(g)} \leqslant C \} \subseteq F \K H; $$

\end{defn}

 Wreath-like product groups in the class $\mathcal {WR}_b(A, B\ca I )$ admit a natural class of arrays called \emph{support arrays}. An elementary way of constructing them is presented in the following result and its proof.

\begin{thm}\label{supportarray} Let $G\in \mathcal {WR}_b(A, B\ca I)$ be a wreath-like product group and let $\alpha\colon B\times B \ra A^{(I)}$ the corresponding $2$-cocycle. Let $C>0$ so that $|\supp(\alpha(x,y))|\leqslant C$ for all $x,y\in B$. Then one can find a map  $\mq\colon G \ra \ell^2(I)$ satisfying the following: \begin{enumerate}\item For every $x,y\in G$ we have 
\begin{equation} \label{1}
\sup_{z\in G}\|\mq(xzy)-\pi_{\epsilon(x)}(\mq(z))\|_2<\infty.
 \end{equation}
 \item For every $x\in A^{(I)}$ we have  $\|\mq(x)\|^2=|\supp(x)|$. 
 \end{enumerate}
 
\noindent Here we denoted by $\pi\colon B\ra \mathcal O(\mathcal \ell^2(I))$ the orthogonal representation induced by $B\ca I$. 
\end{thm}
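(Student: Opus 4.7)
The plan is to define $\mq$ from the $A^{(I)}$-factor in the normal form decomposition of group elements against the section $\varsigma$, after first normalizing so that $\varsigma(1)=1$. To arrange this normalization I would replace the given $\varsigma$ by $\varsigma'(b) := \varsigma(b)\varsigma(1)^{-1}$: since $\varsigma(1)=\alpha(1,1)$ already lies in $A^{(I)}$ with $|\supp\varsigma(1)|\leqslant C$, a short computation yields $\alpha'(x,y) = \alpha(x,y)\cdot\pi_x(\varsigma(1)^{-1})$, so the new cocycle still has uniformly bounded support (by $2C$). Replacing $C$ by $2C$ if necessary, I may assume $\varsigma(1)=1$. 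Every $g\in G$ then has a unique normal form $g = a_g\,\varsigma(\varepsilon(g))$ with $a_g\in A^{(I)}$, and I set
\[
\mq(g) \;:=\; \chi_{\supp(a_g)} \in \ell^2(I).
\]
Condition~(2) is now immediate: for $x\in A^{(I)}$, $\varepsilon(x)=1$ and $a_x=x$, so $\|\mq(x)\|_2^{\,2} = |\supp(x)|$.

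For~(1), the main step is a normal form calculation for $xzy$. Writing $u=\varepsilon(x)$, $v=\varepsilon(z)$, $w=\varepsilon(y)$ and repeatedly applying the two defining relations $\varsigma(b)a = \pi_b(a)\varsigma(b)$ (for $a\in A^{(I)}$) and $\varsigma(b_1)\varsigma(b_2) = \alpha(b_1,b_2)\varsigma(b_1b_2)$, I would migrate every $A^{(I)}$-factor to the left to obtain
\[
xzy \;=\; a_x\cdot\pi_u(a_z)\cdot\alpha(u,v)\cdot\pi_{uv}(a_y)\cdot\alpha(uv,w)\cdot\varsigma(uvw),
\]
from which $a_{xzy}$ is the product, inside $A^{(I)}$, of the first five factors. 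Only one of them, $\pi_u(a_z)$, depends substantially on $z$: its support equals $u\cdot\supp(a_z)$, which is exactly $\supp(\pi_u(\mq(z)))$ since $\pi_b\chi_S = \chi_{bS}$ on $\ell^2(I)$. The remaining four factors have supports of cardinality at most $|\supp(a_x)|$, $C$, $|\supp(a_y)|$, and $C$, all uniformly in $z$.

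Let $F$ denote the union of these four uniformly-bounded supports. Using that multiplication in $A^{(I)}$ is coordinatewise, for any index $i\notin F$ the product $a_{xzy}$ and the single factor $\pi_u(a_z)$ agree at $i$; hence
\[
\supp(a_{xzy}) \,\triangle\, \supp(\pi_u(a_z)) \;\subseteq\; F,
\]
and passing to $\ell^2$-norms gives
\[
\|\mq(xzy) - \pi_{\varepsilon(x)}(\mq(z))\|_2^{\,2} \;\leqslant\; |F| \;\leqslant\; |\supp(a_x)| + |\supp(a_y)| + 2C,
\]
uniformly in $z$, which is~(1). I do not anticipate any serious obstacle in executing this argument; it is essentially careful bookkeeping in the short exact sequence defining the wreath-like product, and the uniform support bound on $\alpha$ feeds directly into a uniform bound on the defect. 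The one subtle point is the preliminary normalization $\varsigma(1)=1$, without which~(2) would hold only up to an additive constant.
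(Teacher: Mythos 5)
Your proposal is correct and takes essentially the same route as the paper: you define $\mq$ identically (as the indicator of the support of the $A^{(I)}$-component in the normal form against $\varsigma$), and the defect bound rests on the same uniform bound $|\supp\alpha|\leqslant C$. The paper reaches the bound by proving and composing five one-factor-at-a-time estimates (equation (4.3) there), whereas you compute the normal form of $xzy$ in one pass and read off the symmetric-difference bound directly; you are also more explicit than the paper about the normalization $\varsigma(1)=1$ needed for part (2) to hold on the nose, but the underlying argument is the same.
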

\begin{proof} Throughout the proof, for every $c\in A^{(I)}$ we denote by $\supp(c)\Subset I$ its support.  Now we define  $\mq\colon G\ra \ell^2(I)$ as follows. For $G\ni g=cb$ with  $c\in A^{(I)}, b\in B$ we let \begin{equation}\label{2}\mq(cb )(i)=\begin{cases}
1 &\mbox {if } i\in \supp(c)\\
0&\mbox {otherwise.} \end{cases}\end{equation}
To get \eqref{1} it suffices to show the following. For every $a, c\in A^{(I)}$ and $s,t\in B$ we have that

\begin{equation}\label{3}
\begin{split} &\| \mq(ca s )- \mq(a s )\|_2, \leqslant \sqrt{2}\|\mq(c)\|_2,\\ &   \| \mq(ca s )- \mq(c s )\|_2 \leqslant \sqrt{2}\|\mq(a)\|_2,\\
&\| \mq(t a s )- \pi_t(\mq(a s))\|_2\leqslant  2\sqrt {C},\\
&\| \mq(c s a)- \mq(c s)\|_2\leqslant \sqrt{2}\|\mq(a)\|_2,\\
&\| \mq(c s t )- \mq(c s )\|_2\leqslant \sqrt{2C}.\\
\end{split}
\end{equation}

 To justify the first inequality observe that (\ref{2}) gives
$\| \mq(ca s )- \mq(a s)\|^2_2=\sum_{i\in I} |\mq(ca s)(i)- \mq(a s)(i)|^2 = |\supp(ca) \triangle \supp(a)| \leqslant 2 | \supp(c)|=2\|\mq(c)\|^2$. The second one follows similarly.
\vskip 0.02in 

\noindent To see the third inequality, using the second inequality of (\ref{3})  we have  \begin{equation*}\label{4}\begin{split}
&\| \mq(ta s )- \pi_t(\mq(a s))\|^2_2\leqslant   2\| \mq(\sigma_t(a) \alpha(t,s) ts )- \mq(\sigma_t(a) ts )\|^2_2 +2\|\mq(\sigma_t(a) ts )- \pi_t(\mq(a s)\|^2_2\\
&\leqslant 4\|\mq(\alpha(t,s))\|^2_2 + 2\|\mq(\sigma_t(a) ts )- \pi_t(\mq(a s))\|^2_2=4 C +2\|\mq(\sigma_t(a) ts )- \pi_t(\mq(a s))\|^2_2\\
&= 4C +\sum_{i\in I} |\mq(\sigma_t(a) ts)(i)- \mq(a s)( t^{-1}i)|^2= 4C+|\supp(\sigma_t(a)) \triangle t \cdot \supp(a)| =4C.
\end{split}\end{equation*}
 To derive the fourth inequality in (\ref{3}), using the second inequality in (\ref{3}) we have $\| \mq(csa)- \mq(c s)\|_2=\| \mq(c\sigma_s (a) s)- \mq(c s)\|_2\leqslant \sqrt 2 \| \mq(\sigma_s (a))\|_2=\sqrt 2 \| \mq(a)\|_2$.

Finally, we show the last inequality in (\ref{3}). Using again the second inequality in (\ref{3}) we have $\| \mq(c s t)- \mq(c s)\|^2_2=  \| \mq( c\alpha(t,s)st )- \mq(c s)\|^2_2 = \sum_{i\in I} | \mq( c\alpha(t,s)) (i)-\mq(c) (i)|^2 = |\supp(c \alpha(t,s)) \triangle \supp (c)|
=2  |\supp (\alpha(t,s)) | \leqslant 2C.$  \end{proof}

\begin{prop}
Let $G \in \WR_b(A,B\acts I)$ where $B$ is a non-amenable subgroup of a hyperbolic group, $A$ is amenable and the action $B\acts I$ has amenable stabilizers.  Then there exists a weakly-$\ell^2$ representation $\rho\colon G \ra \mathcal{O}(\sH)$ and an array $\ma\colon G \ra \sH$ such that for any $C>0$ there is a finite subset $F\subset B$ and $n\in \mathbb{N}$ such that
\begin{equation}\label{ball_sum_arrays}
    B_C^\ma \subset \{ ah \in G : a\in A^{(I)}, |\supp(a)| \leqslant n, h \in F \}\ .
\end{equation}
\end{prop}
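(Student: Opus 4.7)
The plan is to construct $\ma$ as a direct sum of two arrays: the support array $\mq\colon G \to \ell^2(I)$ from Theorem \ref{supportarray}, which will detect the size of the $A^{(I)}$-component of $g$, and a proper array on $B$ pulled back via the quotient homomorphism $\varepsilon\colon G \to B$, which will detect the $B$-component. Since $B$ embeds into a hyperbolic group, $B$ is bi-exact, and the Chifan--Sinclair framework \cite{CS11} produces a proper array $\mc\colon B \to \sK$ into a weakly-$\ell^2$ orthogonal representation $\rho_\mc\colon B \to \mathcal{O}(\sK)$. Separately, the representation $\pi\colon B \to \mathcal{O}(\ell^2(I))$ induced by $B \acts I$ is weakly-$\ell^2$: the amenable stabilizer hypothesis makes each summand in the orbit decomposition $\ell^2(I) = \bigoplus_\alpha \ell^2(B/\stab_B(i_\alpha))$ a quasi-regular representation on cosets of an amenable subgroup, hence weakly contained in $\lambda_B$.

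I will then set $\sH := \ell^2(I) \oplus \sK$, define $\rho := (\pi \oplus \rho_\mc) \circ \varepsilon$, and set $\ma(g) := \mq(g) \oplus \mc(\varepsilon(g))$. Since $\ker(\varepsilon) = A^{(I)}$ is amenable (a countable direct sum of amenable groups), pulling back through $\varepsilon$ preserves the weakly-$\ell^2$ property, so $\rho$ is weakly-$\ell^2$ on $G$. The bounded-defect estimate for $\ma$ follows summand-wise: the $\mq$-part is exactly estimate \eqref{1} in Theorem \ref{supportarray}, while the $\mc\circ\varepsilon$-part is immediate from the array property of $\mc$ and the fact that $\varepsilon$ is a homomorphism.

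Finally, to verify the ball condition, fix $C>0$ and suppose $g \in B_C^\ma$, so $\|\mq(g)\|^2 + \|\mc(\varepsilon(g))\|^2 \leqslant C^2$. Using the section $\liftS$ of Definition \ref{def. cl}, I write $g = a\,\liftS(\varepsilon(g))$ with $a \in A^{(I)}$. Formula \eqref{2} in the proof of Theorem \ref{supportarray} gives $\|\mq(g)\|^2 = |\supp(a)|$, so $|\supp(a)| \leqslant n := \lfloor C^2 \rfloor$. Properness of $\mc$ makes $F_0 := \{b \in B : \|\mc(b)\| \leqslant C\}$ finite, and then $F := \liftS(F_0)$ (identified with a finite subset of $B$) yields $g = a h$ with $h \in F$, as required. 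I do not anticipate a serious obstacle; the most delicate point is confirming that pulling back through $\varepsilon$ keeps the combined representation weakly-$\ell^2$, which hinges on the amenability of the kernel $A^{(I)}$. The rest is a direct combination of Theorem \ref{supportarray} with the array arising from bi-exactness of $B$.
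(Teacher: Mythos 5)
Your construction is exactly the paper's: you form $\ma$ as the direct sum of the support array from Theorem \ref{supportarray} and the pull-back through $\varepsilon$ of a proper array on $B$ (the paper cites \cite{MMS03} where you invoke bi-exactness via \cite{CS11}, but these are interchangeable here), verify weak-$\ell^2$-ness via amenability of the stabilizers and of $\ker\varepsilon = A^{(I)}$, and read off the ball containment from $B_C^\ma \subset B_C^{\ma_1}\cap B_C^{\ma_2}$ using $\|\mq(g)\|^2 = |\supp(a)|$ and properness of $\mc$. This is the same argument as the paper's proof.
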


\begin{proof}
Let $\ma_1\colon G \ra \ell^2(I)$ be the array constructed in the previous theorem.  By decomposing $I=\sqcup_k B/\stab(b_k)$ as a disjoint union of orbits, where $k$ runs through the orbits and we pick a representative $b_k\in I$ for each orbit $k$, we obtain a direct sum decomposition of the orthogonal representation of $G$ on $\sH_1:= \ell^2(I)$ as $\sH_1 \cong \oplus_k \ell^2 (G / {\epsilon^{-1}(\stab(b_k)))}$.  Notice that both $A$ and each $\stab(b_k)$ are amenable, and since $\epsilon^{-1}(\stab(b_k))$ is an extension of $A^{(I)}$ by $\stab(b_k)$, then it is also amenable. Thus, each representation $\ell^2 (G / {\epsilon^{-1}(\stab(b_k)))}$ of $G$ is weakly-$\ell^2$, and so is $\rho_1\colon G\ra \mathcal{O}(\sH_1)$.

 On the other hand, $B$ being a subgroup of a hyperbolic group implies by \cite{MMS03} that there is a proper array $\mathfrak{c}\colon B\ra \sH_2$ into a weakly-$\ell^2$ representation $\pi\colon B \ra \mathcal{O}(\sH_2)$.  We can pull-back this array and corresponding representation to $G$ (by composing with the canonical quotient map $\varepsilon$) to obtain the array $\ma_2 = \mathfrak{c}\circ \varepsilon$ into a representation $\rho_2=\pi \circ \varepsilon$ which is weakly contained in the pull-back to $G$ of the regular representation of $B$.  Moreover, since $\ker(\varepsilon)$ is amenable, then the pull-back to $G$ of the regular representation of $B$ is also weakly-$\ell^2$.  Since $\mathfrak{c}$ is proper, we have that $\ma_2$ is proper relative to ${\ker(\varepsilon)} = A^{(I)}$.

 Taking direct sum we obtain the array $\ma=\ma_1 \oplus \ma_2 \colon G \ra \sH_1 \oplus \sH_2$ into the weakly-$\ell^2$ representation $\rho= \rho_1 \oplus \rho_2$.  Given $C>0$, since $\Norm{\ma(g)}^2 = \Norm{\ma_1(g)}^2 + \Norm{\ma_2(g)}^2 $, we get that $B_C^\ma \subset B_C^{\ma_1} \cap B_C^{\ma_2}$.  Thus, the control on the finite radius balls $B_C^\ma$ in (\ref{ball_sum_arrays}) is obtained from the definition of $\ma_1$ and the fact that $\ma_2$ is proper relative to $A^{(I)}$.
\end{proof}

\begin{rem}
Without loss of generality we can assume the array $\ma$ obtained in the previous proposition satisfies $\Norm{\ma(g)}\geqslant 1$  for all $g\in G$ and $\Norm{\ma(1)}=1$.  Indeed, following the notation of the proof, we need only redefine $\ma_1(1)$ and $\mc(b)$ on the finitely many $b\in B$ for which $\Norm{\mc(b)}<1$, while noting this finitely many modifications do not affect the array property or the control on the finite radius balls.
\end{rem}

 We will turn our attention now to constructing arrays on a countable (possibly infinite) direct sum of groups in the class $\WR_b(A,B\acts I)$ into a tensor product representation.  Suppose $\{G_j : j\in J\}$ is a countable collection of groups and let $G=\oplus_{j\in J} G_j$.  For any subset $F \subset J$ let $p_F \colon G \ra \oplus_{j\in F} G_j$ be the canonical projection.  For simplicity of notation, when $i\in J$ we will denote by $p_i := p_{\{i\}}$ the projection onto $G_i$.  For $g\in G$ we will use the notation $g_i=p_i(g)$ and $\mathfrak{s}(g) := \supp(g) = \{ i\in J : g_i\neq 1 \}$.  Given orthogonal representations $\rho_j \colon G_j \ra \mathcal{O}(\sH_j)$ for each $j\in J$ we define the representation $\rho$ of $G$ on $\sH = \otimes_{j\in J} \sH_j$ by letting $\rho(g)= \otimes_{j\in \mathfrak{s}(g)} \rho_j(g_j)$.  When $J$ is infinite we can fix a dummy unit vector $\xi_j\in \sH_j$ for each $j\in J$ to form the infinite tensor product of Hilbert spaces $\sH = \otimes_{j\in J} (\sH_j,\xi_j)$, the choice of $\xi_j$ will not affect the result and the operators $\rho(g)$ are well defined since these can be seen as tensor products of countably many operators where only finitely many are not the identity.
 
 We continue with a result that is essential towards construction of arrays on infinite direct sum groups.

\begin{lem}
Let $J$ be an at most countable set.  For $j\in J$, let $G_j \in \WR_b(A_j,B_j\acts I_j)$ where $B_j$ is a non-amenable subgroup of a hyperbolic group, $A_j$ is amenable and the action $B_j\acts I_j$ has amenable stabilizers.  Let $\ma_j\colon G_j \ra \sH_j$ be the array into a weakly-$\ell^2$ orthogonal representation satisfying (\ref{ball_sum_arrays}) and assume both $\Norm{\ma_j(1)}=1$ and $\Norm{\ma_j(g)}\geq 1$  for all $g\in G_j$, $j\in J$.  Define $G=\oplus_{j\in J} G_j$ and the representation $\rho \colon G \ra \mathcal{\sH}$ as above, where $\sH = \otimes_{j\in J} (\sH_j,\ma_j(1))$.  Consider the collection of subsets $\mathscr{G} = \{ B_n^{\ma_j}\times G_{\hat{j}} : n\in\mathbb N, j\in J \}$ and the map $\mc\colon G \ra \sH$ is given by
$$\mc(g) = \frac{\otimes_{s\in \mathfrak{s}(g)}\ma_s(g_s)}{\max_{s\in \mathfrak{s}(g)} \prod_{t\neq s} \Norm{\ma_t(g_t)}}\ .$$
for any $g\in G$. For any $g\in G$ denote by $\zeta(g)= \frac{\mc(g)}{\|\mc(g)\|}$. Then for any $g,h\in G$ we have that
\begin{equation}\label{almostbimodularatinfty} \lim_{k\to \infty/\mathscr{G}} \Norm{ \zeta(gkh) - \rho_g \zeta(k)} = 0. \end{equation}
\end{lem}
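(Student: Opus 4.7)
The plan is to exploit a striking simplification at the outset: although $\mc(g)$ carries the awkward scalar normalization $M(g)=\max_{s\in\ms(g)}\prod_{t\neq s}\|\ma_t(g_t)\|$, this scalar cancels on passing to $\zeta(g)=\mc(g)/\|\mc(g)\|$. Using $\|\ma_j(1)\|=1$ and $\|\ma_j(x)\|\geq 1$ throughout, each $\ma_j(g_j)$ is nonzero, and the infinite tensor product $\bigotimes_j\ma_j(g_j)\in\sH$ (with $\ma_j(1)$ in the $j$-th slot when $j\notin\ms(g)$) has norm $\prod_j\|\ma_j(g_j)\|<\infty$. So I would first rewrite
\[
\zeta(g)=\bigotimes_{j\in J}\frac{\ma_j(g_j)}{\|\ma_j(g_j)\|},
\]
a clean factorization over the coordinates of $J$. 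With this in hand the calculation decouples.

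Next, I would fix $g,h\in G$, set $F:=\ms(g)\cup\ms(h)$ (a finite subset of $J$), and localize the discrepancy to $F$. Since $G=\oplus_j G_j$ has coordinate-wise multiplication, $(gkh)_j=g_jk_jh_j$, and $\rho(g)=\bigotimes_j\rho_j(g_j)$ acts as the identity on the slots $j\notin\ms(g)$. Consequently, for $j\notin F$ the $j$-th slots of $\zeta(gkh)$ and $\rho_g\zeta(k)$ coincide literally, so only the finitely many indices $j\in F$ can contribute to $\zeta(gkh)-\rho_g\zeta(k)$.

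On each $j\in F$, letting $K_j:=d^{\ma_j}(g_j,h_j)$, the array property gives $\|\ma_j(g_jk_jh_j)-\rho_j(g_j)\ma_j(k_j)\|\leq K_j$, and since $\rho_j(g_j)$ is orthogonal the reverse triangle inequality yields $\bigl|\|\ma_j(g_jk_jh_j)\|-\|\ma_j(k_j)\|\bigr|\leq K_j$. Expanding $u/\|u\|-v/\|v\|$ with $u=\ma_j(g_jk_jh_j)$ and $v=\rho_j(g_j)\ma_j(k_j)$ (noting $\|v\|=\|\ma_j(k_j)\|$) should produce the pointwise estimate
\[
\Bigl\|\frac{\ma_j(g_jk_jh_j)}{\|\ma_j(g_jk_jh_j)\|}-\rho_j(g_j)\frac{\ma_j(k_j)}{\|\ma_j(k_j)\|}\Bigr\|\leq\frac{2K_j}{\|\ma_j(g_jk_jh_j)\|}.
\]
I would then invoke the standard unit-vector telescoping inequality $\bigl\|\bigotimes_j u_j-\bigotimes_j v_j\bigr\|\leq\sum_{j\in F}\|u_j-v_j\|$ (valid when the two sequences of unit vectors agree off the finite set $F$) to conclude
\[
\|\zeta(gkh)-\rho_g\zeta(k)\|\leq\sum_{j\in F}\frac{2K_j}{\|\ma_j(g_jk_jh_j)\|}.
\]

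To finish, I would unpack the filter limit: $k\to\infty/\mathscr{G}$ simply means $k$ eventually avoids every finite union $\bigcup_i(B_{n_i}^{\ma_{j_i}}\times G_{\hat{j_i}})$, so for any finite $F'\subset J$ and any threshold $N$ one has $\|\ma_j(k_j)\|>N$ for all $j\in F'$ eventually. Applying this to $F'=F$ and combining with $\bigl|\|\ma_j(g_jk_jh_j)\|-\|\ma_j(k_j)\|\bigr|\leq K_j$, each denominator $\|\ma_j(g_jk_jh_j)\|$ tends to infinity along the filter, and since $F$ is finite the displayed sum tends to $0$. The only mildly delicate step is the initial cancellation of $M(g)$ that produces the factorized form of $\zeta(g)$; once that is secured, the remainder is a routine combination of the array defect estimate with the telescoping inequality for tensor products, and I do not anticipate a serious obstacle.
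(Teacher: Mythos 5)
Your proof is correct, and it takes a genuinely different, and considerably shorter, route than the paper's. The crux is your observation that the awkward normalization in $\mc$ cancels upon passing to $\zeta$: since $\|\mathfrak{c}(g)\| = \min_{s\in\ms(g)}\|\ma_s(g_s)\|$ (as the paper itself computes in the middle of its proof) and the numerator's denominator equals $\prod_s\|\ma_s(g_s)\|/\min_s\|\ma_s(g_s)\|$, one indeed gets the clean factorization $\zeta(g)=\bigotimes_{j\in J}\ma_j(g_j)/\|\ma_j(g_j)\|$. This legitimizes a slot-by-slot comparison: for $j\notin F:=\ms(g)\cup\ms(h)$ the $j$-th factors of $\zeta(gkh)$ and $\rho_g\zeta(k)$ coincide exactly, and for $j\in F$ the array defect plus the normalization estimate $\|u/\|u\|-v/\|v\|\|\leq 2\|u-v\|/\|u\|$ gives the bound $2K_j/\|\ma_j(g_jk_jh_j)\|$, after which the unit-vector telescoping inequality and the reverse triangle inequality $\|\ma_j(g_jk_jh_j)\|\geq\|\ma_j(k_j)\|-K_j$ close the argument via $\bigcup_{j\in F}B_N^{\ma_j}\times G_{\hat j}\in\text{(finite unions of)}\;\mathscr G$. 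The paper, by contrast, never exploits this factorization: it first reduces to single-coordinate perturbations $(g,h)=(g^i,1)$ and $(1,h^j)$, then carries out a page of delicate estimates directly on the $\mathfrak{c}$-formula, splitting into cases according to whether the $\max_s\prod_{t\neq s}$ normalization is achieved at the perturbed coordinate $i$ or elsewhere, and whether $k$ lies in $B_C^{\mathfrak{c}}$ or not. Your route is shorter, avoids the reduction step entirely, and makes transparent that the specific normalization chosen for $\mathfrak{c}$ (which matters for the ball structure fed into Proposition \ref{relsolarray}) is irrelevant to this particular limit statement. The one spot worth spelling out fully in a writeup is the initial cancellation computation, but as shown above it does hold, so the proof stands.
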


\begin{rem}
In the previous statement, for a function $f\colon G \ra V$, with $V$ a normed vector space, we say that $\lim_{k \ra \infty /\mathscr G } \|f(k)\|=0$ if and only if for every $\varepsilon>0$ the set $\{k\in G \colon \|f(k)\|_V\geqslant \varepsilon \}$ is small relative to $\mathscr G$, i.e. it is contained in the union of finitely many sets of the form $t\Sigma v$, where $t,v\in G$,  $\Sigma \in \mathscr G$.  
\end{rem}

\begin{proof} For every $i\in J$ denote by $g^i=(g^i_s)_s\in G$ where $g^i_s=1$ for all $s\neq i$ and $g^i_i\neq 1$. First we argue that it suffices to prove \eqref{almostbimodularatinfty} only for  $(g,h)=(g^i,1)$ and for $(g,h)=(1,h^j)$. So assume we have established \eqref{almostbimodularatinfty} for such elements. As $g= \Pi_{i\in \ms(g)} g^i$, $h=\Pi_{j\in {\mathfrak s}(h)} h^j$, inserting terms and using the triangle inequality we get 
\begin{equation}\label{dom1}
    \|\zeta(gkh)-\rho_g\zeta(k)\| \leq \sum_{i\in \ms(g)
    }\|\zeta (g^i (\Pi_{s>i} g^s) k h)- \rho_{g^i}\zeta((\Pi_{s>i} g^s) kh)\|+ \sum_{j\in \ms(h)}\|\zeta(kh^j (\Pi_{s>j} h^s))- \zeta(k (\Pi_{s>j}h^s))\|
\end{equation}

 Since we assumed \eqref{almostbimodularatinfty} holds for all  $(g,h)=(g^i,1)$ and $(g,h)=(1,h^j)$
and since we have $k\ra \infty/\mathscr G$ if and only if $tku\ra \infty/\mathscr G$ for every fixed $t,u \in G$ we conclude  
 $\lim_{k\ra \infty/\mathscr G}\|\zeta (g^i (\Pi_{s>i} g^s) k h)- \rho_{g^i}\zeta((\Pi_{s>i} g^s) kh)\|=0$  and
$\lim_{k\ra \infty/\mathscr G}\|\zeta(kh^j (\Pi_{s>j} h^s))- \zeta(k (\Pi_{s>j}h^s))\|=0$. These combined with  \eqref{dom1} yield \eqref{almostbimodularatinfty}.

In the remaining part we prove \eqref{almostbimodularatinfty} for $(g,h)=(g^i,1)$ 
and $(g,h)=(1, h^j)$. Below we will include only a proof for the first case as the other is similar. 

Throughout the proof we fix  $k=(k_s)_s\in G $ with $k_i\neq 1,(g^{i}_i)^{-1}$ and notice $\mathfrak s(gk)=\mathfrak s(k)=:{\mathfrak s}$. Thus using the above formulae we see that \begin{equation}\label{beginningestimate}\begin{split}
    &\|\zeta(g^ik)-\rho_{g^i}\zeta(k)\|\leqslant \frac{2\|\mc(g^ik)-\rho_{g^i}\mc(k)
    \|}{\|\mc(k)\|}= \frac{2}{\|\mc(k)\|} \left \|\frac{\otimes_{s\in \mathfrak s} \ma_s(g_s^ik_s)}{\max_s(\Pi_{t\neq s}\|\ma_t(g^i_t k_t)\|)}- \frac{\otimes_{s\in\mathfrak s} \rho_{g^i_s}\ma_s(k_s)}{\max_s(\Pi_{t\neq s}\|\ma_t(k_t)\|)}\right \|
    \\&\leqslant 
    \frac{2\|\otimes_{s} \ma_s(g_s^ik_s)-\otimes_s \rho_{g^i_s}\ma_s(k_s)\|}{ \|\mc(k)\|\max_s(\Pi_{t\neq s}\|\ma_t(g^i_t k_t)\|)} + \frac{2 \Pi_{s} \|\ma_s(g^i_sk_s)\|}{\|\mc(k)\|} \left |\frac{1}{\max_s(\Pi_{t\neq s}\|\ma_t(g^i_t k_t)\|)}- \frac{1}{\max_s(\Pi_{t\neq s}\|\ma_t(k_t)\|)}\right|\\
    &\leqslant \frac{2d_i(g^i_i,1)\Pi_{s\neq i}\|\ma_s(k_s)\|  }{\|\mc(k)\|\max_s(\Pi_{t\neq s}\|\ma_t(g^i_t k_t)\|)} + \frac{2 \Pi_{s} \|\ma_s(g^i_sk_s)\|}{\|\mc(k)\|} \left |\frac{1}{\max_s(\Pi_{t\neq s}\|\ma_t(g^i_t k_t)\|)}- \frac{1}{\max_s(\Pi_{t\neq s}\|\ma_t(k_t)\|)}\right|\end{split}
\end{equation}

First we prove some bounds for the first term in the last sum in \eqref{beginningestimate}. 

 Fix $C>0$ and let $k=(k_s)_s\in G\setminus B^\ma_C$. Thus $\|\mc(k)\|=\min_s \|\ma_s(k_s)\|> C$ and using the definitions we have 
    \begin{equation}\label{firstbound}\frac{2d_i(g^i,1) \Pi_{s\neq i}\|\ma_s(k_s)\|  }{\|\mc(k)\|\max_s(\Pi_{t\neq s}\|\ma_t(g^i_t k_t)\|)}\leqslant \frac{2 d_i(g^i_i,1)}{\|\mc(k)\|}< \frac{2 d_i(g_i^i,1)}{C}. \end{equation} 

 Now fix $k=(k_s)_s\in B^\ma_C\setminus ((G_i \setminus B^{\ma_i}_{C+1+d_i(g_i^i,1)})\times G_{\hat i})$. Next we argue the maximum $\max_s(\Pi_{t\neq s}\|\ma_t(g^i_t k_t)\|)$ does not occur at $s= i$. If it did then we would have \begin{equation}\label{max2}\|\ma_s(k_s)\|\geqslant \|\ma_i(g_i^ik_i)\|\text{ for all }s\neq i.\end{equation} As $k_i \in G_i \setminus B^{\ma_i}_{C+1+d_i(g_i^i,1)}$ we see that $\|\ma_i(g_i^ik_i)\|\geqslant \|\rho_{g_i^i}\ma_i(k_i)\|-\|\ma_i(k_i)-\rho_{g_i^i}\ma_i(k_i)\|\geqslant \|\ma_i(k_i)\|-d_i(g_i^i,1)\geqslant C+1 $. Using \eqref{max2} this entails $\|\mc(k)\|=\min_s\|\ma_s(k_s)\|\geqslant C+1$, a contradiction. Thus $\max_s(\Pi_{t\neq s}\|\ma_t(g^i_t k_t)\|)$ occurs at $s_0\neq i$ and hence we have $\|\ma_s(g^i_sk_s)\|\geqslant \|\ma_{s_0}(k_{s_0})\|$ for all $s\neq s_0$. Moreover, as $k_i\in G_i\setminus B^{\ma_i}_{d_i(g_i^i,1)+C+1}$ and $\|\mc(k)\|=\min_s\|\ma_s(k_s)\|\leqslant C$ we have that $\|\mc(k)\|=\|\ma_{s_1}(k_{s_1})\|$
for some $s_1\neq i$.   In particular, $s_0=s_1$ and $\Norm{\mc(k)} = \Norm{\ma_{s_0}(k_{s_0})}$. Altogether, these show that 
\begin{equation}\label{secondbound}
\frac{2 \Pi_{s\neq i}\|\ma_s(k_s)\| d_i(g^i,1) }{\|\mc(k)\|\max_s(\Pi_{t\neq s}\|\ma_t(g^i_t k_t)\|)}= \frac{2 \|\ma_{s_0}(k_{s_0})\| d_i(g^i_i,1)}{\|\mc(k)\| \|\ma_i(g_i^ik_i)\|}= \frac{2 d(g_i^i,1)}{\|\ma_i(g_i^ik_i)\|} \leqslant \frac{2d_i(g_i^i,1)}{\|\ma_i(k_1)\|-d_i(g_i^i,1)}<\frac{2d_i(g_i^i,1)}{C+1}.\end{equation} In conclusion, \eqref{firstbound} and \eqref{secondbound} show that for all $k=(k_s)_s\in (G_i\setminus (B^{\ma_i}_{C+1+d_i(g_i^i,1)}\cup\{1, (g_i^i)^{-1}\})) \times G_{\hat i}$ we have 
\begin{equation}\label{bound firstquantity}
    \frac{2d_i(g^i,1) \Pi_{s\neq i}\|\ma_s(k_s)\|  }{\|\mc(k)\|\max_s(\Pi_{t\neq s}\|\ma_t(g^i_t k_t)\|)} \leqslant\frac{2d_i(g_i^i,1)}{C}.\end{equation}

 Next we proceed to show bounds similar to \eqref{bound firstquantity} for the second term in the last sum of \eqref{beginningestimate}. First note that using the bounded equivariance of the array  $\ma_i$, a basic computation shows that 
\begin{equation}\label{secondquantitybound1}
    \left |\max_s \left (\Pi_{t\neq s}\|\ma_t(k_t)\|\right )- \max_{s}\left (\Pi_{t\neq s}\|\ma_t(g^i_tk_t)\|\right)\right |\leqslant  \left (d_i(g_i^i,1)+d_i((g_i^i)^{-1},1)\right )\max_{s\neq i}\left (\Pi_{t\neq s,i}\|\ma_t(k_t)\|\right ).
\end{equation}
Letting $D_i := 2\left (d_i(g_i^i,1)+d_i((g_i^i)^{-1},1)\right )$ and using the above estimate we have \begin{equation}\label{secondquantityfisrtcase1}\begin{split}
&\frac{2 \Pi_{s} \|\ma_s(g^i_sk_s)\|}{\|\mc(k)\|} \left |\frac{1}{\max_s(\Pi_{t\neq s}\|\ma_t(g^i_t k_t)\|)}- \frac{1}{\max_s(\Pi_{t\neq s}\|\ma_t(k_t)\|)}\right|\leqslant \\
&\leqslant
\frac{D_i\left (\Pi_{s} \|\ma_s(g^i_sk_s)\|\right)\max_{s\neq i}\left (\Pi_{t\neq s,i}\|\ma_t(k_t)\|\right )}{\|\mc(k)\|\max_s(\Pi_{t\neq s}\|\ma_t(g^i_t k_t)\|) \max_s(\Pi_{t\neq s}\|\ma_t(k_t)\|)}\end{split}
\end{equation}

 Fix $k=(k_s)_s\in G\setminus B^\ma_C$. Hence $\|\mc(k)\|=\min_{s}\|\ma_s(k_s)\|\geqslant C$. Using  $\Pi_s\|\ma_s(g^i_s k_s)\|= \Pi_{t\neq i}\|\ma_t(k_t)\| \|\ma_i(g^i_i k_i)\|$, regrouping in \eqref{secondquantitybound1}, and using the previous inequality we see the last expression in \eqref{secondquantitybound1} further satisfies

\begin{equation}\label{secondquantityfirstcase2}
    = \frac{D_i}{\|\mc(k)\|} \cdot \frac{\max_{s\neq i}\left (\Pi_{t\neq s,i}\|\ma_t(k_t)\|\|\ma_i(g^i_ik_i)\|\right)}{\max_s(\Pi_{t\neq s}\|\ma_t(g^i_t k_t)\|)} \cdot \frac{\left (\Pi_{t\neq i} \|\ma_t(k_t)\|\right)}{ \max_s(\Pi_{t\neq s}\|\ma_t(k_t)\|)}\leqslant \frac{D_i}{C}.\end{equation}

 Now fix $k=(k_s)_s\in B_C^\ma\cap ((G_i\setminus B^{\ma_i}_{C+1+d_i(g_i^i,1)})\times G_{\hat{i}})$. Assume that the maximum $\max_{s\neq i}\left (\Pi_{t\neq s,i}\|\ma_t(k_t)\|\right )$ occurs at $s=s_0\neq i$. Hence $\|\ma_t(k_t)\|\geqslant \|\ma_{s_0}(k_{s_0})\|$  for all $t\neq i$ which in particular implies both maxima $\max_s(\Pi_{t\neq s}\|\ma_t(k_t)\|)$ and  $\max_s(\Pi_{t\neq s}\|\ma_t(g^i_t k_t)\|$ occur at $s_0$ or $i$. Next we argue that neither of them can occur at $i$. To see this assume by contradiction the first maximum occurs at $i$. Thus  $\|\ma_t(k_t)\|\geqslant \|\ma_i(t_i)\|$ for all $t$. Hence $C\geqslant \|\mc(k)\|=\min \|\mc_t(k_t)\|\geqslant \|\ma_i(k_i)\|\geqslant C+1+ d_i(g_i^i,1)$, a contradiction. Now assume the second maximum occurs at $i$. Hence, $\|\ma_t(k_t)\|\geqslant \|\ma_i(g_i^ik_i )\|$, for all $t\neq i$ and thus for every $t\neq i$ we have $\|\ma_t(k_t)\|\geqslant \|\ma_i(g_i^ik_i)\|\geqslant \|\ma_i(k_i)\|-\| \ma_i(g_i^i k_i)-\lambda_{g_i^i}(\ma_i(k_i))\|\geqslant C+1$, as $k_i \in G_i \setminus B^{\ma_i}_{C+1+d_i(g_i^i,1)}$. Hence $C\geqslant \|\mc(k)\|= \min_{t}\|\ma_t(k_t)\|\geqslant C+1$, a contradiction.

 In conclusion, both maxima occur at $s_0$ and hence $\|\ma_t(k_t)\|,\|\ma_t (g^i_t k_t)\| \geqslant \|\ma_{s_0}(k_{s_0})\|$ for all $t$. Thus we have

\begin{equation}\label{boundsecondquantity}\begin{split}
    &\frac{D_i\left (\Pi_{s} \|\ma_s(g^i_sk_s)\|\right)\max_{s\neq i}\left (\Pi_{t\neq s,i}\|\ma_t(k_t)\|\right )}{\|\mc(k)\|\max_s(\Pi_{t\neq s}\|\ma_t(g^i_t k_t)\|) \max_s(\Pi_{t\neq s}\|\ma_t(k_t)\|)}=  \frac{D_i\left (\Pi_{s} \|\ma_s(g^i_sk_s)\|\right)\left (\Pi_{t\neq s_0,i}\|\ma_t(k_t)\|\right )}{\|\mc(k)\|(\Pi_{t\neq s_0}\|\ma_t(g^i_t k_t)\|) (\Pi_{t\neq s_0}\|\ma_t(k_t)\|)}\\
    &=\frac{D_i\left (\Pi_{s} \|\ma_s(g^i_sk_s)\|\right)}{\|\mc(k)\|(\Pi_{t\neq s_0}\|\ma_t(g^i_t k_t)\|) \|\ma_i(k_i)\|} = \frac{D_i \|\ma_{s_0}(k_{s_0})\|}{\|\mc(k)\| \|\ma_i(k_i)\|} \leqslant 
\frac{D_i}{ C+1+ d_i(g_i^i,1)}.\end{split}\end{equation}

 Altogether, estimates \eqref{bound firstquantity}, \eqref{secondquantityfisrtcase1}, \eqref{secondquantityfirstcase2}, \eqref{boundsecondquantity} and \eqref{beginningestimate} show that for all $C>0$ and  $k=(k_s)_s\in \left (G_i\setminus (B^{\ma_i}_{C+1+d_i(g_i^i,1)} \cup\{ (g_i^i)^{-1},1\})\right )\times G_{\hat i}$ we have that  \begin{equation}
    \|\zeta(g^ik)-\rho_{g^i} \zeta (k)\|\leqslant \frac{4(d_i(g_i^i,1)+d_{i}((g_i^i)^{-1},1))}{C}.
\end{equation}
Letting $C\nearrow \infty$ this clearly gives \eqref{almostbimodularatinfty} for all $(g,h)=(g^i,1)$.\end{proof}
 Finally, we use the prior lemma, together with a general method from \cite{BO08} and \cite{PV12}, to construct an array on an infinite direct sum of wreath-like product groups.
\begin{prop}\label{relsolarray}
Let $J$ be an at most countable set and for every $j\in J$, let $G_j \in \WR_b(A_j,B_j\ca I_j)$ where $A_j$ is amenable, $B_j$ is a non-amenable subgroup of a hyperbolic group and the action $B_j\ca I_j$ has amenable stabilizers.  Let $G=\oplus_{j\in J} G_j$. Then there exists a weakly-$\ell^2$ representation $\rho \colon G \ra \mathcal{O}(\sH)$ and an array $\ma\colon G \ra \sH$ satisfying that for every $C\geqslant 0$ there is a finite subset $J_0 \subseteq J$ such that for each $j \in J_0$ there is a finite subset $F_j\subset G_j$ and $n_j \in \mathbb{N}$ satisfying 
\begin{equation}
B_C^\ma \subset \bigcup_{j\in J_0} \{ a_j h_j \colon a_j \in A_j^{(I_j)}, |\supp(a_j)|\leqslant n_j, h_j \in F_j \} \times G_{\hat{j}}\,.
\end{equation}
\end{prop}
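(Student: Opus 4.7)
The plan is to upgrade the asymptotically bimodular family $\zeta$ supplied by the previous lemma to a genuine array via the standard ultrapower/GNS technique of \cite{BO08} and \cite{PV12}. First, for each $j \in J$ I apply the previous proposition to obtain arrays $\ma_j \colon G_j \to \sH_j$ into weakly-$\ell^2$ representations $\rho_j$ satisfying the ball control (\ref{ball_sum_arrays}); after rescaling and finitely many adjustments I may assume $\Norm{\ma_j(1)}=1$ and $\Norm{\ma_j(g)}\geq 1$ for all $g \in G_j$. The previous lemma then provides the unit-vector map $\zeta \colon G \to \sH := \otimes_{j\in J}(\sH_j,\ma_j(1))$ with the asymptotic bimodularity along the family $\mathscr{G}=\{B_n^{\ma_j}\times G_{\hat j} : n\in\mathbb N, j\in J\}$, relative to $\rho = \otimes_j \rho_j$.

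Next I fix a non-principal ultrafilter $\omega$ on $G$ extending the (proper) filter generated by complements of finite unions of elements of $\mathscr{G}$, form the ultrapower Hilbert space $\sH^\omega = \ell^\infty(G,\sH)/\mathcal{N}_\omega$ with its induced unitary representation $\tilde\rho$, and set $\Xi = [\zeta] \in \sH^\omega$. Define $\ma\colon G \to \sH^\omega$ by $\ma(g) = \tilde\rho_g \Xi - \Xi$. A direct computation gives $\tilde\rho_h\ma(g)-\ma(hgk)= -\ma(h)-\tilde\rho_{hg}\ma(k)$, so $d^\ma(h,k) \leq \Norm{\ma(h)} + \Norm{\ma(k)}$ and $\ma$ is indeed an array. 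The cyclic sub-representation of $\tilde\rho$ generated by $\Xi$ carries the positive-definite function $\phi(g)=\lim_{k\to\omega}\langle\rho_g\zeta(k),\zeta(k)\rangle$, a pointwise limit of matrix coefficients of $\rho$. Combining the tensor-product structure $\rho=\otimes_j\rho_j$, the identification $\ell^2(G)\cong\otimes_j(\ell^2(G_j),\delta_1)$ of regular representations, and the weakly-$\ell^2$ property of each $\rho_j$, one verifies that this sub-representation is itself weakly-$\ell^2$; I restrict $\ma$ to it.

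Finally, the norm satisfies $\Norm{\ma(g)}^2 = 2(1-\mathrm{Re}\,\phi(g))$, and the tensor-product structure factors
\[
\phi(g)=\prod_{j\in\ms(g)}\phi_j(g_j),\qquad \phi_j(g_j):=\lim_{\omega_j}\frac{\langle\rho_j(g_j)\ma_j(k_j),\ma_j(k_j)\rangle}{\Norm{\ma_j(k_j)}^2},
\]
with $|\phi_j|\leq 1$ and $\omega_j$ the pushforward of $\omega$ under the projection $G\to G_j$. The inequality $\mathrm{Re}\,\phi(g)\geq 1-C^2/2$ therefore bounds both $|\ms(g)|$ and each $1-\mathrm{Re}\,\phi_j(g_j)$ for $j\in\ms(g)$; coupling $\phi_j(g_j)\approx 1$ with the array identity $\rho_j(g_j)\ma_j(k_j)\approx \ma_j(g_j k_j)$ and the ball control (\ref{ball_sum_arrays}) for $\ma_j$ pins $g_j$ down to a set of the form $\{a_jh_j : a_j\in A_j^{(I_j)},\,|\supp a_j|\leq n_j,\,h_j\in F_j\}$, which yields the desired inclusion. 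The main obstacle is this last implication: weakly-$\ell^2$ representations of $G_j$ need not have uniformly decaying matrix coefficients on orbits, so one must exploit both the explicit form of $\ma_j$ (support array plus pullback of a proper array on $B_j$) and the fact that $\omega$ pushes every $B_n^{\ma_j}$ to an $\omega_j$-null set to exclude approximate invariance and thereby force the required wreath-product structure for $g_j$.
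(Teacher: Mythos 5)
Your proposal takes a genuinely different route from the paper. The paper's proof does not pass to an ultrapower at all; instead it directly builds $\ma$ on the original Hilbert space $\sH$ by the "staircase" device of \cite{BO08,PV12}: one fixes an exhaustion $\{1\}=E_0\subset E_1\subset\cdots$ of $G$ with $E_n^{-1}=E_n$, defines inductively the sets
\[
\Omega_n := E_n\Omega_{n-1}E_n \cup \bigcup_{g,h\in E_n}\{k\in G:\ \|\zeta(gkh)-\lambda_g\zeta(k)\|>1/n\},
\]
and sets $\ma(k)=n\zeta(k)$ for $k\in\Omega_{n+1}\setminus\Omega_n$ (and $\ma\equiv 0$ on $\Omega_1$). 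The array property follows by a case analysis on whether $k\in\Omega_m$ or not, and the ball control is \emph{automatic}: $\|\ma(k)\|\geqslant n$ forces $k\notin\Omega_n$, so $B_C^{\ma}\subset\Omega_{\lfloor C\rfloor+1}$, and each $\Omega_n$ is, by construction and the preceding lemma, contained in a finite union of translates $t\Sigma v$ with $\Sigma\in\mathscr{G}$ --- sets which plainly have the required form after absorbing the translates into the finite sets $F_j$ and the support bounds $n_j$.

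In your ultrapower version the coboundary $\ma(g)=\tilde\rho_g\Xi-\Xi$ certainly gives an array, and the weak-$\ell^2$ property of the cyclic sub-representation is plausible via weak containment. But the final step — the ball control — is a genuine gap, not merely a detail to check. You have $\|\ma(g)\|^2=2(1-\mathrm{Re}\,\phi(g))$ with $\phi(g)=\prod_{j\in\ms(g)}\phi_j(g_j)$, and you need $\phi_j(g_j)$ bounded away from $1$ whenever $g_j$ is outside a set of the form $\{a_jh_j: |\supp(a_j)|\leqslant n_j,\ h_j\in F_j\}$. Nothing you cite gives that. Weak-$\ell^2$-ness of $\rho_j$ does not control pointwise decay of the single matrix coefficient $\phi_j$: it only places it in the weak-$*$ closure of $\ell^2$-type coefficients. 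Worse, a direct computation along $\omega_j$ (which concentrates on $k_j$ with $\|\ma_j(k_j)\|\to\infty$) shows that the contribution of the bounded defect $d^{\ma_j}(g_j,1)$ is washed out, so $\phi_j(g_j)$ is essentially $\lim_{\omega_j}\langle\ma_j(g_jk_j),\ma_j(k_j)\rangle/\|\ma_j(k_j)\|^2$. If $\omega_j$ happens to concentrate on $k_j\in A_j^{(I_j)}$ with large, nearly $\epsilon(g_j)$-invariant support, this quotient can approach $1$ even for $g_j\notin A_j^{(I_j)}F_j$, and ruling this out requires a spectral-gap-type estimate for the quasi-regular representation of $B_j$ on $\ell^2(I_j)$ that is neither in the hypotheses nor in anything you invoke. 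Your last paragraph acknowledges exactly this obstacle and asserts it can be handled, but does not do so; as written, the argument does not close. The paper's $\Omega_n$-construction is designed precisely to avoid needing any such coefficient estimate: it tracks the bad sets directly and sidesteps the matrix-coefficient problem entirely.
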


\begin{proof}
The construction of this array will follow a method that can be traced back to \cite{BO08} and more concretely used in \cite[Proposition 2.7]{PV12}.  Let $\mc \colon G \ra \sH$ and $\zeta(g) = \frac{\mc(g)}{\Norm{\mc(g)}}$ be the maps from the previous lemma.  Take any nested sequence of finite subsets $\{1\} = E_0 \subset ... \subset E_n \subset ... \subset G$ such that $E_n^{-1} = E_n$ and $\bigcup_{n\in \mathbb{N}} E_n = G$.  Now define inductively the sets $\Omega_n \subset G$ by setting $\Omega_0 =\{1\}$ and for $n\geq 1$
$$ \Omega_n := E_n \Omega_{n-1} E_n \cup \bigcup_{g,h \in E_n} \{ k \in G : \Norm{\zeta(gkh) - \lambda_g \zeta(k)} > 1/n \}\,. $$

 By construction, $(\Omega_n)_n$ is an increasing sequence satisfying $\bigcup_n \Omega_n =G$ and $E_n \subset \Omega_n$ for every $n\in \mathbb{N}$.  Now we define $\ma\colon G\ra \sH$ by

\begin{equation*}
\ma(k) = \begin{cases} 0 &\mbox {if } k\in \Omega_1\\ n\zeta(k) &\mbox {if } k\in \Omega_{n+1}\setminus \Omega_n \text{ for some } n\geqslant 1\,.
\end{cases}
\end{equation*}

 Notice that $\Norm{\ma(k)} \geq n$ whenever $k \in G\setminus \Omega_n$.  Hence, from the previous lemma, we obtain that the finite radius balls $B_C^\ma$ are small relative to $\mathscr{G}$, i.e. they have the desired structure.

 We are left to prove that $\ma$ is indeed an array.  Let $g,h \in G$, so $g,h \in E_m$ for some $m\geqslant 1$.  We shall prove $\Norm{\ma(gkh) - \lambda_g \ma(k)} \leqslant 2m$ for all $k\in G$.  Indeed, if $k\in G\setminus \Omega_m$, then $k\in \Omega_{n+1}\setminus\Omega_n$ for some $n\geqslant m$ and $\ma(k) = n\zeta(k)$.  Since $g,h \in E_m \subset E_n = E_n^{-1}$ and $k\notin \Omega_n$, we must have $gkh \in \Omega_{n+2}\setminus \Omega_{n-1}$.  Thus, $\ma(gkh)$ is one of $(n+1)\zeta(gkh)$, $n\zeta(gkh)$ or $(n-1)\zeta(gkh)$.  In any case, we have $\Norm{\ma(gkh) - n \zeta(gkh)} \leqslant 1$.  Moreover, since $k \notin \Omega_n$ and $g,h \in E_n$ we must have $\Norm{\zeta(gkh) - \lambda_g \zeta(k)}\leqslant 1/n$.  Therefore 
$$ \Norm{\ma(gkh) - \lambda_g \ma(k)} \leqslant \Norm{\ma(gkh) - n\zeta(gkh)} + n\Norm{\zeta(gkh) - \lambda_g \zeta(k)} \leqslant 2 \leqslant 2m. $$
On the other hand, if $k\in \Omega_m$ then $\Norm{\ma(k)} \leqslant m-1$ and since $g,h \in E_m$ we also have $gkh \in \Omega_{m+1}$ and $\Norm{\ma(gkh)} \leqslant m$.  Thus,
$$ \Norm{\ma(gkh) - \lambda_g \ma(k)} \leqslant \Norm{\ma(gkh)} + \Norm{\ma(k)} \leqslant 2m - 1 < 2m\,. $$\end{proof}

\section{Some analytic properties of deformations on von Neumann algebras of wreath-like product groups}

In the first part of this section we recall the construction of von Neumann algebras deformations associated with arrays on groups and we collect together some of their basic properties. Then we will  establish a subordination property (Theorem \ref{subrel}) between a restriction of the array deformation \cite{Si10,CS11} and Ioana's deformation on infinite tensor algebras \cite{Io06}. This is instrumental in the infinitesimal analysis performed in Section~\ref{sec. vNa main} towards reconstructing the infinite folded direct sum feature of a group from $W^*$-equivalence.  Incidentally, this subordination property has been also used in an essential way in the reconstruction of a group center under $W^*$-equivalence, \cite{CFQT23}.

\subsection{Array deformations of von Neumann algebras of wreath-like product groups}\label{sec:deform_array}

Suppose $\pi\colon G \rightarrow \mathcal{O}(\sH)$ is an orthogonal representation of a countable discrete group on a separable real Hilbert space.  Then there is a p.m.p. action $G\acts^{\sigma^\pi} (Y^\pi,\nu^\pi)$ (known as the Gaussian action) on a non-atomic standard probability space such that the Koopman representation $G\acts^{\pi_0} L^2_0(Y^\pi, \nu^\pi) = L^2(Y^\pi, \nu^\pi) \ominus \mathbb{C}1$ is unitarily equivalent to the direct sum over the (strictly positive) symmetric tensor powers of the complexified representation $\pi_\mathbb{C}\colon  G \rightarrow \mathscr{U}(\mathscr{H}\otimes\mathbb{C})$.  In \cite{Si10} this Gaussian construction was used to construct a deformation of $\mathcal{L}(G)$ via the exponentiation of a 1-cocycle $\mq \colon G\rightarrow \mathscr{H}$ into an orthogonal representation $\pi$, this was then generalized in \cite{CS11} by allowing the map $\mq$ to be an array.  We briefly illustrate this procedure.

 Let $S\mathscr{H}_\mathbb{C} = \oplus_{n=0}^\infty \mathscr{H}_\mathbb{C}^{\odot n}$ be the direct sum over all symmetric tensor powers (where we make the identifications $\mathscr{H}_\mathbb{C}^{\odot 0}= \mathbb{C}\Omega$, $\Norm{\Omega}=1$ and $\xi^{\otimes 0} = \Omega$ for any $\xi \in \mathscr{H}$).  The exponential of a vector $\xi\in \mathscr{H}$ can be defined by $\mathrm{Exp}(\xi) = \sum_{n=0}^\infty (n!)^{-1/2}\xi^{\otimes n}$, so $\innpr{\mathrm{Exp}(\xi),\mathrm{Exp}(\eta)} = \exp\innpr{\xi,\eta} $ and the set $\{\mathrm{Exp}(\xi) \colon \xi\in\mathscr{H} \}$ is linearly independent and total in $S\mathscr{H}_\mathbb{C}$ \cite[Proposition 2.2]{Gui72}.  Thus, for each $\xi\in \mathscr{H}$ the map $\omega(\xi)$ defined by $\mathrm{Exp}(\eta) \mapsto e^{-\Norm{\xi}^2 - \innpr{\sqrt{2}\xi,\eta}} \mathrm{Exp}(\eta+\sqrt{2}\xi)$ (for any $\eta \in \mathscr{H}$) extends to a unitary on $S\mathscr{H}_\mathbb{C}$.  We let $\mathcal{D}$ be the von Neumann algebra $\mathcal{D}$ generated by $\{ \omega(\xi) : \xi \in \mathscr{H} \}$ and notice:
\begin{itemize}
	\item $\omega(0) = 1$, $\omega(\xi + \eta) = \omega(\xi)\omega(\eta)$ and $\omega(\xi)^* = \omega(-\xi)$ for all $\xi, \eta \in\mathscr{H}$;
	\item the vector $\Omega$ is cyclic and separating for $\mathcal{D}$ and defines a faithful normal trace $\tau(\omega(\xi)) := \innpr{\omega(\xi) \Omega, \Omega} = e^{-\Norm{\xi}^2}$.
\end{itemize}
Since $\mathcal{D}$ is a separable abelian von Neumann algebra, there is a standard probability space $(Y^\pi,\nu^\pi)$ such that $\mathcal{D}\cong L^\infty (Y^\pi,\nu^\pi)$ and $\tau$ is implemented by the integral with respect to $\nu^\pi$.  Moreover, $\mathcal{D}$ is diffuse and so $(Y^\pi, \nu^\pi)$ is non-atomic.  Notice that any orthogonal operator $A\in \mathcal{O}(\mathscr{H})$ extends to a unitary $A_\mathbb{C} \in \mathscr{U}(\mathscr{H}_\mathbb{C})$ and to a unitary $U_A \in \mathscr{U}(S\mathscr{H}_\mathbb{C})$ (via the direct sum of the tensor powers).  Hence, the above definitions imply that conjugation by $U_A$ gives a trace-preserving automorphism $\sigma_A$ of $\mathcal{D}$ where $\sigma_A (\omega(\xi)) = U_A \omega(\xi) U_A^* = \omega(A\xi)$.  Consequently, the orthogonal representation $\pi\colon G \rightarrow \mathcal{O}(\mathscr{H})$ induces an action by trace-preserving automorphisms $G \acts^{\sigma^\pi} \mathcal{D}$ (where $\sigma^\pi_g = \sigma_{\pi(g)}$) and thus a p.m.p action $G \acts^{\sigma^\pi} (Y^\pi,\nu^\pi)$.

 We now proceed to describe the Gaussian deformation and recall some of its properties.  Let $G \acts^\rho (\mathcal{N},\tau)$ be a trace-preserving action on a finite von Neumann algebra, and consider the product action $G\acts^{\sigma^\pi \otimes \rho} \mathcal{D}\bar{\otimes} \mathcal{N}$. Given a map $\mq\colon G \rightarrow \mathscr{H}$ and $t\in \mathbb{R}$ we may define a unitary $V^\mq_t \in \mathscr{U}(L^2(Y^\pi, \nu^\pi) \otimes L^2(\mathcal{N}) \otimes \ell^2 (G))$ by extending $V^\mq_t(\zeta \otimes x \otimes \delta_h) = \omega(t\mq(h)) \zeta \otimes x \otimes \delta_h$ for every $\zeta \in L^2(Y^\pi, \nu^\pi)$, $x\in L^2(\mathcal{N})$ and $h\in G$.  When the map $\mq$ is a 1-cocycle into $\pi$, then conjugation by $V^\mq_t$ is an automorphism of $(\mathcal{D}\Bar{\otimes} \mathcal{N})\rtimes_{\sigma^\pi\otimes \rho} G$ and we have a deformation at the level of the von Neumann algebras $\mathcal{N}\rtimes_\rho G \subset (\mathcal{D}\Bar{\otimes} \mathcal{N})\rtimes_{\sigma^\pi\otimes \rho} G$.  This is not true when $\mq$ is only an array, but in that case we have a deformation at the level of the $C^*$-algebras.

\begin{prop}\label{propo:Gaussian_properties}
Let $G \acts^\rho ( \mathcal{N},\tau)$ be a trace-preserving action on a finite von Neumann algebra, $\pi\colon G \rightarrow \mathcal{O}(\mathscr{H})$ be an orthogonal representation of a countable discrete group on a separable real Hilbert space and $\mq \colon G \rightarrow \mathscr{H}$ be an array into $\pi$.  Let $V^\mq_t$ be the Gaussian deformation as described above and $e \in \mathbb{B}(L^2(Y^\pi,\nu^\pi) \otimes L^2(\mathcal{N}) \otimes \ell^2(G))$ be the orthogonal projection onto $1\otimes L^2(\mathcal{N}) \otimes \ell^2(G)$.  Then the following hold:
\begin{enumerate}
	\item\label{propo:Gaussian_transversality} (Transversality, \cite[Lemma 2.8]{CS11}) Given $t\in \mathbb{R}$, $\eta \in L^2(\mathcal{N}) \otimes\ell^2(G)$ and fixing $e^\perp = 1 - e$ we have
	$$\Norm{e^\perp\circ V_t^\mq (\eta)}_2^2 \leqslant \Norm{\eta - V_t^\mq(\eta) }_2^2 \leqslant 2 \Norm{e^\perp \circ V_t^\mq(\eta)}_2^2 . $$

    \item\label{propo:Gaussian_bimodularity} (Asymptotic bimodularity, \cite[Corollary 5.5]{CSU11} and \cite[Proposition 1.10]{CSU13})\\
    For any $x,y\in \mathcal{N} \rtimes_{\rho,r} G $ (the reduced $C^*$-crossed product) we have
    $$ \lim_{t\rightarrow 0} \left( \sup_{\eta \in (L^2(\mathcal{D}\rtimes G))_1}\Norm{V^\mq_t(x\eta y) - x V_t^\mq(\eta)y}_2 \right) = 0 $$
\item\label{spectralgap} (Spectral gap,  \cite[Theorem 3.2]{CS11})\\ Assume that $\pi$ is a weakly-$\ell^2$ representation. For any projection $p\in \mathcal N\rtimes G=:\cM$ and any von Neumann algebra $\mathcal A\subseteq p\cM p$ with non-amenable relative commutant $\mathcal A'\cap p\cM p$ we have $$\lim_{t\ra 0}\left(\sup_{x\in (\mathcal A)_1}\| x-V^\mq_t (x)\|_2\right)=0.$$
\end{enumerate}
\end{prop}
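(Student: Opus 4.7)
My plan is to verify each item by direct computation in the symmetric Fock model, relying throughout on the master identity $\innpr{\omega(\xi)\Omega,\omega(\eta)\Omega} = e^{-\Norm{\xi-\eta}^2}$ (a consequence of $\omega(\xi)\Omega = e^{-\Norm{\xi}^2}\mathrm{Exp}(\sqrt{2}\xi)$ together with $\innpr{\mathrm{Exp}(\alpha),\mathrm{Exp}(\beta)} = e^{\innpr{\alpha,\beta}}$). Writing any $\eta \in L^2(\cN)\otimes\ell^2(G)$ in Fourier form $\eta = \sum_h x_h\otimes\delta_h$, we have $V_t^{\mq}\eta = \sum_h \omega(t\mq(h))\Omega\otimes x_h\otimes\delta_h$. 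For (1), the orthogonal decomposition $\omega(t\mq(h))\Omega = e^{-t^2\Norm{\mq(h)}^2}\Omega + \zeta_h$ with $\zeta_h\perp\Omega$ yields the explicit formulas $\Norm{e^\perp V_t^{\mq}\eta}_2^2 = \sum_h(1 - e^{-2t^2\Norm{\mq(h)}^2})\Norm{x_h}_2^2$ and $\Norm{V_t^{\mq}\eta - \eta}_2^2 = 2\sum_h(1 - e^{-t^2\Norm{\mq(h)}^2})\Norm{x_h}_2^2$, so both inequalities reduce to the pointwise comparisons $1 - e^{-s} \leqslant 1 - e^{-2s} \leqslant 2(1 - e^{-s})$ valid for $s\geqslant 0$.

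For (2), by density and linearity one reduces to the case $x = u_g$, $y = u_h$ (bimodularity with respect to $\cN$ itself being immediate, since $V_t^{\mq}$ only acts on the $\delta_g$-coordinate of the Fourier expansion). A direct computation combined with the master identity gives
\[
\Norm{V_t^{\mq}(u_g\eta u_h) - u_g V_t^{\mq}(\eta) u_h}_2^2 = 2\sum_k\bigl(1 - e^{-t^2\Norm{\mq(gkh) - \pi_g\mq(k)}^2}\bigr)\Norm{\eta_k}_2^2,
\]
which, by the array defect bound $\Norm{\mq(gkh) - \pi_g\mq(k)} \leqslant d^{\mq}(g,h) =: K$ uniform in $k$, is dominated by $2(1 - e^{-t^2 K^2})\Norm{\eta}_2^2$ and tends to $0$ as $t\to 0$ uniformly over $\eta$ in the unit ball.

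For (3), I would proceed via Popa's spectral gap principle. Since $\pi$ is weakly-$\ell^2$, each (complexified) symmetric tensor power of $\pi$ is also weakly-$\ell^2$, hence the $\cN$-$\cN$ bimodule $L^2((\mathcal{D}\bar\otimes\cN)\rtimes G) \ominus L^2(\cN\rtimes G)$ is weakly contained in a multiple of the coarse bimodule $L^2(\cN)\bar\otimes L^2(\cN)$. Suppose the spectral gap claim fails; then via the transversality in (1) one extracts $t_n\to 0$ and $x_n\in(\mathcal A)_1$ with $\Norm{e^\perp V_{t_n}^{\mq}(x_n)}_2$ bounded below. The resulting ultraproduct vector lies in the coarse part of the bimodule, and by the asymptotic bimodularity of (2) it is asymptotically central for $\mathcal A'\cap p\cM p$. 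The non-amenability of $\mathcal A'\cap p\cM p$, combined with the weak coarse containment, forces this vector to vanish, contradicting the lower bound. The principal obstacle is precisely this third item: one must confirm that weak-$\ell^2$ of $\pi$ propagates through the symmetric Fock construction to yield coarse containment of the orthogonal complement, and then upgrade the resulting pointwise spectral gap to uniformity over $(\mathcal A)_1$ via the interplay of (1) and (2); items (1) and (2) reduce to routine Gaussian computations once the master identity is in place.
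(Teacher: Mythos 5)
The paper gives no proof of this proposition: all three items are imported verbatim from \cite{CS11}, \cite{CSU11}, and \cite{CSU13} via citations, so there is no internal argument to compare against. Your proofs of items (1) and (2) are correct and follow exactly the computation in those sources. For (1), the two displayed formulas
\[
\Norm{e^\perp V_t^\mq(\eta)}_2^2 = \sum_h\bigl(1-e^{-2t^2\Norm{\mq(h)}^2}\bigr)\Norm{x_h}_2^2, \qquad
\Norm{\eta - V_t^\mq(\eta)}_2^2 = 2\sum_h\bigl(1-e^{-t^2\Norm{\mq(h)}^2}\bigr)\Norm{x_h}_2^2,
\]
are right, and the pointwise comparison $1-e^{-s}\leqslant 1-e^{-2s}\leqslant 2(1-e^{-s})$ does give both inequalities (the second inequality of the chain yields the upper bound and the first yields the lower bound). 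For (2), I verified the Fourier-mode formula you wrote, including the $\cN$-equivariance step (which works because $\cD$ commutes with $\cN$), and the uniform domination by $2(1-e^{-t^2 K^2})$ with $K=d^\mq(g,h)$ is the desired conclusion.

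Item (3) is the one place where the sketch would need to be tightened before it counts as a proof, and you flag this yourself. The weak-$\ell^2$ propagation through the symmetric Fock space is indeed routine ($\pi^{\odot n}_\mathbb{C}\subset\pi^{\otimes n}_\mathbb{C}\prec\lambda^{\otimes n}\cong\oplus\lambda$ for $n\geqslant 1$, and the $n=0$ stratum is precisely $L^2(\cM)$, which $e^\perp$ kills). The two genuinely nontrivial points are: (a) the asymptotic bimodularity you invoke is only available for $y$ in the \emph{reduced $C^*$-crossed product}, whereas Popa's spectral gap requires unitaries in the von Neumann algebra $\mathcal A'\cap p\cM p$; bridging this requires a Kaplansky-density perturbation, and because $V_t^\mq$ is only an isometry of $L^2$ (not a $*$-homomorphism for a general array), one cannot control $\Norm{(u-v)\xi_n}_2$ by $\Norm{u-v}_2$ alone, so the perturbation has to be routed through the $C^*$-norm approximant of $x_n$ rather than of $u$ --- this is exactly how the present paper itself uses (2) in the proof of Theorem \ref{intertwining}; and (b) "the non-amenability forces the ultraproduct vector to vanish" is cleanest when replaced by the quantitative spectral-gap inequality ($\exists\, u_1,\dots,u_m$ and $\delta>0$ with $\max_i\Norm{u_i\xi-\xi u_i}\geqslant\delta\Norm{\xi}$ for all $\xi$ in the coarse part), which avoids having to verify that weak containment in the coarse bimodule passes to the ultrapower. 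These are exactly the technical points addressed in \cite[Theorem 3.2]{CS11}; your strategy is the correct one, but as written it is an outline rather than a proof of item (3).
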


\subsection{Ioana's deformation of infinite tensor product von Neumann algebras}

Next we recall the deformation of a von Neumann algebra $\mathcal M \bar\otimes \L(G)$ arising from a group $G =  \oplus_J A $ where $A$ is a countable group and $J$ is an infinite countable set, introduced in \cite{Io06}. Notice that we have $\Nn:=\mathcal M \bar\otimes \L(G)= \mathcal M\bar\otimes (\bar \otimes_J \L(A))$.  Following \cite[Proposition 2.3]{Io06}  consider the group $\tilde A= A\ast \mathbb Z$ and denote by $\tilde G = \oplus_J \tilde A$. Notice that $\mathcal M\bar\otimes \L(G)\subset \mathcal M\bar\otimes \L(\tilde G)=:\tilde \Nn$. Moreover, let $u\in \L(\mathbb{Z})\cong L^\infty([-\pi,\pi])$ be the canonical generating Haar unitary and $h=h^* \in \L(\mathbb{Z})$ be such that $e^{ih} = u$ (we fix $h\colon [-\pi,\pi] \ra [-\pi,\pi]$ the identity function for simplicity and for fixing constants). Then for every $t\in \mathbb R$ we consider the unitary $u^t = \exp(it h)\in \L(\mathbb Z)$. This enables us to define a path $\alpha\colon \mathbb{R} \rightarrow \Aut(\Tilde{\Nn})$ given by
$$\alpha_t (x \otimes (\otimes_j a_j)) = x \otimes (\otimes_j \Ad_{u^t}(a_j)) \text{\ \ for every }t\in \mathbb{R} \text{ ,\ } x\in \mathcal M, \otimes_j a_j \in \L(\Tilde{G}).$$
It is straight forward to check that this is a continuous path of trace-preserving automorphisms.  The choice of $h$ above follows the conventions used in \cite[Theorem 4.2]{IPV10}, in particular notice $\tau(u^t) = \frac{\sin(\pi t)}{\pi t}$ and $E_{\Nn}(\alpha_t(x\otimes (\otimes_j a_j)) = |\frac{\sin(\pi t)}{\pi t}|^{2|\ms(a)|} (x\otimes (\otimes_j a_j))$ whenever $x\in \M$ and $(\otimes_j a_j)\in \oo_J \cL(A)$ satisfies $a_j\in \cL(A) \ominus \mathbb{C}1$ for $j\in \ms(a)$, and $a_j\in \mathbb{C}1$ for $j\notin \ms(a)$.  For further use we next recall two important properties established in \cite{Io06} regarding the analysis of this deformation. 

\begin{lem}[{\cite[Lemma 2.4]{Io06}}]
Assume the notations of the preceding paragraph and let $\P\subseteq \Nn$ be a von Neumann subalgebra.  Suppose there is $t>0$, a projection $p\in \P$ and a non-zero partial isometry $v\in \Tilde{\Nn}$ satisfying $v^*v \leqslant p$ and $\alpha_t(x)v=vx$ for all $x \in p\P p$.  Then there is a $g\in G$, a finite set $F\subset J$ and a constant  $C>0$ such that for all $u \in \sU(p\P p)$ we have
$$ \Norm{ E_{\mathcal M \bar\otimes \L (\oplus_F A))} (\alpha_t(u)vu_g^*)) }_2 \geqslant C . $$
\end{lem}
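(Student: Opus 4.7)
Plan. The plan is to exploit the Fourier decomposition of $v$ over $\tilde G=\bigoplus_{j\in J}\tilde A$, identify a dominant Fourier coefficient to read off the candidate pair $(g,F)$, and then combine the intertwining $\alpha_t(u)v=vu$ with the $\Nn$-bimodularity of $E_\Nn$ to propagate the resulting lower bound uniformly in $u\in\sU(p\P p)$.

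First I would Fourier-expand $v=\sum_{\tilde g\in\tilde G}v_{\tilde g}\,u_{\tilde g}$ with $v_{\tilde g}=E_{\M}(vu_{\tilde g}^*)\in\M$; Parseval gives $\|v\|_2^2=\sum_{\tilde g}\|v_{\tilde g}\|_2^2<\infty$, and since $v\neq 0$ there is some $\tilde g_0\in\tilde G$ with $v_{\tilde g_0}\neq 0$. Each coordinate $\tilde g_0(j)\in\tilde A=A\ast\mathbb Z$ is a reduced alternating word in $A$ and $\mathbb Z$; setting $F:=\ms(\tilde g_0)\subset J$ and choosing $g\in G=\bigoplus_{j\in J}A$ to be the element whose $j$-th coordinate for $j\in F$ is a designated $A$-syllable of $\tilde g_0(j)$ (and trivial off $F$) provides the candidate pair $(g,F)$; the idea is that right-multiplication by $u_g^*$ should cancel the $A$-part of $\tilde g_0$ and leave a non-trivial component with Fourier support inside $\bigoplus_{j\in F}A$.

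Next I would use the intertwining. Because $uu_g^*\in\Nn$ and $E_F=E_F\circ E_\Nn$ by the tower of conditional expectations, together with the $\Nn$-bimodularity of $E_\Nn$, we have
\[
E_F(\alpha_t(u)vu_g^*)=E_F\!\bigl(E_\Nn(\alpha_t(u)v)\,u_g^*\bigr)=E_F\!\bigl(E_\Nn(v)\,uu_g^*\bigr),
\]
where the last equality uses $E_\Nn(\alpha_t(u)v)=E_\Nn(vu)=E_\Nn(v)u$, obtained from the intertwining hypothesis $\alpha_t(u)v=vu$ combined with $u\in\Nn$. When $E_\Nn(v)\neq 0$, the free-product identity $E_{\L(A)}(u^{t}b(u^{t})^{*})=|\sin(\pi t)/(\pi t)|^{2}b$ for $b\in\L(A)\ominus\mathbb C$ that underlies the definition of $\alpha_t$ guarantees, via the choice of $\tilde g_0$, that the $G$-Fourier coefficient of $E_\Nn(v)$ at $g$ has $L^{2}$-norm at least $|\sin(\pi t)/(\pi t)|^{2|F|}\|v_{\tilde g_0}\|_2>0$. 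A Parseval-type manipulation, using that right multiplication by $uu_g^{*}$ preserves the $L^{2}$-norm of elements with right support $\leqslant p$ (which includes $E_\Nn(v)$, since $vp=v$ implies $E_\Nn(v)=E_\Nn(v)p$), then shows that the $\M\bar\otimes\L(\bigoplus_{j\in F}A)$-component of $E_\Nn(v)uu_g^{*}$ retains at least a fixed positive fraction of this mass independently of $u$, which yields the required constant $C$.

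The principal obstacle is the case $E_\Nn(v)=0$, which the stated hypotheses do not immediately exclude: there $v$ is $L^2$-orthogonal to $\Nn$ and the reduction above gives only the trivial bound $0$. To handle this case I would filter $v$ according to its total $\mathbb Z$-syllable count in the reduced free-product basis of $\L(\tilde A)=\L(A)\ast_{\mathbb C}\L(\mathbb Z)$, decomposing $v=\sum_{k\geqslant 0}v_k$, pick the minimal $k$ with $v_k\neq 0$, and rerun an analogous argument on $v_k$ with a modified choice of $g\in G$ that absorbs the $\mathbb Z$-syllables appearing on both sides. A secondary difficulty is the combinatorial bookkeeping involved in promoting the alternating-word $\tilde g_0$ to a canonical $g\in G$: one has to verify in the reduced free-product basis that the chosen $A$-syllable survives the projection $E_F$ instead of being destroyed by the $\mathbb Z$-syllables, or smeared across many different $h\in G$, after $v$ is right-multiplied by $uu_g^{*}$.
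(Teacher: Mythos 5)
Note first that the paper states this result only as a citation to \cite[Lemma 2.4]{Io06} and gives no proof, so there is no in-paper argument to compare against; the following assesses your sketch on its own terms.

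Your reduction $E_{\M\bar\otimes\L(\oplus_F A)}(\alpha_t(u)vu_g^*)=E_{\M\bar\otimes\L(\oplus_F A)}(E_\Nn(v)\,uu_g^*)$ is algebraically correct, but the subsequent ``Parseval-type manipulation'' you invoke to finish is not valid, and this is where the argument breaks. Right multiplication by the partial isometry $uu_g^*$ preserves $\Norm{E_\Nn(v)}_2$, but it emphatically does \emph{not} preserve the fraction of mass lying in the corner $\M\bar\otimes\L(\oplus_F A)$: for $u=u_h$ with $h\in G$ supported off $F$ and far from the Fourier support of $E_\Nn(v)$, the quantity $\Norm{E_{\M\bar\otimes\L(\oplus_F A)}(E_\Nn(v)u_h u_g^*)}_2$ can be made arbitrarily small. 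The uniform lower bound you are after is, by Popa's criterion, exactly the assertion $\P\prec_\Nn \M\bar\otimes\L(\oplus_F A)$ --- i.e.\ the full content of the lemma --- and it does not follow from $E_\Nn(v)\neq 0$ alone. Moreover you never establish $E_\Nn(v)\neq 0$, even though your own reduction shows the left-hand side of the lemma vanishes identically for all $g$ and $F$ when $E_\Nn(v)=0$; the proposed filtration by $\mathbb{Z}$-syllable length does nothing here, since after applying $E_\Nn$ the offending quantity is already $0$. Finally, the intermediate assertion that ``the $G$-Fourier coefficient of $E_\Nn(v)$ at $g$ has $L^2$-norm at least $|\sin(\pi t)/(\pi t)|^{2|F|}\|v_{\tilde g_0}\|_2$'' conflates two different Fourier decompositions: the coefficient $v_{\tilde g_0}$, taken at some $\tilde g_0\in\tilde G\setminus G$, lies in $L^2(\tilde{\Nn})\ominus L^2(\Nn)$ and contributes nothing to $E_\Nn(v)$. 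A correct proof must exploit the $\alpha_t$-twisting \emph{before} discarding the off-$\Nn$ part of $v$ --- for instance by starting from the constancy of $\tau(v^*\alpha_t(u)vu^*)=\|v\|_2^2$ and tracking how $\alpha_t(u)$ redistributes the $\tilde G$-Fourier coefficients of a finite truncation of $v$ over $G$-double cosets --- rather than immediately projecting $v$ onto $\Nn$.
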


\begin{lem}[{Proof of \cite[Theorem 3.6]{Io06}}]\label{ioanacontrol} Assume the notations of the preceding paragraph and let $\P\subseteq \Nn$ be a von Neumann subalgebra.  Suppose there is $t>0$, a projection $p\in \P$ and a non-zero partial isometry $v\in \Tilde{\Nn}$ satisfying $v^*v \leqslant p$ and $\alpha_t(x)v=vx$ for all $x \in p\P p$.  Then there is  a finite set $F\subset J$ such that $\P\prec \mathcal M \bar\otimes \L(\oplus_F A)$.
\end{lem}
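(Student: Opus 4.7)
The plan is to derive \Cref{ioanacontrol} from the preceding lemma by using the hypothesis $\alpha_t(x)v = vx$ to eliminate the deformation $\alpha_t$ from the uniform lower bound, and then exploiting the tower of conditional expectations $\M \bar\otimes \L(\oplus_F A) \subset \Nn \subset \Tilde{\Nn}$ to push $v$ from $\Tilde{\Nn}$ down into $\Nn$.

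Explicitly, let $F \subset J$, $g \in G$, and $C > 0$ be as in the preceding lemma, and write $\mathcal{Q} := \M \bar\otimes \L(\oplus_F A)$. For every unitary $u \in \sU(p\P p)$ that lemma yields
$$\Norm{E_\mathcal{Q}(\alpha_t(u) v u_g^*)}_2 \geqslant C.$$
Since $u \in p\P p$, the standing hypothesis $\alpha_t(x)v = vx$ specializes to $\alpha_t(u)v = vu$, whence $\alpha_t(u) v u_g^* = v u u_g^*$ and therefore
$$\Norm{E_\mathcal{Q}(v u u_g^*)}_2 \geqslant C.$$

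Now the inclusions $\mathcal{Q} \subset \Nn \subset \Tilde{\Nn}$ yield the tower identity $E_\mathcal{Q} = E_\mathcal{Q} \circ E_\Nn$ on $\Tilde{\Nn}$; combined with the $\Nn$-bimodularity of $E_\Nn \colon \Tilde{\Nn} \to \Nn$ and the fact that $u u_g^* \in \Nn$, this gives
$$E_\mathcal{Q}(v u u_g^*) = E_\mathcal{Q}\!\left(E_\Nn(v) \cdot u u_g^*\right).$$
Setting $v_0 := E_\Nn(v) \in \Nn$, the lower bound becomes
$$\Norm{E_\mathcal{Q}(v_0 \cdot u \cdot u_g^*)}_2 \geqslant C \quad \text{for every } u \in \sU(p\P p).$$
Since both $v_0$ and $u_g^*$ lie in $\Nn$ and this bound is uniform over $\sU(p\P p)$, Popa's intertwining criterion immediately yields $p\P p \prec_\Nn \mathcal{Q}$, and a fortiori $\P \prec \M \bar\otimes \L(\oplus_F A)$.

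The argument is a short reduction that combines the intertwining identity with standard manipulations of conditional expectations, so I do not expect a genuine obstacle. The only point warranting mild care is that the descent $v \mapsto E_\Nn(v)$ preserves the lower bound, which in turn relies on $\mathcal{Q}$ sitting inside $\Nn$ (and not merely inside $\Tilde{\Nn}$); this is automatic from the choice $\mathcal{Q} = \M \bar\otimes \L(\oplus_F A)$ and is precisely the feature that makes the tower $\mathcal{Q} \subset \Nn \subset \Tilde{\Nn}$ exploitable.
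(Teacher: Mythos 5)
Your proof is correct and is exactly the standard reduction used in the proof of \cite[Theorem~3.6]{Io06} that the paper cites: eliminate $\alpha_t$ via the intertwining relation $\alpha_t(u)v=vu$, then push $v$ to $v_0=E_\Nn(v)\in\Nn$ using $E_\Q=E_\Q\circ E_\Nn$ and $\Nn$-bimodularity (valid since $uu_g^*\in\Nn$), and finally apply condition~(3) of Theorem~\ref{corner} in the ambient algebra $\Nn$ with $x_1=v_0$, $y_1=u_g^*$. (Note that the uniform bound $\Norm{E_\Q(v_0 uu_g^*)}_2\geqslant C>0$ automatically guarantees $v_0\neq 0$, so there is no degeneracy to worry about.)
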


 Now we are ready to derive a subordination relation between the restriction of the support array deformation $V^\mq_t$ to the core of the wreath-like product von Neumann algebra and Ioana's deformation $\alpha_t$ on this core. This relies on two elementary inequalities. For reader's convenience we also include their proofs.

\begin{prop}\label{ineq1} For every $t>0$ the following inequalities hold
\begin{equation}\label{basicineq1}
    1-\frac{1}{t+1}\geqslant e^{-\frac{1}{t}}\geqslant 1-\frac{1}{t}.
\end{equation}
\end{prop}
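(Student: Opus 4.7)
The plan is to derive both inequalities from the single elementary fact $e^x \geq 1+x$ for all $x \in \mathbb{R}$, which is standard (e.g.\ from convexity of $\exp$, or from the Taylor expansion). I would first rewrite the proposed inequalities so that they match this shape.

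For the right-hand inequality, I would simply substitute $x = -1/t$ into $e^x \geq 1+x$, yielding $e^{-1/t} \geq 1 - 1/t$ directly. (Note that for $t \leq 1$ the right-hand side is nonpositive, so the inequality is in fact trivial; but the uniform argument via $e^x \geq 1+x$ covers all $t > 0$ at once and is cleaner.)

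For the left-hand inequality, I would rearrange $1 - \frac{1}{t+1} \geq e^{-1/t}$ to the equivalent form $e^{1/t} \geq \frac{t+1}{t} = 1 + \frac{1}{t}$, and then apply $e^x \geq 1+x$ with $x = 1/t > 0$. Taking reciprocals (both sides are positive) produces the desired bound.

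There is no genuine obstacle here: the whole statement is an immediate two-line consequence of $e^x \geq 1+x$. I would simply present it as two applications of this inequality, one with $x = 1/t$ and one with $x = -1/t$, noting in each case the elementary algebraic manipulation needed to put the target inequality into the form $e^x \geq 1+x$.
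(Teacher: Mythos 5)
Your proof is correct. The paper proves both inequalities via the mean value theorem applied to $\ln$ on the intervals $[t,t+1]$ and $[t-1,t]$ respectively, and has to split off the case $t\le 1$ for the second inequality since MVT on $[t-1,t]$ requires $t>1$. Your route through the single inequality $e^x\ge 1+x$ (with $x=-1/t$ and $x=1/t$) is cleaner: it avoids the case split entirely, since $e^x\ge 1+x$ holds for all real $x$, and it makes transparent that the two bounds are really one inequality applied twice. Both arguments are elementary and short, but yours is tighter.
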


\begin{proof} Using the MVT for $x\ra \ln(x)$ in the interval $[t,t+1]$ we see that for all $t>0$ we have $\frac{1}{t}\geqslant \ln(t+1)-\ln(t)$. This implies that $-\frac{1}{t}\leqslant \ln(\frac{t}{t+1})$. Exponentiating we get $e^{-\frac{1}{t}}\leqslant \frac{t}{t+1}=1-\frac{1}{t+1}$, which yields the first inequality. The second inequality holds trivially when $t\leqslant1$, so assume $t> 1$. Once again using MVT in the interval $[t-1,t]$ we see that $\ln(t)-\ln(t-1)\geqslant \frac{1}{t}$. Thus $\ln(\frac{t}{t-1})\geqslant \frac{1}{t}$ and hence $-\frac{1}{t}\geqslant \ln(\frac{t-1}{t})$. Therefore $e^{-\frac{1}{t}}\geqslant 1-\frac{1}{t}$, as desired. \end{proof}

\begin{prop}\label{ineq2} For every $1>t\geqslant 0$ the following inequalities hold
\begin{equation}\label{basicineq2}
e^{-\frac{1}{3}t^2}\geqslant \frac{\sin^2(t)}{t^2}\geqslant e^{-\frac{(\pi^2+5)}{3(\pi^2-1)} t^2}.
\end{equation}
    
\end{prop}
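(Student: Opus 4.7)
The plan is to derive both inequalities simultaneously from the classical Euler product
$$\frac{\sin t}{t}=\prod_{n=1}^{\infty}\left(1-\frac{t^{2}}{n^{2}\pi^{2}}\right),$$
which converges to a strictly positive value on $(0,1)$, combined with elementary estimates on $-\ln(1-x)$. For $t=0$ both sides equal $1$, so assume $0<t<1$.

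First I would take logarithms and expand each factor via the Mercator series $-\ln(1-x)=\sum_{k\geqslant 1}x^{k}/k$, valid since $t^{2}/(n^{2}\pi^{2})<1/\pi^{2}<1$. All terms being nonnegative, Tonelli's theorem justifies interchanging summations, yielding the identity
$$-2\ln\frac{\sin t}{t}=\sum_{n=1}^{\infty}\sum_{k=1}^{\infty}\frac{2}{k}\left(\frac{t^{2}}{n^{2}\pi^{2}}\right)^{k}=\sum_{k=1}^{\infty}\frac{2\zeta(2k)}{k\pi^{2k}}\,t^{2k}.$$

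For the upper bound $\sin^{2}(t)/t^{2}\leqslant e^{-t^{2}/3}$, isolate the $k=1$ contribution: using $\zeta(2)=\pi^{2}/6$, this term equals exactly $t^{2}/3$. Since every subsequent term in the series is nonnegative, we get $-2\ln(\sin t/t)\geqslant t^{2}/3$, which exponentiates to the desired inequality.

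For the lower bound, use that $\zeta$ is decreasing on $(1,\infty)$, so $\zeta(2k)\leqslant \zeta(2)=\pi^{2}/6$ for every $k\geqslant 1$. This yields
$$-2\ln\frac{\sin t}{t}\leqslant \frac{\pi^{2}}{3}\sum_{k=1}^{\infty}\frac{1}{k}\left(\frac{t^{2}}{\pi^{2}}\right)^{k}=-\frac{\pi^{2}}{3}\ln\!\left(1-\frac{t^{2}}{\pi^{2}}\right).$$
Applying the elementary inequality $-\ln(1-x)\leqslant x/(1-x)$ for $x\in[0,1)$ (immediate by termwise comparison of $\sum x^{k}/k$ and $\sum x^{k}$) with $x=t^{2}/\pi^{2}$ gives the bound $\pi^{2}t^{2}/\bigl(3(\pi^{2}-t^{2})\bigr)$. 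Finally, for $0\leqslant t<1$ we have $\pi^{2}-t^{2}\geqslant \pi^{2}-1$, and $\pi^{2}\leqslant \pi^{2}+5$, so
$$-2\ln\frac{\sin t}{t}\leqslant \frac{\pi^{2}t^{2}}{3(\pi^{2}-1)}\leqslant \frac{(\pi^{2}+5)\,t^{2}}{3(\pi^{2}-1)},$$
which exponentiates to the claimed inequality. There is no serious obstacle; the only subtlety is the interchange of summations in the first step, handled by positivity. The slack in the final two bounds (replacing $\pi^{2}-t^{2}$ by $\pi^{2}-1$ and $\pi^{2}$ by $\pi^{2}+5$) is what produces the specific constant in the statement, and could presumably be improved if a sharper constant were needed.
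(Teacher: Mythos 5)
Your proof is correct. Both you and the paper start from the Euler product $\frac{\sin t}{t}=\prod_{n\geqslant 1}\bigl(1-\frac{t^2}{\pi^2 n^2}\bigr)$, but thereafter the routes diverge. The paper stays in ``product land'': it invokes the two elementary exponential inequalities proved in Proposition~5.3 ($1-x\leqslant e^{-x}$ and $e^{-x/(1-x)}\leqslant 1-x$) factor by factor, and for the lower bound it splits off the $n=1$ factor and shifts the index $n\mapsto n-1$ in the tail to re-summon $\sum 1/n^2=\pi^2/6$, arriving at $\frac{\sin t}{t}\geqslant e^{-t^2/(\pi^2-t^2)-t^2/6}$. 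You instead pass immediately to logarithms and the Mercator series, obtaining the closed form $-2\ln\frac{\sin t}{t}=\sum_k \frac{2\zeta(2k)}{k\pi^{2k}}t^{2k}$; the upper bound then falls out from positivity and $\zeta(2)=\pi^2/6$, and the lower bound from the monotonicity $\zeta(2k)\leqslant\zeta(2)$ plus $-\ln(1-x)\leqslant x/(1-x)$. Your route actually delivers the slightly sharper intermediate constant $\frac{\pi^2}{3(\pi^2-1)}$, which you then loosen to $\frac{\pi^2+5}{3(\pi^2-1)}$ to match the statement; the paper's constant $\pi^2+5$ is what naturally falls out of its term-by-term estimate via $\frac{1}{\pi^2-1}+\frac{1}{6}=\frac{\pi^2+5}{6(\pi^2-1)}$. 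Both arguments are elementary and of comparable length; yours makes the $\zeta$-function structure explicit, while the paper's avoids any series manipulation beyond the telescoping $\sum 1/n^2$ and instead leans on the reusable inequalities of Proposition~5.3.
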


\begin{proof} Recall that for all $t\in \mathbb R$ we have that 
\begin{equation}\label{decomp}
    \frac{\sin(t)}{t}= \prod_{n\in \mathbb N}\left(1-\frac{t^2}{\pi^2n^2}\right).
\end{equation}
Using this in combination with the second inequality in Proposition \ref{ineq1} we obtain 
 that $ \frac{\sin(t)}{t}= \prod_{n\in \mathbb N}(1-\frac{t^2}{\pi^2n^2})\leqslant \prod_{n\in \mathbb N} e^{-\frac{t^2}{\pi^2n^2}}=  e^{-\frac{t^2}{\pi^2}(\sum_n \frac{1}{n^2})}= e^{-\frac{t^2}{6}}$. Squaring, we get the first inequality in \eqref{basicineq2}.    

 Using the first inequality in Proposition \ref{ineq1} together with \eqref{decomp} we can see for all $1>t\geqslant 0$ we have \begin{equation*}
    \begin{split}
        \frac{\sin(t)}{t}&=\prod_{n\in \mathbb N}\left(1-\frac{t^2}{\pi^2n^2}\right)\geqslant   \prod_{n\in \mathbb N} e^{-\frac{t^2}{\pi^2n^2-t^2}}\\&=e^{-\frac{t^2}{\pi^2-t^2}} \prod^\infty_{n=2} e^{-\frac{t^2}{\pi^2n^2-t^2}}\geqslant e^{-\frac{t^2}{\pi^2-t^2}} \prod^\infty_{n=2} e^{-\frac{t^2}{\pi^2(n-1)^2}}\\
        &=e^{-\frac{t^2}{\pi^2-t^2}}e^{-\frac{t^2}{\pi^2}(\sum_n \frac{1}{n^2})}= e^{-\frac{t^2}{\pi^2-t^2}- \frac{t^2}{6} }\\&\geqslant e^{-\frac{(\pi^2+5)t^2}{6(\pi^2-1)}}. 
    \end{split}
\end{equation*}
Squaring we get the second inequality in \eqref{basicineq2}.\end{proof}

\begin{thm}\label{subrel} Let $\mathcal{M} = \mathcal{N}\bar \otimes \cL(A^{(I)})\subset \mathcal N \oo \cL(G)$ where $G \in \mathcal{WR}_b(A, B\ca I)$. Then for every $\xi\in L^2( \cM)$ and any $0 \leq t < 1/\pi $ we have the following inequalities for the array $\mq$ defined in Theorem \ref{supportarray}:
    \begin{equation}
        \|e_\mathcal{M}^{\perp}\circ V^\mq_{\sqrt{\frac{1}{6}} \pi t}(\xi)\|_2\geqslant \| e_\mathcal{M}^{\perp}\circ \alpha_t(\xi)\|_2\geqslant\|e_\mathcal{M}^{\perp}\circ V^\mq_{\sqrt{\frac{1}{6}} \pi t}(\xi) \|_2.
    \end{equation}
\end{thm}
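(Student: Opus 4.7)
The plan is to exploit the infinite tensor product structure $\mathcal{M} = \mathcal{N}\bar{\otimes}\, \overline{\bigotimes}_{i\in I}\mathcal{L}(A)$ to orthogonally decompose
$$L^2(\mathcal{M}) = \bigoplus_{F\Subset I}\mathcal{H}_F, \qquad \mathcal{H}_F := L^2(\mathcal{N})\otimes \bigotimes_{i\in F}\bigl(\mathcal{L}(A)\ominus \mathbb{C}\bigr),$$
and write $\xi = \sum_F \xi_F$ accordingly. The key point is that on each graded piece $\mathcal{H}_F$ both deformations act essentially as multiplication by a scalar that depends only on $|F|$, so the whole operator estimate reduces to a pointwise scalar inequality supplied by Proposition~\ref{ineq2}.

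First I would handle the Ioana side. The main input is the free-product identity
$$E_{\mathcal{L}(A)} \circ \mathrm{Ad}_{u^t}\big|_{\mathcal{L}(A)\ominus \mathbb{C}} = |\tau(u^t)|^2 \,\mathrm{Id},$$
obtained by decomposing $u^t = \tau(u^t)\cdot 1 + u_0^t$ with $u_0^t\in \mathcal{L}(\mathbb{Z})\ominus \mathbb{C}$ and using freeness of $u_0^t$ with $\mathcal{L}(A)\ominus \mathbb{C}$ inside $\mathcal{L}(\tilde{A}) = \mathcal{L}(A)\ast \mathcal{L}(\mathbb{Z})$. Since $\tau(u^t) = \tfrac{\sin(\pi t)}{\pi t}$, tensoring over $i\in F$ yields $E_{\mathcal{M}}(\alpha_t(\xi_F)) = \bigl(\tfrac{\sin(\pi t)}{\pi t}\bigr)^{2|F|}\xi_F$; combined with trace preservation of $\alpha_t$ and the fact that $e_{\mathcal{M}}$ coincides with $E_{\mathcal{M}}$ at the $L^2$-level, a short expansion gives
$$\|e_{\mathcal{M}}^{\perp}\alpha_t(\xi_F)\|_2^2 = \Bigl(1 - \bigl(\tfrac{\sin(\pi t)}{\pi t}\bigr)^{4|F|}\Bigr)\|\xi_F\|_2^2.$$
For the Gaussian side I would expand $\xi_F = \sum_{c:\,\supp(c)=F} x_c\, \delta_c$ in the orthonormal basis of $\mathcal{H}_F$ indexed by $c\in A^{(I)}$. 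Then $V^{\mq}_s(x_c\delta_c) = \omega(s\mq(c))\otimes x_c\otimes \delta_c$, and using $\tau(\omega(s\mq(c))) = e^{-s^2 \|\mq(c)\|^2}$ together with the support identity $\|\mq(c)\|^2 = |F|$ from Theorem~\ref{supportarray} produces
$$\|e_{\mathcal{M}}^{\perp}V^{\mq}_s(\xi_F)\|_2^2 = \bigl(1 - e^{-2s^2|F|}\bigr)\|\xi_F\|_2^2.$$

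To finish, substituting $s = \sqrt{1/6}\,\pi t$ so that $2s^2 = \pi^2 t^2/3$ and applying Proposition~\ref{ineq2} at the argument $\pi t\in [0,1)$ sandwiches $\bigl(\tfrac{\sin(\pi t)}{\pi t}\bigr)^{2}$ between the two Gaussians; raising to the $2|F|$-th power, passing to $1-(\cdot)$ (which reverses inequalities), multiplying by $\|\xi_F\|_2^2$, and summing the resulting orthogonal contributions over $F$ deliver the claimed two-sided operator bound. The main obstacle will be the orthogonality bookkeeping across distinct $F$-blocks after applying each deformation: for the Ioana side one must verify that $\langle e_{\mathcal{M}}^{\perp}\alpha_t(\xi_F),\, e_{\mathcal{M}}^{\perp}\alpha_t(\xi_{F'})\rangle = 0$ for $F\ne F'$, which follows from a short calculation exploiting that $e_{\mathcal{M}}\alpha_t(\xi_F) = c_F \xi_F$ is already a scalar multiple of $\xi_F$, so the cross-terms cancel against $\langle \alpha_t\xi_F, \alpha_t\xi_{F'}\rangle = 0$; on the Gaussian side, orthogonality is automatic from the $\delta_c$-indexing.
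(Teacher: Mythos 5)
Your approach is essentially the same as the paper's: decompose $\xi$ into its Fourier pieces indexed by elements of $A^{(I)}$ (equivalently, your grading by support sets $F$), compute each $e_\mathcal{M}^\perp$-norm exactly on a graded piece, and invoke Proposition~\ref{ineq2}.  The paper does this in a few lines by quoting the Ioana/IPV formula, whereas you re-derive it from the free-product identity $E_{\mathcal{L}(A)}\circ\mathrm{Ad}_{u^t}\big|_{\mathcal{L}(A)\ominus\mathbb{C}}=|\tau(u^t)|^2\,\mathrm{Id}$; both moves are fine, and the orthogonality concern you flag at the end is a non-issue for exactly the reason you give (each graded piece is an eigenvector of $E_\mathcal{M}\alpha_t$ and of $E_\mathcal{M} V^\mq_s$, so the cross-terms vanish).

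There is, however, a genuine numerical slip at the very end, and it interacts with an apparent inconsistency in the paper itself.  Your own computation correctly gives
\[
\|e_\mathcal{M}^\perp\alpha_t(\xi_F)\|_2^2=\Bigl(1-\bigl(\tfrac{\sin(\pi t)}{\pi t}\bigr)^{4|F|}\Bigr)\|\xi_F\|_2^2,
\]
since $E_\mathcal{M}\alpha_t$ scales $\xi_F$ by $|\tau(u^t)|^{2|F|}$ and the $L^2$-norm squares this.  But the paper's proof records the factor as $\bigl(\tfrac{\sin^2(\pi t)}{\pi^2 t^2}\bigr)^{|\ms(h)|}$, i.e.\ with exponent $2|\ms(h)|$ rather than $4|\ms(h)|$ — this is the quantity $\langle\alpha_t\xi_h,\xi_h\rangle$ rather than $\|E_\mathcal{M}\alpha_t\xi_h\|_2^2$.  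If you stick with your (correct) exponent $4|F|$ and apply Proposition~\ref{ineq2} at the argument $\pi t$, you sandwich $\bigl(\tfrac{\sin(\pi t)}{\pi t}\bigr)^{4|F|}$ between $e^{-\frac{2}{3}\pi^2 t^2|F|}$ and $e^{-\frac{2(\pi^2+5)}{3(\pi^2-1)}\pi^2 t^2|F|}$.  Matching the first of these against your Gaussian formula $e^{-2s^2|F|}$ forces $s=\sqrt{1/3}\,\pi t$, not $\sqrt{1/6}\,\pi t$; with $s=\sqrt{1/6}\,\pi t$ you only get a loose one-sided bound, not the two-sided sandwich.  (Note also that the theorem statement as printed has the identical Gaussian parameter on both sides of the sandwich, so it is literally asserting an equality — this is surely a typo for two different parameters, one of which should involve the constant $\frac{\pi^2+5}{\pi^2-1}$.)  So: your structure and your derivations are sound, but you should either replace $\sqrt{1/6}\,\pi t$ by $\sqrt{1/3}\,\pi t$ to match your own Ioana formula, or switch to comparing $\|\xi\|_2^2-\langle\alpha_t\xi,\xi\rangle$ with $\|\xi\|_2^2-\langle V^\mq_s\xi,\xi\rangle$ (which is what the paper's quoted formula actually computes), in which case $\sqrt{1/6}\,\pi t$ is the right scale.
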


\begin{proof} Let $\xi= \sum_{h\in A^{(I)}}\xi_h \otimes \delta_h$ the canonical decomposition for $\xi_h\in L^2(\cN)$. Using the definitions of $V^\mq_t$ and the fact that $|\ms(h)|:=|\supp(h)| =\|\mq(h)\|^2$ for all $h\in A^{(I)}$ we see that 
\begin{equation}\begin{split}
   \| e_\mathcal{M}^{\perp}\circ V^\mq_{\sqrt{\frac{1}{3}} t}(\xi)\|_2^2&= \sum_h\| (\omega(\sqrt{\frac{1}{3}} t \mq(h))-\tau(\omega(\sqrt{\frac{1}{3}} t \mq(h))) (1\otimes \xi_h) ) \|^2_2\\
   &= \sum_h \|\xi_h\|^2_2 (1- e^{-2\frac{t^2}{3} \|\mq(h)\|^2})\\
   &= \sum_h \|\xi_h\|^2_2 (1- e^{-2\frac{t^2}{3} |\ms(h)|}).\end{split}
\end{equation}

 On the other hand, using \cite{Io06,IPV10}, for every $t$ we have that 

\begin{equation}
    \|e_\mathcal{M}^{\perp}\circ \alpha_t(\xi)\|_2^2 = \sum_h \|\xi_h\|_2^2\left (1- \left(\frac{\sin^2(\pi t)}{\pi^2 t^2}\right)^{|\ms(h)|} \right)
\end{equation}
  
  These formulae combined with the inequalities in Proposition \ref{ineq2} give the desired conclusion.  \end{proof}

\begin{prop}\label{ineqballsioanadef} Let $G \in WR_b(A, B\ca I )$ be a wreath-like product and let $H$ be an arbitrary countable group  Also let $\cN = \cL(A^{(I)})\bar \otimes \cL(H)\subset \cL(G)\bar\otimes \cL(H)=\cM$.  Let $\ma\colon G \ra \mathscr H $ be the array introduced in Proposition \ref{relsolarray}.  Then for every $d\geq 0$ one can find a finite subset $F\subset B$  and $d'>2$ such that $$B^\ma_d \subset \{ \gamma b \in G \colon \gamma\in A^{(I)}, |\supp(\gamma)| \leqslant d', b \in F\}.$$  In addition, for every $z\in \cN$ we have that 
\begin{equation}\label{coparisonballsioanadef}
\Norm{P_{B_d^{\ma}\times H}(z)}_2^2 \leqslant 2 \innpr{(\alpha_t\otimes\mathrm{Id}) (z), z }, \quad \text{for all }  0 \leqslant t < \min \left\{ \sqrt{\frac{\ln(2)}{13d'}}, \frac{1}{\sqrt{2} \pi} \right\} .
\end{equation}
\end{prop}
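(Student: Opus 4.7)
The plan is to combine Proposition \ref{relsolarray} (which controls the finite-radius balls of $\ma$), the Ioana deformation formula recalled just before Lemma \ref{ioanacontrol}, and the explicit estimate of Proposition \ref{ineq2}, glued together by a Fourier expansion of $z$ in the canonical basis of $\cN$.

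For the first assertion I would specialize Proposition \ref{relsolarray} to the singleton index set $J=\{1\}$, which yields $B_d^{\ma}\subset \{ah: a\in A^{(I)},\ |\supp(a)|\leq n,\ h\in F_0\}$ for some finite $F_0\subset G$ and some $n\in\mathbb{N}$. Using the bounded-cocycle section $\liftS\colon B\to G$ from Definition \ref{def. cl}, every $h\in F_0$ decomposes uniquely as $h=a_h\liftS(b_h)$ with $a_h\in A^{(I)}$ and $b_h\in B$; absorbing the finitely many $a_h$'s into the first factor realizes $F:=\{b_h:h\in F_0\}$ as a finite subset of $B$ at the cost of enlarging $n$ by $\max_h|\supp(a_h)|$. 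Taking $d'$ to be the enlarged integer (increased past $2$ if necessary) delivers the first assertion.

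Next I would expand $z = \sum_{\gamma\in A^{(I)},\, h\in H} c_{\gamma,h}\, u_\gamma u_h$ in the canonical basis of $\cN$. Since $A^{(I)}\cap \liftS(B)=\{1\}$, the first claim forces every $\gamma\in A^{(I)}\cap B_d^{\ma}$ to satisfy $|\supp(\gamma)|\leq d'$, and hence
\begin{align*}
\Norm{P_{B_d^{\ma}\times H}(z)}_2^2 \;\leq\; \sum_{\substack{\gamma\in A^{(I)},\, h\in H\\ |\supp(\gamma)|\leq d'}}|c_{\gamma,h}|^2.
\end{align*}
Meanwhile, the Ioana formula gives $E_{\cN}\!\left((\alpha_t\otimes\mathrm{Id})(u_\gamma u_h)\right) = |\sin(\pi t)/(\pi t)|^{2|\supp(\gamma)|}\, u_\gamma u_h$, hence
\begin{align*}
\langle (\alpha_t\otimes\mathrm{Id})(z),z\rangle \;=\; \sum_{\gamma,h}|c_{\gamma,h}|^2 \, |\sin(\pi t)/(\pi t)|^{2|\supp(\gamma)|}.
\end{align*}

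With these two expressions in hand, the proof reduces to the pointwise lower bound $|\sin(\pi t)/(\pi t)|^{2|\supp(\gamma)|}\geq 1/2$ for $|\supp(\gamma)|\leq d'$. I would obtain this from Proposition \ref{ineq2}: since $t<1/(\sqrt 2\pi)<1/\pi$, applying that proposition with $s=\pi t$ gives $|\sin(\pi t)/(\pi t)|^{2}\geq e^{-Kt^2}$ with $K:=\pi^2(\pi^2+5)/(3(\pi^2-1))\approx 5.52<13$, so the hypothesis $t^2<\ln(2)/(13d')$ yields $|\sin(\pi t)/(\pi t)|^{2d'}\geq e^{-13d't^2}>1/2$. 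The main (minor) obstacle lies in the first step: extracting the refined form $F\subset B$ from Proposition \ref{relsolarray}, where the bounded-cocycle section enters essentially; the remaining steps are a standard Fourier computation and a numerical check with generous slack.
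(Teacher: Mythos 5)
Your proof is correct and follows essentially the same route as the paper: both reduce the first assertion to Proposition~\ref{relsolarray}, then Fourier-expand $z$ over $A^{(I)}$, apply the Ioana conditional-expectation formula $E_{\cN}\bigl((\alpha_t\otimes\mathrm{Id})(u_\gamma u_h)\bigr)=|\sin(\pi t)/(\pi t)|^{2|\supp(\gamma)|}u_\gamma u_h$ from \cite[Equation~(4.1)]{IPV10}, invoke Proposition~\ref{ineq2}, and close with the numerical check that the exponential rate is below~$13$. If anything you are more careful than the paper on both ends: you spell out the conversion $F_0\subset G \rightsquigarrow F\subset B$ (which the paper dispatches in one line), and you compute directly with $\alpha_t$ to get the effective constant $\pi^2(\pi^2+5)/(3(\pi^2-1))\approx 5.5$, whereas the paper's displayed computation carries a $\sqrt 2$-rescaling of the deformation parameter (giving $\approx 11$) that does not appear in the statement.
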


\begin{proof} First assertion follows directly from Proposition \ref{relsolarray}.  To see the second one, fix $z\in \cN$.  Using the Fourier expansion of $z = \sum_{h\in A^{(I)}} u_h z_h  \in \cL(A^{(I)})\oo\cL(H)=\cN$ with $z_h \in \cL(H)$ and $\{u_h\}_{h\in A^{(I)}}$ the canonical group unitaries in $\cL(A^{(I)})$ we  have
\begin{equation}\label{computation2}
\begin{split}
& \innpr{\alpha_{\sqrt{2}t} \otimes {\rm Id}(z),z } = \sum_{h\in A^{(I)}} \tau(z^* E_\cM \circ (\alpha_{\sqrt{2}t} \otimes {\rm Id})(u_hz_h)) = \sum_{h\in A^{(I)}} \Norm{z_h}_2^2\, \tau(u_h^*\,E_\cM (\alpha_{\sqrt{2}t} (u_h)))
\\
&= \sum_{h\in A^{(I)}} \Norm{z_h}_2^2\, \left(\frac{\sin^2(\sqrt{2}\pi t)}{2\pi^2 t^2}\right)^{|\ms(h)|} \geqslant \sum_{h\in A^{(I)}} \Norm{z_h}_2^2\, e^{-\frac{2(\pi^2 +5)}{3(\pi^2-1)}\pi^2 t^2|\ms(h)|} \geqslant \sum_{h\in A^{(I)}} \Norm{z_h}_2^2\, e^{-13t^2 |\ms(h)|}
\\  &\geqslant \sum_{h\in A^{(I)}\cap B_d^{\ma}} \Norm{z_h}_2^2\, e^{-13t^2 |\ms(h)|} \geqslant \sum_{h\in A^{(I)}\cap B_d^{\ma}} \Norm{z_h}_2^2\, e^{-13t^2 d'} = \Norm{P_{B_d^{\ma} \times H}(z)}_2^2\, e^{-13t^2 d'},
\end{split}
\end{equation}
where the equality in the second line comes from \cite[Equation (4.1)]{IPV10} and for the inequality right after we used \eqref{basicineq2} from Proposition \ref{ineq2}.  Letting $0 \leqslant t$ small enough in the inequality \eqref{computation2} we get  \eqref{coparisonballsioanadef}.
\end{proof}

\section{Popa's intertwining techniques}Almost two decades ago,  S.  Popa  introduced  in \cite [Theorem 2.1 and Corollary 2.3]{Po03} a powerful analytic criterion for identifying intertwiners between arbitrary subalgebras of tracial von Neumann algebras, see Theorem \ref{corner} below. This technique,  known as \emph{Popa's intertwining-by-bimodules technique},  has played an essential role in the classification of von Neumann algebras program via Popa's deformation/rigidity theory.  

\begin{thm}\emph{\cite{Po03}} \label{corner} Let $( \mathcal{M},\tau)$ be a  tracial von Neumann algebra and let $ \mathcal{P},  \mathcal{Q}\subseteq  \mathcal{M}$ be (not necessarily unital) von Neumann subalgebras. 
	Then the following are equivalent:
	\begin{enumerate}
		\item There exist projections $ p\in    \mathcal{P}, q\in    \mathcal{Q}$, a $\ast$-homomorphism $\theta \colon p  \mathcal{P} p\rightarrow q \mathcal{Q} q$  and a partial isometry $0\neq v\in  \mathcal{M} $ such that $v^*v\leqslant p$, $vv^*\leqslant q$ and $\theta(x)v=vx$, for all $x\in p  \mathcal{P} p$.
		\item For any group $\mathcal G\subset \mathscr U( \mathcal{P})$ such that $\mathcal G''=  \mathcal{P}$ there is no net $(u_n)_n\subset \mathcal G$ satisfying $\|E_{  \mathcal{Q}}(xu_ny)\|_2\rightarrow 0$, for all $x,y\in   \mathcal{M}$.
		\item There exist finitely many $x_i, y_i \in  \mathcal{M}$ and $C>0$ such that  $\sum_i\|E_{  \mathcal{Q}}(x_i u y_i)\|^2_2\geqslant C$ for all $u\in \mathscr U( \mathcal{P})$.
		\item There exists a non-zero projection $f\in \mathcal{P}'\cap \langle \mathcal{M}, e_\mathcal{Q} \rangle$ such that ${\rm Tr}(f)<\infty$.
  \item There exists a $\mathcal{P}$-$\mathcal{Q}$-subbimodule $\mathscr H$ of $1_\mathcal{P}\emph{L}^2(\mathcal{M})1_\mathcal{Q}$ such that $\emph{dim}(\mathscr H_\mathcal{Q})<+\infty$.
	\end{enumerate}
\end{thm}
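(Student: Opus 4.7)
The plan is to establish the cycle $(1)\Rightarrow (2)\Rightarrow (3)\Rightarrow (4)\Rightarrow (1)$ and then close the loop with the equivalence $(4)\Leftrightarrow(5)$ via the basic construction. Throughout I will work inside the Jones basic construction $\langle\mathcal{M},e_\mathcal{Q}\rangle$ acting on $L^2(\mathcal{M})$ with its canonical semifinite trace $\mathrm{Tr}$ satisfying $\mathrm{Tr}(xe_\mathcal{Q}y)=\tau(xy)$ for $x,y\in\mathcal{M}$.

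First I would treat $(2)\Leftrightarrow (3)$. The direction $(3)\Rightarrow(2)$ is immediate since the finite sum provides a uniform positive lower bound. For $(2)\Rightarrow(3)$ I would argue by contradiction: if (3) fails, then for every finite tuple $\{x_i,y_i\}\subset\mathcal{M}$ and every $\varepsilon>0$ the unitary group $\mathcal{G}$ contains an element $u$ with $\sum_i \|E_\mathcal{Q}(x_i u y_i)\|_2^2<\varepsilon$; a routine diagonalization over a countable dense family produces the net needed for the negation of (2). The direction $(1)\Rightarrow (2)$ is a direct computation: if $\theta(x)v=vx$ with $0\ne v$ and $\theta\colon p\mathcal{P}p\to q\mathcal{Q}q$, then for any unitary $u\in\mathcal{P}$ one has $E_\mathcal{Q}(vpup\,v^*)=\theta(pup)vv^*$, so taking $x_1=v^*$, $y_1=v$ gives $\|E_\mathcal{Q}(v^*uv)\|_2\geqslant \|v^*v\|_2>0$ uniformly in $u$, contradicting the hypothetical net.

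The principal work is the implication $(3)\Rightarrow(4)$, which is Popa's averaging argument. Set $T=\sum_i x_i e_\mathcal{Q} x_i^*\in\langle\mathcal{M},e_\mathcal{Q}\rangle_+$; a computation with $\mathrm{Tr}$ shows $\mathrm{Tr}(T)=\sum_i\tau(x_ix_i^*)<\infty$, and the lower bound in (3) translates into $\tau(y_i^*T y_i)\geqslant C'$ after absorbing the $y_i$'s (one can rewrite the sum so that the two-sided bimodule structure becomes a one-sided statement). Consider the weakly closed convex hull $\mathcal{C}$ of $\{uTu^*:u\in\mathscr{U}(\mathcal{P})\}$ inside the trace-class ideal of $\langle\mathcal{M},e_\mathcal{Q}\rangle$. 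Since $\mathcal{C}$ is bounded in $\|\cdot\|_{2,\mathrm{Tr}}$ and $\mathrm{Tr}$-norm-closed, it has a unique element $T_0$ of minimal $\|\cdot\|_{2,\mathrm{Tr}}$-norm; by uniqueness, $T_0\in\mathcal{P}'\cap\langle\mathcal{M},e_\mathcal{Q}\rangle$. The uniform lower bound on $\|E_\mathcal{Q}(\cdot)\|_2$ forces $T_0\ne 0$, and any non-trivial spectral projection $f$ of $T_0$ above a small $\delta>0$ lies in $\mathcal{P}'\cap\langle\mathcal{M},e_\mathcal{Q}\rangle$ with $\mathrm{Tr}(f)\leqslant \delta^{-1}\mathrm{Tr}(T_0)<\infty$, yielding (4).

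Next I would prove $(4)\Rightarrow(1)$. The projection $f\in\mathcal{P}'\cap\langle\mathcal{M},e_\mathcal{Q}\rangle$ with $\mathrm{Tr}(f)<\infty$ gives the Hilbert subspace $\mathcal{H}:=f\,L^2(\mathcal{M})\subset L^2(\mathcal{M})$, which is a $\mathcal{P}$-$\mathcal{Q}$-sub-bimodule with $\dim(\mathcal{H}_\mathcal{Q})=\mathrm{Tr}(f)<\infty$; this simultaneously produces (5). Using the structure theory of finite-index bimodules over a finite von Neumann algebra, $\mathcal{H}$ embeds $\mathcal{Q}$-equivariantly into a finite direct sum of standard modules $L^2(\mathcal{Q})q_j$, providing $\mathcal{Q}$-linear maps $v_1,\dots,v_n\colon\mathcal{H}\to L^2(\mathcal{Q})$. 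The $\mathcal{P}$-action on $\mathcal{H}$ commutes with $\mathcal{Q}$ and therefore transports, via the $v_j$'s, to a $*$-homomorphism into a corner $q\mathbb{M}_n(\mathcal{Q})q$; replacing $(\mathcal{Q},q)$ by its amplification, a standard polar decomposition extracts a nonzero partial isometry $v$ and a $*$-homomorphism $\theta\colon p\mathcal{P}p\to q\mathcal{Q}q$ with $\theta(x)v=vx$, giving (1).

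The most delicate step will be $(3)\Rightarrow(4)$: the averaging in the semifinite algebra $\langle\mathcal{M},e_\mathcal{Q}\rangle$ has to be set up so that the averaging operator stays in the predual of $\mathrm{Tr}$ (otherwise one cannot extract a nonzero element of $\mathcal{P}'\cap\langle\mathcal{M},e_\mathcal{Q}\rangle$ with finite trace), and the transition from the two-sided bound $\sum_i\|E_\mathcal{Q}(x_iuy_i)\|_2^2\geqslant C$ to a manageable one-sided average requires a careful choice of $T$ and a Powers–Størmer style inequality to preserve the lower bound under convex averages. All other implications are essentially formal once this averaging yields the correct invariant element.
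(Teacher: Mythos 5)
The paper does not prove this theorem; it is quoted as background from \cite{Po03} (specifically Theorem~2.1 and Corollary~2.3 there), so there is no in-paper proof for your sketch to be compared against. On its own terms, your outline follows the standard shape of Popa's argument, but let me flag two concrete issues. First, the $(1)\Rightarrow(2)$ step as you wrote it does not close: the identity you derive is $E_{\mathcal{Q}}(v\,pup\,v^*)=\theta(pup)vv^*$, which corresponds to the pair $(x_1,y_1)=(v,v^*)$ rather than $(v^*,v)$, and more importantly $\theta(pup)$ is only a contraction for a general $u\in\mathscr U(\mathcal{P})$ --- if $\mathcal{P}$ contains a unitary $w$ with $pwp=0$ then $E_{\mathcal{Q}}(vwv^*)=0$, so this single pair cannot give a uniform lower bound and the ``direct computation'' does not refute an arbitrary net. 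The cleanest way to get $(1)$ into the cycle is to prove $(1)\Rightarrow(4)$ directly: using $xv^*=v^*\theta(x)$ for $x\in p\mathcal{P}p$, the positive operator $v^*e_{\mathcal{Q}}v\in\langle\mathcal{M},e_{\mathcal{Q}}\rangle$ has $\mathrm{Tr}(v^*e_{\mathcal{Q}}v)=\tau(v^*v)<\infty$ and commutes with $p\mathcal{P}p$; a spectral projection of it, suitably enlarged using the central support of $p$ in $\mathcal{P}$, yields $f$ as in $(4)$, and $(4)\Rightarrow(3)\Rightarrow(2)$ are the easy directions. Second, the core implication $(3)\Rightarrow(4)$ is left as a sketch precisely at the point you yourself flag as delicate: the passage from the two-sided bound $\sum_i\|E_{\mathcal{Q}}(x_iuy_i)\|_2^2\geqslant C$ to a convex-averaging statement over $\{u(\sum_ix_i^*e_{\mathcal{Q}}x_i)u^*\}$ inside the trace ideal is where the actual content lies, and the Powers--Størmer/uniform-convexity bookkeeping needed to extract a nonzero fixed point in $\mathcal{P}'\cap\langle\mathcal{M},e_{\mathcal{Q}}\rangle$ is not supplied. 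Since the paper itself treats this as a citation, the appropriate role for your write-up here is a reference to \cite{Po03}, not a reconstructed proof; but if you do include a proof, the $(1)\Rightarrow(2)$ argument needs to be replaced by $(1)\Rightarrow(4)$ as above, and the averaging step needs to be carried out in detail.
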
 

 If one of the five equivalent conditions from Theorem \ref{corner} holds, then we say that \emph{ a corner of $ \mathcal{P}$ embeds into $ \mathcal{Q}$ inside $ \mathcal{M}$}, and write $ \mathcal{P}\prec_{ \mathcal{M}} \mathcal{Q}$. If we moreover have that $ \mathcal{P} p'\prec_{ \mathcal{M}} \mathcal{Q}$, for any projection  $0\neq p'\in  \mathcal{P}'\cap 1_{ \mathcal{P}}  \mathcal{M} 1_{ \mathcal{P}}$, then we write $ \mathcal{P}\prec_{ \mathcal{M}}^{\rm s} \mathcal{Q}$ (where the superscript ``s" stands for {\it strong}).

For further use, we recall several useful intertwining results for von Neumann subalgebras. The first is a result controlling quasinormalizers in wreath-like crossed-product von Neumann algebras, in the same spirit with \cite[Theorem 3.1]{Po03}. For a proof, in the same vein with this, the reader may consult \cite[Corollary 3.8]{CFQT23}.

\begin{thm}[\text{\cite[Theorem 3.1]{Po03} and \cite[Corollary 3.8]{CFQT23}}] Let $G\in \mathcal{WR}(A,B\curvearrowright I)$ and let $\mathcal N$ be a tracial von Neumann algebra. Let $G\curvearrowright \mathcal N$ be a trace-preserving action and denote by $\M=\mathcal N\rtimes G$ the corresponding crossed-product von Neumann algebra. Let $F \subset I$ be a finite subset and let $q\in \mathcal N\rtimes A^F$ be a non-zero projection. 

Assume $\Q \subseteq q(\mathcal{N}\rtimes A^F)q$ is a von Neumann subalgebra satisfying $\Q \nprec_{\mathcal N\rtimes A^F} \mathcal N \rtimes A^K$, for all $K\subsetneq F$. 

If we denote by ${\rm Norm}(F)= \{g \in B \,:\, gF =F\}$, then we have that $\mathscr {Q N}^{(1)}_{q\M q} (\Q)\subseteq \mathcal N\rtimes (A^{(I)} {\rm Norm}(F))$.
\end{thm}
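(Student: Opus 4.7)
My plan is to argue by contradiction using Popa's intertwining criterion, specifically condition~(2) of Theorem~\ref{corner}. Writing $M_S := \mathcal N \rtimes A^S$ for $S \subseteq I$, I fix an arbitrary $x \in \mathscr{QN}^{(1)}_{q\M q}(\Q)$, expand it as a Fourier series $x = \sum_{g\in G} x_g v_g$ with $x_g \in \mathcal N$ (the $\{v_g\}_{g\in G}$ being the canonical unitaries implementing the crossed product), and aim to show that $x_g = 0$ whenever $\epsilon(g) \notin {\rm Norm}(F)$. Suppose for contradiction that $x_{g_0} \neq 0$ for some $g_0 \in G$ with $\beta_0 := \epsilon(g_0) \notin {\rm Norm}(F)$. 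Since $|\beta_0 F| = |F| < \infty$, the inequality $\beta_0 F \neq F$ forces
\[ K := F \cap \beta_0^{-1} F \subsetneq F. \]
The distinguished role of $K$ is the identity $v_{g_0}^{-1} M_F v_{g_0} \cap M_F = M_{\beta_0^{-1} F} \cap M_F = M_K$, which is a consequence of the wreath-like formula $g A^S g^{-1} = A^{\epsilon(g) S}$ (using both that the canonical copies $A_i$ of $A$ are permuted by $G$ through the action $B \curvearrowright I$ and that $A^{(I)}$ normalizes each $A^S$).

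The quasinormalizer hypothesis supplies finitely many $x_1, \ldots, x_n \in q\M q$ such that $xu = \sum_{i=1}^n q_i(u)\, x_i$ for every $u \in \mathcal U(\Q)$ and some $q_i(u) \in \Q$. The central technical step is to combine this identity with the Fourier expansion of $x$ in order to derive an inequality of the shape
\[
0 < C\, \|x_{g_0}\|_2^2 \leqslant \sum_{j=1}^N \bigl\| E_{M_K}(a_j\, u\, b_j) \bigr\|_2^2 \qquad \text{for all } u \in \mathcal U(\Q),
\]
for a fixed finite family $\{a_j, b_j\} \subset \M$ and a constant $C > 0$ independent of $u$. Morally, after left-multiplying by $v_{g_0}^{-1}$ and applying $E_{M_F}$, the left-hand side becomes $z u$, where $z := E_{M_F}(v_{g_0}^{-1} x) \in M_F$ is non-zero since its coefficient at the identity equals $\sigma_{g_0^{-1}}(x_{g_0}) \neq 0$; the right-hand side then involves elements whose $M_F$-valued components are further controlled by projections onto $M_F \cap v_{g_0}^{-1} M_F v_{g_0} = M_K$ once one enforces right-$\Q$-module compatibility and passes to Hilbert--Schmidt norms.

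Once this inequality is in place, the hypothesis $\Q \nprec_{M_F} M_K$ combined with Theorem~\ref{corner}(2) furnishes a net $(w_\lambda) \subset \mathcal U(\Q)$ with $\|E_{M_K}(a w_\lambda b)\|_2 \to 0$ for every $a, b \in M_F$. Substituting $u = w_\lambda$ into the inequality and passing to the limit forces $\|x_{g_0}\|_2 = 0$, contradicting our assumption. The step I expect to be hardest is the clean derivation of the central inequality: it requires careful Fourier bookkeeping for $x, x_i, q_i(u)$ and $u$, an application of Pythagoras in $L^2(\M)$, and repeated use of the $\mathcal N$-bimodularity of the conditional expectations $E_{M_K}$ and $E_{M_F}$. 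The calculation is modeled on the proofs of \cite[Theorem 3.1]{Po03} and \cite[Corollary 3.8]{CFQT23}, and can be followed almost verbatim once the preliminary observation about $M_K$ above is in hand.
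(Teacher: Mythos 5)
The paper does not prove this theorem; it states it with a pointer to \cite[Theorem 3.1]{Po03} and \cite[Corollary 3.8]{CFQT23}, so there is no internal proof to compare your argument against. Your outline does match the shape of the argument in those references: Fourier-expand $x=\sum_g x_g v_g$, suppose $x_{g_0}\neq 0$ with $\beta_0:=\epsilon(g_0)\notin{\rm Norm}(F)$, set $K:=F\cap\beta_0^{-1}F\subsetneq F$ and exploit $v_{g_0}^{-1}M_F v_{g_0}\cap M_F=M_K$, push the one-sided quasinormalizer identity $xu=\sum_i q_i(u)x_i$ through $E_{M_F}\circ\mathrm{Ad}(v_{g_0}^{-1})$, and contradict the non-intertwining hypothesis via Theorem~\ref{corner}(2).

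There is, however, a genuine gap where you close the contradiction. You state the central inequality with test elements $\{a_j,b_j\}\subset\M$, but the standing hypothesis is $\Q\nprec_{\mathcal N\rtimes A^F}\mathcal N\rtimes A^K$ --- non-intertwining \emph{inside} $M_F$, not inside $\M$. Theorem~\ref{corner}(2) applied to this hypothesis furnishes a net $(w_\lambda)\subset\mathcal U(\Q)$ with $\|E_{M_K}(a\,w_\lambda\,b)\|_2\to 0$ only for $a,b\in M_F$; it tells you nothing directly about $E_{M_K}(a\,w_\lambda\,b)$ when $a,b$ are allowed to range over all of $\M$ (and $\Q\nprec_{\M}M_K$ is a strictly stronger hypothesis that is neither assumed nor automatic here). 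Consequently, if your inequality genuinely requires $a_j,b_j\in\M$, substituting $u=w_\lambda$ does not force $\|x_{g_0}\|_2=0$. What the derivations in \cite{Po03} and \cite{CFQT23} actually do is arrange, through the Fourier bookkeeping and a bimodule decomposition over $M_K$, that every test element landing next to $E_{M_K}$ already lies in $M_F$; that reduction is the heart of the proof, not incidental bookkeeping, and your sketch refers to it only obliquely (``the $M_F$-valued components are further controlled by projections onto $M_K$''). As written the proposal leaves this step unverified, so the final contradiction does not yet close.
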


\begin{thm}\label{quasinormcontrol2} Let $G\in \mathcal{WR}(A,B\curvearrowright I)$ and let $\mathcal N$ be a tracial von Neumann algebra. Let $G\curvearrowright \mathcal N$ be a trace-preserving action and denote by $\M=\mathcal N\rtimes G$ the corresponding crossed-product von Neumann algebra. Let $F \subset I$ be a finite subset and let $q\in \mathcal N\rtimes A^F$ be a non-zero projection. 

 Assume $\Q \subseteq q\M q$ is a von Neumann subalgebra satisfying  $\Q \prec_\M \mathcal N\rtimes A^F$ and $\Q \nprec_{\M} \mathcal N \rtimes A^K$, for all $K\subsetneq F$. 

 Then we have that $\mathscr {Q N}_{q\M q} (\Q)''\prec \mathcal N\rtimes (A^{(I)} {\rm Norm}(F))$.
\end{thm}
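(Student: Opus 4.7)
The strategy is to reduce \Cref{quasinormcontrol2} to the preceding theorem by first producing a partial-isometry intertwiner out of $\Q \prec_\M \mathcal{N}\rtimes A^F$, then applying that theorem to the image algebra, and finally transferring the resulting quasinormalizer control back to $\mathscr{QN}_{q\M q}(\Q)''$ through the same partial isometry. Concretely, \Cref{corner} applied to the hypothesis produces non-zero projections $p \in \Q$ and $q_0 \in \mathcal{N}\rtimes A^F$, a $\ast$-homomorphism $\theta\colon p\Q p \to q_0(\mathcal{N}\rtimes A^F)q_0$, and a non-zero partial isometry $v \in q_0\M p$ with $v^*v \leqslant p$, $vv^* \leqslant q_0$ and $\theta(x)v = vx$ for every $x \in p\Q p$. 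After passing to a smaller corner I may assume $\theta$ is injective and $\theta(p) = q_0$, so that $\Q_0 := \theta(p\Q p)$ is a unital subalgebra of $q_0(\mathcal{N}\rtimes A^F)q_0$.

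Next I would verify that $\Q_0$ inherits the non-embedding hypothesis of the preceding theorem, namely $\Q_0 \nprec_{\mathcal{N}\rtimes A^F} \mathcal{N}\rtimes A^K$ for every $K \subsetneq F$: otherwise such an embedding would lift to $\M$ and, composed with the partial-isometry intertwining implemented by $v$ (a standard transitivity property of $\prec_\M$), would produce $\Q \prec_\M \mathcal{N}\rtimes A^K$, contradicting the hypothesis. Applying the preceding theorem to $\Q_0$ then yields
\[
\mathscr{QN}^{(1)}_{q_0\M q_0}(\Q_0) \subseteq \mathcal{N}\rtimes (A^{(I)}\mathrm{Norm}(F)) =: \mathcal{R}.
\]
For the transfer back, given $a \in \mathscr{QN}_{q\M q}(\Q)$ with $\Q a \subseteq \sum_{i=1}^n c_i \Q$, I would use $\theta(x)v = vx$ together with the adjoint identity $yv^* = v^*\theta(y)$ (for $y \in p\Q p$) to obtain
\[
\Q_0 \cdot (vav^*) = v(p\Q p)\,a\,v^* \subseteq \sum_{i=1}^n (v c_i v^*)\cdot \Q_0,
\]
so that $vav^* \in \mathscr{QN}^{(1)}_{q_0\M q_0}(\Q_0) \subseteq \mathcal{R}$. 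Since this holds on a $\sigma$-weakly dense subset of $\mathscr{QN}_{q\M q}(\Q)''$, the partial isometry $v$ witnesses the desired intertwining $\mathscr{QN}_{q\M q}(\Q)''\prec_\M \mathcal{R}$ via \Cref{corner}.

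The principal obstacle is the transfer computation, because $v$ is a genuine partial isometry: elements $z \in \Q$ typically decompose as $zp = pzp + (1-p)zp$, and only the $pzp$ piece satisfies the clean identity $(pzp)v^* = v^*\theta(pzp)$, while the off-diagonal remainder produces extra terms in $\Q v^*$ that need not land in $v^*\Q_0$. One technical fix is to enlarge $v$ by a standard maximality argument so that $v^*v = p$, after which these off-diagonal pieces vanish upon left multiplication by $v$. Alternatively, one may work at the level of item (5) of \Cref{corner}: the finite-right-dimensional $\Q$-$(\mathcal{N}\rtimes A^F)$-subbimodule witnessing $\Q \prec_\M \mathcal{N}\rtimes A^F$ automatically carries a compatible left action of $\mathscr{QN}_{q\M q}(\Q)''$ without altering its right dimension, reducing the problem to the preceding theorem applied as above.
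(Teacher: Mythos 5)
The paper gives no proof of \Cref{quasinormcontrol2}, so there is nothing of the authors' to compare against; what follows assesses the proposal on its own terms. Your overall route---extract an intertwiner $(\theta,v)$ from $\Q\prec_\M\mathcal N\rtimes A^F$, feed $\Q_0:=\theta(p\Q p)$ into the preceding theorem, and push the resulting quasi-normalizer containment back through $v$---is the natural one, and you correctly flag the central obstruction. But the gap you identify is genuine, and neither suggested fix closes it. You cannot ``enlarge $v$'' so that $v^*v=p$: a maximal intertwining partial isometry still only achieves $v^*v\leqslant p$, and $v^*v$ lives in $(p\Q p)'\cap p\M p$ rather than in $\Q$, so equality with $p\in\Q$ cannot be forced without changing the algebra. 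More to the point, even if $v^*v=p$ were available, the offending piece in the computation is a term of the form $v\,(pc_i)(1-p)\,z_ip\,v^*$ with $z_i\in\Q$, in which the factor $(1-p)$ sits between $pc_i$ and $\Q$ and is nowhere adjacent to $v$; the relation $v(1-p)=0$ never enters and this term does not vanish. The bimodule alternative is also incorrect as stated: a $\Q$-$(\mathcal N\rtimes A^F)$-subbimodule $\mathscr H$ of finite right $\mathcal N\rtimes A^F$-dimension is in general \emph{not} invariant under the left $\mathscr{QN}_{q\M q}(\Q)''$-action (for $a\in\mathscr{QN}_{q\M q}(\Q)$ one only has $a\mathscr H\subseteq\sum_i c_i\mathscr H$, not $a\mathscr H\subseteq\mathscr H$), and the $\mathscr{QN}(\Q)''$-bimodule generated by $\mathscr H$ typically has infinite right dimension, so the claim ``without altering its right dimension'' is simply false.

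Two further remarks. Your verification that $\Q_0\nprec_{\mathcal N\rtimes A^F}\mathcal N\rtimes A^K$ appeals to ``a standard transitivity property of $\prec_\M$,'' but $\prec_\M$ is not transitive---the composition of two intertwining partial isometries can vanish---so this step also needs a strong-intertwining ($\prec^s$) or maximality argument rather than a bare appeal to transitivity. What is genuinely missing from the proposal is a transfer device for quasi-normalizers through an intertwiner. One honest starting point: since $\Q_0\subseteq\mathscr{QN}^{(1)}_{q_0\M q_0}(\Q_0)\subseteq\mathcal R:=\mathcal N\rtimes(A^{(I)}{\rm Norm}(F))$ by the preceding theorem, the Jones projection $e_{\mathcal R}$ commutes with $\Q_0$, and a short computation using $\theta(x)v=vx$ shows that $v^*e_{\mathcal R}v$ is a finite-trace positive element of $(p\Q p)'\cap\langle\M,e_{\mathcal R}\rangle$. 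This already yields $\Q\prec_\M\mathcal R$, but upgrading to $\mathscr{QN}_{q\M q}(\Q)''\prec_\M\mathcal R$ requires the quasi-normalizer transport machinery of the type developed in \cite{Po03}, \cite{IPV10} and \cite{Va08}; none of this appears in the proposal.
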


\begin{cor}\label{quasinormcontrol3} Let $G\in \mathcal{WR}(A,B\curvearrowright I)$ where $A$ and $B$ are nontrivial and the action $B \curvearrowright I$ has finite stabilizers. Let $\mathcal N$ be a tracial von Neumann algebra and assume that $G\curvearrowright \mathcal N$ is a trace-preserving action. Denote by $\M=\mathcal N\rtimes G$ the corresponding crossed-product von Neumann algebra. Let $F \subset I$ be a finite subset and let $q\in \mathcal N\rtimes A^F$ be a non-zero projection. 

Assume $\Q \subseteq q\M q$ is a von Neumann subalgebra satisfying  $\Q \prec_\M \mathcal N\rtimes A^F$ and $\Q \nprec_{\M} \mathcal N \rtimes A^K$, for all $K\subsetneq F$. 

 Then we have that $\mathscr {Q N}_{q\M q} (\Q)''\prec \mathcal N\rtimes A^{(I)}$.
\end{cor}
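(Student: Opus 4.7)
The plan is to deduce the corollary from Theorem~\ref{quasinormcontrol2} by showing that the extra factor $\mathrm{Norm}(F)$ contributes only a finite-index extension, which intertwining respects.

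First, the hypotheses of Theorem~\ref{quasinormcontrol2} are exactly those assumed in the corollary, so I would begin by applying it directly to obtain
\[
\mathscr{QN}_{q\M q}(\Q)''\prec_{\M}\mathcal N\rtimes\bigl(A^{(I)}\,\mathrm{Norm}(F)\bigr),
\]
where $A^{(I)}\,\mathrm{Norm}(F)$ is understood as the preimage $\varepsilon^{-1}(\mathrm{Norm}(F))\leqslant G$ under the quotient map $\varepsilon\colon G\twoheadrightarrow B$.

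The next step is the key algebraic observation: under the standing assumption that the action $B\curvearrowright I$ has finite stabilizers, the subgroup $\mathrm{Norm}(F)=\{g\in B:gF=F\}$ is finite. Indeed, the restriction map $\mathrm{Norm}(F)\to\mathrm{Sym}(F)$ is a homomorphism whose kernel equals $\bigcap_{i\in F}\mathrm{Stab}_B(i)$; the kernel is contained in any single $\mathrm{Stab}_B(i)$ and is therefore finite, while the image lies in the finite group $\mathrm{Sym}(F)$. Consequently, fixing a set-theoretic section $s\colon\mathrm{Norm}(F)\to G$ of $\varepsilon$, the family $\{u_{s(k)}\}_{k\in\mathrm{Norm}(F)}$ is a finite orthonormal Pimsner--Popa basis of $\mathcal N\rtimes(A^{(I)}\mathrm{Norm}(F))$ over $\mathcal N\rtimes A^{(I)}$, and in particular the inclusion
\[
\mathcal N\rtimes A^{(I)}\subseteq \mathcal N\rtimes\bigl(A^{(I)}\,\mathrm{Norm}(F)\bigr)
\]
has finite Jones index equal to $|\mathrm{Norm}(F)|$.

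Finally, I would invoke the standard principle that Popa's intertwining is preserved by passing to a finite-index subalgebra: if $\mathcal B\subseteq \mathcal A\subseteq \M$ admits a finite Pimsner--Popa basis $\{m_1,\dots,m_n\}$, then using $E_{\mathcal A}(y)=\sum_i m_i E_{\mathcal B}(m_i^*y)$ one obtains
\[
\|E_{\mathcal A}(z)\|_2^{\,2}\;\leqslant\;\bigl(\max_i\|m_i\|_\infty^{2}\bigr)\sum_i \|E_{\mathcal B}(m_i^*z)\|_2^{\,2},
\]
so any family witnessing $\mathscr{QN}_{q\M q}(\Q)''\prec_{\M}\mathcal A$ via condition (3) of Theorem~\ref{corner} automatically witnesses $\mathscr{QN}_{q\M q}(\Q)''\prec_{\M}\mathcal B$. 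Applying this with $\mathcal A=\mathcal N\rtimes(A^{(I)}\mathrm{Norm}(F))$ and $\mathcal B=\mathcal N\rtimes A^{(I)}$ yields the desired conclusion.

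The only step requiring any genuine argument is the finiteness of $\mathrm{Norm}(F)$; everything else is a mechanical combination of Theorem~\ref{quasinormcontrol2} with the standard finite-index inheritance of Popa's intertwining. I do not expect substantive obstacles.
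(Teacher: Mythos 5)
Your proposal matches the paper's proof exactly: both apply Theorem~\ref{quasinormcontrol2}, observe that finiteness of the stabilizers forces $\mathrm{Norm}(F)$ to be finite (so the inclusion $\mathcal N\rtimes A^{(I)}\subseteq\mathcal N\rtimes(A^{(I)}\mathrm{Norm}(F))$ has finite index), and then transfer the intertwining down — the paper via \cite[Lemma 3.9]{Va08}, you via a direct Pimsner--Popa basis computation, which is the content of that lemma. The only thing worth noting is that both your argument and the paper's tacitly use $F\neq\emptyset$ (otherwise $\mathrm{Norm}(F)=B$ need not be finite), an assumption the paper's proof states but the corollary's hypotheses do not.
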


\begin{proof} Applying Theorem \ref{quasinormcontrol2} we get that $\mathscr {Q N}_{q\M q} (\Q)''\prec \mathcal N\rtimes (A^{(I)} {\rm Norm}(F))$. Since $F$ is nonempty and finite and the stabilizers of $B\ca I$ are finite it follows that ${\rm Norm} (F)$ is finite. This implies in particular that $\mathcal N \rtimes A^{(I)}\subseteq \mathcal N\rtimes (A^{(I)} {\rm Norm}(F)) $ is a finite index inclusion of von Neumann algebras. Using Lemma 3.9 in \cite{Va08} we conclude that $\mathscr {Q N}_{q\M q} (\Q)''\prec \mathcal N\rtimes A^{(I)} $. \end{proof}

\section{Proofs of the main results}\label{sec. vNa main}

Before turning to the proofs of the main results, we discuss a natural extension of the notion of McDuff superrigidity introduced earlier.

Since the isomorphism class of McDuff factors does not change under tensoring with the hyperfinite factor, any $W^*$-McDuff group $G$ satisfies  
\[
\L( G\times A)\cong \L(G\times B)
\]
for all ICC amenable groups $A$ and $B$. Thus, a natural broadening of McDuff superrigidity is a formulation in which the reconstruction of a $W^*$-McDuff group from its factor is complete only modulo this intrinsic obstruction. This motivates the following notion.

\begin{defn}\label{genmdsup}
A group $G$ is \emph{McDuff $W^*$-superrigid} if for every group $H$ with $\L(G)\cong \L(H)$, there exist ICC amenable groups $A$ and $B$ such that  
\[ H\times B \cong G\times A. \]
If one may take $B=1$ above, we say the reconstruction is the \emph{strongest possible} in this framework, corresponding to the McDuff superrigidity notion presented in the introduction.
\end{defn}

In this section, we show that a large class of direct-sum groups satisfies McDuff superrigidity from Definition~\ref{genmdsup} across its full range---for example, when one can take $B=1$ (Theorem~\ref{main2}), but also when $A$ and $B$ are both arbitrary nontrivial ICC amenable groups (Theorem~\ref{main3}). Our arguments follow the general strategy for reconstructing infinite direct sums developed in \cite[Sections~2--3]{CU18}. Several results from those sections will be used without proof, and we encourage the reader to consult them in advance.

We begin by setting up notation and establishing several preliminary facts.

\noindent \emph{Notation A.} Let $0\in J$ where $J$ is a countable (possibly finite) set.  Let $G_0$ be an ICC amenable group which is possibly trivial when $J$ is infinite.  For every $j\in J\setminus \{0\}$ let $G_j \in \mathcal \WR_b(A_j,B_j\ca I_j)$ be a wreath-like product group where $A_j$ is an amenable group, $B_j$ is a non-amenable subgroup of a hyperbolic group and the action $B_j\ca I_j$ has amenable stabilizers.   Denote by $G=\oplus_j G_j$ and let $\cM=\cL(G)$.  For $F\subset J$ we denote $\hat{F}=J\setminus F$, $\cM_F = \cL(\oplus_{j\in F} G_j)$, $\C_F = \cL(\oplus_{j\in F} A_j^{(I_j)})$ and $\D_F = \C_F \oo \M_{\hat{F}}$.  When we deal with $G\times G = (\oplus_{j\in J} G_j) \times (\oplus_{j\in J} G_j)$ we will see this as $G^2 = \oplus_{j\in J\times \{1,2\}} G_j$ where the second index  $1$ or $2$ indicates if we are in the first or second summand of $G$ inside $G\times G$.  In that case $\hat{F}$ will represent $(J\times \{1,2\}) \setminus F$ and we will use $\cM^{\oo2}_F = \cL(\oplus_{j\in F} G^2_j)$.  The notational use of single or double index will also be clear from context. 

Assume that  $\M=\cL(H) $ for an arbitrary group $H$.  Following \cite{IPV10}, let $\Delta\colon \cM\to \cM\bar\otimes \cM $ be the commultiplication along $ H$, i.e.\ $\Delta(v_h)=v_h\otimes v_h$, where $\{v_h\}_{h\in H} $ are the canonical unitaries generating $\cL(H)$. 

\begin{thm}\label{intertwining}
Assume the Notation A above. Then for each $i\in J\setminus \{0\}$ there is $k\in J$ so that $\Delta(\cM_{\hat{i}}) \prec_{\cM\oo\cM} \cM \oo \cM_{\hat{k}}$.
\end{thm}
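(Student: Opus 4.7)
The plan is to run the Popa deformation/rigidity machinery for the array Gaussian deformation on $\cM\oo\cM$, and then use the subordination of Theorem~\ref{subrel} to pass from it to Ioana's tensor-length deformation in order to shrink an infinite tensor direction to a finite one. Fix $i\in J\setminus\{0\}$ and identify $\cM\oo\cM$ with $\cL(G^2)$ where $G^2=\bigoplus_{j\in J\times\{1,2\}}G_j$. Apply Proposition~\ref{relsolarray} to $G^2$ to obtain an array $\ma\colon G^2\to \sH$ into a weakly-$\ell^2$ representation together with the ball-control~\eqref{ball_sum_arrays}, and form the associated Gaussian deformation $V_t:=V_t^\ma$ on $\cL(G^2)$ as in Section~\ref{sec:deform_array}.

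The first key step is spectral gap. Since $B_i$ is non-amenable, so is $G_i$, and hence $\Delta(\cL(G_i))\subseteq \Delta(\cM_{\hat i})'\cap(\cM\oo\cM)$ is non-amenable. Part~\eqref{spectralgap} of Proposition~\ref{propo:Gaussian_properties} therefore yields
\[
\lim_{t\to 0}\Bigl(\sup_{x\in(\Delta(\cM_{\hat i}))_1}\|V_t(x)-x\|_2\Bigr)=0.
\]
Combining this uniform convergence with transversality~\eqref{propo:Gaussian_transversality} and asymptotic bimodularity~\eqref{propo:Gaussian_bimodularity}, a standard Popa-type intertwining argument produces, for small $t>0$, a non-zero partial isometry $v\in\cM\oo\cM$ with $V_t(x)v=vx$ for every $x\in\Delta(\cM_{\hat i})$. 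The structure of $B_C^\ma$ granted by~\eqref{ball_sum_arrays} then localizes the intertwining: passing to one component of the finite union, there exist $j_0\in J\setminus\{0\}$ and a choice of first or second $G$-factor such that, by Theorem~\ref{corner},
\[
\Delta(\cM_{\hat i})\;\prec_{\cM\oo\cM}\; \cM\,\bar{\otimes}\,\cL\bigl((\oplus_{n\ne j_0}G_n)\oplus A_{j_0}^{(I_{j_0})}\bigr).
\]

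To refine this to an intertwining into $\cM\oo\cM_{\hat k}$, I invoke Ioana's deformation $\alpha_t$ on the infinite tensor factor $\cL(A_{j_0}^{(I_{j_0})})=\bar{\otimes}_{I_{j_0}}\cL(A_{j_0})$, lifted to $\cM\oo\cL((\oplus_{n\ne j_0}G_n)\oplus A_{j_0}^{(I_{j_0})})$. The inequality of Theorem~\ref{subrel} together with Proposition~\ref{ineqballsioanadef} bounds the defects of $\alpha_t$ by those of a suitably rescaled support-array Gaussian deformation on this smaller algebra; combined with the uniform convergence of $V_t^\ma$ already established on $(\Delta(\cM_{\hat i}))_1$, this forces $\alpha_t$ to converge uniformly on $(\Delta(\cM_{\hat i}))_1$ as well. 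A second transversality-plus-Popa argument then produces a partial isometry $w$ with $\alpha_t(x)w=wx$ for all $x\in\Delta(\cM_{\hat i})$, and Lemma~\ref{ioanacontrol} yields a finite $F\subset I_{j_0}$ with
\[
\Delta(\cM_{\hat i})\;\prec_{\cM\oo\cM}\;\cM\,\bar{\otimes}\,\cL\bigl((\oplus_{n\ne j_0}G_n)\oplus A_{j_0}^F\bigr).
\]

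Finally, choosing $F$ minimal so that $\Delta(\cM_{\hat i})$ does not intertwine into any $\cM\oo\cL((\oplus_{n\ne j_0}G_n)\oplus A_{j_0}^K)$ for $K\subsetneq F$, I apply the quasinormalizer control of Theorem~\ref{quasinormcontrol2} to the image algebra inside the second $G$-factor. Since the $B_{j_0}$-stabilizers on $I_{j_0}$ are amenable, $\mathrm{Norm}(F)$ is amenable and, together with a finite-index absorption of the amenable piece $\cL(A_{j_0}^F)\rtimes\mathrm{Norm}(F)$ into the $G_0$-direction (using that $G_0$ is ICC amenable when $J$ is finite, or into an enlarged amenable summand when $J$ is infinite), we deduce $\Delta(\cM_{\hat i})\prec_{\cM\oo\cM}\cM\oo\cM_{\hat{j_0}}$. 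Setting $k:=j_0$ finishes the proof. The main obstacle is the refinement step: the array-driven argument alone only reaches the unbounded $A_{j_0}^{(I_{j_0})}$-direction, and converting this into a finite-support statement is exactly what the subordination of Theorem~\ref{subrel} is designed to do---this is the essential new input replacing the relative strong solidity technology used in~\cite{CU18}.
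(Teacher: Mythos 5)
Your high-level plan correctly identifies the three essential ingredients --- spectral gap for the array Gaussian deformation $V_t^\ma$, subordination to Ioana's tensor-length deformation $\alpha_t$, and quasinormalizer control to remove the finite set $F$ --- and this does match the paper's architecture. However, the technical route you sketch in the middle has a genuine gap that the paper is specifically designed to circumvent. You claim that transversality plus asymptotic bimodularity produce a non-zero partial isometry $v$ with $V_t(x)v=vx$ for $x\in\Delta(\cM_{\hat i})$. When $\ma$ is only an array (and not a $1$-cocycle), $\mathrm{Ad}(V_t^\ma)$ is \emph{not} a $\ast$-automorphism of the enlarged von Neumann algebra; it is only asymptotically bimodular at the $C^*$-level (Proposition~\ref{propo:Gaussian_properties}~\eqref{propo:Gaussian_bimodularity}). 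Consequently the intertwining relation $V_t(x)v=vx$ is not a well-posed statement inside a von Neumann algebra, and the standard Popa-type extraction of an intertwiner against $V_t^\ma$ does not apply. The paper avoids this: it first establishes $\Delta(\cM_{\hat i})\prec^s\D_K\oo\D_K$ via relative biexactness coming from the array (Lemma~\ref{proptfirst intertwining}, which invokes \cite[Theorem~15.1.5]{BO08}, not transversality), then uses spectral gap only to get \emph{uniform convergence} of $V_t^\ma$ on $(\Delta(\cM_{\hat i}))_1$, and finally transfers that approximate invariance into a quantitative lower bound for $\alpha_t^j$ via Proposition~\ref{ineqballsioanadef}, from which a minimal-norm averaging argument in a direct sum of basic-construction algebras produces a nonzero element $m$; only $\alpha_t^j$, a genuine automorphism, is intertwined by a partial isometry.

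Two further mismatches are worth flagging. First, the subordination of Theorem~\ref{subrel}/Proposition~\ref{ineqballsioanadef} is only valid for elements lying in $\cL(A^{(I)})\oo\cL(H)$, so "this forces $\alpha_t$ to converge uniformly on $(\Delta(\cM_{\hat i}))_1$" does not follow directly; the preliminary intertwining into $\D_K\oo\D_K$ is exactly what positions the approximants $yx_h$ inside the sub-algebra where the subordination applies, and even then the paper derives a lower bound and averages rather than deduce uniform convergence of $\alpha_t^j$. Moreover the maximality of $K$ from Lemma~\ref{proptfirst intertwining} is used a second time, at the averaging step, to rule out the possibility $k\notin K$. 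Second, the final removal of $F$ does not proceed by a "finite-index absorption into the $G_0$ direction''; the paper runs Theorem~\ref{quasinormcontrol2} and derives a contradiction with $\Delta(\cM)\subseteq\mathscr{N}_{\cM\oo\cM}(\Delta(\cM_{\hat i}))''$ and \cite[Proposition~7.2(2)]{IPV10}, using only that $B_k$ is non-amenable and $B_k\ca I_k$ has amenable stabilizers (so $A_k^{(I_k)}\mathrm{Norm}(F)$ cannot have finite index in $G_k$ when $F\neq\emptyset$).
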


Before proceeding to the proof of the theorem we stablish the following preliminary result.

\begin{lem}\label{proptfirst intertwining}
Under the Notation A,  there exist non-empty finite subsets $K\subset F \subset J$ where $K$ is maximal and $F$ minimal with the property that $\Delta(\cM_{\hat{i}}) \prec^s \D_K \oo \D_K$ and $\Delta(\cM_i) \prec^s \cM_F \oo \cM_F$.
\end{lem}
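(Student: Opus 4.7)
The plan is to establish the two intertwinings separately, using property (T) of $\cM_i=\cL(G_i)$ (inherited from $G_i$) for the $F$-part and then leveraging the fact that $\Delta(\cM_i)$ and $\Delta(\cM_{\hat i})$ commute inside $\cM\oo\cM$ for the $K$-part.

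For the existence of $F$, I fix $i\in J\smallsetminus\{0\}$ and note that $\Delta(\cM_i)$ inherits property (T) from $G_i$. Applied to the Gaussian deformation $V^{\ma\oplus\ma}_t$ associated with the support array $\ma$ on $G\times G$ furnished by Proposition~\ref{relsolarray}, the spectral gap statement in Proposition~\ref{propo:Gaussian_properties} yields $\sup_{u\in\U(\Delta(\cM_i))}\|u-V^{\ma\oplus\ma}_t(u)\|_2\to 0$ as $t\to 0$. Transversality then converts this into uniform control of $e^{\perp}\circ V^{\ma\oplus\ma}_t$ on $\U(\Delta(\cM_i))$, and combining this with the subordination of Theorem~\ref{subrel} together with the ball-controls of Proposition~\ref{ineqballsioanadef} transfers the same convergence to Ioana's deformation $\alpha_s\otimes\alpha_s$ on $\widetilde{\cM^{\oo 2}}$. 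Lemma~\ref{ioanacontrol}, applied on both tensor factors, then locates $\Delta(\cM_i)\prec\cM_F\oo\cM_F$ for some finite $F\subset J$; diffuseness of $\Delta(\cM_i)$ forces $F\neq\emptyset$. I take $F$ minimal, and a standard corner argument (any witnessing non-zero projection in the relative commutant failing $\prec^s$ would yield a proper subset $F'\subsetneq F$ still absorbing a non-trivial corner, contradicting minimality) upgrades the intertwining to $\prec^s$.

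For the existence of $K$, I observe that $\Delta(\cM_{\hat i})\subset \mathscr{QN}_{\cM^{\oo 2}}(\Delta(\cM_i))''$ since $G_i$ centralises $G_{\hat i}$. Applying Theorem~\ref{quasinormcontrol2} on both tensor factors to the strong intertwining $\Delta(\cM_i)\prec^s\cM_F\oo\cM_F$, and using that the amenable stabilizers of $B_j\acts I_j$ make ${\rm Norm}(F)$ amenable and hence absorbable into the base, I deduce that $\Delta(\cM_{\hat i})\prec\D_F\oo\D_F$. Taking $K\subset F$ maximal and repeating the corner-minimality argument yields $\prec^s$. Non-emptiness of $K$ follows from the factoriality of each $\cL(G_j)$ for $j\in F$: the commuting relation forces the $j$-coordinate of $\Delta(\cM_{\hat i})$ to centralise the essential image of $\Delta(\cM_i)$ in $\cL(G_j)$, which by minimality of $F$ pushes that coordinate into the base $\cL(A_j^{(I_j)})$, exhibiting $j\in K$.

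The main obstacle is bridging the two deformations: property (T) acts most naturally against the array Gaussian deformation (through its spectral gap and weakly-$\ell^2$ representation), whereas the finite-coordinate localisation of subalgebras is visible only through Ioana's deformation via Lemma~\ref{ioanacontrol}. The bridge is precisely Theorem~\ref{subrel} together with Proposition~\ref{ineqballsioanadef}, and the technical heart of the proof is propagating uniform convergence through this bridge without loss, so that both deformations simultaneously deliver the intertwining conclusions needed for $F$ and $K$. A secondary subtlety is the non-emptiness of $K$ and the nesting $K\subset F$, which require matching the finite subset emerging from Lemma~\ref{ioanacontrol} with the quasinormalizer analysis; this is where minimality of $F$, maximality of $K$, and the amenable-stabilizer hypothesis all interact.
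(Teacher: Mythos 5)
Your proposal misidentifies which tools drive each half of the lemma and, as a result, both parts have genuine gaps.

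For the $F$-part, the paper's argument is far simpler and does not touch the deformation machinery at all: since $\Delta(\cM_i)$ has property (T) and sits inside the increasing union $\big(\bigcup_{F\Subset J}\cM\oo\cM_F\big)''$, the standard property (T) argument (pointwise convergence of the conditional expectations $E_{\cM\oo\cM_F}$ plus rigidity) immediately yields $\Delta(\cM_i)\prec\cM\oo\cM_F$ for some finite $F$; strong intertwining then follows from \cite[Lemma 2.4(3)]{DHI16} because $\cN_{\cM\oo\cM}(\Delta(\cM_i))'\cap\cM\oo\cM=\mathbb C1$, and symmetry under the flip plus \cite[Lemma 2.8(2)]{DHI16} upgrades to $\cM_F\oo\cM_F$. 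Your route through $V^\ma_t$, Theorem~\ref{subrel} and Lemma~\ref{ioanacontrol} does not work here: the subordination inequality in Theorem~\ref{subrel} only compares the two deformations on vectors $\xi\in L^2(\cN\oo\cL(A^{(I)}))$, i.e.\ inside the core, whereas $\Delta(\cM_i)$ lives in $\cM\oo\cM$; you cannot transfer the Gaussian spectral gap to $\alpha_t$ until you have already localised into a corner of the core, which is precisely the content this lemma is meant to supply (and which the proof of Theorem~\ref{intertwining} later uses). Moreover, even granting the transfer, Lemma~\ref{ioanacontrol} produces intertwining into a finite sub-tensor-product $\cL(A_j^{F'})$ for $F'\subset I_j$, not into a finite sub-direct-sum $\cM_F$ for $F\subset J$; these are different localisations, and the one you need for $F$ is the latter.

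For the $K$-part, Theorem~\ref{quasinormcontrol2} is not applicable in the way you use it. Its hypothesis is $\Q\prec_\M\cN\rtimes A^{F'}$ with $F'$ a finite subset of the action set $I$ of a \emph{single} wreath-like product, and its conclusion is an intertwining into $\cN\rtimes(A^{(I)}\mathrm{Norm}(F'))$; you instead have $\Delta(\cM_i)\prec^s\cM_F\oo\cM_F$ with $F\subset J$ a set of direct-summand indices, which is not of the required form, and even with the right hypothesis the conclusion would not place $\Delta(\cM_{\hat i})$ inside $\D_F\oo\D_F$. The actual mechanism in the paper is relative bi-exactness: Proposition~\ref{relsolarray} exhibits $G\times G$ as bi-exact relative to $\mathscr G=\{A_j^{(I_j)}\times G_{\hat j}:j\in J\times\{1,2\}\}$, so non-amenability of $\Delta(\cM_i)$ together with \cite[Theorem 15.1.5]{BO08} gives $\Delta(\cM_{\hat i})\prec\cL(\Lambda)$ for some $\Lambda\in\mathscr G$, i.e.\ $\prec\cM\oo\D_j$ for some $j$; one then shows $j\in F$ by contradiction using \cite[Proposition 4.3]{CD-AD21} and \cite[Proposition 7.2(2)]{IPV10}, and defines $K$ as the (nonempty) set of all such $j$. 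Your sketch for non-emptiness of $K$ via factoriality does not replace this analytic input. In short, the property (T) input is used directly (not via spectral gap/subordination) for $F$, and relative bi-exactness (not quasinormaliser control) is used for $K$; the deformation and subordination machinery enter only later, in Theorem~\ref{intertwining}, once this lemma has already localised $\Delta(\cM_{\hat i})$ into $\D_K\oo\D_K$.
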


\begin{proof}
Since $G_i$ has property (T), then so does $\Delta(\cM_i)$.  Given that $\Delta(\cM_i)\subset (\bigcup_{F\Subset J} \cM \oo \cM_F)''$ then one can find a finite subset $F\Subset J$ such that $\Delta(\cM_i)\prec \cM \oo \cM_F$.  Moreover, we can pick $F$ to be minimal admitting this intertwining.  Notice that $\cN_{\cM\oo\cM}(\Delta(\cM_i))'\cap \cM \oo\cM = \mathbb{C}1$ and so we further obtain $\Delta(\cM_i)\prec^s \cM \oo \cM_F$ by applying \cite[Lemma 2.4(3)]{DHI16}.  Since $\Delta$ is invariant with respect to the flip automorphism of $\cM\oo\cM$ we also have $\Delta(\cM_i)\prec^s \cM_F \oo \cM$ and thus  we obtain $\Delta(\cM_i)\prec^s \cM_F \oo \cM_F$ by applying \cite[Lemma 2.8(2)]{DHI16}.  From Proposition \ref{relsolarray} we see that $G\times G$ is bi-exact relative to the collection $\mathscr{G}=\{ A_{j}^{(I_j)}\times G_{\hat{j}} \colon j \in J\times \{1,2\} \}$.  As $\cM\oo\cM = \cL(G\times G)$ and $\Delta(\cM_i)$ is non-amenable, then $\Delta(\cM_{\hat{i}}) \prec_{\cL(G\times G)} \cL(\Lambda)$ for some $\Lambda \in \mathscr{G}$ by \cite[Theorem 15.1.5]{BO08}.  From the invariance of $\Delta$ under the flip automorphism, we can assume there is $j\in J$ such that $\Delta(\cM_{\hat{i}}) \prec \cM \oo \D_j$ and with a similar argument as the one above using \cite[Lemma 2.4(3)]{DHI16}, we obtain $\Delta(\cM_{\hat{i}}) \prec^s \cM \oo \D_j$.

 We now show $j$ must belong to $F$.  Indeed, assuming for a contradiction that $F\subset \hat{j}$ we obtain $\Delta(\cM_{\hat{i}}), \Delta(\cM_i) \prec \cM \oo \D_j$.  Since $\cM\oo\D_j$ is regular in $\cM\oo\cM$, and $\Delta(\cM_{\hat{i}}),\Delta(\cM_{i})$ commute with each other, then by \cite[Proposition 4.3]{CD-AD21} we have that $\Delta(\cM) \prec \cM\oo \D_j = \cL(G\times (A_j^{(I_j)} \times G_{\hat{j}}))$.  However, this intertwining would imply by \cite[Proposition 7.2(2)]{IPV10} that $A_j^{(I_j)} \leqslant G_j$ has finite index, which in turn contradicts $B_j$ being infinite.

 So far we have proved that there is a minimal finite subset $F\subset J$ such that  $\Delta(\cM_i)\prec^s \cM_F \oo \cM_F$ and there exists $j\in F$ such that $\Delta(\cM_{\hat{i}}) \prec^s \cM \oo \D_j$.  Let $K\subset F$ be the set of all $j\in F$ such that $\Delta(\cM_{\hat{i}}) \prec^s \cM \oo \D_j$.  By applying \cite[Lemma 2.8(2)]{DHI16} as above repeatedly, $|K|-1$ many times, we get $\Delta(\cM_{\hat{i}}) \prec^s \cM \oo\D_K$ and $K$ is maximal with this property.  Moreover, by the invariance of $\Delta$ under the flip and using \cite[Lemma 2.8(2)]{DHI16} we obtain $\Delta(\cM_{\hat{i}}) \prec^s \D_K \oo \D_K$.

 We note in passing that when $J$ is finite, at the beginning of the proof, we do not need to use that $G_i$ has property (T). 
\end{proof}

 We now proceed to the proof of the Theorem \ref{intertwining}.

\begin{proof}
From Lemma \ref{proptfirst intertwining} and Popa's intertwining techniques we obtain non-zero projections $p\in \Delta(\cM_{\hat{i}})$, $q\in \D_K \oo \D_K$, a non-zero partial isometry $v\in q (\cM\oo\cM) p$ and a normal $*$-isomorphism onto its image
\begin{equation}\label{intert1here}
\theta \colon p \Delta(\cM_{\hat{i}}) p \ra \theta(p\Delta(\cM_{\hat i})p)=:\cQ \subset q (\D_K \oo \D_K) q \ , \text{ so that } \ \theta(x)v = vx \ \text{ for all } \ x\in p \Delta(\cM_{\hat{i}}) p\,.
\end{equation}

Consider the array $\ma\colon G\times G \ra \sH$ constructed in Proposition \ref{relsolarray} by seeing $G\times G$ as $\oplus_{j\in J\times\{1,2\}} G_j$ and let $V_t^\ma \in \sU(L^2(L^\infty(\mathbb{X}^\pi)\rtimes (G\times G)))$ be the path of unitaries corresponding to the deformation associated to this array as in \cite{CS11}.  Using part \ref{spectralgap} in Proposition \ref{propo:Gaussian_properties}  and the non-amenability of $\Delta(\cM_i)$ we obtain that $$\sup_{x\in (\Delta(\cM_{\hat{i}}))_1} \Norm{x - V_t^\ma(x)}_2 \xrightarrow{t\to 0} 0\,.$$

Using Kaplanski's Density Theorem, the asymptotic bimodularity of $V_t^\ma$ over $C_r^*(G\times G)$ from part \eqref{propo:Gaussian_bimodularity} in Proposition \ref{propo:Gaussian_properties} and the triangle inequality we obtain $\Norm{vxv^* - V_t^\ma(vxv^*)}_2 \xrightarrow{t\to0} 0$ uniformly on the unit ball of $\Delta(\cM_{\hat{i}})$ and therefore
\begin{equation}
\sup_{y\in (\Q)_1} \Norm{yvv^* - V_t^\ma(yvv^*)}_2 \xrightarrow{t\to 0} 0
\end{equation}
by applying the intertwining relation \eqref{intert1here}.  Let $1>\varepsilon>0$ and use Kaplanski's Density Theorem to find $x \in (\mathbb{C}[G\times G])_1$ such that $\Norm{vv^* - x}_2 \leqslant \varepsilon/10$.  Let $\tilde{K}= K\times \{1,2\}$ (using the double index notation for $G^2$ introduced earlier) and write $x = \sum_{h\in R_\varepsilon} x_h u_h$ where $R_\varepsilon \Subset B_{\tilde{K}}$ is a finite subset (seen inside $G^2_{\tilde{K}}$ via any set-theoretic section) and $x_h \in \mathbb{C}[G_{\hat{K}}\times \oplus_{j\in K} A_j^{(I_j)} \times G_{\hat{K}}\times \oplus_{j\in K} A_j^{(I_j)}]$.  Let $t_\varepsilon>0$ be such that $\Norm{yvv^* - V_t^\ma(yvv^*)}_2 \leqslant \varepsilon$ and $\Norm{V_t^\ma(yx) - \sum_{h\in R_\varepsilon} V_t^\ma(yx_h)u_h}_2 \leqslant \varepsilon/10$ (from asymptotic bimodularity) for all $y\in (\Q)_1$ and all $0<t\leqslant t_\varepsilon$, then we have
\begin{equation}\label{eq approximIntertwiner}
\begin{split}\sum_{h\in R_\varepsilon} \Norm{yx_h - V_t^\ma(yx_h)}^2_2 = \Norm{yx - yvv^* + yvv^* - V_t^\ma(yvv^*) + V_t^\ma(yvv^*) - \sum_{h\in R_\varepsilon} V_t^\ma(yx_h)u_h }_2^2\\
\leqslant (\varepsilon/10 + \varepsilon + \Norm{V_t^\ma(yvv^*) - V_t^\ma(yx) + V_t^\ma(yx) - \sum_{h\in R_\varepsilon} V_t^\ma(yx_h)u_h }_2 )^2 \leqslant 2\varepsilon^2\end{split}
\end{equation}
for all $y\in (\Q)_1$ and all $0<t\leqslant t_\varepsilon$.

 From the construction of the Gaussian deformation and the array $\ma$ we have some $f>0$ such that $\sum_{h\in R_\varepsilon} \Norm{yx_h - P_{B_f^\ma}(yx_h)}^2_2 \leqslant 2\varepsilon^2$ for all $y\in (\Q)_1$ and so $\sum_{h\in R_\varepsilon} \Norm{yx_h}^2_2 \leqslant 2\varepsilon^2 + \sum_{h\in R_\varepsilon} \Norm{P_{B_f^\ma}(yx_h)}^2_2$.  Furthermore,  using the structure of the finite radius balls of the array $\ma$ from Proposition \ref{relsolarray}, there is a finite subset $S\Subset J\times\{1,2\}$ and a constant $d > 0$ such that $B_f^\ma \subset \bigcup_{j\in S}B_d^{\ma_j}\times G^2_{\hat{j}}$.  Now we split $S = (S\cap \tilde{K}) \sqcup (S\setminus \tilde{K})$ and use this to rewrite and bound the last inequality as
\begin{equation}\label{firstestimate}
\begin{split}
\sum_{h\in R_\varepsilon} \Norm{yx_h}^2_2 \leqslant 2\varepsilon^2 + \sum_{h\in R_\varepsilon} \Norm{P_{B_f^\ma}(yx_h)}^2_2 \leqslant 2\varepsilon^2 + \sum_{h\in R_\varepsilon} ( \sum_{j\in S\cap\tilde{K}} \Norm{P_{B_d^{\ma_j}\times G^2_{\hat{j}}}(yx_h)}^2_2 + \sum_{j\in S\setminus\tilde{K}} \Norm{P_{B_d^{\ma_j}\times G^2_{\hat{j}}}(yx_h)}^2_2)\\
\leqslant 2\varepsilon^2 + \sum_{h\in R_\varepsilon} ( \sum_{j\in S\cap\tilde{K}} \Norm{P_{B_{d}^{\ma_j}\times G^2_{\hat{j}}}(yx_h)}^2_2 + \sum_{j\in S\setminus\tilde{K}} \sum_{\lambda \in F_j} \Norm{E_{\C_j \oo \M^{\oo2}_{\hat{j}}}(yx_h u_{\lambda}^*)}^2_2)
\end{split}
\end{equation}
In the last line of \eqref{firstestimate},  we have used from Proposition \ref{ineqballsioanadef} the existence of a finite subset $F_j\subset B_j$ and an integer $d'>2$ such that $B_d^{\ma_j} \subset \{ \gamma \lambda \in G_j : \gamma\in A_j^{(I_j)}, |\supp(\gamma)| \leqslant d', \lambda \in F_j \}$ for each $j\in S$.  Notice that $yx_h \in \D_K \oo \D_K \subset \C_j \oo \M^{\oo2}_{\hat{j}}$ so we can apply Ioana's tensor length deformation $\alpha^j_t$ (acting on $\C_j$ and leaving $\M^{\oo2}_{\hat{j}}$ invariant) to each $yx_h$ for every $j\in \tilde{K}$.  Thus using the following inequality 
\begin{equation}
\Norm{P_{B_d^{\ma_j}\times G^2_{\hat{j}}}(z)}_2^2 \leqslant 2 \innpr{\alpha_t^j (z), z } \quad \text{ for all } \ z \in \C_j\oo \cM^{\oo2}_{\hat{j}} \ \text{ and } \ 0 \leqslant t < \min \left\{ \sqrt{\frac{\ln(2)}{13d'}}, \frac{1}{\sqrt{2} \pi} \right\}, \,
\end{equation}
from Proposition \ref{ineqballsioanadef}, we can bound from above the last quantity in \eqref{firstestimate} as follows.

For any $y\in (\Q)_1$ and $t$ satisfying the above bounds, we have
$$ \sum_{h\in R_\varepsilon} \Norm{yx_h}^2_2 \leqslant 2\varepsilon^2 + \sum_{h\in R_\varepsilon} ( \sum_{j\in S\cap \tilde{K}} 2\innpr{\alpha^j_t (yx_h), yx_h} + \sum_{j\in S\setminus \tilde{K}} \sum_{\lambda \in F_j} \Norm{E_{\C_j \bar\otimes \M^{\oo2}_{\hat{j}}} (yx_h u_{\lambda}^*)}^2_2)\,. $$
Since $\alpha^j_t(yx_h) = \alpha_t^j(y)\alpha_t^j(x_h)$ and $\alpha_t^j \xrightarrow{t\to 0} 0$ pointwise,  then taking $t_\varepsilon>0$ smaller if needed, we obtain
\begin{equation} \label{thirdestimate}\sum_{h\in R_\varepsilon} \Norm{yx_h}^2_2 \leqslant 3\varepsilon^2 + \sum_{h\in R_\varepsilon} ( \sum_{j\in S\cap \tilde{K}} 2\innpr{\alpha^j_t (y)x_h, yx_h} + \sum_{j\in S \setminus \tilde{K}} \sum_{\lambda \in F_j} \Norm{E_{\C_j \bar\otimes \M^{\oo2}_{\hat{j}}} (yx_h u_{\lambda}^*)}^2_2)\,. \end{equation}

 Now,  notice that for any unitary $u\in \sU(\Q)$ we have $\Norm{vv^*}_2^2 - \varepsilon/4 \leq \Norm{x}_2^2 = \sum_{h\in R_\varepsilon}\Norm{u x_h}_2^2$.  For each $j\in S \setminus \tilde{K}$ we consider the inclusion of von Neumann algebras $\C_j\oo\M^{\oo2}_{\hat{j}} \subset \M\oo\M$ and let $\mathrm{Tr}_{\langle \M\oo\M, e_{\C_j\oo \M^{\oo2}_{\hat{j}}} \rangle}$ be the canonical semi-finite trace on the corresponding basic construction von Neumann algebra  $\langle \M\oo\M, e_{\C_j\oo \M^{\oo2}_{\hat{j}}} \rangle$.  Using the formulae ${\rm Tr}(xe_{\C_j\oo\M^{\oo2}_{\hat{j}}} y)=\tau(xy)$ and  $e_{\C_j\oo \M^{\oo2}_{\hat{j}}} xe_{\C_j\oo \M^{\oo2}_{\hat{j}}}=E_{\C_j\oo \M^{\oo2}_{\hat{j}}}(x)e_{\C_j\oo \M^{\oo2}_{\hat{j}}}$ for all $x,y\in \M\oo\M$ in conjunction with the above estimates and the inequality \eqref{thirdestimate} we obtain that 
\begin{equation}\label{fourthestimate}\begin{split}
&\Norm{vv^*}_2^2 -3\varepsilon^2 - \varepsilon/4\\& \leqslant 2 \sum_{j\in S\cap \tilde{K}} \innpr{\alpha^j_t(u), u ( \sum_{h\in R_\varepsilon} x_h x_h^*)} + \sum_{j\in S \setminus\tilde{K}} \innpr{ u^* e_{\C_j\oo \M^{\oo2}_{\hat{j}}} u, \sum_{h\in R_\varepsilon} \sum_{\lambda \in F_j} x_h u^*_\lambda e_{\C_j\oo \M^{\oo2}_{\hat{j}}} u_\lambda x_h^* }_{\mathrm{Tr}_{\langle \M\oo\M, e_{\C_j\oo \M^{\oo2}_{\hat{j}}} \rangle}}\end{split}
\end{equation}
for all $u\in \mathscr U (\Q)$ and $0< t < t_\varepsilon$. Next, we may assume $\varepsilon>0$ is small enough so that $D:=\|vv^*\|_2 -3\varepsilon^2-\varepsilon/4>0$.

In the last part of the proof we will use this inequality together with an averaging argument on Hilbert space to get our conclusion.  We will write the right hand side of (\ref{thirdestimate}) in a more compact and revealing way by considering the von Neumann algebra
$$\S = \bigoplus_{j\in S\cap \tilde{K}} (\tilde{\C}_j\oo \M^{\oo2}_{\hat{j}}) \oplus \bigoplus_{j\in S\setminus \tilde{K}} \innpr{\M\oo\M, e_{\C_j\oo\M^{\oo2}_{\hat{j}}}} $$
together with the faithful normal semi-finite tracial weight $\mathrm{Tr}_\S = \bigoplus_{j\in S\cap \tilde{K}} \tau_{\tilde{\C}_j\oo \M^{\oo2}_{\hat{j}}} \oplus \bigoplus_{j\in S\setminus \tilde{K}} \mathrm{Tr}_{\innpr{\M\oo\M, e_{\C_j\oo\M^{\oo2}_{\hat{j}}}}} $ on $\S$ (here $\tilde{\C}_j = \cL((A_j*\mathbb{Z})^{(I_j)})$ is the von Neumann algebra on which the deformation $\alpha^j_t$ acts).  Notice that $\bigoplus_{j\in S} \Q \subset \S$.  For each $j \in S \cap \tilde{K}$ let $a_j = \sum_{h\in R_\varepsilon} x_h x_h^* \in \D_K \oo \D_K \subset \tilde{\C}_j \oo \M^{\oo2}_{\hat{j}}$ and for each $j \in S \setminus \tilde{K}$ let $b_j = \sum_{h\in R_\varepsilon} \sum_{\lambda \in F_j} x_h u^*_\lambda e_{\C_j\oo\M^{\oo2}_{\hat{j}}} u_\lambda x_h^* \in \innpr{\M\oo\M, e_{\C_j\oo\M^{\oo2}_{\hat{j}}}}$ and define the operators
\begin{equation*}
\xi = \bigoplus_{j\in S\cap \tilde{K}} a_j \oplus \bigoplus_{j\in S\setminus \tilde{K}} e_{\C_j\oo\M^{\oo2}_{\hat{j}}} \qquad \eta = \bigoplus_{j\in S\cap \tilde{K}} 2 \oplus \bigoplus_{j\in S\setminus \tilde{K}} b_j
\end{equation*}
which can be seen to belong to $\S_+\cap L^2(\S,\mathrm{Tr}_\S)$.  Let $\varphi \in L^2(\S,\mathrm{Tr}_\S)^*$ be the linear functional given by inner product with $\eta$, then we see that the right hand side of (\ref{fourthestimate}) can be expressed in terms of $\varphi$ and the inequality can be rewritten as
\begin{equation}\label{compHspace_gap}
0<D \leqslant  \varphi ((\bigoplus_{j\in S\cap \tilde{K}} \alpha_t^j (u) \oplus \bigoplus_{j\in S\setminus \tilde{K}} u^*) \,\xi \,(\bigoplus_{j\in S\cap \tilde{K}} u^* \oplus \bigoplus_{j\in S\setminus \tilde{K}} u) )
\end{equation}
for all $u\in \sU(\Q)$ and $0< t \leqslant t_\varepsilon$. Consider the closed convex set
$$ \kappa = \overline{\mathrm{co}} \,\{ (\bigoplus_{j\in S\cap \tilde{K}} \alpha_t^j (u) \oplus \bigoplus_{j\in S\setminus \tilde{K}} u^*) \,\xi \,(\bigoplus_{j\in S\cap \tilde{K}} u^* \oplus \bigoplus_{j\in S\setminus \tilde{K}} u) : u\in \sU(\Q)\,, 0< t \leqslant t_\varepsilon \} \subset L^2(\S,\mathrm{Tr}_\S)$$
and let $m\in \kappa$ be the unique element of minimal $\Norm{\cdot}_{\mathrm{Tr}_\S,2}$ norm.  By the uniqueness of $m$ we obtain that
\begin{equation}\label{conjuginvariance}
(\bigoplus_{j\in S\cap \tilde{K}} \alpha_t^j (u) \oplus \bigoplus_{j\in S\setminus \tilde{K}} u^*) \,m \,(\bigoplus_{j\in S\cap \tilde{K}} u^* \oplus \bigoplus_{j\in S\setminus \tilde{K}} u) = m
\end{equation}
for any $u\in \sU(\Q)$ and $0< t \leqslant t_\varepsilon$. Moreover, from (\ref{compHspace_gap}) we observe that $0<D \leq \varphi(m)$, so $m = \oplus_{j\in S} m_j \neq 0$ and there are $k\in J$ and $j \in (\{k\} \times \{1,2\}) \cap S$ such that $m_{j} \neq 0$.  

 We first prove $k\in K$.  Assume for a contradiction that $k\notin K$, so $j \notin \tilde{K}$.  Equation (\ref{conjuginvariance}) implies $u^* \,m_j u = m_j$ for all $u\in \sU(\Q)$ and taking any non-zero spectral projection of the unbounded operator $m_j$ associated to $\Q'\cap \innpr{\M\oo\M, e_{\C_j\oo\M^{\oo2}_{\hat{j}}}}$ we obtain by Popa's intertwining techniques that $\Q\prec \C_j\oo\M^{\oo2}_{\hat{j}}$.  This intertwining together with the $*$-isomorphism and intertwining in (\ref{intert1here}) implies that $\Delta(\cM_{\hat{i}}) \prec \C_j\oo\M^{\oo2}_{\hat{j}} = \cM \oo \C_k \oo \cM_{\hat{k}}$ (or possibly $= \C_k \oo \cM_{\hat{k}} \oo \cM$) where $k\notin K$, thus contradicting the maximality of $K$ from Lemma \ref{proptfirst intertwining}.

 Now that we have $k\in K$, we assume without loss of generality that $j=(k,2)$.  Equation (\ref{conjuginvariance}) now implies $\alpha^j_t(u) m_j = m_j u$ for all $u\in \sU(\Q)$ and $0< t \leqslant t_\varepsilon$.  Moreover, $m_j \in \C_j\oo\M^{\oo2}_{\hat{j}}$ and the partial isometry $v_j$ from its polar decomposition satisfies $\alpha^j_t(u) v_j = v_j u$ for all $u\in \sU(\Q)$ and $0<t \leqslant t_\varepsilon$.  Therefore, Lemma \ref{ioanacontrol}  provides the existence of a finite subset $\tilde{F} \Subset I_k$ such that $\Q \prec \cM \oo \cL(A_k^{\tilde{F}}) \oo \cM_{\hat{k}}$ and again by (\ref{intert1here}) we obtain $\Delta(\cM_{\hat{i}}) \prec \cM \oo \cL(A_k^{\tilde{F}}) \oo \cM_{\hat{j}}$.  We can assume $\tilde{F} \subset I_k$ is minimal such that the intertwining happens.  Finally, we argue that $\Delta(\cM_{\hat{i}}) \prec_{\cM\oo\cM} \cM \oo \cM_{\hat{k}}$, i.e. $\tilde{F} = \emptyset$.  Indeed, assume for a contradiction that $\Delta(\cM_{\hat{i}}) \nprec_{\cM\oo\cM} \cM \oo \cM_{\hat{k}}$, then Theorem \ref{quasinormcontrol2} implies that $\QN_{\cM\oo\cM} (\Delta(\cM_{\hat{i}}))'' \prec \cM \oo \cL(A_k^{(I_k)} {\rm Norm}(\tilde{F})) \oo \cM_{\hat{k}}$.  However, since $\Delta(\cM) \subset \cN_{\cM\oo\cM} (\Delta(\cM_{\hat{i}}))''$, we would obtain $\Delta(\cM) \prec \cM \oo \cL(A_k^{(I_k)} {\rm Norm}(\tilde{F})) \oo \cM_{\hat{k}}$ which implies $A_k^{(I_k)} {\rm Norm}(\tilde{F})$ has finite index in $G_k$ contradicting the fact that $B_k\ca I_k$ has amenable stabilizers and $B_k$ is non-amenable. \end{proof}

\begin{thm}[{\cite[Theorem 3.6]{CU18}}]\label{TwoFoldedProduct}Assume $G= \oplus_{j\in J} G_j$ where the groups $G_j$'s are as in Notation A. Assume that $H$ is any group satisfying $\L(G)=\L(H)$. For each $j\in J\setminus\{0\}$ there is a decomposition $H=\Psi_j\oplus \Theta_j$, a scalar $t_j>0$ and $u_j\in \sU(\M)$ satisfying \begin{equation}u_j\L(G_j)^{t_j}u_j^*=\L(\Psi_j)\text{ and  }u_j\L(G_{J\setminus\{j\}})^{1/t_j}u_j^*=\L(\Theta_j).\end{equation} 
\end{thm}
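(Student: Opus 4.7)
The plan is to mirror the strategy of \cite[Theorem 3.6]{CU18}, with Theorem \ref{intertwining} providing the intertwining input that in \cite{CU18} rested on the weak amenability and bi-exactness of the individual direct summands. Fix $j\in J\setminus\{0\}$. The goal is to transfer the tensor product decomposition $\cM = \cM_{\{j\}}\,\bar\otimes\,\cM_{\hat j}$ on the $G$-side to a direct sum decomposition $H=\Psi_j\oplus\Theta_j$ on the $H$-side, up to unitary conjugation and amplification.

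First I would apply Theorem \ref{intertwining} with $i=j$ to obtain some index $k\in J$ such that $\Delta(\cM_{\hat j})\prec_{\cM\bar\otimes\cM}\cM\,\bar\otimes\,\cM_{\hat k}$. By the flip invariance of the comultiplication $\Delta$, one simultaneously obtains $\Delta(\cM_{\hat j})\prec \cM_{\hat k}\,\bar\otimes\,\cM$. Since $\cM_j$ is a non-amenable property (T) factor that commutes with $\cM_{\hat j}$ inside $\cM$, the normalizing data is rich enough to upgrade both intertwinings to their strong forms and to combine them via \cite[Lemma 2.8(2)]{DHI16} into a joint two-sided intertwining $\Delta(\cM_{\hat j})\prec^{s}\cM_{\hat k}\,\bar\otimes\,\cM_{\hat k}$. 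At this stage one also argues, by a maximality/minimality argument analogous to the one used in Lemma \ref{proptfirst intertwining}, that the index $k$ behaves compatibly with the expected product decomposition.

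With the two-sided intertwining in hand, I would invoke the group subalgebra recognition theorem \cite[Theorem 4.1]{IPV10}, in the refined form used in \cite[Theorem 3.1]{CdSS15} and \cite[Section 3]{CU18}. Since $\cM_{\hat j}$ is regular in $\cM$ (it is a tensor factor) and is irreducible with non-amenable relative commutant, this yields a normal subgroup $\Theta_j\triangleleft H$, an amplification parameter $s_j>0$, and a unitary $w_j\in\cM$ with $w_j\cM_{\hat j}w_j^* = \cL(\Theta_j)^{s_j}$. Taking commutants inside $\cM$ and using that $H$ is ICC, I get $w_j\cM_j w_j^* = \cL(\Theta_j)'\cap \cL(H) = \cL(C_H(\Theta_j))$, and setting $\Psi_j:=C_H(\Theta_j)$ one verifies $H=\Psi_j\oplus\Theta_j$. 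Letting $t_j=1/s_j$ and adjusting $w_j$ by an inner automorphism of $\cL(\Theta_j)$ if necessary produces a single unitary $u_j$ implementing both required conjugations.

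The main obstacle, and the reason this is not a verbatim appeal to \cite{CU18}, is verifying that the one-sided intertwining provided by Theorem \ref{intertwining} can be upgraded to a genuine two-sided intertwining landing in a common algebra $\cM_{\hat k}\,\bar\otimes\,\cM_{\hat k}$. This combination step requires exploiting the property (T) and non-amenability of the commuting factor $\cM_j$ to prevent the intertwining from drifting into a properly smaller corner, and it requires the kind of maximality argument already used in Lemma \ref{proptfirst intertwining} in order to force the two flipped intertwinings to land in the same tensor coordinate. Once this is done, the commutant computation, the identification of $\Psi_j$ as a centralizer, and the existence of a single conjugating unitary are standard applications of the Popa deformation/rigidity toolkit.
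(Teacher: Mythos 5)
Your proposal is correct and follows essentially the same route as the paper: the paper's entire proof consists of producing the intertwining $\Delta(\cM_{\hat j})\prec^s\cM\,\bar\otimes\,\cM_{\hat k}$ from Theorem~\ref{intertwining} and then invoking \cite[Theorem~3.6]{CU18} as a black box, noting that its proof uses only that intertwining. Your plan simply unwinds what happens inside that cited theorem (upgrading to the two-sided strong intertwining via flip-invariance and \cite[Lemma~2.8(2)]{DHI16}, then applying the IPV/CdSS group-subalgebra recognition and commutant argument), which is the same argument the paper delegates to \cite{CU18}.
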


\begin{proof} Since $G$ is as in Notation A then by Theorem \ref{intertwining} we have that for every $j\in J\setminus \{0\}$ there exists $k\in J$ such that $\Delta(\cM_{\hat j})\prec^s \cM \bar\otimes \cM_{\hat k}$. Then applying \cite[Theorem 3.6]{CU18}, whose proof uses only the previous intertwining, we get the desired conclusion. \end{proof}

\begin{thm}\label{infprod} Assume $G= \oplus_{j\in J} G_j$ where the groups $G_j$'s are as in Notation A. Assume that $H$ is an \emph{arbitrary} group satisfying $\cL(G)=\cL(H)$. 

Then $H$ admits a direct sum decomposition $H=(\oplus_{j\in J\setminus \{0\}} H_n) \oplus A$, where $H_j$ is an ICC property (T) group for all $j\in J\setminus \{0\}$ and $A$ is an ICC amenable group. Moreover, let $J_k \nearrow J\setminus \{0\}$ be an exhaustion by finite sets with $|J_k|=k$. Then  one can find a sequence $(t_j)_{j\in J\setminus \{0\}}$ of positive numbers such that for every $k\in \mathbb N$ there is a unitary $u_k\in \cL(H)$ so that 
\begin{equation}\label{intertwiningtheparts'}\begin{split}& u_k\cL(G_j)^{t_j}u_k^*=\cL(H_j)\quad \text{ for all }j\in J_k \text{; and }\\& u_k\cL((\oplus_{j\in \hat{J_k}} G_j )\oplus G_0)u_k^*=\cL((\oplus_{j \in \hat{J_k}} H_j) \oplus A).\end{split}
\end{equation} 

 Notice that when $J$ is finite, the sequence of equations above terminates at $k=|J\setminus \{0\}|$.

\end{thm}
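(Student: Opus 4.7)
The plan is to iterate Theorem~\ref{TwoFoldedProduct} along the exhaustion $J_k \nearrow J\setminus\{0\}$, progressively peeling off direct summands of $H$ corresponding to each $G_j$. Writing $J_k = \{j_1,\ldots,j_k\}$, I would first apply Theorem~\ref{TwoFoldedProduct} to $(G,H)$ with $j=j_1$ to obtain a group decomposition $H = H_{j_1} \oplus K_1$, a scalar $t_{j_1}>0$, and a unitary $u_1 \in \mathcal{U}(\cL(H))$ with $u_1 \cL(G_{j_1})^{t_{j_1}} u_1^* = \cL(H_{j_1})$ and $u_1 \cL(\oplus_{j\neq j_1} G_j)^{1/t_{j_1}} u_1^* = \cL(K_1)$. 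The key observation enabling iteration is that $G^{(1)} := (\oplus_{j\neq j_1, j\neq 0} G_j) \oplus G_0$ still satisfies the hypotheses of Notation~A, and the isomorphism $\cL(K_1)^{t_{j_1}} \cong \cL(G^{(1)})$ allows one to treat $K_1$ as a group whose II$_1$ factor has the same infinite-direct-sum structure (up to amplification).

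Inductively, suppose after $k$ steps I have constructed a group direct sum $H = H_{j_1}\oplus\cdots\oplus H_{j_k}\oplus K_k$, positive scalars $t_{j_1},\ldots,t_{j_k}$, and a unitary $u_k \in \cL(H)$ realizing the intertwinings of \eqref{intertwiningtheparts'}. At step $k+1$ I would apply Theorem~\ref{TwoFoldedProduct} to $K_k$ with respect to $j_{k+1} \in J_{k+1}\setminus J_k$ to produce $K_k = H_{j_{k+1}}\oplus K_{k+1}$, a new scalar $t_{j_{k+1}}$, and an intertwining unitary $v_{k+1}$; composing $v_{k+1}$ with $u_k$ (suitably adjusted to account for the accumulated amplification) yields $u_{k+1}$. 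At each stage, $H_{j_i}$ is ICC because $\cL(H_{j_i}) \cong \cL(G_{j_i})^{t_{j_i}}$ is a factor, and inherits property~(T) from $G_{j_i}$ via the Connes--Jones transfer principle. When $J$ is finite, the recursion terminates after $|J\setminus\{0\}|$ steps and $A := K_{|J\setminus\{0\}|}$ is identified with the copy of $G_0$ sitting in the last complement.

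When $J$ is infinite, I would set
\[
A := \bigcap_{k\geq 1} K_k \trianglelefteq H.
\]
Since each element of $H$ lies in only finitely many of the summands $H_{j_i}$ with respect to its decomposition at any finite stage, one verifies that $H = (\oplus_{k\geq 1} H_{j_k}) \oplus A$ as abstract groups. The identification of $A$ as an ICC amenable group would follow by showing $\cL(A) = \bigcap_k \cL(K_k)$ corresponds, after the consistent sequence of unitary conjugations, to $\cL(G_0)$: amenability passes through the decreasing intersection of amenable complements, and $A$ is ICC because $\cL(A)$ is a factor.

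The principal technical obstacle I anticipate lies in the infinite case: managing the accumulation of amplification constants across the iteration so that the assignment $j \mapsto t_j$ is well-defined independently of the stage at which $H_j$ is extracted, and ensuring the diagonal limit $\lim_k u_k$ produces coherent intertwiners at every finite level. A secondary difficulty is guaranteeing that the residual subgroup $A$ is truly amenable rather than carrying hidden non-amenable or property~(T) content --- this is resolved by the fact that any such content would be detected by Theorem~\ref{intertwining} at a finite stage and therefore already captured by one of the factors $H_{j_k}$; thus no non-amenable piece can persist in $\bigcap_k \cL(K_k)$.
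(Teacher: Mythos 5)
Your proposal follows the same iterative strategy as the paper's proof, but there are two concrete gaps.

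First, you flag the accumulation of amplification constants across the iteration as the ``principal technical obstacle'' and leave it unresolved. The paper dispatches this immediately: at each stage the tail algebra $\L(G_{\widehat{J_k}\cup\{0\}})$ is a McDuff factor (an infinite direct sum of ICC groups always yields one), so it is canonically isomorphic to any of its amplifications $\L(G_{\widehat{J_k}\cup\{0\}})^{1/(t_1\cdots t_k)}$. This is precisely why one can reapply Theorem~\ref{TwoFoldedProduct} verbatim at each step and also why the amplification on the tail side disappears in the final statement \eqref{intertwiningtheparts'}. Without noticing this, the induction you describe does not actually close.

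Second, your argument for the amenability of $A$ does not work as stated. You write that ``amenability passes through the decreasing intersection of amenable complements,'' but the complements $\L(K_k)$ are \emph{not} amenable --- each $K_k$ still contains all the remaining non-amenable, property~(T) summands $H_{j_i}$ for $i>k$. Likewise, the claim that $\L(A)$ ``corresponds'' to $\L(G_0)$ is not justified and is generally false at the group level: in the finite case $A\cong K_{|J\setminus\{0\}|}$ is an ICC group whose factor is an amplification of $\L(G_0)$, which forces amenability but not isomorphism with $G_0$. The paper's actual argument is that, for every $k$, there is a unitary conjugating $\L(A)$ into the tail subfactor $\L(G_{\widehat{J_k}\cup\{0\}})^{1/(t_1\cdots t_k)}$; a von Neumann subalgebra that embeds into every tail of an infinite direct sum of II$_1$ factors is amenable by \cite[Proposition~2.7]{CU18}. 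Your heuristic remark at the end --- that any non-amenable content would be ``detected by Theorem~\ref{intertwining} at a finite stage'' --- gestures at this, but it is not a proof and does not substitute for the cited proposition.
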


\begin{proof} Let $J_k=\{j_1,j_2,j_3,...,j_k\}$ for all $k\in\mathbb N$.  Using Theorem \ref{TwoFoldedProduct} there exist a product decomposition $H=H_1 \oplus \Theta_1$, $v_1\in \sU(\M)$, and $t_1>0$ such that   $v_1\L(G_{j_1})^{t_1}v_1^* =\L(H_1)$ and $v_1\L(G_{\hat{J_1} \cup\{0\}})^{1/t_1}v^*_1=\L(\Theta_1)$. Since  $\L(G_{\hat{J_1} \cup\{0\}})^{1/t_1}$ is a McDuff factor, it is naturally isomorphic to $\L(G_{\hat{J_1} \cup\{0\}})$ and  applying Theorem \ref{TwoFoldedProduct} again in the prior relation for the group $G_{\hat{J_1} \cup\{0\}}$ there exist a product decomposition $\Theta_1=H_2\oplus \Theta_2$, $v_2\in\sU(v_1\L(G_{\hat{J_1} \cup\{0\}})^{1/t_1}v^*_1)$ , and $t_2>0$ such that  $v_2\L(G_{j_2})^{t_2}v_2^* =\L(H_2)$ and $v_2\L(G_{\hat{J_2} \cup\{0\}})^{1/(t_1t_2)}v^*_2=\L(\Theta_2)$.  Proceeding inductively one has $\Theta_{n-1}=H_n\oplus \Theta_n$, a unitary $v_n\in \sU(v_{n-1}\L(G_{\widehat{J_{n-1}} \cup\{0\}})^{1/(t_1t_2\cdots t_{n-1})}v^*_{n-1})$ and $t_n>0$ such that  $v_n\L(G_{j_n})^{t_n}v_n^* =\L(H_n)$ and $v_n\L(G_{\widehat{J_{n-1}} \cup\{0\}})^{1/(t_1t_2\cdots t_n)}v^*_n=\cL(\Theta_n)$. Altogether, these relations show that $\Theta_n\geqslant \Theta_{n+1}$ for all $n$ and also  $H=(\oplus_{n \in J\setminus\{0\}}H_n )\oplus A$, where $A =\cap_n \Theta_n$. In addition, for every $k\in \mathbb N$ letting $u_k:=v_1v_2\cdots v_k$ we see that 

\begin{equation}\label{intertwiningtheparts}
\begin{split}
&u_k \L(G_n)^{t_n}u^*_n=\L(H_n)\text{ for all }n\in J_k \text{ and}\\ & u_k\L((\oplus_{n\in \hat{J_k}} G_n )\oplus G_0)^{1/(t_1t_2\cdots t_k)} u^*_k=\L((\oplus_{n \in \hat{J_k}} H_n) \oplus A).
\end{split} 
\end{equation} 

Since $\L(G_k)$ is a II$_1$ factor the second relation in \eqref{intertwiningtheparts} show that for each $k\in \mathbb N$ one can find $u_k\in \sU(\cM)$ such that  $u^*_k\L(A)u_k\subseteq \L((\oplus_{n\in \hat{J_k}} G_n )\oplus G_0)^{1/(t_1t_2\cdots t_k)}$. Using \cite[Proposition 2.7]{CU18} it follows that $\L(A)$ is an amenable factor and $A$ is ICC amenable or trivial. Finally, since in all our cases, $\L((\oplus_{n\in \hat{J_k}} G_n )\oplus G_0)$ is a McDuff factor, relations \eqref{intertwiningtheparts} yield \eqref{intertwiningtheparts'}. \end{proof}

\noindent \emph{Proof of Theorem \ref{main2}.} Let $\theta\colon \cL(G) \ra \cL(H)$ be any isomorphism. Applying Theorem \ref{infprod}, there is a direct sum decomposition $H=(\oplus_{j\in J\setminus \{0\}} H_j) \oplus A$, with $H_j$ ICC property (T) group for all $j\in J\setminus \{0\}$ and $A$ ICC amenable. Moreover, let $J_k \nearrow J\setminus \{0\}$ be an exhaustion by finite sets with $|J_k|=k$, then  one can find a sequence $(t_j)_{j\in J\setminus \{0\}}$ of positive numbers such that for every $k\in \mathbb N$ there is a unitary $u_k\in \cL(H)$ so that 
\begin{equation}\label{intertwiningtheparts''}\begin{split}& u_k\theta(\cL(G_j))^{t_j}u_k^*=\cL(H_j)\quad \text{ for all }j\in J_k \text{; and }\\& u_k\theta(\cL((\oplus_{j\in \hat{J_k}} G_j )\oplus G_0))u_k^*=\cL((\oplus_{j \in \hat{J_k}} H_j) \oplus A).\end{split}
\end{equation} 
Since by \cite[Theorem 1.3]{CIOS21} every $G_j$ is $W^*$-superrigid, it follows that $t_j=1$ and $H_j \cong G_j$ for all $j\in J\setminus \{0\}$. Moreover, for every $j\in J\setminus \{0\}$ there is a group isomorphism $\delta_j \colon G_j \ra H_j$, a multiplicative character $\eta_j \colon G_j \ra \mathbb T$, and a unitary $x_j \in \cL(H)$, such that $\theta(u_g)= \eta_j(g)x_j v_{\delta(g)}x_j^*$ for all $g\in G_j$. $\hfill\qed$

\vskip 0.08in
We conclude this section by highlighting the following immediate corollary of Theorem \ref{infprod} in the case where the amenable group $G_0$ is nontrivial and the index set $J$ is finite.

\begin{cor}\label{main3} 
For every $1\leqslant j\leqslant n<\infty$ let $G_j\in \mathcal{WR}_b(A_j, B_j\ca I_j)$ be a property (T) wreath-like product group, where $A_j$ is a nontrivial abelian group, $B_j$ is an ICC subgroup of a hyperbolic group, and the action $B_j \ca I_j$ has amenable stabilizers. Let $C$ be any ICC amenable group and set $G= (\oplus^n_{j=1} G_j )\oplus C$.  

Let $H$ be any group and let $\theta\colon \L(G)\rightarrow \L(H)$ be a $\ast$-isomorphism. 

 Then $H$ admits a subgroup $K\cong \oplus^n_{j=1}G_j$ and an amenable ICC subgroup $A$ such that $H=K\oplus A$. In particular, we have $H\oplus C \cong G \oplus A$. 

 Moreover, one can find $x,y\in \mathscr{U}(\mathcal L(H))$, $\eta \in {\rm Char}(\oplus^n_{j=1} G_j)$, and $\delta \in {\rm Isom} (\oplus^n_{j=1}G_j,K)$ such that 
\begin{equation}\label{intertwiningtheparts-main2}
\begin{split}
&\theta(u_g)=\eta (g)\, x v_{\delta(g)} x^* \quad \text{for all } g\in \oplus^n_{j=1} G_j, \\
&\theta(\L(C))= y\L(A)y^*.
\end{split}
\end{equation}  
\end{cor}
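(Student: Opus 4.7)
The plan is to deduce this corollary directly from Theorem~\ref{infprod} combined with the W$^*$-superrigidity result of \cite[Theorem 1.3]{CIOS21}, in parallel to the proof of Theorem~\ref{main2} but exploiting the finiteness of the index set to obtain a single unitary implementer.

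First, I would apply Theorem~\ref{infprod} with $J = \{0, 1, \ldots, n\}$, $G_0 := C$, and the isomorphism $\theta$ identifying $\mathcal{L}(G)$ with $\mathcal{L}(H)$. All hypotheses of Notation~A are met: every $B_j$ is non-amenable (an ICC subgroup of a hyperbolic group cannot be virtually cyclic, hence is non-elementary and non-amenable), the abelian groups $A_j$ are amenable, and property (T) is given by assumption. Since $J\setminus\{0\} = \{1,\ldots,n\}$ is finite, the inductive construction in the proof of Theorem~\ref{infprod} terminates at $k = n$, producing \emph{a single} unitary $u \in \mathscr{U}(\mathcal{L}(H))$, scalars $t_1,\ldots,t_n > 0$, ICC property (T) groups $H_1,\ldots,H_n$, and an ICC amenable group $A$ with $H = (\oplus_{j=1}^n H_j) \oplus A$, satisfying $u\theta(\mathcal{L}(G_j))^{t_j}u^* = \mathcal{L}(H_j)$ for all $1 \leqslant j \leqslant n$ and $u\theta(\mathcal{L}(C))u^* = \mathcal{L}(A)$ (amplifications of $\mathcal{L}(C)$ collapse since it is the hyperfinite factor).

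Next, I would invoke the W$^*$-superrigidity of each $G_j$ provided by \cite[Theorem 1.3]{CIOS21}. Since each base $A_j$ is abelian, $\mathcal{L}(G_j)$ has trivial fundamental group, which forces $t_j = 1$; moreover, the partial isomorphism $\alpha_j \colon \mathcal{L}(G_j) \to \mathcal{L}(H_j)$ defined by $\alpha_j(x) = u\theta(x)u^*$ must be of the canonical form $\alpha_j(u_g) = \eta_j(g)\, w_j v_{\delta_j(g)} w_j^*$, for some group isomorphism $\delta_j \colon G_j \to H_j$, character $\eta_j \in \mathrm{Char}(G_j)$, and unitary $w_j \in \mathscr{U}(\mathcal{L}(H_j))$. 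Setting $K := \oplus_{j=1}^n H_j \leqslant H$ gives $K \cong \oplus_{j=1}^n G_j$ and $H = K \oplus A$; rearranging direct summands yields $H \oplus C \cong G \oplus A$.

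Finally, I would assemble the local data into a single global implementer. Because the subalgebras $\mathcal{L}(H_j)$ pairwise commute, the unitaries $w_j \in \mathcal{L}(H_j)$ commute with one another and with each $v_{\delta_{j'}(g_{j'})}$ for $j' \neq j$. Defining $\eta := \prod_{j=1}^n \eta_j \in \mathrm{Char}(\oplus_j G_j)$, $\delta := \oplus_{j=1}^n \delta_j \colon \oplus_j G_j \to K$, and $w := \prod_{j=1}^n w_j \in \mathscr{U}(\mathcal{L}(K))$, a direct computation using these commutation relations gives $u\theta(u_g)u^* = \eta(g)\, w v_{\delta(g)} w^*$ for every $g \in \oplus_{j=1}^n G_j$. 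Setting $x := u^* w$ and $y := u^*$ then delivers both equations of \eqref{intertwiningtheparts-main2}. The only technical point is the commuting stitching of the $w_j$'s into a single $w$, which is elementary given the direct-sum structure of $K$; all of the genuine analytic work is already packaged inside Theorem~\ref{infprod} and the W$^*$-superrigidity theorem of \cite{CIOS21}.
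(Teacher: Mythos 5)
Your proof is correct and follows exactly the route the paper intends: the paper presents Corollary~\ref{main3} as an immediate consequence of Theorem~\ref{infprod} (with $G_0 = C$ nontrivial and $J$ finite) combined with the $W^*$-superrigidity of each $G_j$ from \cite[Theorem 1.3]{CIOS21}, precisely as you do. Your explicit assembly of the local data $(\eta_j, \delta_j, w_j)$ into a single $(\eta, \delta, w)$ and the substitution $x = u^* w$, $y = u^*$ is the correct way to fill in the details that the paper leaves implicit.
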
    

This result provides a setting in which the McDuff superrigidity phenomenon discussed in Definition ~\ref{genmdsup} holds with both ICC amenable groups $A$ and $B$ nontrivial.

In a different direction, Corollary~\ref{main3} can also be viewed as the $\mathrm{II}_1$ factor analogue of a recent rigidity result for group von Neumann algebras with diffuse center. Indeed, \cite[Theorem A]{CFQT23} shows that if $C$ is any infinite abelian group and $G\in \mathcal{WR}(A, B\ca I)$ is a property~(T) wreath-like product group (with $A$ non-trivial free abelian and $B\ca I$ as in Corollary~\ref{main3}), then for every group $H$ satisfying $\L(G\oplus C)\cong \L(H)$, one must have $H=K\oplus D$, where $K\cong G$ and $D$ is infinite abelian.

\subsection{Proof of Theorem \ref{Thm:main}}

Let $G$ be a torsion-free, non-trivial, hyperbolic group  with property (T). For example, we may start with a uniform lattice in $Sp(n, 1)$. By Selberg’s lemma, it contains a finite
index torsion-free subgroup $G$. Being a finite index subgroup of a uniform lattice, $G$ is also a uniform lattice in $Sp(n, 1)$ and, therefore, is hyperbolic and has property (T).

Fix any $g\in G\setminus \{ 1\}$. Suppose that Theorem \ref{Thm:main-gt-full} applies to every $n> N$ for some $N\in \NN$. Let $\mathcal P$ be the set of all primes $p> N$. For every infinite subset $I=\{ p_1, p_2, \ldots\} \subseteq \mathcal P$, Theorem \ref{Thm:main-gt-full} provides us with a sequence of wreath-like products $G_j=G/[\ll g^{p_j}\rr, \ll g^{p_j}\rr] $ satisfying all the assumptions of Theorem \ref{main1}; furthermore, the direct sum $G(I)=\bigoplus_{j\in \NN} G_j$ contains an element of prime order $p$ if and only if $p\in I$. In particular, $G(I_1)\not \cong G(I_2)$ whenever $I_1\ne I_2$. By Corollary \ref{main2}, this provides a continuum of pairwise non-isomorphic, $W^\ast $-McDuff groups that are McDuff superrigid.

\section{A relative solidity result for von Neumann algebras of wreath-like product groups with bounded $2$-cocycle}

We end this paper with a solidity result of independent interest which generalizes \cite[Corollary 4.7]{CIOS21} to the case of non-abelian amenable base groups.  It also generalizes the main result from \cite{CI08} for nonsplit generalized wreath product extensions.

\begin{thm} For $1\leqslant i\leqslant n<\infty$ let $G_i\in \mathcal{WR}_b( A_i, B_i\ca I_i)$ such that $A_i$ and ${\rm Stab}_{B_i}(k)$ are amenable for all $ k\in I_i$. Let $A_0$ be an amenable group. Consider $G=\times^n_{j=1} G_j\times A_0$ and denote by $\M= \cL(G)$.  Let $p \in  \cL(\times^n_{j=1} A^{(I_j)}_j \times A_0)$ be a projection.  If  $\Q\subseteq  p\cL( \times^n_{j=1} A^{(I_j)}_j \times A_0)p$ is a von Neumann algebra such that $\Q \nprec \cL(\times_{k\neq i} A^{(I_k)}_k\times A_0)$ for all $1\leqslant i\leqslant n$ and $\Q \nprec \cL(\times^n_{k=1} A^{(I_k)}_{k})$ then $\Q'\cap p \M p$ is amenable.
\end{thm}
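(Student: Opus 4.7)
The plan is to argue by contradiction using the array deformation machinery introduced above, together with the quasinormalizer control of Theorem~\ref{quasinormcontrol2}. Suppose $\Q'\cap p\M p$ is non-amenable. After absorbing any $G_j$ with $B_j$ amenable into $A_0$ (such factors are themselves amenable), we may assume every $B_j$ is non-amenable and apply Proposition~\ref{relsolarray} to $G=\times_{j=1}^n G_j\times A_0$, letting $A_0$ contribute trivially. This produces an array $\ma\colon G\to \sH$ into a weakly-$\ell^2$ orthogonal representation $\rho$, together with the associated Gaussian deformation $V_t^{\ma}$ on $L^2(L^\infty(Y^{\pi})\rtimes G)$.

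Because $\Q'\cap p\M p$ is non-amenable, the spectral gap property (Proposition~\ref{propo:Gaussian_properties}\,(\ref{spectralgap})) gives $\sup_{x\in (\Q)_1}\|x-V_t^{\ma}(x)\|_2\to 0$ as $t\to 0$, and transversality (Proposition~\ref{propo:Gaussian_properties}\,(\ref{propo:Gaussian_transversality})) then forces $\sup_{x\in (\Q)_1}\|e_{\M}^{\perp}V_t^{\ma}(x)\|_2\to 0$. Unpacking the Gaussian expression as in the proof of Theorem~\ref{subrel}, and restricting to the base subgroup $\times_j A_j^{(I_j)}\times A_0$ on which the $B_j$-component of $\ma$ vanishes, one sees that this norm controls the Fourier mass of $\Q$-unitaries on the complement of the set of elements with uniformly bounded total support $\sum_j |\supp(a_j)|$. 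Hence for every $\varepsilon>0$ there exists $N\in \mathbb N$ with $\sum_{a\,:\,\sum_j|\supp(a_j)|\leqslant N}\|u_a\|_2^2\geqslant 1-\varepsilon$ for every $u=\sum_a u_a v_a\in \mathcal U(\Q)$.

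The technical heart of the argument is to upgrade this ``bounded support-size'' condition into an honest intertwining $\Q\prec_{\M} \L(\times_j A_j^{F_j}\times A_0)$ for specific finite $F_j\subset I_j$; a bound on support size alone is insufficient because the supports could wander across $I_j$ under the $B_j$-action. The extraction is carried out in the spirit of Chifan--Ioana~\cite{CI08}: one picks a finite tuple of unitaries in $\Q'\cap p\M p$ witnessing non-amenability, observes that conjugation by these unitaries moves supports in $I_j$ via the $B_j$-action, and combines this with the properness of the $B_j$-component of $\ma$ (from Proposition~\ref{relsolarray}) to localize supports in a fixed finite subset $F_j\subset I_j$ for each $j$. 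Choose each $F_j$ minimal; if some $F_i=\emptyset$, then $\Q\prec \L(\times_{k\neq i}A_k^{(I_k)}\times A_0)$, contradicting the first hypothesis, so every $F_j$ is nonempty.

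Finally, apply Theorem~\ref{quasinormcontrol2} to each factor, viewing $\M=\L(\times_{k\neq j} G_k\times A_0)\rtimes G_j$, to obtain $\mathscr{QN}_{p\M p}(\Q)''\prec_{\M} \L(\times_{k\neq j}G_k\times A_0)\,\bar\otimes\, \L(A_j^{(I_j)}\mathrm{Norm}(F_j))$ for each $j$. Combining these intertwinings via \cite[Lemma~2.8]{DHI16} yields $\mathscr{QN}_{p\M p}(\Q)''\prec_{\M} \L(\times_j A_j^{(I_j)}\mathrm{Norm}(F_j)\times A_0)$. Since $F_j$ is finite and each $\stab_{B_j}(i)$ is amenable, $\mathrm{Norm}(F_j)$ is amenable (a finite extension of the pointwise stabilizer $\bigcap_{i\in F_j}\stab_{B_j}(i)$), so $A_j^{(I_j)}\mathrm{Norm}(F_j)$ is amenable and the target group $\times_j A_j^{(I_j)}\mathrm{Norm}(F_j)\times A_0$ is amenable. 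By \cite[Proposition~2.7]{CU18}, the target algebra is then amenable, and since $\Q'\cap p\M p\subseteq \mathscr{QN}_{p\M p}(\Q)''$, this forces $\Q'\cap p\M p$ to be amenable, contradicting the assumption. The principal obstacle is extracting specific finite sets $F_j$ from bare support-size bounds; overcoming this requires a delicate interplay between the support array of Theorem~\ref{supportarray} and the proper $B_j$-array, and is the central new technical input needed beyond the techniques already used elsewhere in the paper.
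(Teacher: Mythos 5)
There is a genuine gap precisely where you flag one: the step from ``bounded support size'' to an actual intertwining $\Q\prec_\M \L(\times_j A_j^{F_j}\times A_0)$. You are right that this is the technical heart, but the way you describe resolving it -- ``pick a finite tuple of unitaries in $\Q'\cap p\M p$ witnessing non-amenability, observe that conjugation moves supports in $I_j$, combine with properness of the $B_j$-array'' -- is not a proof, and it is not what the paper does. The paper's actual mechanism is quite different and more specific: after establishing (via the spectral gap/transversality and the subordination estimates of Proposition~\ref{ineqballsioanadef}) an inequality of the form $D\leq \sum_j \langle\alpha_t^j\otimes\mathrm{Id}(x),\, x E_{\L(A)}(y)\rangle$ for all $x\in\sU(\Q s)$ and small $t$, one passes to the direct sum von Neumann algebra $\S=\bigoplus_j \bigl(\L((A_j*\ZZ)^{(I_j)})\,\bar\otimes\,\L(\times_{k\neq j}G_k\times A_0)\bigr)$, forms the closed convex hull of the vectors $(\oplus_j x^*)\xi(\oplus_j \alpha_t^j\otimes\mathrm{Id}(x))$, and takes the unique minimal-norm element $m$. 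The invariance of $m$ under the $\Q$-action gives $m_i\,\alpha_t^i\otimes\mathrm{Id}(u)=u\,m_i$ for some nonzero component $m_i$, and Lemma~\ref{ioanacontrol} then produces the finite subset $F\subset I_i$ and the intertwining $\Q s\prec\L(A_i^F)\,\bar\otimes\,\L(\times_{k\neq i}G_k\times A_0)$. Without this convex-hull/Ioana-deformation argument, your sketch does not establish the claimed intertwining.

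There are two further structural differences worth noting. First, the paper does not directly assume $\Q'\cap p\M p$ is non-amenable; it isolates the maximal central projection $z$ with $(\Q'\cap p\M p)z$ amenable, sets $y=p-z$, runs the whole argument with $y$ in place of $p$, and eventually contradicts the maximality of $z$ by producing a nonzero amenable corner of $(\Q'\cap p\M p)y$. This finer bookkeeping is needed because the averaging produces an intertwiner whose support need not cover all of $p$. Second, the paper obtains an intertwining for a \emph{single} index $i$ and then handles the remaining factors by induction on $n$ (the case $n=1$ is dispatched directly because $\times_{k\neq i}A_k^{(I_k)}\times C$ is amenable); your proposal instead asserts simultaneous intertwinings into all the $\L(\times_{k\neq j}G_k\times A_0)\,\bar\otimes\,\L(A_j^{(I_j)}\mathrm{Norm}(F_j))$ and combines them by \cite[Lemma~2.8]{DHI16}. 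That combination step is not justified as stated: the DHI lemma requires strong intertwining and commuting subalgebras in a form you have not set up, and the quasinormalizer of $\Q$ is a single algebra, not a product. The inductive route the paper takes avoids this issue entirely.

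Your closing conclusion via amenability of $A_j^{(I_j)}\mathrm{Norm}(F_j)$ and \cite[Proposition~2.7]{CU18} is also not how the paper finishes; the paper shows a nonzero corner $w^*w(\Q'\cap p\M p)w^*w$ is amenable (by induction, or directly when $n=1$), checks $w^*wy\neq 0$, and contradicts maximality of $z$. So while the high-level shape of your argument (spectral gap $\to$ support control $\to$ quasinormalizer control $\to$ amenability) matches the paper, the central extraction step is missing and the surrounding structure (maximal central projection, single-index intertwining, induction) is different from and essential to the paper's proof.
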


\begin{proof} Let $A=\times^n_{j=1} A_j^{(I_j)}\times A_0$ and $B=\times^n_{j=1} B_j^{(I_j)}$. We will prove our statement by induction on $n$.

Let $z \in \mathcal{Z}(\Q' \cap p\M p)$ be the maximal central projection such that $(\Q' \cap p \M p) z$ is amenable.  We shall prove that $z=p$.  Assume by contradiction that $z\neq p$ and let $0\neq y:=p-z$.  Thus  $ (\Q' \cap  p \M p)y$ is non-amenable.  Let $\ma$ be the array on $G= \times^n_{j=1} G_j \times A_0$ in a weakly-$\ell^2$ representation constructed in Proposition \ref{relsolarray} and let $V_t^\ma$ be the corresponding deformation introduced in \cite{CS11}.   Using the spectral gap property from part \ref{spectralgap} in Proposition \ref{propo:Gaussian_properties},  as $ (\Q' \cap p\M p )y$ is non-amenable,  it follows that $\lim_{t\ra 0}(\sup_{x\in (\Q)_1}\|x y -V_t^\ma(xy )\|_2)=0$.  
Fix $\varepsilon>0$. Using the same argument to approximate $y$ as used in the proof of Theorem \ref{intertwining} (page \pageref{eq approximIntertwiner}) to approximate $vv^*$, one can find a finite set $R_\varepsilon\subset B$ and $x_h\in \cL( A)$ with 
\begin{equation}\label{approximantsize}
\|y - \sum_{h\in R_\varepsilon} x_h u_h \|_2\leqslant \varepsilon\text{ and }\|\sum_{h\in R_\varepsilon} x_h u_h\|_\infty\leqslant 1\end{equation} and a scalar $t_\varepsilon>0$ such that

\begin{equation}\label{ineq9}
    \| xy -V_t^{\ma}(xy) \|_2^2 = \sum_{h\in R_\varepsilon} \|x x_h- V_t^{\ma}(xx_h)\|_2^2\leqslant 2\varepsilon^2, 
\end{equation}
for all $x\in (\Q)_1$ and all $0<t\leqslant t_\varepsilon$.

 Then proceeding through exactly the same type of arguments as on the first half of page \pageref{fourthestimate} in the proof of Theorem \ref{intertwining}, after shrinking $t_\varepsilon$ if necessary, \eqref{ineq9} implies that for all $x\in \mathscr U(\Q)$ and  $0<t\leqslant t_\varepsilon$ we have  

 \begin{equation}\label{ineq7}  \Norm{y}_2^2 -3\varepsilon^2 - \varepsilon/4 \leqslant  \sum^n_{j=1} \langle \alpha^j_t\otimes {\rm Id} (x) ),x(\sum_{h\in R_\varepsilon}x_h x_h^*)\rangle . \end{equation}

\noindent Since $y$ is a projection, using the estimates \eqref{approximantsize} and the triangle inequality we see that  \begin{equation}\label{ineq8}\begin{split}
&\|E_{\cL( A)}(y)- \sum_h x_hx_h^*\|_2=\|E_{\cL( A)}(yy^*)- E_{\L(A)}(\sum_{h,k\in R_\varepsilon} x_h u_h u_{k^{-1}}x_k^*)\|_2\leqslant \|yy^*- \sum_{h,k\in R_\varepsilon} x_h u_h u_{k^{-1}}x_k^*\|_2\\ 
&\leqslant \|yy^*- \sum_{h,k\in R_\varepsilon} x_h u_h u_{k^{-1}}x_k^*\|_2 \leq\|y- \sum_{h\in R_\varepsilon} x_h u_h \|_2 + \| \sum_{h\in R_\varepsilon} x_hu_h\|_\infty \|y^*- \sum_{k \in R_\varepsilon} u_{k^{-1}}x_k^*\|_2\leqslant 2\varepsilon.\end{split}
\end{equation} 

\noindent Using \eqref{ineq8} and the triangle inequality in \eqref{ineq7} we further get for all $x\in \mathscr U(\Q)$ and $0<t\leqslant t_\varepsilon$ we have

 \begin{equation}\label{ineq7'}  \Norm{y}_2^2 -3\varepsilon^2 - \varepsilon/4 -2n\varepsilon\leqslant  \sum^n_{j=1}\langle \alpha^j_t\otimes {\rm Id} (x) ),xE_{\cL( A)}(y)\rangle. \end{equation} 
\noindent Consider the  support $s:= \text{sup}(E_{\L( A)}(y))\in \Q'\cap p\L( A)p$ and notice that $s\geqslant y, E_{\L( A)}(y)$. Using the facts that $s E_{\L( A)}(y)= E_{\L( A)}(y)$,  $\|\alpha^i_t\otimes {\rm Id} (s)-s\|_2\overset{t\ra 0}{\ra} 0$ and  $(\alpha^j_t\otimes {\rm Id}) (xs) =  (\alpha^j_t\otimes {\rm Id} )(x)(\alpha^j_t\otimes {\rm Id}) (s)$ in \eqref{ineq7'}, after shrinking $0<t_\varepsilon$ if necessary, we further get that for all $x\in \mathscr U(\Q)$ and $0<t\leqslant t_\varepsilon$ we have

\begin{equation}\label{ineq7''}  \Norm{y}_2^2 -3\varepsilon^2 - \varepsilon/4 -3n\varepsilon\leqslant  \sum^n_{j=1}\langle \alpha^j_t\otimes {\rm Id} (x s) ),xs E_{\cL( A)}(y)\rangle. \end{equation} 

 Letting $\varepsilon>0$ sufficiently small we can assume  $\Norm{y}_2^2 -3\varepsilon^2 - \varepsilon/4 -3n\varepsilon=:D>0$ and thus 

\begin{equation}\label{ineq7'''}  D\leqslant  \sum^n_{j=1}\langle \alpha^j_t\otimes {\rm Id} (x) ),x E_{\cL( A)}(y)\rangle, \quad \text{ for all } x\in \mathscr U(\Q s), 0<t\leqslant t_\varepsilon. \end{equation}

 Next consider the von Neumann algebra $\mathcal S = \bigoplus^n_{j=1} \left(\cL((A_j\ast \mathbb Z)^{(I_j)})\oo \cL(\times_{i\neq j} G_j \times A_0)\right )$ endowed with the trace $\tau_{\mathcal S}=\bigoplus^n_{j=1} \tau_{\cL((A_j\ast \mathbb Z)^{(I_j)})\oo \cL(\times_{i\neq j} G_j \times A_0)}$. Consider the elements $\xi =\bigoplus^n_{j=1}1$, $\eta = \bigoplus^n_{j=1}E_{\L(A)}(y)\in L^2(\mathcal S, \tau_{\mathcal S})$. Let $\phi$ be the linear functional given by the inner product with $\eta$.  Thus inequality \eqref{ineq7} can be rewritten as  $$D\leqslant \phi((\oplus^n_{j=1}x^* )\xi (\oplus^n_{j=1} (\alpha_t^j\otimes {\rm Id}(x))). $$ 
 
 Now consider the closed convex hull
 $$\mathcal K= \overline{\rm co}\{(\oplus^n_{j=1} x^*)\xi (\oplus^n_{j=1} \alpha_t^j\otimes {\rm Id}(x))\,:\, x\in \mathscr U(\Q s), 0<t\leqslant t_\varepsilon\}\subset L^2(\mathcal S, \tau_{\mathcal S}).$$

Let $m\in \mathcal K$ be the unique element of minimal $\|\cdot\|_{\tau_{\S},2}$-norm.  Uniqueness of $m$ implies that
\begin{equation}\label{conjuginvariance'}
(\oplus^n_{j=1} x^*)\, m \,(\oplus^n_{j=1} \alpha_t^j\circ {\rm Id}(x)) = m,
\end{equation}
for any $u\in \sU(\Q s)$ and $0< t \leqslant t_\varepsilon$. Moreover, from \eqref{ineq7'''} we observe that $0<D \leqslant \varphi(m \cdot \eta)$, so $m = m_1 \oplus \cdots \oplus  m_n \neq 0$ and there is $1\leqslant i\leqslant n$  such that $m_{i} \neq 0$.

 Equation (\ref{conjuginvariance'}) now implies $m_i \alpha^i_t\otimes {\rm Id} (u) = u m_i$ for all $u\in \sU(\Q s)$ and $0< t \leqslant t_\varepsilon$.  Moreover, $m_i \in \cL((A_i\ast \mathbb Z)^{(I_i)})\oo \cL(\times_{k\neq i}G_k \times A_0)$ and the partial isometry $v_i$ from its polar decomposition satisfies $v_i\alpha^i_t\otimes {\rm Id} (u) = uv_i$ for all $u\in \sU(\Q s)$.  Thus by Lemma \ref{ioanacontrol}  there is a finite subset $F \Subset I_i$ such that $\Q s \prec \cL(A_i^{F}) \oo \cL(\times_{k\neq i} G_k\times A_0)$. Notice that the hypothesis conditions also imply that $F\neq \emptyset$. We can assume $F \subset I_i$ is minimal such that the intertwining happens.  Moreover,  since $\Q s \subseteq p\cL(A)p$ we conclude that $\Q s\prec \cL(A_i^{F} \times (\times_{k\neq i} A^{(I_k)}_k) \times A_0))=:\mathcal P_i$. Moreover, since $\L(A)$ is regular in $\M$, this further implies that $\Q s\prec_{\L(A)} \mathcal P_i$.  Hence there are projections $e\in \Q, f\in \mathcal P_i$ with $es\neq 0$, a nonzero partial isometry $w\in es \L(A) f$ and a $\ast$-isomorphism onto its image $\phi\colon e\Q es\ra \mathcal R:= \phi(e\Q e s) \subseteq f\P_if$ satisfying \begin{equation}\label{intrel10}\phi (x)w=wx,\text{ for all }x\in e\Q es.\end{equation} 
 
Next observe the hypothesis conditions imply that $\mathcal R \nprec \L( \times_{k\neq i} A_k^{(I_k)}\times A_0 )
$ and thus by Theorem \ref{quasinormcontrol2} we have $ \mathcal R \vee (\mathcal R '\cap f\M f)\subset \mathscr {QN}_{f \M f}(\mathcal R)''\subseteq \L(A^{(I_i)}_{i}{\rm Norm}(F)\times (\times_{k\neq i} G_k)\times A_0 )$.

 Henceforth, denote by $C:=A^{(I_i)}_i{\rm Norm(F)} \times A_0$. Since all $A_j$'s are amenable and $B_i\ca I_i$ has amenable stabilizers it follows that $C$ is amenable. In conclusion, we obtained two commuting von Neumann subalgebras  $\mathcal R ww^*, ww^*(\mathcal R '\cap f\M f )ww^*\subseteq  \L(\times_{k\neq i} G_k \times C)$ with   $\mathcal R ww^*\subseteq \L(\times_{k\neq i} A_k^{(I_k)} \times C)$. 

 Notice that in the case $n=1$ we have $i=1$ and hence  $\times_{k\neq i} A_k^{(I_k)} \times C=C$ which is an amenable group. Hence, the prior containment already imply that  $ww^*(\mathcal R '\cap f\M f )ww^*$ is amenable. 

 Next we assume that $n\geqslant 2$. In this case the hypothesis assumptions imply that $\mathcal R ww^*\nprec_{\cL(\times_{k\neq i} G_k\times C)} \L(\times_{k\neq i,l} G_k \times C)$ for $l\in \{1,...,n\}\setminus \{i\}$ and also $\mathcal R ww^*\nprec_{\cL(\times_{k\neq i} G_k\times C)} \L(\times_{k\neq i} G_k)$.
Therefore, using the inductive assumption for $n-1$ we again have that $ww^* (\mathcal R'\cap f \M f)ww^*$ is amenable.

 However, the intertwining relation \eqref{intrel10} implies that 
\[
ww^* \, (\mathcal R' \cap f \M f) \, ww^* = w \, \bigl( (eQe s)' \cap es \M es \bigr) \, w^* .
\]
Therefore,
\[
w^*w \, \bigl( (eQe s)' \cap es \M es \bigr) \, w^*w
= w^*w \, (\Q' \cap p \M p ) \, w^*w
\]
is also amenable.  

 Now we argue that $w^*w y \neq 0$. Assume, by contradiction, that $w^* w y = 0$. Applying the conditional expectation, we obtain $w^*w E_{\L(A)}(y) = 0$. Hence $w^*w s = 0$. Since $w^*w \leqslant es$, this forces $w^*w = 0$, which would imply $w=0$, a contradiction.  

 Finally, since $w^*w (\Q \cap p \M p ) w^*w$ is amenable and $w^*w y \neq 0$, it follows that the algebra $(\Q' \cap p \M p ) y$ has a nonzero amenable corner. This contradicts the maximality of the projection $z$ chosen at the beginning of the proof. Therefore we have $z=p$, showing the inductive step. \end{proof}

\end{document}